\newcommand{\C}{\mathbb{C}}
\renewcommand{\P}{\mathbb{P}}
\newcommand{\Q}{\mathbb{Q}}
\newcommand{\R}{\mathbb{R}}
\newcommand{\Z}{\mathbb{Z}}
\newcommand{\ad}{\mathrm{ad}\,}
\newcommand{\Ad}{\mathrm{Ad}\,}
\newcommand{\Aut}{\mathrm{Aut}\,}
\newcommand{\Card}{\mathrm{Card}\,}
\newcommand{\Der}{\mathrm{Der}\,}
\renewcommand{\d}{\mathrm{d}\,}
\newcommand{\End}{\mathrm{End}}
\newcommand{\Hom}{\mathrm{Hom}}
\renewcommand{\Im}{\mathrm{Im}\,}
\newcommand{\id}{\mathrm{id}}
\newcommand{\Inn}{\mathrm{Inn}\,}
\newcommand{\Ind}{\mathrm{Ind}\,}
\newcommand{\Ker}{\mathrm{Ker}\,}
\newcommand{\Max}{\mathrm{Max}\,}
\newcommand{\Supp}{\mathrm{Supp}\,}
\newcommand{\sd}{\ltimes}
\newcommand{\Spec}{\mathrm{Spec}\,}
\newcommand{\tr}{\mathrm{tr}\,}
\newcommand{\g}{\mathfrak{g}}
\newtheorem{thm}{Theorem}
\newtheorem{lemma}{Lemma}[section]
\begin{document}
\title{CLASSIFICATION OF SIMPLE LIE ALGEBRAS ON A LATTICE}
\author{Kenji Iohara and Olivier Mathieu\footnote{Reasearch supported by UMR 5028 du CNRS.}}
\maketitle
\begin{abstract}
Let $\Lambda=\Z^n$ for some $n\geq 1$. The aim of the paper is to classify all simple $\Lambda$-graded Lie algebras $\mathcal{L}=\bigoplus_{\lambda \in \Lambda} \mathcal{L}_\lambda$, such that $\dim \mathcal{L}_\lambda=1$ for all $\lambda$. The classification involves two affine Lie algebras, namely $A_1^{(1)}$ and $A_2^{(2)}$, and a familly $(W_\pi)$, parametrized by a dense open set of the space of all embeddings $\pi: \Lambda \rightarrow \C^2$. The family $(W_l)$ of generalized Witt algebras, indexed by all  embeddings $l:\Lambda \rightarrow \C$, appears as a sub-family.  In general, the algebras $W_\pi$ are described as Lie algebras of symbols of twisted pseudo-differential operators. 
\end{abstract}

\vskip3cm

\setcounter{section}{-1}

\section{Introduction}

\subsection{Statement of the Theorem proved in the paper}\label{sect_01}

Let $\Lambda$ be a lattice of rank $n$, i.e. $\Lambda\simeq \Z^n$.
Following I. M. Gelfand's terminology, a \textit{Lie algebra on the lattice $\Lambda$}  is a $\Lambda$-graded Lie algebra 
$\mathcal{L}=\oplus_{\lambda\in\Lambda}\,\mathcal{L}_{\lambda}$ such that
$\dim\,\mathcal{L}_\lambda=1$ for all $\lambda$. 

I.M. Gelfand (in his Seminar) and A.A. Kirillov \cite{Ki} raised the
question of the classification of all Lie algebras on a lattice.
Of course, $\mathcal{L}$ should satisfy
additional properties in order  to expect 
an answer. The present paper investigates the case of simple graded
Lie algebras on a lattice. Recall that the  Lie
algebra
$\mathcal{L}$ is called \textit{simple graded} if $\mathcal{L}$ has no non-trivial
proper graded ideal (here it is assumed that $n>0$, otherwise one has to
assume that
$\dim \mathcal{L}>1$).

To our best of our  knowledge, the first instance of this question is
the V.G. Kac paper \cite{Ka2}, where he gave an explicit conjecture  for
the classification of all simple $\Z$-graded Lie algebras
$\mathcal{L}=\oplus_{n\in \Z}\,\mathcal{L}_n$ for which
$\dim\mathcal{L}_n=1$ for all $n$. He conjectured that such
a Lie algebra is isomorphic to the loop algebras
$A^{(1)}_1$, $A^{(2)}_2$ or to the Witt algebra $W$, see Section \ref{sect_02} for
the definition of these algebras. His conjecture is now proved \cite{M1}.

However, for lattices of rank $>1$, there was no explicit conjecture
(however see   Yu's Theorem \cite{Y} cited below) .

The main result of the paper is the classification of all
simple graded  Lie algebras on a lattice. To  clarify the
statement, the notion of primitivity is defined. Let
$\mathcal{L}$ be a simple
 $\Lambda$-graded Lie algebra, and let $m>0$.
The Lie algebra 
$\mathcal{L}(m)=\mathcal{L}\otimes \C[z_1^{\pm 1},\dots,z_1^{\pm m}]$ is a 
simple $\Lambda\times \Z^m$-graded Lie algebra, and
it is called an \textit{imprimitive form} of $\mathcal{L}$. 
A simple $\Lambda$-graded  Lie algebra $\mathcal{L}$ is called
\textit{primitive} if it is not an imprimitive form. It is clear
that any  simple $\Z^n$-graded Lie algebra is isomorphic
to $\mathcal{L}(m)$ for some primitive $\Z^{n-m}$-graded Lie algebra
 $\mathcal{L}$.

In this paper, a family of Lie algebras $W_\pi$ is
introduced. This family is parametrized by
an injective additive map $\pi:\Lambda\rightarrow \C^2$,
and it contains all generalized Witt algebras.
The result proved in this paper is the following one: \\

\noindent{\textbf{Main Theorem:}} \textit{Let 
$\mathcal{L}$ be a primitive  Lie algebra on $\Lambda$.
Then $\mathcal{L}$ is isomorphic to $A^{(1)}_1$, $A^{(2)}_2$ or to some
$W_\pi$, where $\pi:\Lambda\rightarrow \C^2$ is
an injective and additive map satisfying condition $\mathcal{C}$.} \hfill $\Box$
\\

The next section of the introduction will be devoted to
the precise definitions of the Lie algebras $A^{(1)}_1$, $A^{(2)}_2$ and
$W_\pi$ involved in the theorem (as well as the condition
$\mathcal{C}$). 

Since the proof of the theorem is quite long, the  paper is 
divided into three chapters (see Section \ref{sect_04}). Each chapter is 
briefly described in Sections \ref{sect_05}-\ref{sect_07}. References for 
the definitions of the Lie algebras are given in Section \ref{sect_03},
and for the tools used in the proof in Section \ref{sect_08}.

\subsection{Definition of the the Lie algebras involved in the
classification}\label{sect_02}

In what follows, the following convention will be used.
The identity $\deg x=\lambda$ tacitly means
that $x$ is a homogeneous element, and its degree is $\lambda$. 

\smallskip
 {\it The Lie algebra $A_1^{(1)}$:}
By definition, it is the Lie algebra 
$\mathfrak{sl}(2)\otimes \C[T,T^{-1}]$, where
the $\Z$-gradation is defined  
by the following requirements:

\centerline {$\deg e\otimes T^n=3n+1$, $\deg h\otimes T^n=3n$ and 
$\deg f\otimes T^n=3n-1$.}

\noindent Here  $\{e,\,f,\,\,h\}$
is the standard basis of $\mathfrak{sl}(2)$. 

\smallskip
{\it  The Lie algebra $A_2^{(2)}$:}
For  $x\in\mathfrak{sl}(3)$, set $\eta(x)=-x^t$,
where $t$ denotes the transposition.
Define an  involution $\theta$ 
of $\mathfrak{sl}(3)\otimes \C[T,T^{-1}]$ by

\centerline{$\theta(x\otimes T^n)=(-1)^{n}\, \eta(x)\otimes T^n$,}

\noindent for any $x\in\mathfrak{sl}(3)$ and $n\in \Z$.
By definition, $A_2^{(2)}$ is the Lie algebra of fixed points of the
involution $\theta$.
  
The $\Z$-gradation of $A_2^{(2)}$ is more delicate to define. Let
$(e_i,\,f_i\,\,h_i)_{i=1,\,2}$  be Chevalley's generators of $\mathfrak{
sl}(3)$.  Relative to these
generators, we have $\eta(x_1)=x_2$ and $\eta(x_2)=x_1$, where the letter
$x$ stands for $e$, $f$ or $h$.

Then the gradation is
defined by the following requirements:

$\deg (f_1+f_2)\otimes T^{2n}=8n-1$\hskip1.3cm $\deg (h_1+h_2)\otimes
T^{2n}=8n$, 

$\deg (e_1+e_2)\otimes T^{2n}=8n+1$ \hskip1.2cm 
$\deg [f_1,f_2]\otimes T^{2n+1}=8n+2$,

$\deg (f_1-f_2)\otimes T^{2n+1}=8n+3$
\hskip0.8cm $\deg (h_1-h_2)\otimes T^{2n+1}=8n+4$, 

$\deg (e_1-e_2)\otimes T^{2n+1}=8n+5$
\hskip0.8cm $\deg [e_1,e_2]\otimes T^{2n+1}=8n+6$.

\smallskip
\textit{The generalized Witt algebras $W_l$:}
Let $A=\C [z,z^{-1}]$ be the Laurent polynomial ring.
In what follows, its spectrum $\Spec A=\C^\ast$ is called
the {\it circle}. The {\it Witt algebra} is the Lie algebra
$W=\Der A$ of vector fields on the circle.  
It has basis $(L_n)_{n\in \Z}$, where
$L_n=z^{n+1}\dfrac{d}{dz}$,
and the Lie bracket is given by:

\centerline{$[L_n,L_m]=(m-n)\,L_{n+m}$.}

Let $\mathcal{A}$ be the {\it twisted Laurent polynomial ring}.
By definition $\mathcal{A}$ has basis $(z^s)_{s\in \C}$ and
the product is given by $z^s.z^t=z^{s+t}$. The operator
$\dfrac{d}{dz}$ extends to a
derivation of $\partial:\mathcal{A}\rightarrow \mathcal{A}$ defined
by $\partial z^s=sz^{s-1}$, for all $s\in \C$.
The Lie algebra $\mathcal{W}=\mathcal{A}.\partial$ will be
called the \textit{twisted Witt algebra}. It has basis $(L_s)_{s\in \C}$,
$L_s=z^{s+1}\dfrac{d}{dz}$
and the Lie bracket is given by:

\centerline{$[L_s,L_t]=(t-s)\,L_{s+t}$.}

For any injective additive map
$l:\Lambda\rightarrow \C$, denote by $W_l$ the subalgebra of
$\mathcal{W}$ with basis $(L_{s})$, where $s$ runs over 
the subgroup $l(\Lambda)$. Using the notation $L_\lambda$ for
$L_{l(\lambda)}$, the algebra $W_l$ has basis
$(L_\lambda)_{\lambda\in\Lambda}$ and the bracket is
given by:

\centerline{$[L_\lambda,L_\mu]=l(\mu-\lambda)\, L_{\lambda+\mu}$.}

For the natural gradation of $W_l$, relative to which each $L_\lambda$ is
homogeneous of degree $\lambda$,
$W_l$ is Lie algebra on the lattice $\Lambda$. Moreover
$W_l$ is simple (\cite{Y}, Theorem 3.7).

\smallskip
\textit{The algebra $W_\pi$:}
Recall that the ordinary pseudo-differential operators on the circle
are formal series $\sum_{n\in \Z}\,a_n\partial^n$, where
$a_n\in \C[z,z^{-1}]$ and $a_n=0$ for $n>>0$, and where
$\partial =\dfrac{d}{dz}$. The
definition of \textit{twisted pseudo-differential operators} is similar,
except that complex powers of $z$ and of $\partial$ are allowed
(for a rigorous definition, see Section \ref{sect_12}). For
$\lambda=(u,v)\in \C^2$, let $E_\lambda$ be the
symbol of  $z^{u+1}\partial^{v+1}$. 

Thus the algebra $\mathcal{P}$ of symbols of twisted
pseudo-differential operators  has basis
$E_\lambda$,  where $\lambda$ runs over $\C^2$, and 
the Poisson bracket of symbols is given by the following formula:

\centerline{$\{E_\lambda,E_\mu\}=<\lambda+\rho\vert\,\mu+\rho>\,E_{\lambda+\mu},$}

\noindent where $<\vert>$ denotes the usual symplectic form on
$\C^2$, and where $\rho=(1,1)$. 

Set $\Lambda=\Z^n$ and let $\pi:\Lambda\rightarrow \C^2$
be any injective additive map. By definition, $W_\pi$
is the Lie subalgebra with basis $(E_\lambda)_{\lambda\in\pi(\Lambda)}$.
The Lie algebra $W_\pi$ is obviouly $\Lambda$-graded, for the requirement
that each $E_{\pi(\lambda)}$ is homogeneous of degree $\lambda$.

Now consider the following condition:

 $(\mathcal{C})$\hskip1cm $\pi(\Lambda)\not\subset \C\rho$ and 
$2\rho\notin\pi(\Lambda)$. 

\noindent Under Condition $\mathcal{C}$, the Lie algebra
$W_\pi$ is simple, see Lemma 49. 
Moreover the generalized Witt
algebras constitute a sub-family of the family 
$(W_\pi)$: they correspond to the case where
$\pi(\Lambda)$ lies inside a complex line of $\C^2$.

\subsection{Some references for the Lie algebras:}\label{sect_03}
In the context of the
classification of infinite dimensional Lie algebras, the algebras 
$A^{(1)}_1$, $A^{(2)}_2$ (and all affine Lie algebras)
first appeared in the work of V.G. Kac \cite{Ka1}. At the same time,
they were also introduced by R. Moody in other context \cite{Mo}.

In the context of the classification of
infinite dimensional Lie algebras, the generalized Witt algebras
appeared in the work of R. Yu \cite{Y}. He considered 
simple graded Lie algebras $\mathcal{L}=\oplus_{\lambda}\,\mathcal{L}_\lambda$,
where each homogeneous component $\mathcal{L}_\lambda$ has dimension
one with basis $L_\lambda$.

In our terminology,
the Yu Theorem can be restated as follows:

\bigskip
\noindent{\textbf{Theorem:}} (R.Yu) \textit{Assume that the Lie bracket is given by: \\
\centerline{ $[L_\lambda,L_\mu]=(f(\mu)-f(\lambda))L_{\lambda+\mu}$,}
\noindent for some function $f:\Lambda\rightarrow \C$. Then
$\mathcal{L}$ is an imprimitive form of $A^{(1)}_1$ or an imprimitive form
of a generalized Witt algebra.}

\subsection{General structure of the paper:}\label{sect_04}
The paper is divided into three chapters. For $i=1$ to $3$, Chapter $i$
is devoted to the proof of
Theorem $i$. The Main Theorem is an
immediate consequence of Theorems 1-3, which are stated below.

\subsection{About Theorem 1}\label{sect_05}

Let $\mathcal{G}$ be the class of all
simple graded Lie algebras 
$\mathcal{L}= \oplus_{\lambda}\,\mathcal{L}_\lambda$ where each
homogeneous component $\mathcal{L}_\lambda$ has dimension one.
For  $\mathcal{L}\in \mathcal{G}$, let
$L_\lambda$ be a basis of
$\mathcal{L}_\lambda$, for each $\lambda\in \Lambda$.

We have $[L_0,L_\lambda]=l(\lambda) L_\lambda$ for some function
$l:\Lambda\rightarrow \C$. 
The first step of the
proof is the following alternative:

\bigskip
\noindent{\textbf{Theorem $\mathbf{1}$}}\textit{
\begin{enumerate}
\item[(i)] \textit{The function $l:\Lambda\rightarrow \C$ is additive, or }
\item[(ii)] \textit{there exists $a\in \C$ such
that $l(\Lambda)=[-N,N].a$, for some positive integer $N$.}
\end{enumerate}}

Here $[-N,N]$ denotes the set of integers between $-N$ and $N$.
In the first case, $\mathcal{L}$ is called \textit{non-integrable} and in the
second case $\mathcal{L}$ is called \textit{integrable of type $N$}. Moreover
the only possible value for the type is $1$ or $2$.

Thus Theorem 1 separates the proof into two cases,
and for the integrable case there are some specificities to type 1 
and to type 2. Some statements, like the crucial Main Lemma,  are common for 
all the cases, but they do not admit a unified proof.

In order to state this lemma, denote by 
$\Sigma$ the set of all $\lambda\in\Lambda$ such that
the Lie subalgebra $\C L_\lambda\oplus \C L_0
\oplus \C L_{-\lambda}$ is isomorphic to 
$\mathfrak{sl}(2)$. Obviously, $\lambda$ belongs to
$\Sigma$ iff 

\centerline{$l(\lambda)\neq 0$ and $[L_\lambda,L_{-\lambda}]\neq 0$.}
\noindent
The Main Lemma is the following statement:

\bigskip
\textbf{Main Lemma} \textit{An element $\lambda\in\Lambda$ belongs
to $\Sigma$ whenever  $l(\lambda)\neq 0$.}

\bigskip
For the proof of the Main Lemma, see Lemma \ref{lemma_25} for the case of
integrable Lie algebras of type $2$, Lemma \ref{lemma_33} for
the case of integrable Lie algebras of type $1$ and Lemma \ref{lemma_62}
for the non-integrable case.

\subsection{Statement of Theorem 2}\label{sect_06}

\bigskip
\noindent{\textbf{Theorem $\mathbf{2}$}} \textit{Any primitive integrable 
 Lie algebra in the class $\mathcal{G}$ is isomorphic
to $A^{(1)}_1$ or $A^{(2)}_2$.}

The Main Lemma provides a lot of subalgebras isomorphic to
$\mathfrak{sl}(2)$. Therefore the  proof of Theorem 2 is based on
basic notions, among them $\mathfrak{sl}(2)$-theory,
Jordan algebra, Weyl group, centroid.

\subsection{About the proof of Theorem 3}\label{sect_07}
The main difficulty of the paper is the following statement: \\

\noindent{\textbf{Theorem $\mathbf{3}$}}
\textit{Any primitive non-integrable 
 Lie algebra in class $\mathcal{G}$ is isomorphic
to $W_\pi$ for some injective additive map
$\pi:\Lambda\rightarrow \C^2$ satisfying
condition $\mathcal{C}$.} \\

\noindent The first step  is the proof that for any $\alpha\in\Sigma$, the Lie
algebra  \\
\centerline{$\mathcal{L}(\alpha)=\oplus_{n\in \Z}\,\mathcal{L}_{n\alpha}$}

\noindent is isomorphic to the Witt algebra $W$. Roughly speaking, it
means that any subalgebra $\mathcal{L}_{-\alpha}\oplus\mathcal{L}_0\oplus\mathcal{L}_{\alpha}$ isomorphic to
$\mathfrak{sl}(2)$ "extends" to a Witt algebra, see Lemma \ref{lemma_46}.

For any $\Z \alpha$-coset
$\beta$, set $\mathcal{M}(\beta)=
\oplus_{n\in \Z}\,\mathcal{L}(\beta+n\alpha)$. Thanks to
the Kap\-lan\-sky-Santharoubane Theorem \cite{KS}, the possible 
 $W$-module structures on  $\mathcal{M}(\beta)$ are explicitely classified.

Moreover the Lie bracket in $\mathcal{L}$ provides 
some $W$-equivariant bilinear maps 
$B_{\beta,\gamma}:\mathcal{M}(\beta)\times \mathcal{M}(\gamma)\rightarrow 
\mathcal{M}(\beta+\gamma)$. It turns out that all 
$W$-equivariant bilinear maps $b:L\times M\rightarrow N$, where
$L$, $M$ and $N$ are in the Kaplansky Santharoubane list, have been
recently classified in \cite{IM}. The whole list of \cite{IM} is  
intricate, because it contains many special cases. 
However it allows  a detailed analysis of the Lie bracket of $\mathcal{L}$.

The Main Lemma also holds in this setting, but its proof has little
in common with the previous two cases.

\subsection{Some references for the tools used in the proof}\label{sect_08}
There are two types of tools used in the proof, ``local analysis'' and
``global analysis''. 

A $\Lambda$-graded Lie algebras $\mathcal{L}$ contains a lot
of $\Z$-graded subalgebras $L=\oplus_{n\in \Z}\,L_n$.
Thus it contains some local Lie algebras 
$L_1\oplus L_0\oplus L_{-1}$, which determines some sections of
$\mathcal{L}$. The notion of a local Lie algebra is due to
V.G. Kac \cite{Ka1}. Here, the novelty is the use of infinite dimensional 
local Lie algebras.

Global analysis means to investigate the decomposition of
 $\mathcal{L}$ under a Lie subalgebra $\mathfrak{s}$ and 
to analyse the Lie bracket as an $\mathfrak{s}$-equivariant bilinear map
$\mathcal{L}\times\mathcal{L}\rightarrow\mathcal{L}$.
The prototype of global methods is Koecher-Kantor-Tits
construction, which occurs when a subalgebra $\mathfrak{sl}(2)$ acts
over a Lie algebra
$L$ in a such way that the non-zero isotypical components are
trivial or adjoint: 
the Lie bracket of $L$ is encoded by a Jordan algebra
(up to some technical conditions). Indeed this tool, introduced by Tits
 \cite{T} (see also \cite{Kan}, \cite{Ko}), 
is used in chapter \ref{chapter_II}, as the main ingredient to study 
type $1$ integrable Lie algebras.

The global analysis in the non-integrable case (Section \ref{sect_17}, \ref{sect_18}
and \ref{sect_19}) is quite similar, except that the subalgebra is
the  Witt algebra $W$. In this case, it is proved in Section 18 that 
the Lie bracket is encoded by a certain map 
$c:\Lambda\times\Lambda\rightarrow \C^\ast$. Indeed $c$ satisfies a two-cocyle identity, except 
that it is valid only on a subset of $\Lambda^3$. Therefore,
$c$ is informally called a  ``quasi-two-cocycle".

\subsection{Ground field} 
For simplicity, it has been fixed that the
ground  field is $\C$. However, the final result is valid on
any field $K$ of characteristic zero, but the proof requires the  
following modifications. 

In Section \ref{sect_3},  any norm of the $\Q$-vector space $K$ 
should be used instead of the
complex absolute value. The proof of Lemma \ref{lemma_80} uses that 
$\C$ is uncountable, but it  can be improved to include the case of a
countable field.
Also, the fact that $\C$ is algebraically closed is not essential,
because $\mathcal{L}_0$ has dimension 1 and it can be check that
every structure used in the proof is split.

\setcounter{tocdepth}{1}
\tableofcontents

\part{The alternative integrable/non-integrable}\label{chapter_I}
\section{Generalities, notations, conventions}\label{sect_1}
This section contains the main definitions used throughout the paper.
It also contains a few lemmas which are  repeatedly used.

\subsection{}\label{sect_1.1}
Given an integers $a$, 
set $\Z_{\geq a}=\{n\in \Z\vert\, n\geq a\}$. 
Similarly define 
$\Z_{>a}$, $\Z_{\leq a}$, and $\Z_{<a}$.
Given two integers $a,\, b$ with $a\leq b$, it is 
convenient to set $[a,b]={\Z}_{\geq a}\cap{\Z}_{\leq b}$.

\subsection{}
Let $M$ be an abelian group.
A \textit{weakly $M$-graded vector space}
is a vector space $E$ endowed with a decomposition
$E=\oplus_{m\in M}\,E_{m}$. The subspaces 
$E_m$ are called the \textit{homogeneous components} of $E$.
Given two weakly $M$-graded vector space,
a map $\psi:E\rightarrow F$ is called 
\textit{homogeneous of degree l} if
$\psi(E_m)\subset F_{m+l}$ for all $m\in M$. 

When all homogeneous components are finite dimensional,
$E$ is called a \textit{$M$-graded vector space}.
When the grading group $M$ is tacitely 
determined, $E$ will be called a \textit{graded vector space}.
A subspace $F\subset E$ is \textit{graded} if
$F=\oplus_{m\in M} F_m$, where $F_m=F\cap E_m$.
The set  $\Supp E:=\{m\in M\vert\,E_m\neq 0\}$ is called the 
\textit{support} of $M$.

Set $E'=\oplus_{m\in M}\,(E_m)^\ast$. The space $E'$, which is a subspace of
the ordinary dual $E^\ast$ of $E$, is called the \textit{graded dual}
of $E$. The graded dual $E'$ admits a $M$-gradation defined by
$E'_m=(E_{-m})^\ast$. As it is defined, the duality pairing is homogeneous of
degree zero.

\subsection{}
Let $M$ be an abelian group. A \textit{$M$-graded algebra} is 
$M$-graded vector space $A=\oplus_{m\in M}\,A_m$ with an algebra structure
such that $A_l.A_m\subset A_{l+m}$ for all $l,\,m\in M$, where $.$ denotes
the algebra product. When the notion of
$A$-module is defined, a \textit{graded module} is
a $A$-module $E=\oplus E_m$ such that $A_l.E_m\subset E_{m+l}$.
The notion of \textit{weakly graded algebras} and \textit{weakly graded modules}
are similarly defined.

\subsection{}
An algebra $A$ is called \textit{simple} if
$A.A\neq 0$ and $A$ contains no non-trivial proper two-sided ideal.
A $M$-graded algebra $A$ is called \textit{simple graded}
if $A.A\neq 0$ and $A$ contains no graded
non-trivial proper two-sided ideals. Moreover the
graded algebra $A$ is called \textit{graded simple} if
$A$ is simple (as a non-graded algebra).

\subsection{}
Let $M$ be an abelian group.
Its \textit{group algebra}  is the algebra $C[M]$
with basis $(e^\lambda)_{\lambda\in M}$ and the product is defined by
$e^\lambda\,e^\mu=e^{\lambda+\mu}$. This algebra has a natural
$M$ gradation, for which $e^\lambda$ is homogeneous of degree $\lambda$.
It is clear that  $C[M]$ is a simple graded algebra, although this
algebra is not simple.

\subsection{}\label{sect_1.6}
Let $Q$ and $M$ be abelian groups, let
$\pi: Q\rightarrow M$ be an additive map, and let
$A$ be a $M$-graded algebra. By definition
$\pi^\ast A$ is the subalgebra of $A\otimes \C[Q]$ defined by:

\centerline{
$\pi^\ast A:= \oplus_{\lambda\in Q}\, A_{\pi(\lambda)}\otimes e^\lambda$.}

\noindent The algebra $\pi^\ast A$ is  $Q$-graded and there is
a natural algebra morphism 
$\psi:\pi^\ast A\rightarrow A,\,a\otimes e^\lambda\mapsto a$. Indeed for each
$\lambda\in Q$, $\psi$ induces an isomorphism
$(\pi^\ast A)_\lambda\simeq A_{\pi(\lambda)}$. The following obvious
lemma characterizes $\pi^\ast A$.

\begin{lemma}\label{lemma_1}
Let $B$ be a $Q$-graded algebra and let 
$\phi:B\rightarrow A$ be an algebra morphism. Assume
that $\phi(B_\lambda)\subset A_{\pi(\lambda)}$ 
and that the induced map $B_\lambda\rightarrow  A_{\pi(\lambda)}$
is an isomorphism for each $\lambda\in Q$. Then 
the $Q$-graded algebra $B$ is isomorphic to  $\pi^\ast A$.
\end{lemma}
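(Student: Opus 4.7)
The plan is to construct the required isomorphism $\Psi : B \to \pi^\ast A$ explicitly from the data of $\phi$, and then verify that the grading, bijectivity, and multiplicativity hypotheses are each trivially inherited from the corresponding properties of $\phi$. Since $B$ is $Q$-graded, every element writes uniquely as a finite sum $b = \sum_\lambda b_\lambda$ with $b_\lambda \in B_\lambda$, so it suffices to define $\Psi$ on each homogeneous component and extend by linearity.

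For $\lambda \in Q$ and $b \in B_\lambda$, I set $\Psi(b) = \phi(b)\otimes e^\lambda$. The hypothesis $\phi(B_\lambda)\subset A_{\pi(\lambda)}$ guarantees $\Psi(b) \in A_{\pi(\lambda)}\otimes e^\lambda = (\pi^\ast A)_\lambda$, so $\Psi$ is homogeneous of degree zero. The fact that the induced map $B_\lambda \to A_{\pi(\lambda)}$ is an isomorphism, combined with the tautological identification $A_{\pi(\lambda)} \simeq A_{\pi(\lambda)} \otimes e^\lambda = (\pi^\ast A)_\lambda$ from Section \ref{sect_1.6}, implies that $\Psi$ restricts to an isomorphism on each homogeneous component, hence is a bijection overall.

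It remains to check that $\Psi$ respects the algebra product. For $b \in B_\lambda$ and $b' \in B_\mu$, the product $bb'$ lies in $B_{\lambda+\mu}$ by the $Q$-graded algebra structure on $B$, so
\[
\Psi(bb') = \phi(bb')\otimes e^{\lambda+\mu} = \phi(b)\phi(b')\otimes e^{\lambda+\mu},
\]
using that $\phi$ is an algebra morphism. On the other hand, the product in $A\otimes \C[M]$ satisfies
\[
(\phi(b)\otimes e^\lambda)(\phi(b')\otimes e^\mu) = \phi(b)\phi(b')\otimes e^{\lambda+\mu},
\]
so $\Psi(b)\Psi(b') = \Psi(bb')$. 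Extending by bilinearity gives the algebra morphism property on all of $B$.

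There is essentially no obstacle here: the content of the lemma is purely bookkeeping, the construction of $\Psi$ being forced by the requirement that it agree with $\phi$ on homogeneous components. The only thing to keep in mind is that the gradation on $\pi^\ast A$ is by $Q$ (the source of $\pi$) rather than by $M$, which is exactly why tensoring with $e^\lambda$ is necessary in the definition of $\Psi$.
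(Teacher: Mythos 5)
Your proof is correct; the paper itself offers no argument (it declares the lemma obvious), and your map $\Psi(b)=\phi(b)\otimes e^\lambda$ on $B_\lambda$ is exactly the forced construction, with the grading, bijectivity, and multiplicativity checks all done properly. One trivial slip: the ambient algebra is $A\otimes \C[Q]$, not $A\otimes\C[M]$, since $\pi^\ast A$ is graded by the source group $Q$ — which is in fact the very point you make in your closing remark.
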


\subsection{}
From now on, $\Lambda$ denotes a lattice, i.e. a
free abelian group of finite rank.

\subsection{}
Let $M$ be an abelian group.
Denote by $\mathcal{G}(M)$ the class of all simple $M$-graded Lie
algebras $\mathcal{L}=\oplus_{\lambda\in M} \mathcal{L}_{\lambda}$ such
that  $\dim \mathcal{L}_{\lambda}=1$ for all $\lambda\in M$.
 
Denote by $\mathcal{G}'(M)$ the class of all simple $M$-graded Lie
algebras $\mathcal{L}=\oplus_{\lambda\in M} \mathcal{L}_{\lambda}$ such
that  
\begin{enumerate}
\item[(i)] $\dim \mathcal{L}_{\lambda}\leq 1$ for all $\lambda\in\ M$,
\item[(ii)]  $\dim \mathcal{L}_{0}= 1$, and
\item[(iii)] $\Supp \mathcal{L}$ generates $M$.
\end{enumerate}
When it is tacitely assumed that $M=\Lambda$, these classes will
be denoted by $\mathcal{G}$ and $\mathcal{G}'$.

\subsection{}
Let $\mathcal{L}\in \mathcal{G}'$. For any $\lambda\in\Supp\mathcal{L}$,
denotes by $L_\lambda$  any non-zero vector of
$\mathcal{L}_\lambda$. Also denote by
$L^\ast_\lambda$ the element of $\mathcal{L}'$ defined by

\centerline{$<L^\ast_\lambda\vert\,L_\mu>=\delta_{\lambda,\mu}$,}

\noindent where $\delta_{\lambda,\mu}$ is Kronecker's symbol.
Note that $L^\ast_\lambda$ is a homogeneous  element of $\mathcal{L}'$
of degree $-\lambda$.

For $\lambda\neq 0$, the exact normalization
of $L_\lambda$ does not matter. 
However, we will fix once for all the
vector $L_0$. This allows to define the function $l:\Supp \mathcal{L}\rightarrow 
\C$ by the requirement

\centerline{$[L_0,L_{\lambda}]=l(\lambda)L_\lambda$.}

\noindent
The following Lemma is obvious.
\begin{lemma}\label{lemma_2} Let $\lambda,\mu\in\Supp\mathcal{L}$.
If $[L_\lambda,L_\mu]\neq 0$, then 

\centerline{$l(\lambda+\mu)=l(\lambda)+l(\mu)$.}
\end{lemma}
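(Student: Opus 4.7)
The plan is to derive the identity from the Jacobi identity applied to the triple $(L_0,L_\lambda,L_\mu)$, using only the definition of $l$ and the fact that $\mathcal{L}_{\lambda+\mu}$ is one-dimensional.

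First I would expand $[L_0,[L_\lambda,L_\mu]]$ in two ways. On one hand, the Jacobi identity gives
\[
[L_0,[L_\lambda,L_\mu]] = [[L_0,L_\lambda],L_\mu] + [L_\lambda,[L_0,L_\mu]] = l(\lambda)[L_\lambda,L_\mu] + l(\mu)[L_\lambda,L_\mu],
\]
using the defining relation $[L_0,L_\nu]=l(\nu)L_\nu$ for $\nu=\lambda,\mu$.

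On the other hand, since $[L_\lambda,L_\mu]\in\mathcal{L}_{\lambda+\mu}$ and this homogeneous component is one-dimensional (because $\mathcal{L}\in\mathcal{G}'$), the nonzero element $[L_\lambda,L_\mu]$ is a scalar multiple of $L_{\lambda+\mu}$; in particular $\lambda+\mu$ lies in $\Supp\mathcal{L}$ so $l(\lambda+\mu)$ is defined, and
\[
[L_0,[L_\lambda,L_\mu]] = l(\lambda+\mu)\,[L_\lambda,L_\mu].
\]

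Comparing the two expressions and cancelling the nonzero vector $[L_\lambda,L_\mu]$ yields $l(\lambda+\mu)=l(\lambda)+l(\mu)$, as required. I do not foresee any real obstacle — this is a direct consequence of Jacobi together with the one-dimensionality of the homogeneous components, which is exactly why the statement is flagged as obvious.
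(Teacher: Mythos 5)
Your proof is correct and is precisely the standard argument the paper has in mind when it declares the lemma ``obvious'' without writing a proof: apply the Jacobi identity with $L_0$ and use that $[L_\lambda,L_\mu]$ is a nonzero vector in the one-dimensional space $\mathcal{L}_{\lambda+\mu}$, hence an eigenvector of $\ad L_0$ with eigenvalue $l(\lambda+\mu)$. Nothing to add.
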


\subsection{}
Let $\Pi$ be the set of all $\alpha\in\Lambda$ such that
$\pm\alpha\in\Supp\mathcal{L}$ and $[L_{\alpha},L_{-\alpha}]\neq 0$. Let
$\Sigma$ be the set of all $\alpha\in\Pi$ such that
$l(-\alpha)\neq 0$.
 
For  $\alpha\in\Pi$, set
$\mathfrak{s}(\alpha)=\C L_{-\alpha}
\oplus \C L_0\oplus \C L_\alpha$. It is
clear that $\mathfrak{s}(\alpha)$ is a Lie subalgebra.
Since $l(-\alpha)=-l(\alpha)$, the Lie algebra $\mathfrak{s}(\alpha)$ is
isomorphic with $\mathfrak{sl}(2)$ if $\alpha\in\Sigma$  and it is 
the Heisenberg algebra of dimension three otherwise.

\subsection{}
The following lemma will be used repeatedly.

\begin{lemma}\label{lemma_3} Let
$L$ be a Lie algebra and let
$A$, $B$ be two subspaces such that
$L=A+B$ and $[A,B]\subset B$. Then $B+[B,B]$ is an ideal of $L$.
\end{lemma}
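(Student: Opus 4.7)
The plan is to verify directly that $I := B + [B,B]$ satisfies $[L, I] \subset I$. Since $L = A + B$ and $I$ is spanned by $B$ and $[B,B]$, bilinearity reduces this to checking that the four subspaces $[A, B]$, $[B, B]$, $[A, [B,B]]$, and $[B, [B,B]]$ all lie in $I$.

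The first two inclusions are immediate: $[A, B] \subset B \subset I$ by hypothesis, and $[B, B] \subset I$ by construction. The third is a one-line Jacobi computation: for $a \in A$ and $b_1, b_2 \in B$ one has
\[ [a, [b_1, b_2]] = [[a, b_1], b_2] + [b_1, [a, b_2]], \]
and since $[A, B] \subset B$, both summands on the right lie in $[B, B] \subset I$.

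The only remaining step, and the only one requiring an idea, is $[B, [B, B]] \subset I$. My plan is to exploit the ambient decomposition $L = A + B$. Given $b_1, b_2 \in B$, the bracket $[b_1, b_2]$ is an element of $L$, so it may be written as $a' + b'$ with $a' \in A$ and $b' \in B$. Then for any $b \in B$,
\[ [b, [b_1, b_2]] = [b, a'] + [b, b'] = -[a', b] + [b, b'], \]
which lies in $B + [B, B] = I$, using once more that $[A, B] \subset B$.

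I do not anticipate a real obstacle. The slightly non-obvious ingredient is recognising that to control $[B, [B,B]]$ one should not try to compute purely inside $B$ (a direct Jacobi expansion goes in circles, since $[[B,B], B] = [B, [B,B]]$) but should instead use that $[B, B]$ already sits inside $L = A + B$ and decompose accordingly.
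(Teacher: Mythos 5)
Your proof is correct and complete; the key step, handling $[B,[B,B]]$ by writing $[b_1,b_2]\in L=A+B$ as $a'+b'$ and using $[A,B]\subset B$, is exactly the right idea. The paper itself gives no argument here (it only cites \cite{M1}, Lemma 6), and your direct verification is the standard elementary proof one would expect there.
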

The proof is easy, see \cite{M1}, Lemma 6.

\subsection{}
Let $X,Y$ be subsets of $\Lambda$. Say that 
$X$ is \textit{dominated} by $Y$, and denote it by $X\leq Y$,  iff there
exists a finite subset $F$ of $\Lambda$ with $X\subset F+Y$. Say that
$X$ is \textit{equivalent} to $Y$, and  denote it by $X\equiv Y$, iff
$X\leq Y$ and $Y\leq X$.

In what follows, a \textit{simple graded module} means a module without
non-trivial graded submodule. The following abstract lemma will be
useful:

\begin{lemma}\label{lemma_4} 
Let $L$ be a $\Lambda$-graded Lie algebra, and let
$M$ be a simple $\Lambda$-graded module. Then for any non-zero
homogeneous elements $m_1, m_2$ of $M$, we have

\centerline{$\Supp L.m_1\equiv\Supp L.m_2$.}
\end{lemma}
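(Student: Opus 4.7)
The plan is to exploit the graded simplicity of $M$ through a carefully chosen submodule. I define the graded subspace
\[
S = \bigoplus_{\lambda \in \Lambda} S_\lambda, \qquad S_\lambda = \{\, m \in M_\lambda : \Supp L.m \leq \Supp L.m_1 \,\},
\]
and aim to show that $S$ is an $L$-submodule containing $m_1$. Since $M$ is simple graded and $S$ is then a nonzero graded submodule, this forces $S = M$, so in particular $m_2 \in S$, giving $\Supp L.m_2 \leq \Supp L.m_1$. Exchanging the roles of $m_1$ and $m_2$ yields the reverse inequality, hence the equivalence.

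The engine of the proof is the derivation identity $y.(x.m) = x.(y.m) + [y,x].m$ for $x, y \in L$ and $m \in M$. Letting $y$ range over $L$, this gives $L.(x.m) \subseteq x.(L.m) + L.m$, and for homogeneous $x \in L_\mu$, passing to supports,
\[
\Supp L.(x.m) \subseteq \big(\mu + \Supp L.m\big) \cup \Supp L.m.
\]
First I would check that each $S_\lambda$ is a vector subspace: this is immediate from $L.(am + bm') \subseteq L.m + L.m'$, because the union of two finite witness sets still dominates the sum. Next I would verify stability under $L$: if $m \in S_\lambda$ with $\Supp L.m \subseteq F + \Supp L.m_1$ for some finite $F \subset \Lambda$, and if $x \in L_\mu$, the displayed inclusion yields $\Supp L.(x.m) \subseteq \big((\mu + F) \cup F\big) + \Supp L.m_1$, again a finite shift, so $x.m \in S$. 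Since $m_1 \in S$ trivially (take $F = \{0\}$), $S$ is nonzero, and graded simplicity closes the argument.

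I do not foresee a real obstacle. The only conceptual point is to recognize that the relation $\leq$ on subsets of $\Lambda$ is engineered to absorb precisely the finite shifts produced by the Leibniz-type identity above, and that the subspace $S$ is the natural vehicle for transferring this stability from $m_1$ to an arbitrary homogeneous element of $M$ via simplicity.
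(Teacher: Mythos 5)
Your proof is correct and follows essentially the same route as the paper: both rest on the inclusion $L.(x.m)\subset x.(L.m)+L.m$, which shows that the set of elements whose support is dominated by $\Supp L.m_1$ forms a graded submodule, and then invoke graded simplicity and symmetry. The only cosmetic difference is that you build the submodule degree by degree as $S=\oplus_\lambda S_\lambda$, whereas the paper defines $\Omega(m)$ for arbitrary $m$ as the union over homogeneous components and takes $X=\{m:\Omega(m)\leq\Omega(m_1)\}$; these are the same subspace.
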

\begin{proof}
For any homogeneous element $m\in M$,
set $\Omega(m)=\Supp L.m$. More generally any
$m\in M$ can be decomposed as
$m=\sum\,m_\lambda$ where $m_\lambda\in M_\lambda$. In general set
$\Omega(m)=\cup\Omega(m_\lambda)$. 

Let $m\in M$, $x\in L$  be  homogeneous elements.
We have $L.x.m\subset L.m+x.L.m$, and therefore
$\Omega(x.m)\subset \Omega(m) \cup \deg x +\Omega(m)$. 
It follows that $\Omega(x.m)\leq \Omega(m)$ for any (not necessarily
homogeneous) elements $m\in M$ and $x\in L$. So the subspace $X$ of all
$m$ such that
$\Omega(m)\leq\Omega(m_1)$ is a graded submodule. Since
$M$ is simple as a graded $L$-module, we have $X=M$ and therefore
$\Omega(m_2)\leq\Omega(m_1)$. Similarly we have
$\Omega(m_1)\leq\Omega(m_2)$. Thus we get

\centerline{$\Supp L.m_1\equiv\Supp L.m_2$.}
\end{proof}

\section{Generalities on centroids of graded algebras}\label{sect_2}
Let $A$ be an algebra of  any type
(Jordan, Lie, associative...). 
Its \textit{centroid}, denoted by  $C(A)$, is the
algebra of all maps $\psi:A\rightarrow A$ which commute
with  left and right multiplications, namely

\centerline{$\psi(ab)=\psi(a)b$ and  $\psi(ab)=a\psi(b)$,}

\noindent for any $a,\,b\in A$.

\begin{lemma}\label{lemma_5} Assume that $A=A.A$.
Then the algebra  $C(A)$ is commutative. 
\end{lemma}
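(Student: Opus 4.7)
The plan is to show $\psi\phi = \phi\psi$ for arbitrary $\psi,\phi\in C(A)$ by checking equality first on products $ab$ and then extending via the hypothesis $A=A.A$. The centroid definition gives two ways to push $\psi$ through a product: $\psi(ab)=\psi(a)b$ and $\psi(ab)=a\psi(b)$, and similarly for $\phi$. The idea is to use one identity for $\psi$ and the other for $\phi$ so that the composition decouples symmetrically across the two factors.

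Concretely, I would compute
\[
\psi\phi(ab)=\psi(a\,\phi(b))=\psi(a)\,\phi(b),
\]
where the first equality uses $\phi(ab)=a\phi(b)$ and the second uses $\psi(a\,\phi(b))=\psi(a)\,\phi(b)$ (the right-multiplication property of $\psi$, with right factor $\phi(b)$). An analogous calculation, swapping the roles of $\psi$ and $\phi$, yields
\[
\phi\psi(ab)=\phi(a\,\psi(b))=\phi(a)\,\psi(b).
\]
To conclude that these two expressions agree, I apply a second pass of the centroid relations: $\psi(a)\phi(b)=\phi(\psi(a)\,b)=\phi\psi(a\cdot b\text{ deconstructed})$, or more cleanly, note $\psi(a)\phi(b)=\phi(\psi(a))b$ does not help directly — instead, the cleanest route is the single-pass computation $\psi\phi(ab)=\psi(a)\phi(b)$ versus $\phi\psi(ab)=\phi(a)\psi(b)$, and then observe that both equal $\psi(a\phi(b))=\phi(a\psi(b))$ because, using the right-commuting property, $\psi(a)\phi(b)=\psi(a\phi(b))$ and $\phi(a)\psi(b)=\phi(a\psi(b))$, and the two middle terms coincide by $\psi\circ (a\cdot -)=(a\cdot -)\circ\psi$ applied to $\phi(b)$ and symmetrically.

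The hypothesis $A=A.A$ enters at the final step: the equality $[\psi,\phi](ab)=0$ for all $a,b\in A$, together with linearity, forces $[\psi,\phi]$ to vanish on the linear span of products, which by assumption is all of $A$. Hence $\psi\phi=\phi\psi$, so $C(A)$ is commutative.

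There is essentially no obstacle here — the argument is a short direct manipulation of the two defining identities of the centroid. The only point worth emphasising is the role of the hypothesis: without $A=A.A$, the centroid acts freely on the ``noise'' outside $A.A$ and the above computation says nothing there, so commutativity can genuinely fail. Everything else is formal.
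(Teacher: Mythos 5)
Your overall strategy is the paper's: show $[\phi,\psi]$ vanishes on every product $ab$ and then invoke $A=A.A$. The first half is fine: $\psi\phi(ab)=\psi(a\phi(b))=\psi(a)\phi(b)$ and $\phi\psi(ab)=\phi(a)\psi(b)$ are both correct. The problem is the step where you equate $\psi(a)\phi(b)$ with $\phi(a)\psi(b)$. The route you finally commit to rewrites $\psi(a)\phi(b)=\psi(a\phi(b))=a\,\psi(\phi(b))$ and $\phi(a)\psi(b)=\phi(a\psi(b))=a\,\phi(\psi(b))$, and then asserts that "the two middle terms coincide." But $a\,\psi\phi(b)=a\,\phi\psi(b)$ is exactly the commutativity you are trying to prove; as written, that step is circular and the proof does not close.

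The fix is the computation you started and then dismissed as "does not help directly": it does help. From $\psi(a)\phi(b)=\phi\bigl(\psi(a)\,b\bigr)=\phi\bigl(\psi(ab)\bigr)=\phi\circ\psi(ab)$ you get that the \emph{single} operator $\phi\circ\psi$ applied to $ab$ equals $\psi(a)\phi(b)$; applying the other defining identity first gives $\phi\circ\psi(ab)=\phi(a\psi(b))=\phi(a)\psi(b)$. Hence $\psi(a)\phi(b)=\phi(a)\psi(b)$, both being the same quantity $\phi\circ\psi(ab)$, and by symmetry this common value is also $\psi\circ\phi(ab)$. This is precisely the paper's argument: evaluate one composition on a product in two ways, rather than evaluating the two compositions separately and then trying to reconcile them. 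Your use of $A=A.A$ at the end is correct and matches the paper.
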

\begin{proof}
Let $\phi,\psi\in C(A)$ and
$a,b\in A$. It follows from the definition that:

\centerline{$\phi\circ\psi(ab)=\phi(a)\psi(b)$ and
$\phi\circ\psi(ab)=\psi(a)\phi(b)$. }

\noindent Hence
$[\phi,\psi]$ vanishes on $A.A$ and
therefore $C(A)$ is commutative whenever
$A=A.A$. 
\end{proof}
From now on, let  $Q$ be a countable abelian group.

\begin{lemma}\label{lemma_6}  
Let $A$ be a simple $Q$-graded 
algebra. Then:
\begin{enumerate}
\item[(i)] Any non-zero homogeneous element $\psi\in C(A)$  is invertible.
\item[(ii)] The algebra $C(A)$ is a $Q$-graded algebra.
\item[(iii)] $M:=\Supp C(A)$ is a subgroup of $Q$.
\item[(iv)] The algebra $C(A)$ is  isomorphic
with the group algebra $\C[M]$.
\end{enumerate}
\end{lemma}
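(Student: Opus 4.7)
My approach is to establish the four items in order, each using its predecessors. For (i), if $\psi \in C(A)$ is non-zero and homogeneous of degree $m$, the centroid identities $\psi(ab)=\psi(a)b=a\psi(b)$ make $\ker\psi$ and $\Im\psi$ two-sided ideals of $A$, and the homogeneity of $\psi$ makes them graded. By graded simplicity, $\ker\psi = 0$ and $\Im\psi = A$, so $\psi$ is bijective; its inverse is again in $C(A)$ and is homogeneous of degree $-m$.

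For (ii), given any $\psi \in C(A)$, define $\psi_m : A \to A$ by linearly extending the rule that, for homogeneous $a \in A_\lambda$, $\psi_m(a)$ is the $A_{\lambda+m}$-component of $\psi(a)$. A direct computation checking the identities $\psi_m(ab)=\psi_m(a)b=a\psi_m(b)$ for $a,b$ homogeneous shows $\psi_m \in C(A)$. The crucial finiteness observation uses (i): any non-zero $\psi_m$ is a bijection, so $\psi_m(a)\neq 0$ for every non-zero homogeneous $a$; if $\{m:\psi_m\neq 0\}$ were infinite, then $\psi(a)=\sum_m \psi_m(a)$ would have infinite support in $A=\bigoplus_m A_m$, a contradiction. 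Thus $\psi=\sum_m \psi_m$ is a finite sum, giving the $Q$-grading on $C(A)$. Item (iii) is then immediate: $M=\Supp C(A)$ contains $0$ (from $\id_A$), is closed under negation (via the inverses furnished by (i)), and closed under addition (since a product of two invertible homogeneous centroid elements is non-zero and homogeneous of summed degree).

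For (iv), the key step is $C(A)_0 = \C\cdot\id_A$. Since $A\cdot A$ is a non-zero graded ideal of $A$ it equals $A$, so Lemma~\ref{lemma_5} makes $C(A)$ commutative, and (i) makes $C(A)_0$ a commutative division algebra over $\C$. The convention that homogeneous components of a graded vector space are finite-dimensional, together with the countability of $Q$, yields $\dim_\C A \leq \aleph_0$; evaluation $\psi\mapsto\psi(a)$ at a fixed non-zero homogeneous $a$ embeds $C(A)_0$ into $A$ (injective because any $\psi$ in the kernel would have graded kernel equal to $A$), so $C(A)_0$ is countable-dimensional. A commutative division algebra of countable dimension over the algebraically closed field $\C$ is $\C$ itself. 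Consequently each $C(A)_m$ with $m\in M$ is one-dimensional, since any two non-zero elements differ by a ratio in $C(A)_0=\C$. Picking non-zero $\psi_m\in C(A)_m$ for each $m\in M$, the structure constants $\psi_m\psi_{m'}=c(m,m')\psi_{m+m'}$ form a symmetric $2$-cocycle $c:M\times M\to\C^\ast$, and since $\C^\ast$ is divisible, $\mathrm{Ext}^1_\Z(M,\C^\ast)=0$, so $c$ is a coboundary. Rescaling the $\psi_m$ accordingly converts multiplication into the group law of $M$, yielding $C(A)\cong\C[M]$. The main obstacle I expect is the finiteness argument in (ii), and the careful use of countable dimension in (iv) to exclude larger division algebra structures on $C(A)_0$.
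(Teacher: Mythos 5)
Your proofs of (i)--(iii) follow the paper's route exactly: kernel and image of a homogeneous $\psi$ are graded two-sided ideals, the homogeneous components $\psi_m$ lie in $C(A)$ and injectivity of the non-zero ones forces all but finitely many to vanish, and invertibility makes $M$ a subgroup. (One small bookkeeping point: in this paper a ``$Q$-graded algebra'' has \emph{finite-dimensional} homogeneous components, and the paper secures $\dim C(A)_\mu<\infty$ already in (ii) via the injective evaluation map $\psi\mapsto\psi(a)$ at a fixed homogeneous $a\neq 0$; in your write-up this only emerges in (iv), which is harmless but leaves item (ii) formally incomplete on its own.) In (iv) you genuinely diverge. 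The paper takes a maximal ideal of the commutative, countable-dimensional algebra $C(A)$ to get a character $\chi:C(A)\rightarrow\C$; since non-zero homogeneous elements are invertible, $\chi$ is injective on each $C(A)_\mu$, which gives $\dim C(A)_\mu=1$ \emph{and} the normalized basis $E_\mu$ with $E_{\mu_1}E_{\mu_2}=E_{\mu_1+\mu_2}$ in a single stroke. You instead prove $C(A)_0=\C\cdot\id_A$ directly (a commutative division algebra of countable dimension over the uncountable algebraically closed field $\C$ is $\C$), deduce one-dimensionality of the components by taking ratios, and then split the resulting symmetric factor set $c:M\times M\rightarrow\C^\ast$ using $\mathrm{Ext}^1_{\Z}(M,\C^\ast)=0$, valid because $\C^\ast$ is divisible and $c$ is symmetric (commutativity) and a cocycle (associativity). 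Both arguments are correct and both ultimately rest on the same countability input; the paper's character trick is more economical because the normalization comes for free, while your route is slightly more modular -- it isolates exactly which cohomological fact is being used and would survive in settings where one has $C(A)_0=\C$ by other means.
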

\begin{proof}
\textit{Point (i):}
Let $\psi\in C(A)$ be a non-zero and homogeneous. Its image and its
kernel are graded two-sided ideals. Therefore  $\psi$  is
bijective. It is clear that its inverse lies in $C(A)$.

\textit{Point (ii):}  
Let $\psi\in C(A)$. Then $\psi$ can be decomposed as
$\psi=\sum_{\mu\in Q}\,\psi_\mu$, where:
\begin{enumerate}
\item[(i)] the linear map $\psi_\mu:A\rightarrow A$ is homogeneous of degree
$\mu$,
\item[(ii)] For any $x\in A$, $\psi_\mu(x)=0$ for almost all $\mu$.
\end{enumerate}
It is clear that each homogeneous component $\psi_\mu$ is in $C(A)$.
Since each non-zero component $\psi_\mu$ is injective, it follows that 
almost all of them are zero. Therefore each $\psi\in C(A)$ is a
finite sum of its homogeneous components, and thus $C(A)$ admits a
decomposition $C(A)=\oplus_\mu\,C(A)_\mu$.

It is obvious that $C(A)$ is a weakly graded
algebra, i.e. $C(A)_\lambda.C(A)_\mu\subset C(A)_{\lambda+\mu}$
for all $\lambda,\mu\in Q$. So it
remains to prove that each homogeneous component
$C(A)_\mu$ is finite dimensional.
Choose any  homogeneous element $a\in A\setminus\{0\}$ of
degree $\nu$. Since any
non-zero homogeneous element of $C(A)$ is one-to-one, the natural map
$C(A)\rightarrow A, \psi\mapsto \psi(a)$ is injective. Therefore
$\dim\,C(A)_\mu\leq \dim A_{\mu+\nu}<\infty$. Thus $C(A)$ is a
$Q$-graded algebra.

\textit{Point (iii):} Since
any  non-zero homogeneous element in $C(A)$
is invertible,  $M$ is a subgroup of $Q$.

\textit{Point (iv):}  
By Lemma 5, $C(A)$ is commutative. Since $C(A)$ has
countable dimension, any maximal
ideal of $C(A)$ determines a morphism $\chi:C(A)\rightarrow \C$.
Since each non-zero homogeneous element is invertible, the restriction of
$\chi$ to each homogeneous component
$C(A)_\mu$ is injective. Therefore 
$C(A)_\mu$ is one dimensional and  there is a unique
element $E_\mu\in C(A)_\mu$ with $\chi(E_\mu)=1$. It follows that
$(E_\mu)_{\mu\in M}$ is a basis of $C(A)$ and that
$E_{\mu_1}E_{\mu_2}=E_{\mu_1+\mu_2}$. Hence $C(A)$ is
isomorphic with the group algebra $\C[M]$.
\end{proof}
Let $\mathcal{U}=\oplus_{\lambda\in Q}\,\mathcal{U}_\lambda$
be an associative weakly graded algebra. Set 
$\mathcal{A}=\mathcal{U}_0$.

\begin{lemma}\label{lemma_7} 
Let $E$ be a  simple graded $\mathcal{U}$-module.
\begin{enumerate}
\item[(i)] The $\mathcal{A}$-module $E_\lambda$ is simple for any
$\lambda\in \Supp E$.
\item[(ii)] If the $\mathcal{A}$-module $E_\lambda$ and $E_\mu$ are isomorphic
for some $\lambda,\,\mu\in\Supp E$, then there is an
isomorphism of $\mathcal{U}$-modules $\psi:E\rightarrow E$
which is homogeneous of degree $\lambda-\mu$.
\end{enumerate}
\end{lemma}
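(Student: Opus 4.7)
My plan is to deduce each part from the simple graded hypothesis by lifting $\mathcal{A}$-module data on a single homogeneous component of $E$ to the whole of $E$. The key observation used in both parts is that, since $\mathcal{A} = \mathcal{U}_0$ preserves each $E_\nu$, any $\mathcal{A}$-submodule $F \subset E_\nu$ satisfies $(F + \mathcal{U}.F)_\nu = F + \mathcal{A}.F = F$. Part (i) is then immediate: for a non-zero $\mathcal{A}$-submodule $F \subset E_\lambda$, the graded $\mathcal{U}$-submodule $N := F + \mathcal{U}.F$ is non-zero, hence equal to $E$ by simplicity, so $F = N_\lambda = E_\lambda$.

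For part (ii), I would realize the desired self-map of $E$ of degree $\lambda-\mu$ as a degree-zero morphism into a shifted copy $E'$ of $E$, defined by $E'_\nu := E_{\nu + \lambda - \mu}$. Then $E'_\mu = E_\lambda$, and $E'$ is again a simple graded $\mathcal{U}$-module (grading shifts preserve the lattice of graded submodules). Given the $\mathcal{A}$-isomorphism $\phi: E_\mu \to E_\lambda$, form its graph
\[
  \Gamma := \{(v, \phi(v)) : v \in E_\mu\} \subset E_\mu \oplus E'_\mu = (E \oplus E')_\mu,
\]
which is an $\mathcal{A}$-submodule because $\phi$ is $\mathcal{A}$-linear. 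Let $M := E \oplus E'$, let $N$ be the graded $\mathcal{U}$-submodule of $M$ generated by $\Gamma$, and let $\pi_1, \pi_2$ denote the projections onto $E$ and $E'$. By the key observation, $N_\mu = \Gamma$.

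The remaining task is to show that both restrictions $\pi_i|_N$ are $\mathcal{U}$-module isomorphisms; then $\psi := \pi_2 \circ (\pi_1|_N)^{-1}$ is a $\mathcal{U}$-module automorphism of $E$, homogeneous of degree $\lambda-\mu$ once one identifies $E'$ with $E$ shifted. Surjectivity is clear since each $\pi_i(N)$ is a non-zero graded $\mathcal{U}$-submodule of a simple graded module ($\pi_1(N) \supset E_\mu$ and $\pi_2(N) \supset \phi(E_\mu) = E_\lambda = E'_\mu$). For injectivity, $\Ker(\pi_1|_N)$ is a graded $\mathcal{U}$-submodule of $\{0\} \oplus E'$, hence by simplicity of $E'$ it is either $0$ or all of $E'$; the latter would force $\{0\} \oplus E'_\mu \subset N_\mu = \Gamma$, so $E_\lambda = E'_\mu = 0$, contradicting $\lambda \in \Supp E$. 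The same argument handles $\pi_2|_N$. The only subtle point in this whole plan is bookkeeping: choosing the sign of the shift defining $E'$ correctly so that $\psi$ comes out of degree $\lambda-\mu$ rather than $\mu-\lambda$.
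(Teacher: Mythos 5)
Your proposal is correct. Part (i) is the paper's own argument in a trivially different dress (you generate a submodule from an arbitrary $\mathcal{A}$-submodule $F\subset E_\lambda$ rather than from a single vector $x$, using in both cases that the degree-$\lambda$ component of the generated graded submodule is $\mathcal{A}.F$, resp.\ $\mathcal{A}.x$). Part (ii), however, takes a genuinely different route. The paper proves an intermediate categorical statement: the functors $F^\mu: E\mapsto E_\mu$ and $S^\mu: X\mapsto \bigl(\mathcal{U}\otimes_{\mathcal{A}}X\bigr)/K(X)$ give an equivalence between simple $\mathcal{A}$-modules and simple weakly graded $\mathcal{U}$-modules whose support contains $\mu$; the degree-$(\lambda-\mu)$ automorphism then falls out of $E\simeq S^\mu(X)\simeq S^\lambda(X)$, the two realizations differing by a grading shift. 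Your graph construction inside $E\oplus E'$ avoids induced modules entirely: the only inputs are the observation that the degree-$\mu$ component of the $\mathcal{U}$-submodule generated by $\Gamma$ is $\Gamma$ itself, and graded simplicity of $E$ and of the shifted copy $E'$, which settle surjectivity and injectivity of the two projections. This is more elementary and self-contained (the paper's verification that $S^\mu(X)$ is simple and that the two functors are mutually inverse is only sketched), while the paper's approach buys the equivalence $Irr(\mathcal{A})\simeq Irr_\mu(\mathcal{U})$ as a reusable statement encoding the uniqueness of the simple graded module with prescribed degree-$\mu$ component. Your sign bookkeeping is also right: with $E'_\nu=E_{\nu+\lambda-\mu}$ one has $E'_\mu=E_\lambda$ and $\psi(E_\nu)\subset E_{\nu+\lambda-\mu}$, i.e.\ $\psi$ is homogeneous of degree $\lambda-\mu$ as required.
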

\begin{proof}
\textit{Point (i)} 
Let $x$ be any non-zero element in $E_\lambda$. Since 
$\mathcal{U}.x$ is a graded submodule  of $E$, we have $\mathcal{U}.x=E$,
i.e. $E_\mu=\mathcal{U}_{\mu-\lambda}.x$ for any $\mu\in Q$. So we get 
$\mathcal{A}.x=E_\lambda$, for any $x\in E_\lambda\setminus 0$,
which proves that the $\mathcal{U}$-module $E_\lambda$ is simple.

\textit{Intermediate  step:} Let $\mu\in Q$. Let $Irr(\mathcal{A})$ be the
category of  simple $\mathcal{A}$-modules and let 
$Irr_\mu(\mathcal{U})$ be the category of simple weakly graded
$\mathcal{U}$-modules $Y$ whose support contains $\mu$. We claim
that the categories $Irr(\mathcal{A})$ and $Irr_\mu(\mathcal{U})$ are
equivalent.

By the first point, the map $E\mapsto E_\mu$ provides a functor 

\centerline{$F^\mu:Irr_\mu(\mathcal{U})\rightarrow Irr(\mathcal{A})$.} 

Conversely, let $X\in Irr(\mathcal{A})$.
Set $I^\mu(X)=\mathcal{U}\otimes_{\mathcal{A}}\,X$, and for
any $\lambda\in Q$, set 
$I^\mu_\lambda(X)=\mathcal{U}_{\lambda-\mu}\otimes_{\mathcal{A}}\,X$.
Relative to the decomposition
 
\centerline{$I^\mu(X)=\oplus_{\lambda\in Q}\,\,I^\mu_\lambda(M)$,}

\noindent $I^\mu(X)$ is a weakly graded $\mathcal{U}$-module. 
Let $K(X)$ be the biggest $\mathcal{U}$-submodule lying in
$\oplus_{\lambda\neq\mu}\,I^\mu_\lambda(M)$ and set
$S^\mu(X)=I^\mu(X)/K(X)$. Since $K(X)$
is a graded subspace, 
$S^\mu(X)$ is a weakly graded $\mathcal{U}$-module, and it
is clear that $S^\mu(X)$ is simple as weakly graded module. 
Moreover its homogeneous component of degre $\mu$
is $X$. Therefore the map $X\mapsto S^\mu(X)$ provides a functor

\centerline{$S^\mu:Irr(\mathcal{A})\rightarrow Irr_\mu(\mathcal{U})$.}

\noindent The functor $F^\mu$ and $S^\mu$ are inverse to each other,
which proves that the category $Irr(\mathcal{A})$ and $Irr_\mu(\mathcal{U})$ are
equivalent.

\textit{Point (ii):} Let  $\lambda,\,\mu\in\Supp E$.
Assume that the $\mathcal{U}$-modules
$E_\lambda$ and $E_\mu$  are isomorphic to the same $\mathcal{A}$-module 
$X$. As $\mathcal{U}$-modules, $S^\mu(X)$ and $S^\lambda(X)$ are isomorphic,
up to a shift by $\lambda-\mu$ of their gradation.
Since we have  $E\simeq S^\mu(X)$ and $E\simeq S^\lambda(X)$,
there is isomorphism of $\mathcal{U}$-modules $\psi:A\rightarrow A$
which is homogeneous of degree $\lambda-\mu$. 
\end{proof}

Let $A$ be a graded algebra. Let $\mathcal{U}\subset \End(A)$ be the
associative subalgebra generated by left and right multiplications. 
Let $\mathcal{A}\subset \mathcal{U}$ be the subalgebra of all
$u\in \mathcal{U}$ such that $u(A_\mu)\subset A_\mu$ for all 
$\mu\in Q$.

\begin{lemma}\label{lemma_8} 
Let $A$ be a simple $Q$-graded  algebra.
Set $M=\Supp C(A)$.
\begin{enumerate}
\item[(i)] For any $\mu\in \Supp A$, the $\mathcal{A}$-module
$A_\mu$ is simple.
\item[(ii)] For any $\lambda,\,\mu\in \Supp A$, the $\mathcal{A}$-modules
$A_\mu$ and $A_\lambda$ are isomorphic iff
$\lambda-\mu\in M$.
\end{enumerate}
\end{lemma}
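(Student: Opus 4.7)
The plan is to deduce Lemma 8 from Lemma 7 by realizing $A$ itself as a simple graded module over $\mathcal{U}$. First, I would observe that $\mathcal{U}$ is in fact a weakly $Q$-graded associative subalgebra of $\End(A)$: left multiplication $L_a$ and right multiplication $R_a$ by a homogeneous $a\in A_\lambda$ are linear endomorphisms of $A$ homogeneous of degree $\lambda$, so the subalgebra they generate decomposes as $\mathcal{U}=\oplus_{\lambda\in Q}\mathcal{U}_\lambda$ with $\mathcal{U}_0=\mathcal{A}$. Moreover, a graded $\mathcal{U}$-submodule of $A$ is exactly a graded two-sided ideal, so the hypothesis that $A$ is simple $Q$-graded means that $A$ is a simple graded $\mathcal{U}$-module.

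With this setup, Point (i) is immediate: it is Lemma 7(i) applied to $E=A$.

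For Point (ii), I need the elementary identification $C(A)=\End_{\mathcal{U}}(A)$. Indeed, an element of $C(A)$ commutes with every $L_a$ and every $R_a$ by definition, hence with every element of $\mathcal{U}$; conversely, every $\mathcal{U}$-linear map commutes with left and right multiplications, which lie in $\mathcal{U}$. Using this identification, I would argue both directions as follows. If $\lambda-\mu\in M$, pick a non-zero $\psi\in C(A)_{\lambda-\mu}$; by Lemma 6(i) $\psi$ is invertible, so $\psi$ restricts to an isomorphism $A_\mu\to A_\lambda$, and since $\psi$ is $\mathcal{U}$-linear it is in particular $\mathcal{A}$-linear, giving $A_\mu\simeq A_\lambda$ as $\mathcal{A}$-modules. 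Conversely, if $A_\mu\simeq A_\lambda$ as $\mathcal{A}$-modules, then by Point (i) both are simple $\mathcal{A}$-modules, so Lemma 7(ii) produces an isomorphism $\psi\colon A\to A$ of $\mathcal{U}$-modules homogeneous of degree $\lambda-\mu$. Under $C(A)=\End_{\mathcal{U}}(A)$, $\psi$ is a non-zero homogeneous element of $C(A)$ of degree $\lambda-\mu$, so $\lambda-\mu\in\Supp C(A)=M$.

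There is no real obstacle here; the proof is essentially a matter of checking that the abstract framework of Lemma 7 applies to $E=A$ viewed as a $\mathcal{U}$-module, and matching the endomorphism ring of that module with the centroid. The only point that deserves care is the verification that $\mathcal{U}$ is genuinely weakly $Q$-graded with $\mathcal{U}_0=\mathcal{A}$, so that the Lemma 7 machinery is applicable, and the identification $C(A)=\End_{\mathcal{U}}(A)$, which makes the two points of Lemma 7 translate directly into the two points of Lemma 8.
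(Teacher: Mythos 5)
Your proof is correct and follows essentially the same route as the paper: both realize $A$ as a simple graded module over the weakly $Q$-graded algebra $\mathcal{U}$ with $\mathcal{U}_0=\mathcal{A}$, deduce (i) from Lemma \ref{lemma_7}(i), and prove (ii) by combining the invertibility of homogeneous centroid elements (Lemma \ref{lemma_6}(i)) with Lemma \ref{lemma_7}(ii) via the identification $C(A)=\End_{\mathcal{U}}(A)$. The only difference is that you make this last identification explicit, which the paper leaves implicit.
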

\begin{proof}
For each $\mu\in Q$, set
$\mathcal{U}_\mu=\{u\in \mathcal{U}\vert\,u(A_\nu)\subset
A_{\mu+\nu},\forall\nu\}$.
Relative to the decomposition 
$\mathcal{U}=\oplus_{\mu\in Q}\,\mathcal{U}_\mu$, the algebra
$\mathcal{U}$ is a weakly graded, and $A$ is a simple graded 
$\mathcal{U}$-module.

The simplicity of each $\mathcal{A}$-module
$A_\mu$ follows from Lemma \ref{lemma_7} (i). 

Assume that $\lambda-\mu\in M$.
Choose any non-zero $\psi\in C(A)$ which is homogeneous 
of degree $\lambda-\mu$. Then $\psi$ provides a morphism of
$\mathcal{A}$-modules $A_\mu\rightarrow A_\lambda$. By Lemma \ref{lemma_6} (i), it is an isomorphism, so 
$A_\lambda$ and $A_\mu$ are isomorphic. 

Conversely, assume that the $\mathcal{A}$-modules
$A_\lambda$ and $A_\mu$ are isomorphic. By
Lemma \ref{lemma_7} (ii), there is a  isomorphism of $\mathcal{U}$-modules
$\psi:E\rightarrow E$ which is homogeneous of degree $\lambda-\mu$.
Since $\psi$ belongs to $C(A)$, it follows that $\lambda-\mu$ belongs to
$M$.
\end{proof}

\begin{lemma}\label{lemma_9} 
Let $A$ be a simple $Q$-graded algebra,
let $M$ be the support of $C(A)$. Set 
$\overline{Q}=Q/M$ and let $\pi:Q\rightarrow \overline{Q}$ be the natural
projection. Then there exists a $\overline{Q}$-graded
algebra $\overline{A}$ such that:
\begin{enumerate}
\item[(i)] $A=\pi^\ast\overline{A}$\hskip.5mm,
\item[(ii)] the algebra  $\overline{A}$ is simple
(as a non-graded algebra).
\end{enumerate}
\end{lemma}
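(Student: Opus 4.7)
My plan is to construct $\overline{A}$ by quotienting out the centroid action, apply Lemma \ref{lemma_1} to obtain (i), and then upgrade the obvious $\overline{Q}$-graded simplicity of $\overline{A}$ to full simplicity via Lemma \ref{lemma_8}.

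By Lemma \ref{lemma_6}(iv), $C(A) = \C[M]$ with a distinguished basis $(E_\mu)_{\mu \in M}$ such that each $E_\mu$ acts as an isomorphism $A_\lambda \to A_{\lambda+\mu}$. I would set $\overline{A} := A \otimes_{C(A)} \C$, where $\C$ is the $C(A)$-module defined by the augmentation $E_\mu \mapsto 1$. Explicitly, this is a $\overline{Q}$-graded vector space whose homogeneous component $\overline{A}_{\bar\lambda}$ is naturally isomorphic, via the quotient map, to $A_{\lambda_0}$ for any lift $\lambda_0$ of $\bar\lambda$. Because the multiplication on $A$ is $C(A)$-bilinear (this is the defining property of the centroid), it descends to $\overline{A}$, and the quotient $p\colon A \to \overline{A}$ is an algebra morphism whose restrictions $A_\lambda \to \overline{A}_{\pi(\lambda)}$ are all isomorphisms. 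Part (i) is then immediate from Lemma \ref{lemma_1}.

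Before attacking (ii), I would record two preliminary facts. First, $\overline{A}$ is $\overline{Q}$-graded simple: any graded ideal of $\overline{A}$ pulls back under $\pi^\ast$ to a graded ideal of $A$, which must be $0$ or $A$. Second, $\Supp C(\overline{A}) = \{0\}$: given a nonzero homogeneous $\bar\psi \in C(\overline{A})_{\bar\mu}$, pick a lift $\mu_0 \in Q$ of $\bar\mu$ and define $\psi\colon A \to A$ of degree $\mu_0$ by applying $\bar\psi$ to the $\overline{A}$-factor under the identification $A = \pi^\ast \overline{A}$; a direct check on the pullback product shows $\psi \in C(A)_{\mu_0}$, and Lemma \ref{lemma_6}(iii) then forces $\mu_0 \in M$, so $\bar\mu = 0$.

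To finish (ii), let $J$ be a nonzero two-sided ideal of $\overline{A}$. Then $J$ is a submodule of $\overline{A}$ for the degree-preserving subalgebra $\mathcal{B}$ attached to $\overline{A}$ as in the preamble of Lemma \ref{lemma_8}. By Lemma \ref{lemma_8}(i) each $\overline{A}_{\bar\mu}$ is a simple $\mathcal{B}$-module, and by Lemma \ref{lemma_8}(ii) combined with $\Supp C(\overline{A}) = \{0\}$ the simple modules obtained for distinct $\bar\mu \in \Supp \overline{A}$ are pairwise non-isomorphic. Standard semisimplicity then forces every $\mathcal{B}$-submodule of $\overline{A}$ to be a direct sum of some of the $\overline{A}_{\bar\mu}$'s, so $J$ is automatically $\overline{Q}$-graded, and the first preliminary gives $J = \overline{A}$. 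The step I expect to be most delicate is the lifting in the centroid argument: verifying that $\psi$ genuinely satisfies the centroid identities on $A$ requires carefully tracing the $C(A)$-bilinearity of the product through the explicit form $A = \pi^\ast \overline{A}$.
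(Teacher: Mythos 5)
Your proof is correct and follows essentially the same route as the paper: the paper also constructs $\overline{A}$ as the quotient of $A$ by $m.A$ for a maximal ideal $m$ of $C(A)$ (your augmentation ideal is one such choice), applies Lemma \ref{lemma_1} for (i), and deduces (ii) from Lemma \ref{lemma_8} by observing that the homogeneous components of $\overline{A}$ are simple and pairwise non-isomorphic modules over the degree-preserving multiplication algebra, so every two-sided ideal is graded. Your explicit verification that $\Supp C(\overline{A})=\{0\}$ is a worthwhile addition, since the paper leaves that point implicit when invoking Lemma \ref{lemma_8}(ii).
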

\begin{proof}
Let $m$ be any maximal ideal of $C(A)$.
Set $\overline{ A}=A/m.A$ and let $\psi:A\rightarrow\overline{A}$ 
be the natural projection. 

For $\lambda\in Q$, set 
$\mathcal{M}(\lambda)=\oplus_{\mu\in M}\,A_{\lambda+\mu}$. It follows from
Lemma \ref{lemma_6}  that 
$\mathcal{M}(\lambda)=C(A)\otimes A_\lambda$, $C(A)/m \cong \C$ 
and so we have
$\mathcal{M}(\lambda)/m.\mathcal{M}(\lambda)\simeq A_\lambda$.
Thus $\overline{A}$ is  a $\overline{Q}$-graded algebra. Moreover 
for each $\lambda\in Q$, the morphism $\psi$ induces
an isomorphism $A_\lambda\rightarrow{\overline A}_{\pi(\lambda)}$.
It follows from Lemma \ref{lemma_1}  that
$A\simeq \pi^\ast \overline{A}$.

Since $\psi$ is onto,  it is clear
that $\overline{A}$ is simple graded. By Lemma \ref{lemma_8},  the family
$(\overline{A}_{\overline{\lambda}})_
{\overline{\lambda}\in\Supp \overline{A}} $ consists of
simple, pairwise non-isomorphic $\mathcal{A}$-modules.
Thus any two-sided ideal in $\overline{A}$ is graded,
which proves that the algebra  $\overline{A}$ is simple
(as a non-graded algebra).
\end{proof}

\section{The alternative for the class $\mathcal{G}$}\label{sect_3}
Let $\Lambda$ be a lattice.
This section contains the first key result for
the classification, namely the
alternative for the class $\mathcal{G}$.

However in chapter II, the class $\mathcal{G}'$ will be used
as a tool, and some results are more natural in this 
larger class. Except stated otherwise, $\mathcal{L}$ will
be a Lie algebra in the class $\mathcal{G}'$.

\begin{lemma}\label{lemma_10} 
Let $\alpha\in\Pi$ and  $\lambda\in \Supp \mathcal{L}$.
Assume $l(\lambda)\neq 0$, and set
$x=\vert 2l(\lambda)/l(\alpha)\vert$ if $l(\alpha)\neq 0$ or
$x=+\infty$ otherwise. Then at least one of the following
two assertions holds:
\begin{enumerate}
\item[(i)] $\ad^n(L_\alpha)(L_\lambda)\neq 0$ for all $n\leq x$, or 
\item[(ii)] $\ad^n(L_{-\alpha})(L_\lambda)\neq 0$ for all $n\leq x$.
\end{enumerate}
\end{lemma}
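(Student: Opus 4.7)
The plan is to argue by contradiction. Assume both (i) and (ii) fail, and pick minimal positive integers $p, q \leq x$ with $\ad^p(L_\alpha) L_\lambda = 0$ and $\ad^q(L_{-\alpha}) L_\lambda = 0$. Setting $u_k := \ad^k(L_\alpha) L_\lambda$ for $k \geq 0$ and $u_{-k} := \ad^k(L_{-\alpha}) L_\lambda$ for $k \geq 1$, minimality makes each $u_k$ a nonzero vector of the (hence one-dimensional) space $\mathcal{L}_{\lambda + k\alpha}$ for $-(q-1) \leq k \leq p-1$. Write $[L_\alpha, L_{-\alpha}] = \gamma L_0$ with $\gamma \neq 0$ (which holds because $\alpha \in \Pi$), and regard $V := \bigoplus_k \mathcal{L}_{\lambda + k\alpha}$ as an $\mathfrak{s}(\alpha)$-module under the adjoint action.

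In the case $l(\alpha) \neq 0$, where $\mathfrak{s}(\alpha) \cong \mathfrak{sl}(2)$, I define scalars $a_k, b_k$ along the chain by $\ad(L_\alpha) u_k = a_k u_{k+1}$ and $\ad(L_{-\alpha}) u_k = b_k u_{k-1}$, so that the minimality conditions force $a_{p-1} = 0$ and $b_{-(q-1)} = 0$. Computing $[L_\alpha, L_{-\alpha}] u_k$ in two ways, and setting $d_k := a_k b_{k+1}$, yields the recursion
\[
d_{k-1} - d_k = \gamma\bigl(l(\lambda) + k\, l(\alpha)\bigr) \qquad (-(q-1) \leq k \leq p-1),
\]
together with the boundary values $d_{p-1} = d_{-q} = 0$. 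Telescoping gives
\[
0 = \gamma(p+q-1)\Bigl(l(\lambda) + \tfrac{p-q}{2}\, l(\alpha)\Bigr),
\]
so $2l(\lambda)/l(\alpha) = q - p$, whence $x = |q - p|$. But $1 \leq p, q \leq x$ forces $|q - p| \leq \max(p,q) - 1 \leq x - 1$, a contradiction.

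In the case $l(\alpha) = 0$, one has $x = +\infty$ and $\mathfrak{s}(\alpha)$ is the three-dimensional Heisenberg algebra; $L_0$ is central and acts on $V$ as the nonzero scalar $l(\lambda)$. Let $V'$ be the $\mathfrak{s}(\alpha)$-submodule of $V$ generated by $L_\lambda$. The Heisenberg PBW identity (expanding $\ad(L_\alpha)^i\, \ad(L_{-\alpha})^j$ with all commutators $\gamma L_0$ pushed to the left) shows that $V' \cap \mathcal{L}_{\lambda + m\alpha} = 0$ for every $m \geq p$ and every $m \leq -q$, so $V'$ has dimension at most $p + q - 1$. But then $[L_\alpha, L_{-\alpha}] = \gamma L_0$ would have trace zero on the finite-dimensional module $V'$, while acting on it as the nonzero scalar $\gamma l(\lambda)$; contradiction.

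The main obstacle I expect is the careful bookkeeping in the $\mathfrak{sl}(2)$ case: verifying that the recursion really does hold on the full interval $-(q-1) \leq k \leq p-1$ (using the genuine one-dimensionality of each weight space there, provided by the minimality of $p$ and $q$), and that the boundary values $d_{p-1}$ and $d_{-q}$ vanish independently of whether $\mathcal{L}_{\lambda \pm p\alpha}$ themselves happen to be zero.
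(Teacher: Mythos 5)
Your proof is correct and takes essentially the same route as the paper: both arguments reduce to the vanishing of the trace of $[\ad(L_\alpha),\ad(L_{-\alpha})]=\gamma\,\ad(L_0)$ on the finite-dimensional cyclic $\mathfrak{s}(\alpha)$-module spanned by the chain $\mathcal{L}_{\lambda+k\alpha}$, $-(q-1)\leq k\leq p-1$, yielding the identity $(p+q-1)\bigl(l(\lambda)+\tfrac{p-q}{2}\,l(\alpha)\bigr)=0$. Your telescoped recursion for $d_k$ (and the Heisenberg PBW variant when $l(\alpha)=0$) is just this trace computation written out termwise, where the paper handles both cases uniformly in one line.
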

\begin{proof}
Set
$\mathfrak{s}(\alpha):=\C L_{\alpha}\oplus
\C L_0\oplus \C L_{-\alpha}$.
Recall that 
$\mathfrak{s}(\alpha)$ is a Lie subalgebra. Moreover
$\mathfrak{s}(\alpha)$ is isomorphic 
to $\mathfrak{sl}(2)$  if $\alpha\in\Sigma$ or
to the 3-dimensional Heisenberg algebra
$\mathfrak{H}$
if $\alpha\in\Pi\setminus\Sigma$. So the lemma follows from standard
representation theory of $\mathfrak{sl}(2)$ and
$\mathfrak{H}$.

Here is a short proof.
One can assume that 
$\ad^n(L_{\pm\alpha})(L_\lambda)=0$ for $n$ big enough,
otherwise the assertion   is obvious. Set 

\centerline{$N^{\pm}=\mathrm{Sup} \, \{n\vert
\ad^n(L_{\pm\alpha})(L_\lambda)\neq 0\}$.}

Let $M$ be the $\mathfrak{s}(\alpha)$-module generated by 
$L_{\lambda}$. Then the family

\noindent$(L_{\lambda+n\alpha})_{n\in [-N^-, N^+]}$  is a
basis for $M$, and therefore $M$ is finite dimensional.
Since $L_0$ is a scalar mutiple of $[L_\alpha,L_{-\alpha}]$,
we have $\tr \,L_0\vert_M=0$. Since 
$[L_0, L_{\lambda+n\alpha}]=(l(\lambda)+nl(\alpha))\,
L_{\lambda+n\alpha}$  for all $n\in [-N^-, N^+]$, we get
\begin{align*}
0=
&\sum _{-N^-\leq n\leq N^+}\,(l(\lambda)+nl(\alpha)) \\
=
&(N^++N^-+1)[(N^+-N^-)l(\alpha)/2+l(\lambda)]. 
\end{align*}
\noindent Therefore we have
$\vert N^+-N^-\vert=2\vert l(\lambda)/ l(\alpha)\vert=x$, which proves
that
$N^+$ or $N^-$ is $\geq x$. 
\end{proof} 

\begin{lemma}\label{lemma_11} Let 
$\alpha_1,\alpha_2,...,\alpha_n
\in\Pi$. 

There exists a positive real number 
$a=a(\alpha_1,\dots,\alpha_n)$ such that
for every $\lambda\in \Supp \mathcal{L}$, and every  n-uple
$(m_1,m_2,...,m_n)$ of integers with $0\leq m_i\leq a\vert
l(\lambda)\vert$ for all $i$, there exists 
$(\epsilon_1,\dots,\epsilon_n)\in\{\pm 1\}^n$ such that:

\centerline{$\ad^{m_n}(L_{\epsilon_n\alpha_n})\dots
\ad^{m_1}(L_{\epsilon_1\alpha_1}) (L_\lambda)
\neq0$.}
\end{lemma}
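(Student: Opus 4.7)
The plan is to construct the signs $\epsilon_k$ one at a time by applying Lemma \ref{lemma_10} inductively, after choosing $a$ small enough that the value of $l$ on successive partial degrees stays bounded away from zero. Set $S=\sum_{j=1}^{n}|l(\alpha_j)|$. If $S=0$, any $a>0$ works because Lemma \ref{lemma_10} gives $x=+\infty$ at every step. Otherwise I would take
\[
a=\frac{1}{2S}.
\]

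If $l(\lambda)=0$ the hypothesis $0\le m_i\le a|l(\lambda)|$ forces $m_1=\cdots=m_n=0$, and the iterated bracket reduces to $L_\lambda\neq 0$. So assume $l(\lambda)\neq 0$. I would build a sequence by setting $v_0=L_\lambda$ and, having chosen $\epsilon_1,\dots,\epsilon_k$ so that
\[
v_k=\ad^{m_k}(L_{\epsilon_k\alpha_k})\cdots\ad^{m_1}(L_{\epsilon_1\alpha_1})(L_\lambda)
\]
is nonzero, choose $\epsilon_{k+1}$ as explained below so that $v_{k+1}=\ad^{m_{k+1}}(L_{\epsilon_{k+1}\alpha_{k+1}})(v_k)$ is nonzero as well. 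Since $\dim\mathcal{L}_\mu\le 1$, the nonzero vector $v_k$ is a scalar multiple of $L_{\mu_k}$, where $\mu_k=\lambda+\sum_{j\le k}\epsilon_j m_j\alpha_j$, hence
\[
l(\mu_k)=l(\lambda)+\sum_{j\le k}\epsilon_j m_j\, l(\alpha_j).
\]

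The key a priori estimate is the triangle inequality
\[
|l(\mu_k)-l(\lambda)|\le \sum_{j\le k}m_j|l(\alpha_j)| \le a|l(\lambda)|\cdot S \le \tfrac{1}{2}|l(\lambda)|,
\]
so $|l(\mu_k)|\ge |l(\lambda)|/2>0$. I can therefore apply Lemma \ref{lemma_10} with $\alpha=\alpha_{k+1}$ to the nonzero homogeneous element $v_k$: it furnishes $\epsilon_{k+1}\in\{\pm 1\}$ such that $\ad^m(L_{\epsilon_{k+1}\alpha_{k+1}})(v_k)\ne 0$ for all $m\le x_k$, where $x_k=2|l(\mu_k)/l(\alpha_{k+1})|$ (or $+\infty$ if $l(\alpha_{k+1})=0$, in which case we are done). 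Since $S\ge |l(\alpha_{k+1})|$,
\[
x_k \ge \frac{|l(\lambda)|}{|l(\alpha_{k+1})|} \ge \frac{|l(\lambda)|}{S} = 2a|l(\lambda)| \ge m_{k+1},
\]
so $v_{k+1}\neq 0$. At $k=n$ this is the required non-vanishing.

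\textbf{Main obstacle.} The only thing to watch is that the choice of $\epsilon_{k+1}$ made by Lemma \ref{lemma_10} is forced (we cannot pick it freely), and future signs may depend on earlier ones through $\mu_k$; however, since the degree shift satisfies $|l(\mu_k)|\ge |l(\lambda)|/2$ uniformly in the choice of signs, Lemma \ref{lemma_10} never runs out of room and the inductive construction carries through without further case analysis.
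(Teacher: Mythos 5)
Your proof is correct and follows essentially the same inductive argument as the paper: at each step the triangle inequality keeps $|l(\mu_k)|$ bounded below by a fixed fraction of $|l(\lambda)|$, so Lemma \ref{lemma_10} always supplies a workable sign. The only (immaterial) difference is the choice of constant — you take $a=1/(2\sum_j|l(\alpha_j)|)$ where the paper takes $a=1/(n\max_j|l(\alpha_j)|)$ — and both yield the same estimate.
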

\begin{proof}
First define the real number $a$. 
If $l(\alpha_i)=0$ for all $i$, choose any $a>0$. Otherwise, set
$s=\Max \vert l(\alpha_i)\vert$ and set 
$a=1/ns$. Let $\lambda\in\Lambda$. Then the following assertion:

$(\mathcal{H}_k)$:
for every  k-uple $(m_1,m_2,...,m_k)$ of
integers with $0\leq m_i\leq a\vert l(\lambda)\vert$ for all $i$, there
exists  $(\epsilon_1,\dots,\epsilon_k)\in\{\pm 1\}^k$ such that

\centerline{$\ad^{m_k}(L_{\epsilon_k\alpha_k})\dots
\ad^{m_1}(L_{\epsilon_1\alpha_1}) (L_\lambda)
\neq0$.}

\noindent will be proved by induction on $k$, 
for $0\leq k\leq n$. Clearly, one can
assume that $l(\lambda)\neq 0$. 

Assume that ${\cal H}_k$ holds for some $k<n$. Let 
$(m_1,m_2,...,m_{k+1})$ be a $(k+1)$-uple of
integers with $0\leq m_i\leq a\vert l(\lambda)\vert$ for all $i$.
By hypothesis, there exists 
$(\epsilon_1,\dots,\epsilon_k)\in\{\pm 1\}^k$ such that 
$X:=\ad^{m_k}(L_{\epsilon_k\alpha_k})\dots
\ad^{m_1}(L_{\epsilon_1\alpha_1}) (L_\lambda)$ is not
zero. Set $\mu=\lambda+\sum_{i\leq k}\,\epsilon_i m_i \alpha_i$.
Since $\deg X=\mu$, we have
$l(\mu)=l(\lambda)+\sum_{i\leq k}\,\epsilon_i m_i l(\alpha_i)$.
It follows that 

\begin{align*}
\vert l(\mu)\vert\geq 
&\vert l(\lambda)\vert-\sum_{i\leq k}\,m_i \vert l(\alpha_i)\vert \\
\geq 
&\vert l(\lambda)\vert-
\sum_{i\leq k}\,a \vert l(\lambda)\vert \\
\geq 
&\vert l(\lambda)\vert-
\sum_{i\leq k}\, \vert l(\lambda)\vert/n \\
\geq  
&(n-k) \vert l(\lambda)\vert/n \\
\geq   
&\vert l(\lambda)\vert/n,
\end{align*}
\noindent  and therefore we have:

 \centerline{$\vert 2 l(\mu)/l(\alpha_{i+1})\vert
\geq \vert  l(\mu)/l(\alpha_{i+1})\vert
\geq \vert l(\lambda)\vert/ns= a \vert l(\lambda)\vert$. }

\noindent By Lemma \ref{lemma_10}, there exists some
$\epsilon\in\{\pm 1\}$ such that 
$\ad^m(L_{\epsilon\alpha_{k+1}})(X)\neq 0$ for any $m\leq a
\vert l(\mu)\vert$. Set $\epsilon_{k+1}=\epsilon$.
It  follows that 

\centerline{$\ad^{m_{k+1}}(L_{\epsilon_{k+1}\alpha_{k+1}})\dots
\ad^{m_1}(L_{\epsilon_1\alpha_1}) (L_\lambda)\neq 0$}

\noindent   Therefore Assertion $(\mathcal{H}_{k+1})$
is proved.

Since $(\mathcal{H}_0)$ is trivial, the assertion $(\mathcal{H}_n)$
is proved and the lemma follows. 
\end{proof}

\begin{lemma}\label{lemma_12} 
The set $\Pi$ generates $\Lambda$, and  $\Sigma$ is not empty.
\end{lemma}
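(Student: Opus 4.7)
My plan is to handle both assertions by the same mechanism: in each case I decompose $\mathcal{L}=A\oplus B$ into graded subspaces with $[A,B]\subset B$, apply Lemma \ref{lemma_3} to the resulting graded ideal $J:=B+[B,B]$, and verify that $L_0\notin J$; graded simplicity of $\mathcal{L}$ then forces $J=0$ and hence $B=0$.

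\emph{For the first assertion,} I would let $H\subset\Lambda$ be the subgroup generated by $\Pi$, and set $A=\bigoplus_{\lambda\in H}\mathcal{L}_\lambda$ and $B=\bigoplus_{\lambda\notin H}\mathcal{L}_\lambda$. The inclusion $[A,B]\subset B$ is automatic, since $H$ is a subgroup. The degree-$0$ part of $J$ reduces to a sum of brackets $[L_\mu,L_{-\mu}]$ with $\mu\notin H$; any such nonzero bracket would place $\mu$ in $\Pi\subset H$, a contradiction. So $J$ is a proper graded ideal, hence zero, which gives $\Supp\mathcal{L}\subset H$; condition (iii) in the definition of $\mathcal{G}'$ then yields $H=\Lambda$.

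\emph{For the second assertion,} I would argue by contradiction: if $\Sigma=\emptyset$, then $l$ vanishes on $\Pi$. Take $A=\bigoplus_{l(\lambda)=0}\mathcal{L}_\lambda$ and $B=\bigoplus_{l(\lambda)\neq 0}\mathcal{L}_\lambda$. Lemma \ref{lemma_2} supplies $[A,B]\subset B$, since a nonzero bracket $[L_\mu,L_\nu]$ with $l(\mu)=0$ and $l(\nu)\neq 0$ forces $l(\mu+\nu)=l(\nu)\neq 0$. The degree-$0$ part of $J$ is again controlled by brackets $[L_\mu,L_{-\mu}]$ with $l(\mu)\neq 0$, and any nonzero such bracket would place $\mu$ in $\Sigma$, contradicting the assumption. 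Hence $J=0$, so $B=0$, so $l\equiv 0$ on $\Supp\mathcal{L}$. But then $L_0$ is central, and $\C L_0$ becomes a proper graded ideal of $\mathcal{L}$ (proper because nontriviality of $\Lambda$ forces $\Supp\mathcal{L}$ to contain a nonzero element), contradicting simplicity.

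The only real subtlety I foresee is verifying the vanishing of the degree-$0$ part of $[B,B]$ in each case, and this is exactly where the definition of $\Pi$ (for the first assertion) and the hypothesis $\Sigma=\emptyset$ (for the second) come in. Apart from that, both parts are a direct application of Lemma \ref{lemma_3} together with graded simplicity.
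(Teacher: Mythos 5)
Your proof is correct and follows essentially the same route as the paper's: both assertions are obtained from the decomposition $\mathcal{L}=\mathcal{A}\oplus\mathcal{B}$ and Lemma \ref{lemma_3}, with the degree-zero component of $\mathcal{B}+[\mathcal{B},\mathcal{B}]$ controlled by the definitions of $\Pi$ and $\Sigma$. The only cosmetic difference is in the second assertion, where the paper runs the dichotomy the other way (the ideal is nonzero because $L_0$ is not central, hence equals $\mathcal{L}$, which forces $L_0\in[\mathcal{B},\mathcal{B}]$ and exhibits an element of $\Sigma$), whereas you argue by contradiction from $\Sigma=\emptyset$.
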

\begin{proof}
Let $M$ be the sublattice generated by
$\Pi$ and let $N$ be its complement.  Set $\mathcal{A}=\oplus_{\lambda\in
M} \mathcal{L}_{\lambda}$,
$\mathcal{B}=\oplus_{\lambda\in N} \mathcal{L}_{\lambda}$
and $\mathcal{I}=\mathcal{B}+ [\mathcal{B},\mathcal{B}]$.
Since $\Lambda=M\cup N$ and $M+N\subset N$, we have
$\mathcal{L}=\mathcal{A}+\mathcal{B}$ and 
$[\mathcal{A},\mathcal{B}]\subset \mathcal{B}$. Therefore, 
it follows from Lemma \ref{lemma_3} that $\mathcal{I}$ is
an ideal.

By hypothesis, $N$ does not contain $0$ nor any element in
$\Pi$ and thus $\mathcal{I}\cap\mathcal{L}_0=\{0\}$. Since  
$\mathcal{I}\neq\mathcal{L}$, it follows that $\mathcal{I}=0$. Therefore
the support of $\mathcal{L}$ lies in $M$. Since
$\Supp \mathcal{L}$ generates $\Lambda$, it is proved that $\Pi$ generates
$\Lambda$. 

The proof of the second assertion is similar.
Set $\mathcal{A}=\oplus_{l(\lambda)=0}\, \mathcal{L}_{\lambda}$,
$\mathcal{B}=\oplus_{l(\lambda)\neq 0}\, \mathcal{L}_{\lambda}$
and $\mathcal{I}=\mathcal{B}+ [\mathcal{B},\mathcal{B}]$. We have
$\mathcal{L}=\mathcal{A}+\mathcal{B}$ and by Lemma \ref{lemma_2} we have
$[\mathcal{A},\mathcal{B}]\subset \mathcal{B}$. Therefore $\mathcal{I}$ is an ideal.
Since $L_0$ is not central, this ideal is not trivial and so 
$\mathcal{I}=\mathcal{L}$. It follows easily that $L_0$ belongs to
$[\mathcal{B},\mathcal{B}]$, thus there is $\alpha\in\Lambda$ such that
$l(\alpha)\neq 0$ and $[\mathcal{L}_\alpha,\mathcal{L}_{-\alpha}]\neq 0$.
By definition $\alpha$ belongs to $\Sigma$, so $\Sigma$ is
not empty. 
\end{proof}

A subset $B\in\Lambda$ is called a \textit{$\Q$-basis} if 
it is a basis of $\Q\otimes \Lambda$. Equivalently, it means that
$B$ is a basis of a finite index sublattice in $\Lambda$.
From the previous lemma,  $\Pi$ contains some
$\Q$-basis $B$. 
Then  define the additive map $L_B:\Lambda\rightarrow \C$ by
the requirement that $L_B(\beta)=l(\beta)$ for all $\beta\in B$.

\begin{lemma}\label{lemma_13} Assume that $l$ is an  unbounded
function. Then
we have $L_B(\alpha)=l(\alpha)$, for any $\alpha\in \Pi$.
\end{lemma}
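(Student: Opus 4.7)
The plan is to fix $\alpha\in\Pi$, write $N\alpha=\sum_{i=1}^n c_i\beta_i$ for some $N\in\Z_{>0}$ and $c_i\in\Z$ (possible since $B$ is a $\Q$-basis), and to compute $l(\lambda+N\epsilon\alpha)-l(\lambda)$ in two ways for a suitable $\epsilon\in\{\pm 1\}$ and a suitable $\lambda\in\Supp\mathcal{L}$ whose $|l(\lambda)|$ is very large (available by unboundedness of $l$). One computation along a chain in the $\alpha$-direction will yield $N\epsilon\,l(\alpha)$, the other along a walk through the $\beta_i$'s will yield $N\epsilon\,L_B(\alpha)$; equating will force $l(\alpha)=L_B(\alpha)$.

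For the $\alpha$-chain, choose $\lambda\in\Supp\mathcal{L}$ with $|l(\lambda)|\geq N|l(\alpha)|/2$ (or arbitrary if $l(\alpha)=0$). Lemma \ref{lemma_10} applied to $\alpha$ and $\lambda$ furnishes $\epsilon\in\{\pm 1\}$ such that $\ad^k(L_{\epsilon\alpha})(L_\lambda)\neq 0$ for every $0\leq k\leq N$. Iterating Lemma \ref{lemma_2} along this chain gives
\[
l(\lambda+N\epsilon\alpha)=l(\lambda)+N\epsilon\,l(\alpha).
\]

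For the $\beta$-walk, set $e:=l-L_B$ on $\Supp\mathcal{L}$; by construction of $L_B$ we have $e(\beta_i)=0$ for all $i$, and by Lemma \ref{lemma_2}, $e$ is additive along any edge of the bracket graph (pairs $(\mu,\nu)$ with $[L_\mu,L_\nu]\neq 0$). If the target $\lambda+N\epsilon\alpha=\lambda+\epsilon\sum c_i\beta_i$ is reachable from $\lambda$ by a walk in the bracket graph whose steps are all of the form $\pm\beta_i$, then telescoping $e$ along such a walk gives $e(\lambda+N\epsilon\alpha)=e(\lambda)$, and combining with the previous identity yields $N\epsilon\,e(\alpha)=0$, i.e.\ $l(\alpha)=L_B(\alpha)$.

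The main obstacle is producing such a $\beta$-walk with the prescribed endpoint. Lemma \ref{lemma_11}, applied to $(\beta_1,\ldots,\beta_n)$ from $L_\lambda$, does produce, for any magnitudes $(m_i)$ bounded by $a\,|l(\lambda)|$, a nonzero iterated adjoint ending at $\lambda+\sum\epsilon_i m_i\beta_i$; however the signs $(\epsilon_i)$ are forced by the lemma and need not match $(\mathrm{sign}(\epsilon c_i))$. I plan to address this either by iterating Lemma \ref{lemma_11} with successive intermediate base points---each still with large $|l|$, since steps change $l$ by bounded amounts---so as to fill out the entire $\beta$-walk-orbit of $\lambda$ inside $(\lambda+\langle B\rangle)\cap\Supp\mathcal{L}$, or by using the trace identity in the proof of Lemma \ref{lemma_10} to predict the forced signs in terms of the sign of $l(\lambda)$, and then choosing that sign so the forced signs line up with $(\mathrm{sign}(\epsilon c_i))$. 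The sign-bookkeeping is the most delicate part; once it is handled, the rest of the argument is immediate.
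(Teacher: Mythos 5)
Your first computation (the $\alpha$-chain via Lemma \ref{lemma_10} and telescoping Lemma \ref{lemma_2}) is fine, but the entire difficulty of Lemma \ref{lemma_13} is concentrated in the step you defer, and neither of your proposed remedies closes it. Lemma \ref{lemma_11} only asserts that for each magnitude vector $\mathbf{m}$ \emph{there exists} a sign vector making the iterated adjoint nonzero; you have no control over which one, so you cannot steer the $\beta$-walk to the prescribed endpoint $\lambda+\epsilon\sum c_i\beta_i$. Your second remedy fails for a concrete reason: when both $N^{\pm}$ are finite, the trace identity in the proof of Lemma \ref{lemma_10} forces the working direction for $\beta_i$ at base point $\mu$ according to the sign of $l(\mu)/l(\beta_i)$, and these forced signs are fixed once $\lambda$ is fixed --- they do not flip coherently with your free parameter $\epsilon$, whereas the required signs $\mathrm{sign}(\epsilon c_i)$ all flip together. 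So if, say, $c_1,c_2>0$ but the forced directions for $\beta_1,\beta_2$ disagree, neither choice of $\epsilon$ works. Moreover some $\beta_i\in\Pi$ may have $l(\beta_i)=0$ (elements of $\Pi\setminus\Sigma$), in which case the trace identity determines nothing, and iterating through intermediate base points only propagates the same problem. Finally, recall that here $\mathcal{L}\in\mathcal{G}'$, so $\Supp\mathcal{L}$ need not be all of $\Lambda$, which makes hitting a prescribed lattice point even less automatic.

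The paper's proof sidesteps exactly this obstruction by a counting argument rather than by steering. With $\alpha_{n+1}=\alpha$ adjoined to $B$, it accepts whatever signs Lemma \ref{lemma_11} produces and considers the resulting map $\Theta_\lambda:\mathbf{m}\mapsto\sum_i\epsilon_i m_i\alpha_i$ from the set of admissible $(n+1)$-tuples of magnitudes into a ball of radius $O(|l(\lambda)|)$. Since the domain has cardinality of order $|l(\lambda)|^{n+1}$ and the target of order $|l(\lambda)|^{n}$, unboundedness of $l$ lets one choose $\lambda$ so that $\Theta_\lambda$ is not injective. A collision $\Theta_\lambda(\mathbf{m})=\Theta_\lambda(\mathbf{m}')$ gives two valid expressions for $l$ at the \emph{same} (unprescribed) endpoint, namely $l(\lambda)+L_B(\mu)+\epsilon_{n+1}m_{n+1}l(\alpha)$ and $l(\lambda)+L_B(\mu')+\epsilon'_{n+1}m'_{n+1}l(\alpha)$; since $\{\alpha_1,\dots,\alpha_n\}$ is a $\Q$-basis the lattice relation forces $\epsilon_{n+1}m_{n+1}\neq\epsilon'_{n+1}m'_{n+1}$, and subtracting yields $l(\alpha)=L_B(\alpha)$. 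If you want to salvage your write-up, you should replace the $\beta$-walk construction by this pigeonhole mechanism; as it stands the proposal has a genuine gap at its central step.
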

\begin{proof}
For clarity, the proof is divided into 
four steps. 

\textit{Step one:} Additional notations are now introduced. 

Let $\vert\vert\,\,\vert\vert$ be a  euclidean norm on $\Lambda$, i.e.
the restriction of a usual norm on $\R \otimes\Lambda$.
For any positive real number $r$, the ball of radius $r$ is the set
$B(r)=\{\lambda\in\Lambda\vert\,
\vert\vert\lambda\vert\vert\leq r\}$.
There is a positive real numbers  $v$ such that
Card $B(r)\leq v r^n$ for all $r\geq 1$.

Fix $\alpha\in \Pi$. Set $B=\{\alpha_1,\dots,\alpha_n\}$, where $n$ is
the rank  of $\Lambda$ and set $\alpha_{n+1}=\alpha$.
Let $a=a(\alpha_1,\dots,\alpha_{n+1})$ be the constant of 
Lemma \ref{lemma_11} and set 
$b=\sum_{1\leq i\leq n+1}\,\vert\vert\alpha_i\vert\vert$. 
Also for $s\geq 0$, let 
$A(s)$ be the set of all $(n+1)$-uples $\mathbf{m}=(m_1,\dots,m_{n+1})$
of integers with $0\leq m_i\leq s$ for any $i$.

\textit{Step two:} There exists $r_0>1/ab$ such that 
$(a r)^{n+1}>v  (abr)^n$ for any $r>r_0$. Equivalently, we
have:
\[ \Card\, A(ar)>\Card\, B(abr)\]
for any $r>r_0$.

\textit{Step three:} Let $\lambda\in\Supp\mathcal{L}$. 
Define two maps:
$\epsilon_{\lambda}: A(a\vert l(\lambda)\vert )\rightarrow 
\{\pm 1\}^{n+1}$ and
$\Theta_\lambda:A(a\vert l(\lambda)\vert )
\rightarrow B(ab\vert l(\lambda)\vert )$, by the following requirement.
By Lemma \ref{lemma_11}, for each $(n+1)$-uple
$\mathbf{m}=(m_1,\dots,m_{n+1})\in 
A(a\vert l(\lambda)\vert )$, there exists
$(\epsilon_1,\dots,\epsilon_{n+1})\in\{\pm 1\}^{n+1}$ such that

\centerline{$\ad^{m_{n+1}}(L_{\epsilon_{n+1}\alpha_{n+1}})\dots
\ad^{m_1}(L_{\epsilon_1\alpha_1}) (L_\lambda)
\neq0$.}

\noindent Thus set

$\epsilon_{\lambda}(\mathbf{m})=(\epsilon_1,,\dots,\epsilon_{n+1})$, and
$\Theta_\lambda(\mathbf{m})=
\sum_{1\leq i\leq n+1}\, \epsilon_i m_i\alpha_i$. 

\noindent From the definition of the maps $\epsilon_\lambda$ and
$\Theta_\lambda$, it follows  that 

\noindent($\ast$)\hskip1.4cm $\ad^{m_{n+1}}(L_{\epsilon_{n+1}\alpha_{n+1}})\dots
\ad^{m_1}(L_{\epsilon_1\alpha_1})
(L_\lambda)=c L_{\lambda+\Theta_\lambda(\mathbf{m})}$,

\noindent where c is a non-zero scalar. \\
Moreover, we have
\begin{align*}
\vert\vert \Theta_\lambda({\bf m})\vert\vert\leq
&
\sum_{1\leq i\leq n+1}\, m_i\vert\vert\alpha_i\vert\vert \\
\leq 
&a\vert l(\lambda)\vert
\sum_{1\leq i\leq n+1}\, \vert\vert\alpha_i\vert\vert \\
\leq 
&ab \vert l(\lambda)\vert,
\end{align*}
\noindent and therefore $\Theta_\lambda$ takes value in $B(ab\vert
l(\lambda)\vert)$.

\textit{Step four:} Since the function $l$ is unbounded, one can choose 
$\lambda\in\Supp\mathcal{L}$ such that $\vert l(\lambda)\vert  >r_0$.

It follows from Step two that $\Theta_{\lambda}$ is not
injective. Choose two distinct elements $\mathbf{m}$, $\mathbf{m'}\in A(a\vert
l(\lambda)\vert)$ with  $\Theta_\lambda(\mathbf{m})=\Theta_\lambda(\mathbf{m'})$.
Set $\mathbf{m}=(m_1,\dots,m_{n+1})$, $\mathbf{m'}=(m_1',\dots,m_{n+1}')$,
$\epsilon_{\lambda}(\mathbf{m})=(\epsilon_1,\dots,\epsilon_{n+1})$ and 
$\epsilon_{\lambda}(\mathbf{m'})=(\epsilon_1',\dots,\epsilon_{n+1}')$. 
Using Identity ($\ast$), we have:
\begin{align*}
l(\lambda+\Theta_\lambda(\mathbf{m}))= 
&l(\lambda)+\sum_{1\leq k\leq n+1}\,
\epsilon_i m_i l(\alpha_i)\\
=
&l(\lambda)+L_B(\mu)+\epsilon_{n+1}m_{n+1} l(\alpha_{n+1}), 
\end{align*}
where $\mu=\sum_{1\leq k\leq n}\, \epsilon_i m_i\alpha_i$.
Similarly, we get 

\centerline{$l(\lambda+\Theta_\lambda({\bf m'}))=
l(\lambda)+L_B(\mu')+\epsilon_{n+1}'m_{n+1}' l(\alpha_{n+1}),$}

\noindent where $\mu'=\sum_{1\leq k\leq n}\, \epsilon_i' m_i' \alpha_i$.
Therefore, we get:
\begin{align*}
&(\epsilon_{n+1}m_{n+1}-\epsilon_{n+1}'m_{n+1}') l(\alpha_{n+1})
+L_B(\mu-\mu')=0. \\
&(\epsilon_{n+1}m_{n+1}-\epsilon_{n+1}'m_{n+1}') \alpha_{n+1}
+\mu-\mu'=0.
\end{align*}
Since $\{\alpha_1,\dots,\alpha_n\}$ is a $\Q$-basis, 
we have $\epsilon_{n+1}m_{n+1}\neq \epsilon_{n+1}'m_{n+1}'$.
It follows from the previous two identities that
$l(\alpha_{n+1})=L_B(\alpha_{n+1})$, which proves the lemma.
\end{proof}

A function $m:\Supp\mathcal{L}\rightarrow \C$ is called
\textit{additive} if there is an additive function
$\widetilde{m}:\Lambda\rightarrow \C$ whose restriction
to $\Supp \mathcal{L}$ is $m$. Since $\Supp \mathcal{L}$ generates
$\Lambda$, the function $M$ is uniquely determined by $m$.

\begin{lemma}\label{lemma_14} Assume that $l$ is an unbounded  function. 
Then the function $l$ is additive.
\end{lemma}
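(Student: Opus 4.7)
Set $f := l - L_B$ on $\Supp\mathcal{L}$; the goal is $f \equiv 0$. Two properties are immediate. By Lemma \ref{lemma_13}, $f$ vanishes on $\Pi$. By Lemma \ref{lemma_2} combined with the additivity of $L_B$, $f$ is ``quasi-additive'': $f(\lambda + \mu) = f(\lambda) + f(\mu)$ whenever $\lambda, \mu, \lambda+\mu \in \Supp\mathcal{L}$ and $[L_\lambda, L_\mu] \neq 0$.

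First I would fix $\mu \in \Supp\mathcal{L}$ with $l(\mu) \neq 0$ and apply Lemma \ref{lemma_11} to $L_\mu$ with the $\Q$-basis $B = \{\alpha_1, \dots, \alpha_n\} \subset \Pi$. For each tuple $(m_1, \dots, m_n)$ with $0 \leq m_i \leq a\vert l(\mu)\vert$, Lemma \ref{lemma_11} yields signs $\epsilon_i \in \{\pm 1\}$ such that the iterated adjoint $\ad^{m_n}(L_{\epsilon_n \alpha_n}) \cdots \ad^{m_1}(L_{\epsilon_1 \alpha_1})(L_\mu)$ is nonzero. Hence $\mu + \sum_i \epsilon_i m_i \alpha_i$ lies in $\Supp\mathcal{L}$, and chaining Lemma \ref{lemma_2} along the nonzero brackets together with $f(\alpha_i)=0$ gives $f(\mu + \sum_i \epsilon_i m_i \alpha_i) = f(\mu)$. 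Thus $f$ is constant on the ``Lemma \ref{lemma_11} orbit'' of $\mu$.

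To conclude $f(\mu)=0$ it suffices to produce one element of this orbit that lies in $\Pi$. For this I would iterate the construction starting from several orbit endpoints and invoke a pigeonhole argument in the spirit of Step 2 in the proof of Lemma \ref{lemma_13}, using an extra operator $\alpha_{n+1}\in\Pi$: non-injectivity of the resulting endpoint map forces two distinct tuples to share the same target, and combining the identity with $l=L_B$ on $\{\alpha_1,\dots,\alpha_{n+1}\}$ (Lemma \ref{lemma_13}) should extract an orbit element of $\mu$ lying in $\Pi$. The case $l(\mu)=0$ reduces to the nonzero case by bracketing $L_\mu$ with $L_\alpha$ for some $\alpha\in\Sigma$ (nonempty by Lemma \ref{lemma_12}) to obtain an element of nonzero $l$-value, and then applying quasi-additivity.

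The main obstacle is precisely this last combinatorial step. Unlike Lemma \ref{lemma_13} — where the pairwise pigeonhole identity directly yields $l(\alpha_{n+1})=L_B(\alpha_{n+1})$ because the starting vector's $l$-value cancels — here one wants information about the starting vector itself, so a single application of the pigeonhole is insufficient. One therefore needs a more refined setup, either by playing off several starting vectors against each other, or by arguing that $\Supp\mathcal{L}$ and $\Pi$ coincide on a sufficiently rich subset, exploiting the simplicity of $\mathcal{L}$ together with the unboundedness of $l$. This is where the argument genuinely differs from Lemma \ref{lemma_13} and constitutes the heart of the proof.
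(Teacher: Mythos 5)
There is a genuine gap, and you have located it yourself: you cannot show that the Lemma~\ref{lemma_11} orbit of an arbitrary $\mu\in\Supp\mathcal{L}$ meets $\Pi$, and the paper never attempts to. The missing idea is that one should not try to connect $\mu$ to $\Pi$ by an explicit chain of nonzero brackets at all; instead one uses the simplicity of $\mathcal{L}$ via the ideal trick of Lemma~\ref{lemma_3}, exactly as in Lemmas~\ref{lemma_12} and~\ref{lemma_16}. Concretely: let $N=\{\lambda\in\Supp\mathcal{L}\mid f(\lambda)\neq 0\}$ (your notation), $M$ its complement, $\mathcal{A}=\oplus_{\lambda\in M}\mathcal{L}_\lambda$, $\mathcal{B}=\oplus_{\lambda\in N}\mathcal{L}_\lambda$. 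Your quasi-additivity of $f$ is precisely what shows $[\mathcal{A},\mathcal{B}]\subset\mathcal{B}$ (if $f(\lambda)=0$, $f(\mu)\neq 0$ and $[L_\lambda,L_\mu]\neq 0$, then $f(\lambda+\mu)=f(\mu)\neq 0$). Hence $\mathcal{I}=\mathcal{B}+[\mathcal{B},\mathcal{B}]$ is a graded ideal. It meets $\mathcal{L}_0$ trivially: $0\notin N$, and a degree-zero element of $[\mathcal{B},\mathcal{B}]$ would require some $\lambda\in N$ with $[L_\lambda,L_{-\lambda}]\neq 0$, i.e.\ $\lambda\in\Pi\cap N$, which is empty by Lemma~\ref{lemma_13}. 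So $\mathcal{I}$ is proper, hence zero, hence $N=\emptyset$ and $l=L_B$ on $\Supp\mathcal{L}$.

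So the first half of your plan (the function $f$, its vanishing on $\Pi$, its quasi-additivity) is correct and is exactly the input the paper uses; but the ``heart of the proof'' you were searching for in a second pigeonhole argument does not exist. The combinatorial machinery of Lemmas~\ref{lemma_10}--\ref{lemma_13} is needed only to force $l=L_B$ on $\Pi$; once that is known, simplicity does all the remaining work in three lines. Your proposed route of ``playing off several starting vectors against each other'' would at best reprove a weaker statement with much more effort, and it is not clear it can be completed, since there is no reason a given $\mu$ should be linked to $\Pi$ by iterated $\ad$'s of the $L_{\pm\alpha_i}$ alone.
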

\begin{proof}
It follows from the previous lemma that there exists
an additive function $L:\Lambda\rightarrow \C$ such that
$l(\alpha)=L(\alpha)$ for any $\alpha \in \Pi$.

 Set $M=\{\lambda\in \Supp\mathcal{L}\vert\, l(\lambda)=L(\lambda)\}$  
and let $N$ be its complement.  Set 
$\mathcal{A}=\oplus_{\lambda\in M}\, \mathcal{L}_{\lambda}$,
$\mathcal{B}=\oplus_{\lambda\in N}\, \mathcal{L}_{\lambda}$
and $\mathcal{I}=\mathcal{B}+ [\mathcal{B},\mathcal{B}]$.
We have $\mathcal{L}=\mathcal{A}+\mathcal{B}$. Since $l(\lambda+\mu)=
l(\lambda)+l(\mu)$ whenever $[L_\lambda,L_\mu]\neq 0$
(Lemma \ref{lemma_2}), we also have 
$[\mathcal{A},\mathcal{B}]\subset \mathcal{B}$. Therefore, 
it follows from Lemma \ref{lemma_3} that $\mathcal{I}$ is
an ideal.

By assumption, $N$ does not contains $0$, nor any element in
$\Pi$ and thus $\mathcal{I}\cap\mathcal{L}_0=\{0\}$. Since  
$\mathcal{I}\neq\mathcal{L}$, it follows that $\mathcal{I}=0$.
Therefore $N=\emptyset$, which implies that $l$ is additive. 
\end{proof}

An algebra $\mathcal{L}\in \mathcal{G}'$ is called \textit{integrable} if
the function $l:\Supp\mathcal{L}\rightarrow \C$ is bounded
(it is similar to the definition of integrability in \cite{M1},\cite{M2}).
Otherwise the Lie algebra
$\mathcal{L}$ will be called \textit{non-integrable}. 

Recall that $\Sigma$ is the set of all
$\alpha$ such that the Lie algebra
$\mathfrak{s}(\alpha)=\mathcal{L}_{-\alpha}\oplus \mathcal{L}_0\oplus \mathcal{L}_{\alpha}$ is isomorphic to $\mathfrak{sl}(2)$.

\begin{lemma}\label{lemma_15} Assume that $\mathcal{L}\in \mathcal{G'}$ is integrable. For any $\beta\in\Sigma$, we have:

\centerline{$l(\Lambda)\subset \Z.l(\beta)/2$.}
\end{lemma}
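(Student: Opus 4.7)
The plan is to study, for each $\lambda \in \Supp \mathcal{L}$, the $\mathfrak{s}(\beta)$-submodule $V \subset \mathcal{L}$ generated by $L_\lambda$, and to read off the integrality of $2l(\lambda)/l(\beta)$ from the classical representation theory of $\mathfrak{sl}(2)$.

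First I would check that $V$ is finite-dimensional. Since $V$ is generated from $L_\lambda$ by iterated brackets with $L_{\pm\beta}$ (and the diagonal action of $L_0$), it is supported on $\{\lambda + n\beta : n \in \Z\}$. Because $\dim \mathcal{L}_{\lambda+n\beta} \leq 1$, the dimension of $V$ equals the cardinality of $S := \{n \in \Z : V_{\lambda+n\beta} \neq 0\}$. An iterated application of Lemma~\ref{lemma_2} shows that whenever $n \in S$, the corresponding one-dimensional piece is reached through a chain of successive non-zero brackets, so $l(\lambda + n\beta) = l(\lambda) + n\,l(\beta)$. Since $\mathcal{L}$ is integrable, $l$ is bounded, so $|l(\lambda) + n\,l(\beta)|$ is uniformly bounded in $n \in S$, forcing $S$ to be finite.

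Next I would set up the $\mathfrak{sl}(2)$-triple. Since $\beta \in \Sigma$, one has $[L_\beta, L_{-\beta}] = d\,L_0$ for some non-zero scalar $d$. Taking $e := L_\beta$, $f := L_{-\beta}$ and $h := d\,L_0$, the identity $[h,e] = d\,l(\beta)\,e$ together with $[e,f] = h$ forces the normalization $d = 2/l(\beta)$ in order to recover the standard $\mathfrak{sl}(2)$-relations. With this normalization the eigenvalue of $h$ on any weight vector $L_\mu \in V$ equals $2l(\mu)/l(\beta)$.

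Finally, the classical theory of $\mathfrak{sl}(2)$ asserts that the eigenvalues of $h$ on a finite dimensional module are integers; in particular $2l(\lambda)/l(\beta) \in \Z$, i.e.\ $l(\lambda) \in \Z\cdot l(\beta)/2$. Since $\lambda \in \Supp \mathcal{L}$ was arbitrary, this yields $l(\Supp \mathcal{L}) \subset \Z\cdot l(\beta)/2$. The main obstacle is really the finite-dimensionality step: one has to track that along every non-zero iterated bracket Lemma~\ref{lemma_2} indeed propagates the additivity of $l$, so that the a priori unbounded $\mathfrak{s}(\beta)$-orbit collapses to finitely many weights once $l$ is bounded. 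Granted this, the $\mathfrak{sl}(2)$-integrality is completely standard.
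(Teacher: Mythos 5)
Your proof is correct and follows the same route as the paper: build the $\mathfrak{sl}(2)$-triple $(e,f,h)$ with $h=2L_0/l(\beta)$ and read off integrality of the $h$-eigenvalue $2l(\lambda)/l(\beta)$ from finite-dimensional $\mathfrak{sl}(2)$-theory. The only difference is that the paper simply asserts that integrability makes $\mathcal{L}$ a direct sum of finite-dimensional $\mathfrak{s}(\beta)$-modules, whereas you supply the justification (boundedness of $l$ plus the propagation of additivity along non-zero iterated brackets via Lemma~\ref{lemma_2}), which is a worthwhile elaboration rather than a departure.
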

\begin{proof} Since $\beta\in\Sigma$, there is a
$\mathfrak{sl}(2)$-triple $(e,f,h)$ with $e\in\mathcal{L}_\beta$,
$f\in\mathcal{L}_{-\beta}$ and $h=2L_0/l(\beta)$.
Since $\mathcal{L}$  is integrable, it is a direct sum of
$\mathfrak{s}(\beta)$-modules of finite dimension and the eigenvalues
of $h$ are integers. Thus the eigenvalues of $L_0$ are integral 
multiple of $l(\beta)/2$. 
\end{proof}

\begin{lemma}\label{lemma_16}  Assume that $\mathcal{L}\in \mathcal{G'}$ is integrable.
There exists $\alpha\in\Sigma$, and an integer $N\in \Z_{>0}$ such
that: 

\centerline{$l(\Lambda)=[-N,N].l(\alpha)$.}
\end{lemma}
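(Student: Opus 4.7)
The strategy is to produce $\alpha \in \Sigma$ whose $|l(\alpha)|$ realises the minimum of $|l|$ on $l(\Lambda) \setminus \{0\}$, then fill the string $[-N,N]l(\alpha)$ by $\mathfrak{sl}(2)$-theory applied to $\mathfrak{s}(\alpha)$. By Lemma~\ref{lemma_12}, $\Sigma$ is non-empty; by integrability $l$ is bounded; and by Lemma~\ref{lemma_15} applied to any $\gamma \in \Sigma$, $l(\Lambda) \subset \tfrac{1}{2} l(\gamma)\,\Z$, so $l(\Lambda)$ is finite. We pick $\beta \in \Sigma$ \emph{minimising} $|l(\beta)|$, set $a := l(\beta)/2$ (WLOG $a > 0$), and let $c > 0$ denote the minimum of $|l|$ on $l(\Lambda)\setminus\{0\}$. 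Since $l(\Lambda) \subset a\,\Z$ and $2a = l(\beta) \in l(\Lambda)$, we have $c \in \{a, 2a\}$.

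\textbf{Ruling out $c = a$ (the crux).} Suppose there exists $\alpha_0 \in \Supp\mathcal{L}$ with $l(\alpha_0) = a$. Decompose $\mathcal{L} = A \oplus B$, where $A$ (resp.\ $B$) collects the homogeneous components whose $l$-values are even (resp.\ odd) multiples of $a$. By Lemma~\ref{lemma_2}, $[A,B] \subset B$ and $[B,B] \subset A$, so Lemma~\ref{lemma_3} says $I := B + [B,B]$ is a graded ideal. Since $L_{\alpha_0} \in B$, $I$ is non-zero, and simplicity forces $I = \mathcal{L}$, hence $A = [B,B]$. In particular $L_0 = \sum_i c_i [L_{\alpha_i}, L_{-\alpha_i}]$, where degree forces the partners to be opposite and each $l(\alpha_i)$ is an odd multiple of $a$. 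At least one term is non-zero, yielding $\alpha_i \in \Sigma$. The minimality of $\beta$ in $\Sigma$ gives $|l(\alpha_i)| \geq 2a$, and the parity constraint sharpens this to $|l(\alpha_i)| \geq 3a$. But Lemma~\ref{lemma_15} applied to this $\alpha_i$ requires $l(\Lambda) \subset \tfrac{1}{2}l(\alpha_i)\,\Z$, while $a = l(\alpha_0) \in l(\Lambda)$ is not a multiple of $|l(\alpha_i)|/2 \geq 3a/2$: contradiction.

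\textbf{Filling the string.} Hence $c = 2a = l(\beta)$, and we take $\alpha := \beta \in \Sigma$. Lemma~\ref{lemma_15} gives $l(\Lambda) \subset \tfrac{1}{2}l(\alpha)\,\Z$; any $\mu$ with $l(\mu) = (2k+1)l(\alpha)/2$ would, via the finite-dimensional $\mathfrak{s}(\alpha) \cong \mathfrak{sl}(2)$-submodule it generates (finite-dimensional by integrability), produce an element of $l$-value $\pm l(\alpha)/2 < c$, contradicting minimality. Thus $l(\Lambda) \subset l(\alpha)\,\Z$. Let $N$ be the maximal integer with $N\,l(\alpha) \in l(\Lambda)$; it is finite by integrability and positive since $l(\alpha) \in l(\Lambda)$. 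Picking $\mu$ with $l(\mu) = N\,l(\alpha)$, the $\mathfrak{s}(\alpha)$-module generated by $L_\mu$ is irreducible of highest weight $2N$ (a higher weight would force an $l$-value exceeding $Nl(\alpha)$), and its weights $2N, 2N-2, \ldots, -2N$ correspond to the $l$-values $N l(\alpha), (N-1)l(\alpha), \ldots, -N l(\alpha)$. This gives $[-N,N]\,l(\alpha) \subset l(\Lambda)$; the reverse inclusion follows from $l(\Lambda) \subset l(\alpha)\,\Z$ together with the definition of $N$.

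\textbf{Main obstacle.} The delicate step is the mod-$2$ ideal argument ruling out $c = l(\beta)/2$: it requires simultaneously the simplicity of $\mathcal{L}$ (to upgrade $I$ to all of $\mathcal{L}$) and the minimality of $\beta$ in $\Sigma$ (to jump from $|l(\alpha_i)| \geq 2a$ to $\geq 3a$ via parity), so as to contradict Lemma~\ref{lemma_15} at $\alpha_i$. The rest of the proof is routine $\mathfrak{sl}(2)$-theory.
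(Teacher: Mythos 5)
Your proof is correct, and it follows the same skeleton as the paper's (Lemma~\ref{lemma_12} for $\Sigma\neq\emptyset$, Lemma~\ref{lemma_3} to manufacture a graded ideal from a decomposition of $\mathcal{L}$ dictated by $l$-values, Lemma~\ref{lemma_15} for divisibility, and $\mathfrak{sl}(2)$-theory to force a symmetric interval), but the middle step is organized genuinely differently. The paper takes an \emph{arbitrary} $\beta\in\Sigma$ and, if $\Im l\not\subset \Z l(\beta)$, splits $\mathcal{L}$ according to whether $l(\lambda)$ lies in $\Z l(\beta)$ or not; simplicity then produces $\alpha\in\Sigma$ with $l(\alpha)\notin\Z l(\beta)$, and applying Lemma~\ref{lemma_15} in both directions gives $l(\beta)=ml(\alpha)/2$ and $l(\alpha)=nl(\beta)/2$ with $mn=4$ and $n$ odd, forcing $l(\alpha)=\pm l(\beta)/2$ and hence $\Im l\subset\Z l(\alpha)$. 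You instead choose $\beta$ \emph{extremally} and split by the parity of the integer $n$ in $l=na$; minimality plus the odd-multiple constraint then collide with Lemma~\ref{lemma_15}. Both routes work: the paper's avoids the extremal choice (whose existence you should justify by noting $l(\Sigma)\subset\tfrac12 l(\gamma)\,\Z\setminus\{0\}$ for any fixed $\gamma\in\Sigma$, so the relevant set of positive integers $|n|$ has a minimum) at the price of the small $mn=4$ computation. Two cosmetic points: since $l$ is $\C$-valued, ``WLOG $a>0$'' should be read as comparing the integer coefficients $n$ rather than the complex values themselves; and the reverse inclusion $l(\Lambda)\subset[-N,N]l(\alpha)$ also requires excluding $n<-N$, by the same $\mathfrak{sl}(2)$ symmetry (such an $l$-value would sit in a component of highest weight at least $2|n|>2N$) --- a step you and the paper both leave implicit.
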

\begin{proof}
First we claim that
$\Im l\subset \Z.l(\alpha)$ for some $\alpha\in\Sigma$.

By Lemma \ref{lemma_12}, $\Sigma\neq\emptyset$. Choose
any $\beta\in \Sigma$. If $\Im l\subset \Z.l(\beta)$, 
the claim is proved. 

Otherwise, set $A=\{\lambda\in \Supp\mathcal{L}\vert\,l(\lambda)\in
\Z.l(\alpha)\}$ and let
$B$ be its complement. Also set 
$\mathcal{A}=\oplus_{\lambda\in A}\, \mathcal{L}_\lambda$ 
and
$\mathcal{B}=\oplus_{\lambda\in B}\,\mathcal{L}_\lambda$.
It is clear that $\mathcal{L}=\mathcal{A}\oplus \mathcal{B}$ and
$[\mathcal{A},\mathcal{B}]\subset \mathcal{B}$. By Lemma \ref{lemma_3}, 
$\mathcal{B}+[\mathcal{B},\mathcal{B}] $ is an ideal, therefore
there exists $\alpha\in B$ such that
$[L_\alpha,L_{-\alpha}]$ is
a non-zero multiple of $L_0$. Since $l(\alpha)\notin
\Z.l(\beta)$, we have $l(\alpha)\neq 0$, and so
$\alpha$ lies in $\Sigma$. 

By Lemma \ref{lemma_15}, there are integers $m$ and $n$ such that
$l(\beta)=ml(\alpha)/2$ and $l(\alpha)=nl(\beta)/2$, which
implies $mn=4$. Moreover, 
$n/2$ is not an integer. Thus $n=\pm 1$ and
$l(\alpha)=\pm l(\beta)/2$. By Lemma \ref{lemma_15},
$\Im l\subset \Z.l(\beta)/2=\Z.l(\alpha)$, and therefore the
claim is proved.

Since $l$ is bounded, there a finite set $X\subset \Z$ such that 
$\Im l=X.l(\alpha)$. Since $\mathfrak{s}(\alpha)$ is isomorphic to
$\mathfrak{sl}(2)$, $\mathcal{L}$ is a direct sum of finite dimensional
simple $\mathfrak{s}(\alpha)$-modules.
Thus it follows that $X$ is necessarily a
symmetric interval $[-N, N]$.
\end{proof}

Let $\mathcal{L}\in \mathcal{G'}$ be integrable. The \textit{type} of $\mathcal{L}$
is the integer $N$ such that $l(\Lambda)=[-N,N].l(\alpha)$ for
some $\alpha\in\Sigma$.

\begin{thm}\label{theorem_1} (Alternative for the class $\mathcal{G}$) 
Let $\mathcal{L}$ be a  Lie
algebra in the class $\mathcal{G}$. Then $\mathcal{L}$ satisfies  one of the
following two assertions:
\begin{enumerate}
\item[(i)] The function $l:\Lambda\rightarrow \C$
is additive, or
\item[(ii)] there exists $\alpha\in\Sigma$ such
that $l(\Lambda)=[-N,N].l(\alpha)$, for some positive integer $N$.
\end{enumerate}
\end{thm}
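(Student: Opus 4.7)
The plan is a direct synthesis of the lemmas already established in this section. Since $\mathcal{L}\in\mathcal{G}$ means $\Supp\mathcal{L}=\Lambda$, the function $l:\Lambda\to\C$ is defined everywhere, and one simply splits on whether $l$ is bounded or unbounded on $\Lambda$. These two cases are exhaustive, and in each one the relevant conclusion is (almost) packaged in a single lemma above.

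If $l$ is unbounded, Lemma \ref{lemma_14} immediately yields that $l$ is additive, giving assertion (i). The real content powering that lemma is the chain \ref{lemma_10}--\ref{lemma_13}: the $\mathfrak{sl}(2)$/Heisenberg eigenvalue bookkeeping of Lemma \ref{lemma_10} produces long chains of non-zero $\ad$-iterates whenever $|l(\lambda)|$ is large; iterating this along a $\Q$-basis extracted from $\Pi$ by Lemma \ref{lemma_12} gives Lemma \ref{lemma_11}; Lemma \ref{lemma_13}'s counting argument, which pits the exponential growth of the number of distinct $(n+1)$-tuples against the polynomial growth $\leq v r^n$ of lattice balls, then forces $l$ to coincide on all of $\Pi$ with the linear extension $L_B$ of $l|_B$; finally, the ideal argument of Lemma \ref{lemma_14}, using Lemma \ref{lemma_3}, propagates the agreement $l = L_B$ from $\Pi$ to all of $\Lambda$.

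If $l$ is bounded (the integrable case), Lemma \ref{lemma_16} directly produces $\alpha\in\Sigma$ and $N\in\Z_{>0}$ with $l(\Lambda)=[-N,N]\cdot l(\alpha)$, which is assertion (ii). The ingredients are: Lemma \ref{lemma_15}, noting that boundedness forces finite-dimensional decomposition under any $\mathfrak{s}(\beta)$ with $\beta\in\Sigma$, so $h=2L_0/l(\beta)$ acts with integer eigenvalues and hence $l(\Lambda)\subset\tfrac{1}{2}\Z\cdot l(\beta)$; a second application of Lemma \ref{lemma_3} to manufacture an $\alpha\in\Sigma$ with $\Im l\subset \Z\cdot l(\alpha)$; and finite-dimensional $\mathfrak{sl}(2)$-representation theory to force the image to be a symmetric interval.

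In short, Theorem \ref{theorem_1} is an immediate assembly of Lemmas \ref{lemma_14} and \ref{lemma_16}, and no new obstacle appears at the level of the theorem itself. The genuine technical heart of the section, and what I would expect to be the main difficulty if one rebuilt the argument from scratch, is the volume/counting step in the proof of Lemma \ref{lemma_13}; once that is in hand, the dichotomy falls out by a two-line case split.
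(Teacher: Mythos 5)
Your proposal is correct and is exactly the paper's own proof: the authors dispose of Theorem~\ref{theorem_1} in one line by splitting on whether $l$ is bounded, citing Lemma~\ref{lemma_14} for the unbounded case and Lemma~\ref{lemma_16} for the bounded one. Your accompanying account of where the weight of the argument really sits (the counting step in Lemma~\ref{lemma_13}) matches the structure of the section as written.
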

\begin{proof}
Theorem \ref{theorem_1} follows from Lemmas \ref{lemma_14} and \ref{lemma_16}.
\end{proof}

\part{Classification of integrable Lie algebras
in ${\cal G}$}\label{chapter_II}

\section{Notations and conventions for chapter \ref{chapter_II}}\label{sect_4}
Let $\Lambda$ be a lattice.
Let $\mathcal{L}$ be an integrable Lie algebra
in the class  $\mathcal{G'}$ of type $N$. By definition,
the spectrum of $\ad(L_0)$ is 
$[-N,N].x$, for some scalar $x\in \C^\ast$. 
After a renormalization of $L_0$, 
it can be assumed that $x=1$. 

Set $\mathcal{L}^i=\oplus_{l(\beta)=i}\, \mathcal{L}_\beta$. There is a
decomposition:

\centerline{$\mathcal{L}=\oplus_{i\in [-N,+N]}\, \mathcal{L}^i$.}

\noindent Relative to this decomposition, $\mathcal{L}$ is weakly 
$\Z$-graded: in
general, the homogeneous components $\mathcal{L}^i$ are infinite dimensional.
For any integer $i$, set $\Sigma_i=\{\beta\in\Sigma\vert \,l(\beta)=i\}$.
Similarly, there is a decomposition

\centerline{$\Sigma=\cup_{i\in [-N,+N]}\,\Sigma_i$.}

\section{The Lie subalgebra $\mathcal{K}$}\label{sect_5}

Let $e,f,h$ be the standard basis of $\mathfrak{sl}(2)$.
For a finite dimensional $\mathfrak{sl}(2)$-module $V$,
set $V^i=\{v\in V\vert h.v=2i v\}$. A simple finite dimensional
$\mathfrak{sl}(2)$-module is called \textit{spherical} if 
$V^0\neq 0$, or, equivalently if the eigenvalues of 
$h$ are even integers,  or, equivalently if
$\dim V$ is odd. For a spherical module $V$, the elements
of  $V^0$ are called the \textit{spherical vectors} of $V$.

Let $\pi: \mathfrak{sl}(2)\rightarrow \C$
defined by $\pi(h)=1$, $\pi(e)=\pi(f)=0$.
Let $U,V$ be two spherical simple finite dimensional
$\mathfrak{sl}(2)$-modules, and let
$b:U\otimes V\rightarrow \mathfrak{sl}(2)$ be a non-zero
$\mathfrak{sl}(2)$-morphism. Define $B:U\times V\rightarrow \C$
by $B(u,v)=\pi(b(u\otimes v))$. Denote respectively by $K(U)\subset U$ and 
$K(V)\subset V$
the left and the right kernel of the bilinear map $B$.

\begin{lemma}\label{lemma_17} (with the previous notations).
Assume that $b\neq 0$ and $\dim V\geq \dim U$.
Then one of the following statements holds:
\begin{enumerate}
\item[(i)] We have $\dim U=\dim V$, 
$K(U)=U^0$ and $K(V)=V^0$, or
\item[(ii)] we have $\dim U=2m-1$,  $\dim V=2m+1$, $K(U)=0$ and
$K(V)=V^m\oplus V^{-m}$ for some positive integer $m$.
\end{enumerate}
\end{lemma}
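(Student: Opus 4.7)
The plan is to use weight decomposition together with the $\mathfrak{sl}(2)$-equivariance of $b$ to reduce the computation of $K(U)$ and $K(V)$ to a short three-term recursion. Write $\dim U=2p+1$, $\dim V=2q+1$ with $p\leq q$. First, observe that $\pi$ is the projection of $\mathfrak{sl}(2)$ onto its zero-weight subspace $\C h$; since $b(U^i\otimes V^j)\subseteq \mathfrak{sl}(2)^{i+j}$ by equivariance, $B(u,v)=\pi(b(u,v))$ vanishes unless $\deg u+\deg v=0$. Choosing weight-vector bases $u_i$ of $U^i$ and $v_j$ of $V^j$, define $c_i\in\C$ by $b(u_i,v_{-i})=c_ih$ whenever both weight spaces are nonzero. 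The kernels $K(U),K(V)$ are then homogeneous, with $U^i\subseteq K(U)\iff c_i=0$, while $V^j\subseteq K(V)$ iff $c_{-j}=0$ or $U^{-j}=0$.

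Since $b\neq 0$ and the adjoint representation is simple, $b$ is surjective; hence the adjoint occurs in $U\otimes V$. By Clebsch--Gordan this forces $|p-q|\leq 1$, and combined with $p\leq q$ it leaves exactly $q=p$ (case (i)) or $q=p+1$ (case (ii), with $m=p+1$). In both cases the adjoint has multiplicity one, so $b$ is unique up to a scalar.

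Next, I would apply $e$ and $f$, as derivations, to $b(u_i,v_{-i})=c_ih$ together with $b(u_a,v_{1-a})=\gamma_a e$ and $b(u_a,v_{-a-1})=\mu_a f$ (the only other weight matches that can land in $\mathfrak{sl}(2)$). Using $eu_i=(p-i)(p+i+1)u_{i+1}$, $fu_i=u_{i-1}$ and the analogous formulas for $v_j$ with $q$ in place of $p$, the derivation identity $x\cdot b(u,v)=b(xu,v)+b(u,xv)$ yields
\[
\gamma_a=-(c_{a-1}+c_a),\qquad (p-i)(p+i+1)\gamma_{i+1}+(q+i)(q-i+1)\gamma_i=-2c_i,
\]
with the convention $c_j=0$ whenever $U^j=0$ or $V^{-j}=0$. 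Eliminating $\gamma$ produces a three-term linear recursion on the $c_i$.

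Solving this recursion finishes the proof. In case (i) the unique nonzero solution up to scalar is $c_j=(-1)^{p-j}(j/p)c_p$, which vanishes exactly at $j=0$; so $K(U)=U^0$ and $K(V)=V^0$. In case (ii) the solution $c_j=(-1)^{p-j}(p+j+1)(2p+1)^{-1}c_p$ is nonzero throughout $|j|\leq p$, so $K(U)=0$, while the weights of $V$ without a partner in $U$ are precisely $\pm(p+1)=\pm m$; hence $K(V)=V^m\oplus V^{-m}$. The main obstacle is the bookkeeping for the recursion: the two boundary conditions $c_j=0$ beyond the valid range would a priori overdetermine a second-order recursion, but consistency is guaranteed because $\dim\Hom_{\mathfrak{sl}(2)}(U\otimes V,\mathfrak{sl}(2))=1$ forces a unique one-parameter family of solutions.
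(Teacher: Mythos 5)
Your proof is correct, but it takes a genuinely different route from the paper's. Both arguments begin the same way: Clebsch--Gordan forces $\dim U=\dim V$ or $\dim U=\dim V-2$, and multiplicity one of the adjoint in $U\otimes V$ pins $b$ down up to a scalar. From there the paper avoids all computation by \emph{identifying} $b$ with a concrete model: in case (i) it identifies $V$ with $U^\ast$ and recognizes $B(u,v)=\langle h.u\mid v\rangle$, so the left kernel is visibly $\Ker h|_U=U^0$; in case (ii) it realizes $U=S^{2(m-1)}E$, $V=S^{2m}E$ and recognizes $B(F,G)=\langle XYF\mid G\rangle$, so $K(U)=0$ follows from injectivity of multiplication by $XY$ and $K(V)$ from $h$-invariance of the pairing. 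You instead work directly in a weight basis, use equivariance under $e$ and $f$ to derive the coupled relations $\gamma_a=-(c_{a-1}+c_a)$ and $(p-i)(p+i+1)\gamma_{i+1}+(q+i)(q-i+1)\gamma_i=-2c_i$ (both of which I checked), and solve the resulting three-term recursion; your closed forms $c_j\propto(-1)^{p-j}j$ in case (i) and $c_j\propto(-1)^{p-j}(p+j+1)$ in case (ii) are correct, and your observation that multiplicity one rescues the a priori overdetermined boundary conditions is exactly the right way to justify uniqueness of the solution up to scalar (the downward recursion determines $c_{i-1}$ from $c_i,c_{i+1}$ since the coefficient $(q+i)(q-i+1)$ does not vanish in the relevant range, and existence of a nonzero solution is guaranteed by $b\neq 0$, which forces some $c_i\neq 0$ via $\gamma_a=-(c_{a-1}+c_a)$ and its $f$-analogue). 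The paper's approach buys conceptual transparency and no bookkeeping; yours is self-contained and mechanical, at the price of having to verify the closed-form solutions by substitution (routine, and you should say so explicitly rather than just assert them). One small point worth adding: in case (i) one has $p\geq 1$ automatically (otherwise $U\otimes V$ is trivial and $b=0$), so the division by $p$ in your formula is legitimate.
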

\begin{proof}
It follows easily from the tensor product decomposition 
formula  for $\mathfrak{sl}(2)$-modules 
that $U\otimes V$ contains the adjoint module iff
$\dim U=\dim V$ or if $\dim U=\dim V-2$. 
These dimensions
are odd because $U$ and $V$ are spherical, so we have
\begin{enumerate}
\item[case 1:] $\dim U=\dim V$, or
\item[case 2:] $\dim U=2m-1$,  $\dim V=2m+1$, for some positive integer $m$.
\end{enumerate}
Moreover, the adjoint module
appears with multiplicity one, so $b$ is uniquely defined up to a scalar
multiple.

In the first case, identify $V$ with $U^\ast$ and 
$\mathfrak{sl}(2)$ with its dual. Up to a non-zero scalar multiple, 
$b$ can be identified with the  map 
$U\otimes V\rightarrow \mathfrak{sl}(2)^\ast$ defined as follows:

\centerline{$b(u\otimes v)$ is the linear map 
$x\in \mathfrak{sl}(2)\mapsto <x.u\vert v>$,}

\noindent for any $u\in U$, $v\in V$,
where $<\vert>$ is the duality pairing between $U$ and $V$,
and where $x.u$ denotes the action of $x$ on $u$.
So, up to a non-zero scalar multiple, we have 
$B(u,v)=<h.u\vert v>$. It follows easily that the left kernel
of $B$ is the kernel of $h\vert_U$, namely $U^0$. By symmetry,
the right kernel of $B$ is $V^0$.

In the second case, let $E=\C X\oplus \C Y$ be the two dimensional
representation of $\mathfrak{sl}(2)$, where $X$ and $Y$ denotes
two eigenvectors for $h$. 
Identify $U$ with $S^{2(m-1)}E$, $V$ with $S^{2m}E$
and $\mathfrak{sl}(2)$ with the dual of $S^2 E$,
where $S^lE$ denotes  the space of degree $l$ homogeneous polynomials in
$X$ and $Y$. Up to a non-zero scalar multiple, $b$ can be identified with
the  map  $S^{2(m-1)}E \otimes S^{2(m+1)}E\rightarrow (S^2 E)^\ast$ defined
as follows:

\centerline{$b(F\otimes G)$ is the linear map 
$H\in S^2E \mapsto <H.F\vert G>$,}

\noindent for any $F\in S^{2(m-1)}E $, $G\in S^{2(m+1)}E$, 
where $<\vert>$ is the $\mathfrak{sl}(2)$-invariant
pairing on
$S^{2(m+1)}E$, and where $H.F$ denotes product of polynomials
$H$ and $F$.
So, up to a non-zero scalar multiple, we have 
$B(F,G)=<XYF\vert G>$. Since the multiplication by $XY$ is injective,
it follows that $K(U)=0$. Since $B$ is $h$-invariant, it follows that
the right kernel is generated by $X^{2m}$ and 
$Y^{2m}$, i.e. $K(V)=V^m\oplus V^{-m}$. 
\end{proof}

Set $\Lambda_e=\Lambda\times \Z$. 
Then $\mathcal{L}$ admits the natural 
$\Lambda_e$-gradation:
\[ \mathcal{L}=\bigoplus_{(\beta,i)\in\Lambda_e}\,
\mathcal{L}_\beta^i,\]
where 
$\mathcal{L}_\beta^i=\mathcal{L}_\beta\cap \mathcal{L}^i$. Set
$\Supp_e \mathcal{L}=\{(\beta,i)\in\Lambda_e\vert\,
\mathcal{L}_\beta^i\neq 0\}$.

For
$\alpha\in\Sigma_1$, define $\pi_\alpha:\Lambda_e\rightarrow\Lambda$
by $\pi_\alpha(\beta,i)=\beta-i\alpha$. For $\gamma\in\Lambda$,
set 
$\mathcal{M}(\alpha,\gamma)=\oplus_{\pi_\alpha(\beta,i)=\gamma}\,
\mathcal{L}_\beta^i$. It is clear that
 $[\mathcal{M}(\alpha,\gamma_1),\mathcal{M}(\alpha,\gamma_2)]
\subset \mathcal{M}(\alpha,\gamma_1+\gamma_2)$, for any
$\gamma_1,\,\gamma_2\in\Lambda$. Since 
$\Supp_e \mathcal{L}\subset\Lambda\times[-N,N]$, it follows that
$\dim \mathcal{M}(\alpha,\gamma)\leq 2N+1$ for all $\gamma$.

Therefore, the decomposition
\[ \mathcal{L}=\bigoplus_{\gamma\in\Lambda}\,\mathcal{M}(\alpha,\gamma) \]
provides a new  $\Lambda$-gradation of $\mathcal{L}$.
It is clear that each $\mathcal{M}(\alpha,\gamma)$ is an 
$\mathfrak{s}(\alpha)$-module.

\begin{lemma}\label{lemma_18} Let $\alpha\in\Sigma_1$. 
\begin{enumerate}
\item[(i)] $\mathcal{M}(\alpha,\gamma)$ is not zero iff $\gamma$ belongs to $\Supp
\mathcal{L}^0$.
\item[(ii)] For any $\gamma\in \Supp \mathcal{L}^0$, 
the   $\mathfrak{s}(\alpha)$-module $\mathcal{M}(\alpha,\gamma)$
is simple and it is generated by the spherical vector $L_\gamma$.
\item[(iii)] In particular, we have $\mathcal{M}(\alpha,0)=\mathfrak{s}(\alpha)$.
\end{enumerate}
\end{lemma}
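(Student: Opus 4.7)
The core idea is that once $\alpha \in \Sigma_1$ is fixed, $\mathcal{L}$ becomes a locally finite $\mathfrak{s}(\alpha)$-module all of whose simple summands are spherical, so everything reduces to counting spherical lines.

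With the normalization $l(\alpha) = 1$, the element $h := 2L_0/l(\alpha) = 2L_0$ completes an $\mathfrak{sl}(2)$-triple $(e,f,h)$ with $e \in \mathcal{L}_\alpha$, $f \in \mathcal{L}_{-\alpha}$. Integrability gives $\Spec(\ad L_0) = [-N,N]$, so every $h$-eigenvalue on $\mathcal{L}$ is an even integer in $[-2N,2N]$. Since any $v \in \mathcal{L}^i_\beta$ satisfies $\ad(e)^j v \in \mathcal{L}^{i+j}_{\beta+j\alpha}$ with $i+j$ eventually outside $[-N,N]$, both $\ad e$ and $\ad f$ are locally nilpotent. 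Combined with the weight decomposition under $h$, this forces $\mathcal{L}$ to split as a direct sum of finite-dimensional simple $\mathfrak{s}(\alpha)$-modules; since all $h$-weights are even, each summand is spherical.

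Next I would verify that $\mathcal{M}(\alpha,\gamma)$ is an $\mathfrak{s}(\alpha)$-submodule: this amounts to the identities $\pi_\alpha(\beta \pm \alpha, i \pm 1) = \pi_\alpha(\beta,i)$, which hold by definition of $\pi_\alpha$. The intersection of $\mathcal{M}(\alpha,\gamma)$ with the zero weight space of $h$ is exactly $\mathcal{L}^0_\gamma$, because the component indexed by $(\beta,i) = (\gamma + i\alpha, i)$ with $i = 0$ is $\mathcal{L}_\gamma^0$. Since $\dim \mathcal{L}_\gamma \leq 1$, this zero weight space has dimension $1$ if $l(\gamma) = 0$ (i.e.\ $\gamma \in \Supp \mathcal{L}^0$) and $0$ otherwise.

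From here all three assertions drop out. Each simple summand in the $\mathfrak{s}(\alpha)$-decomposition of $\mathcal{M}(\alpha,\gamma)$ is spherical and therefore contributes a one-dimensional line to the zero weight space $\mathcal{L}_\gamma^0$. If $\gamma \notin \Supp \mathcal{L}^0$, then $\mathcal{L}_\gamma^0 = 0$ and no summand can occur, so $\mathcal{M}(\alpha,\gamma) = 0$, giving (i). If $\gamma \in \Supp \mathcal{L}^0$, then $\dim \mathcal{L}_\gamma^0 = 1$, hence exactly one spherical summand appears and $\mathcal{M}(\alpha,\gamma)$ is a simple $\mathfrak{s}(\alpha)$-module, generated by its unique (up to scalar) spherical vector $L_\gamma$, giving (ii). Applying (ii) at $\gamma = 0$ and noting that $\mathfrak{s}(\alpha)$ is itself a nonzero $\mathfrak{s}(\alpha)$-submodule of $\mathcal{M}(\alpha,0)$, simplicity forces $\mathcal{M}(\alpha,0) = \mathfrak{s}(\alpha)$, giving (iii). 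The only nontrivial point in this argument is the local nilpotence of $\ad e$ and $\ad f$, which is the mechanism by which integrability propagates from an eigenvalue bound to a genuine module-theoretic decomposition.
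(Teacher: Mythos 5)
Your proof is correct and follows essentially the same route as the paper's: the paper's (terser) argument likewise rests on the fact that for $\alpha\in\Sigma_1$ every simple $\mathfrak{s}(\alpha)$-component of $\mathcal{L}$ is spherical, identifies the zero-weight space of $\mathcal{M}(\alpha,\gamma)$ with $\mathcal{L}^0_\gamma$, and counts spherical lines. You merely make explicit the local-finiteness and complete-reducibility step that the paper takes for granted.
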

\begin{proof}
Since $\alpha$ belongs to $\Sigma_1$, any simple
component of the $\mathfrak{s}(\alpha)$-module 
$\mathcal{L}$ is spherical. Assume that $\mathcal{M}(\alpha,\gamma)$ 
is not zero. Then its spherical part is $\mathcal{L}_\gamma$, and so
$\gamma$ belongs to $\Supp \mathcal{L}^0$. Moreover its spherical
part has dimension one, so $\mathcal{M}(\alpha,\gamma)$ is simple.
Thus Points (i) and (ii) are proved.

Since $\mathcal{M}(\alpha,0)$ contains $\mathfrak{s}(\alpha)$,
it follows  that 
$\mathcal{M}(\alpha,0)=\mathfrak{s}(\alpha)$. 
\end{proof}

Let $B:\mathcal{L}\times\mathcal{L}\rightarrow \C$ be the 
skew-symmetric bilinear form 
defined by $B(X,Y)=L^\ast_0([X,Y])$, for any $X,Y\in\mathcal{L}$. 
Its kernel, denoted  $\mathcal{K}$, is
a Lie subalgebra.
Set $\mathcal{K}^i=\mathcal{K}\cap\mathcal{L}^i$ for any integer $i$.

\begin{lemma}\label{lemma_19} 
Let $\alpha\in\Sigma_1$ and  $\gamma\in\Lambda$.
Assume that:

\centerline{ $[\mathcal{M}(\alpha,\gamma),\mathcal{M}(\alpha,-\gamma)]
\neq 0$.}

\noindent Then one of the following two assertions holds:
\begin{enumerate}
\item[(i)]  $\dim \mathcal{M}(\alpha,\gamma)=\dim \mathcal{M}(\alpha,-\gamma)$, or
\item[(ii)] $\dim \mathcal{M}(\alpha,\epsilon\gamma)=3$ and 
$\dim \mathcal{M}(\alpha,-\epsilon\gamma)=1$, for some
 $\epsilon=\pm1$.
\end{enumerate}
\end{lemma}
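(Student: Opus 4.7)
The plan is to recognize the Lie bracket as an $\mathfrak{s}(\alpha)\cong\mathfrak{sl}(2)$-equivariant bilinear map and apply Lemma 17, then sharpen the resulting dichotomy by a Jacobi-identity argument that cascades through the $\gamma$-shifts. Set $U=\mathcal{M}(\alpha,\gamma)$ and $V=\mathcal{M}(\alpha,-\gamma)$. By Lemma 18 both are simple spherical $\mathfrak{s}(\alpha)$-modules (with spherical vectors $L_\gamma$ and $L_{-\gamma}$), and $\mathcal{M}(\alpha,0)=\mathfrak{s}(\alpha)$ is the adjoint module $V_2$. Restriction of the Lie bracket is an $\mathfrak{s}(\alpha)$-equivariant map $b:U\otimes V\to\mathfrak{s}(\alpha)$, nonzero by hypothesis. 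After choosing $\epsilon\in\{\pm 1\}$ that swaps $\gamma\leftrightarrow-\gamma$ so that $\dim V\geq\dim U$, Lemma 17 yields either $\dim U=\dim V$ (which is Lemma 19 (i)) or $\dim U=2m-1$ and $\dim V=2m+1$ for some integer $m\geq 1$. To obtain Lemma 19 (ii) it suffices to show $m=1$.

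Suppose for contradiction that $m\geq 2$. Since $K(U)=0$ in Lemma 17 (ii) and $L_{-\gamma}$ is the unique element of $V$ whose $L_0$-weight allows $[L_\gamma,\cdot]$ to land in $\C L_0$, the bracket $[L_\gamma,L_{-\gamma}]$ must have nonzero $L_0$-component, say $b(L_\gamma,L_{-\gamma})=cL_0$ with $c\neq 0$. Apply the Jacobi identity to the triple $(L_\gamma,L_{-\gamma+m\alpha},L_{-\gamma})\in U\times V\times V$:
\[
b(L_\gamma,L_{-\gamma+m\alpha})\cdot L_{-\gamma}+[[L_{-\gamma+m\alpha},L_{-\gamma}],L_\gamma]-b(L_\gamma,L_{-\gamma})\cdot L_{-\gamma+m\alpha}=0,
\]
where $\cdot$ denotes the $\mathfrak{s}(\alpha)$-action on $V$. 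The first term vanishes because $b(L_\gamma,L_{-\gamma+m\alpha})\in V_2$ would have $h$-weight $2m\geq 4$, outside the support of $V_2$. The third term equals $cm\,L_{-\gamma+m\alpha}\neq 0$ since $L_0$ acts on $L_{-\gamma+m\alpha}$ as the scalar $m$. Hence the middle term is nonzero, so $[L_{-\gamma+m\alpha},L_{-\gamma}]\neq 0$; this element lies in $\mathcal{M}(\alpha,-2\gamma)$ at $L_0$-weight $m$, forcing $\mathcal{M}(\alpha,-2\gamma)$ to be simple spherical (Lemma 18) of dimension at least $2m+1$.

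The last step is to iterate. The bracket $[U,U]\to\mathcal{M}(\alpha,2\gamma)$ factors through $\Lambda^2 U=\Lambda^2 V_{2(m-1)}=V_2\oplus V_6\oplus\cdots\oplus V_{4m-6}$, so $\dim\mathcal{M}(\alpha,2\gamma)\leq 4m-5$; for $m=2$ this is strictly less than $2m+1=5$, and Lemma 19 applied to the pair $(\mathcal{M}(\alpha,2\gamma),\mathcal{M}(\alpha,-2\gamma))$ excludes both its alternatives unless the bracket between them vanishes. Analogous Jacobi identities on triples $(L_\gamma,L_{-k\gamma+j\alpha},L_{-k\gamma})$ and the simplicity statement of Lemma 18 should then propagate to produce nonzero $\mathcal{M}(\alpha,-k\gamma)$ of strictly increasing maximal $L_0$-weight as $k$ grows, eventually violating the integrability bound $\dim\mathcal{M}(\alpha,\delta)\leq 2N+1$ and yielding the desired contradiction. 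Making this cascading induction precise — tracking which higher-shift modules are forced to be nonzero, and extracting the final numerical contradiction against the fixed integer $N$ — is the main obstacle of the proof.
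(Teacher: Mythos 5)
Your first half is sound and close in spirit to the paper: reducing to Lemma 17 case (ii) with $\dim U=2m-1$, $\dim V=2m+1$, $m\ge 2$, noting that $K(U)=0$ forces $[L_\gamma,L_{-\gamma}]=cL_0$ with $c\neq 0$, and then playing a Jacobi identity against the fact that $\mathcal{M}(\alpha,0)=\mathfrak{s}(\alpha)$ has no component of $l$-weight $\ge 2$. Your first Jacobi computation is correct and does show $[L_{-\gamma+m\alpha},L_{-\gamma}]\neq 0$, hence $\dim\mathcal{M}(\alpha,-2\gamma)\ge 2m+1$.

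But the argument does not close, and the gap you flag at the end is real and not merely technical. First, $\dim\mathcal{M}(\alpha,-2\gamma)\ge 2m+1$ reproduces exactly the bound you started from ($2m+1\le 2N+1$ is already known), so the first step yields no progress toward contradicting the type $N$; you would need strictly increasing weights, which you assert but do not obtain. Second, the mechanism that made the first step work --- the vanishing of $[L_\gamma,L_{-\gamma+m\alpha}]$ because it lands in $\mathcal{M}(\alpha,0)=\mathfrak{s}(\alpha)$ --- is unavailable for the proposed triples $(L_\gamma,L_{-k\gamma+j\alpha},L_{-k\gamma})$ with $k\ge 2$, whose first bracket lands in $\mathcal{M}(\alpha,(1-k)\gamma)$, about which Lemma 18 says nothing restrictive. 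Third, the interpolated claims are shaky: the bracket $[U,U]$ factoring through $\Lambda^2U$ bounds only the image, not $\dim\mathcal{M}(\alpha,2\gamma)$; and you invoke Lemma 19 inside its own proof. The paper avoids all of this by staying inside the original cosets: since $K(U)=0$, the vector $L_{-\gamma-\alpha}$ pairs nontrivially, so $\beta=\gamma+\alpha$ lies in $\Sigma_1$, and then Lemma 18 (iii) applied to this \emph{new} element of $\Sigma_1$ gives $\mathcal{M}(\beta,0)=\mathfrak{s}(\beta)$, which kills $[L_\gamma,L_{\gamma+2\alpha}]$ (it has $\Lambda_e$-degree $(2\beta,2)$); combined with $[L_{-\gamma},L_{\gamma+2\alpha}]=0$ (degree $(2\alpha,2)$ in $\mathcal{M}(\alpha,0)$) this contradicts $[[L_{-\gamma},L_\gamma],L_{\gamma+2\alpha}]=2cL_{\gamma+2\alpha}\neq 0$ in one stroke, with no iteration and no appeal to the global bound $2N+1$. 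The missing idea in your proposal is precisely this use of the auxiliary root $\beta=\gamma+\alpha\in\Sigma_1$ and of Lemma 18 (iii) for it.
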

\begin{proof}
Assume that neither Assertion (i) nor (ii) holds. Moreover, 
it can be assumed, without loss of generality, that
$\dim \mathcal{M}(\alpha,\gamma)>
\dim \mathcal{M}(\alpha,-\gamma)$.

By Lemma \ref{lemma_18}, the modules $\mathcal{M}(\alpha,\pm\gamma)$ are spherical and
$\mathcal{M}(\alpha,-\gamma)$ is not the trivial representation.
Thus the hypotheses imply that
  
\centerline {$\dim \mathcal{M}(\alpha,-\gamma)\geq 3$
and $\dim \mathcal{M}(\alpha,\gamma)\geq 5$.}

\noindent Set $\beta=\gamma+\alpha$ and $\delta=\gamma+2\alpha$.
Since $L_\gamma$ is a spherical vector of 
$\mathcal{M}(\alpha,\gamma)$, it follows that
$\gamma$, $\beta$ and $\delta$ belongs to $\Supp \mathcal{M}(\alpha,\gamma)$.
Similarly, $-\gamma$ and $-\beta$ belong to
$\Supp{\cal M}(\alpha,-\gamma)$.

It follows from Lemma \ref{lemma_17} that 
$L_{-\beta}$ is not in the
kernel of $B$. Thus $\beta$ belongs to $\Sigma_1$. 
Since $\gamma+\delta=2\beta$ and $l(\gamma)+l(\delta)=2$,
the element $[L_\gamma,L_\delta]$ is homogeneous
of degree $(2\beta,2)$ relative to the $\Lambda_e$-gradation.
Thus $[L_\gamma,L_\delta]$ belongs to
$\mathcal{M}(\beta,0)$. By Lemma \ref{lemma_18}, we have
$\mathcal{M}(\beta,0)=\mathfrak{s}(\beta)$,
and therefore

\centerline{ $[L_{\gamma},L_{\delta}]=0$.}

Similarly, $[L_{-\gamma},L_{\delta}]$ is homogeneous
of degree $(2\alpha,2)$ relative to the $\Lambda_e$-gradation.
Thus it belongs to  $\mathcal{M}(\alpha,0)$.
Since $\mathcal{M}(\alpha,0)=\mathfrak{s}(\alpha)$, it follows that

\centerline{$[L_{-\gamma},L_{\delta}]=0$.}

 However, these two relations $[L_{\pm\gamma},L_{\delta}]=0$ are
impossible.  Indeed it follows  from Lemma \ref{lemma_17} that 
$L_{-\gamma}$ is not in the
kernel of $B$. Thus $[L_{-\gamma},L_\gamma]=cL_0$ for some $c\neq 0$,
and thus  $[[L_{-\gamma},L_\gamma],L_\delta]=2cL_\delta\neq 0$,
which is a contradiction. 
\end{proof}

\begin{lemma}\label{lemma_20} Let  $\alpha\in \Sigma_1$. Then we have:
\begin{enumerate}
\item[(i)] $[\mathcal{L}_{-\alpha},\mathcal{K}^i]\subset \mathcal{K}^{i-1}$ for any $i>1$,
\item[(ii)] $[\mathcal{L}_{\alpha},\mathcal{K}^i]\subset \mathcal{K}^{i+1}$ for any $i\geq
1$.
\end{enumerate}
\end{lemma}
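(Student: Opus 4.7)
The plan is to handle both parts uniformly via a short Jacobi reduction, followed by an analysis of the $\mathfrak{s}(\alpha)$-module decomposition of $\mathcal{L}$ using Lemmas~\ref{lemma_17} and~\ref{lemma_19}. The grading statement $[L_{\pm\alpha}, \mathcal{K}^i] \subset \mathcal{L}^{i \pm 1}$ is automatic from $L_{\pm\alpha} \in \mathcal{L}^{\pm 1}$, so the content is to prove that the image lies in $\mathcal{K}$, i.e.\ that $B([L_{\pm\alpha}, X], Y) = 0$ for every $Y \in \mathcal{L}$ and every $X \in \mathcal{K}^i$.

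A short Jacobi computation, using $B(X, \cdot) = 0$ to kill one of the two terms, reduces $B([L_{\pm\alpha}, X], Y)$ to $B(L_{\pm\alpha}, [X, Y])$. This is nonzero only when $[X, Y]$ contains a nonzero $L_{\mp\alpha}$ component, which (by the $\Lambda$-grading together with the $L_0$-grading) forces $Y$ to be a scalar multiple of $L_{\mp\alpha - \beta}$ with a specific $l$-weight, where $\beta = \deg X$. So the whole question reduces to showing $[X, L_{\mp\alpha - \beta}]$ has no $L_{\mp\alpha}$ component.

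Now pass to $\mathfrak{s}(\alpha)$-modules: setting $\gamma = \beta - i\alpha$, one checks $X \in \mathcal{M}(\alpha, \gamma)$ and $L_{\mp\alpha - \beta} \in \mathcal{M}(\alpha, -\gamma)$, so their bracket lands in $\mathcal{M}(\alpha, 0) = \mathfrak{s}(\alpha)$. When $\gamma = 0$ the element $X$ lies in $\mathfrak{s}(\alpha)$, whose nonzero-weight subspaces are $\C L_{\pm\alpha}$, and neither of these lies in $\mathcal{K}$ since $\alpha \in \Sigma$; hence $X = 0$. When $\gamma \neq 0$, one may assume the pairing $\mathcal{M}(\alpha, \gamma) \times \mathcal{M}(\alpha, -\gamma) \to \mathfrak{s}(\alpha)$ is nonzero (otherwise the bracket is trivially zero), so Lemma~\ref{lemma_19} restricts the pair of dimensions to be either equal or the unordered pair $\{1, 3\}$. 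In the equal case, Lemma~\ref{lemma_17}(i) identifies the $B$-kernel in $\mathcal{M}(\alpha, \gamma)$ with its weight-zero subspace; since $X$ has weight $i \neq 0$, this forces $X = 0$. In the $\{1, 3\}$ case, $\mathcal{M}(\alpha, \gamma)$ carries only weights in $\{-1, 0, 1\}$, so $|i| \geq 2$ again kills $X$.

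The only genuinely delicate subcase is $|i| = 1$ with $\dim \mathcal{M}(\alpha, \gamma) = 3$ and $\dim \mathcal{M}(\alpha, -\gamma) = 1$: here $\mathcal{M}(\alpha, -\gamma)$ has only weight-zero vectors, so for part~(ii) the candidate $Y$ would need weight $-1 - i = -2$, which is unavailable, while for part~(i) this subcase is simply excluded by the strict inequality $i > 1$. The main obstacle of the proof is precisely this boundary phenomenon: Lemma~\ref{lemma_17} controls only the $L_0$-component of the bracket $\mathcal{M}(\alpha, \gamma) \times \mathcal{M}(\alpha, -\gamma) \to \mathfrak{s}(\alpha)$, not the $L_{\pm\alpha}$ components we actually need, and it is the dimensional sharpening from Lemma~\ref{lemma_19} that rules out the pathological ``top-weight of a three-dimensional module'' configurations away from the $i = \pm 1$ boundary. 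This asymmetry in exactly which boundary values survive is what produces the mildly different hypotheses $i > 1$ and $i \geq 1$ in parts~(i) and~(ii).
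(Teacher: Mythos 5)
Your proof is correct and follows essentially the same route as the paper: both decompose $\mathcal{L}$ into the modules $\mathcal{M}(\alpha,\gamma)$, invoke Lemmas~\ref{lemma_17} and~\ref{lemma_19} to pin down $\mathcal{K}\cap\mathcal{M}(\alpha,\gamma)$ in each dimension pattern, and dispose of the boundary case $i=1$, $\dim\mathcal{M}(\alpha,\gamma)=3$, $\dim\mathcal{M}(\alpha,-\gamma)=1$ by a weight count. The only cosmetic difference is your initial Jacobi reduction of $B([L_{\pm\alpha},X],Y)$ to $B(L_{\pm\alpha},[X,Y])$, which the paper sidesteps by observing directly that in each case either $\mathcal{K}^i(\gamma)$ vanishes, or $\mathcal{K}(\gamma)=\mathcal{M}(\alpha,\gamma)$ is an $\mathfrak{s}(\alpha)$-submodule, or $[\mathcal{L}_\alpha,\mathcal{K}^1(\gamma)]=0$ because $1$ is the top eigenvalue of $L_0$ on $\mathcal{M}(\alpha,\gamma)$.
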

\begin{proof}
For any $\gamma\in\Lambda$ and any $i\in \Z$, set
$\mathcal{K}(\gamma)=\mathcal{M}(\alpha,\gamma)\cap \mathcal{K}$ and
$\mathcal{K}^i(\gamma)=\mathcal{K}^i\cap \mathcal{K}(\gamma)$.
Since $\mathcal{K}$ is a graded subspace of $\mathcal{L}$,
we have $\mathcal{K}=\oplus_\gamma\,\mathcal{K}(\gamma)$. Therefore, it is 
enough to prove for any $\gamma\in\Lambda$ the following assertion:

($\mathcal{A}$) \hskip3mm $[\mathcal{L}_{-\alpha},\mathcal{K}^{i+1}(\gamma)]\subset  \mathcal{K}^i(\gamma)$ and  
$[\mathcal{L}_{\alpha},\mathcal{K}^i(\gamma)]\subset \mathcal{K}^{i+1}(\gamma)$, 
for any $i\geq 1$

\noindent Four cases are required to check the assertion.

\textit{First case:} Assume 
$[\mathcal{M}(\alpha,\gamma),\mathcal{M}(\alpha,-\gamma)]=0$.

\noindent  In such a case,
$\mathcal{K}(\gamma)=\mathcal{M}(\alpha,\gamma)$ is a $\mathfrak{s}(\alpha)$-module, and Assertion ($\mathcal{A}$) is obvious.

From now on, it can be assumed that

\centerline{$[\mathcal{M}(\alpha,\gamma),\mathcal{M}(\alpha,-\gamma)]\neq 0$}

\textit{Second  case:} Assume 
$\dim \mathcal{M}(\alpha,\gamma)=
\dim \mathcal{M}(\alpha,-\gamma)$.

\noindent In such a case, it follows from
Lemma \ref{lemma_17} that $\mathcal{K}(\gamma)=\mathcal{K}^0(\gamma)$, and Assertion
($\mathcal{A}$) is clear.

\textit{Third case:} Assume  
 $\dim \mathcal{M}(\alpha,\gamma)< \dim \mathcal{M}(\alpha,-\gamma)$. 

\noindent It follows from Lemma \ref{lemma_17}
that $\mathcal{K}(\gamma)=0$ and Assertion ($\mathcal{A}$) is obvious.

\textit{Fourth case:} Assume  
$\dim \mathcal{M}(\alpha,\gamma)> \dim \mathcal{M}(\alpha,-\gamma)$. 

\noindent It follows
from Lemma \ref{lemma_19} that $\dim \mathcal{M}(\alpha,\gamma)=3$ and
that $\dim \mathcal{M}(\alpha,-\gamma) =1$. Thus
$\mathcal{K}(\gamma)=\mathcal{K}^1(\gamma)\oplus \mathcal{K}^{-1}(\gamma)$.
Thus Assertion ($\mathcal{A}$) follows from the fact that $1$ is the highest
eigenvalue of $L_0$ on $\mathcal{M}(\alpha,\gamma)$ and therefore
$[\mathcal{L}_\alpha, \mathcal{K}^1(\gamma)]=0$. 
\end{proof}

\begin{lemma}\label{lemma_21} We have:
\begin{enumerate}
\item[(i)] $[\mathcal{L}^{-1},\mathcal{K}^i]\subset \mathcal{K}^{i-1}$ for any $i>1$,
\item[(ii)] $[\mathcal{L}^1,\mathcal{K}^i]\subset \mathcal{K}^{i+1}$ for any $i\geq 1$.
\end{enumerate}
\end{lemma}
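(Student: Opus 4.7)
The plan is to reduce Lemma \ref{lemma_21} to Lemma \ref{lemma_20} by decomposing $\mathcal{L}^{\pm 1}$ into its $\Lambda$-homogeneous pieces. I focus on assertion (ii); assertion (i) is entirely symmetric (swap $\beta\leftrightarrow -\beta$ and use Lemma \ref{lemma_20}(i) instead of (ii)). Since $\mathcal{L}^1=\bigoplus_{l(\beta)=1}\mathcal{L}_\beta$ and each homogeneous component is at most one-dimensional, the task reduces to checking $[\mathcal{L}_\beta,\mathcal{K}^i]\subset\mathcal{K}^{i+1}$ for each $\beta\in\Supp\mathcal{L}$ with $l(\beta)=1$ and each $i\geq 1$.

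\textit{Case $\beta\in\Sigma_1$.} The proof of Lemma \ref{lemma_20} uses only the hypothesis $\alpha\in\Sigma_1$, so applying that lemma with $\alpha$ replaced by $\beta$ yields $[\mathcal{L}_\beta,\mathcal{K}^i]\subset\mathcal{K}^{i+1}$ immediately.

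\textit{Case $l(\beta)=1$ with $\beta\notin\Sigma_1$.} By the definition of $\Sigma_1$ we have $[L_\beta,\mathcal{L}_{-\beta}]=0$: either $\mathcal{L}_{-\beta}=0$, or $-\beta\in\Supp\mathcal{L}$ but $[L_\beta,L_{-\beta}]=0$. Fix $x\in\mathcal{K}^i\cap\mathcal{L}_\mu$; to verify $[L_\beta,x]\in\mathcal{K}$ it suffices to check $L^\ast_0([[L_\beta,x],w])=0$ for $w\in\mathcal{L}_{-(\beta+\mu)}$, the only degree producing a non-trivial $\mathcal{L}_0$-component. Jacobi gives
\[ [[L_\beta,x],w]=[L_\beta,[x,w]]-[x,[L_\beta,w]]. \]
The second term pairs $x\in\mathcal{K}$ against $[L_\beta,w]$, so its $L^\ast_0$-value vanishes. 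In the first term, $[x,w]\in\mathcal{L}_{-\beta}$, hence $[L_\beta,[x,w]]\in[L_\beta,\mathcal{L}_{-\beta}]=0$. Thus $[L_\beta,x]\in\mathcal{K}^{i+1}$, as required.

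There is no substantial technical obstacle beyond identifying the correct case split: only the directions $\beta\in\pm\Sigma_1$ require the delicate $\mathfrak{sl}(2)$-module analysis of Lemma \ref{lemma_20}, while for every other $\beta$ with $l(\beta)=\pm 1$ the Jacobi expansion collapses for free because $[L_\beta,L_{-\beta}]=0$. This is also why the conclusion of Lemma \ref{lemma_21} makes no reference to $\Sigma_1$ at all, despite Lemma \ref{lemma_20} being formulated only for elements of $\Sigma_1$.
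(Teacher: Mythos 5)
Your proof is correct and follows essentially the same route as the paper: decompose $\mathcal{L}^{\pm1}$ into the span of the $\mathcal{L}_{\pm\alpha}$ with $\alpha\in\Sigma_1$ (handled by Lemma \ref{lemma_20}) and the complementary part $\mathcal{K}^{\pm1}$, which the paper dispatches by noting that $\mathcal{K}$ is a Lie subalgebra. Your Jacobi computation in the second case is just an explicit unwinding of that closure property (your first term vanishing is exactly the statement $L_\beta\in\mathcal{K}^{\pm1}$), so the two arguments coincide in substance.
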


\begin{proof}
We have
$\mathcal{L}^{-1}=\mathcal{K}^{-1}\oplus[\oplus_{\alpha\in\Sigma_{1}}
\,\mathcal{L}_{-\alpha}]$ and 
$\mathcal{L}^1=\mathcal{K}^1\oplus[\oplus_{\alpha\in\Sigma_1}\,\mathcal{L}_\alpha]$. Therefore the lemma follows from the previous lemma and
the fact that $\mathcal{K}$ is a Lie subalgebra.
\end{proof}

\section{Types of integrable Lie algebras in the class $\mathcal{G}'$}\label{sect_6}

In this section, it is proved that an integrable Lie algebra
$\mathcal{L}\in \mathcal{G}'$ is of type $1$ or $2$. In the terminology
of root graded Lie algebras, it corresponds with $A_1$ and $BC_1$ 
Lie algebras, see in particular \cite{BZ}.

\begin{lemma}\label{lemma_22} We have $\Sigma_1\neq\emptyset$ and
$\Sigma_i=\emptyset$ for any $i>2$.
\end{lemma}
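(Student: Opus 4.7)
The statement is an immediate consequence of Lemmas \ref{lemma_15} and \ref{lemma_16}, once one carefully tracks the normalization fixed at the start of Section \ref{sect_4}. Recall that after renormalizing $L_0$, the spectrum of $\ad(L_0)$ is exactly the set of integers $[-N,N]$, i.e.\ $l(\Lambda)=[-N,N]$. By Lemma \ref{lemma_12} we have $\Sigma\neq\emptyset$, hence $l$ is not identically zero and $N\geq 1$; in particular $1\in l(\Lambda)$.

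\textbf{Step 1: $\Sigma_1\neq\emptyset$.} Apply Lemma \ref{lemma_16} to $\mathcal{L}$: there exists $\alpha\in\Sigma$ and a positive integer $N'$ with $l(\Lambda)=[-N',N']\,l(\alpha)$. Comparing supremum norms (or simply the extremal elements) of this set with $l(\Lambda)=[-N,N]$, one gets $N'=N$ and $l(\alpha)=\pm 1$. If $l(\alpha)=1$ we are done. If $l(\alpha)=-1$, note that $\alpha\in\Pi$ implies $[L_\alpha,L_{-\alpha}]\neq 0$ and $\ad(L_0)$ annihilates this bracket, which forces $l(\alpha)+l(-\alpha)=0$; hence $\Sigma$ is symmetric under $\alpha\mapsto -\alpha$, and $-\alpha\in\Sigma_1$.

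\textbf{Step 2: $\Sigma_i=\emptyset$ for $i>2$.} Suppose for contradiction that $\beta\in\Sigma_i$ with $i>2$. Since $\mathcal{L}\in\mathcal{G}'$ is integrable, Lemma \ref{lemma_15} applies and gives
\[
l(\Lambda)\subset \tfrac{1}{2}\,\Z\cdot l(\beta)=\tfrac{i}{2}\,\Z.
\]
But $1\in l(\Lambda)$ (from Step 1, since $N\geq 1$), so $1$ must be a multiple of $i/2$, forcing $i\mid 2$. This contradicts $i>2$, proving the claim.

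There is no real obstacle here: the whole argument is a direct bookkeeping on the renormalization of $L_0$ combined with Lemmas \ref{lemma_15} and \ref{lemma_16}. The only mildly subtle point is the symmetry $\Sigma=-\Sigma$, which follows from the identity $l(-\alpha)=-l(\alpha)$ valid on $\Pi$ (a consequence of $[L_0,[L_\alpha,L_{-\alpha}]]=0$ together with $[L_\alpha,L_{-\alpha}]\neq 0$).
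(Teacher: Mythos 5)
Your proof is correct and follows essentially the same route as the paper: $\Sigma_1\neq\emptyset$ is extracted from Lemma \ref{lemma_16} together with the normalization $l(\Lambda)=[-N,N]$ and the symmetry $\Sigma=-\Sigma$, and the bound $i\leq 2$ comes from applying Lemma \ref{lemma_15} to $\beta\in\Sigma_i$ and using $1\in l(\Lambda)$ to force $i\mid 2$. The paper's version is just terser; no substantive difference.
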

\begin{proof}
The fact that $\Sigma_1\neq\emptyset$
follows from Lemma \ref{lemma_16}.

Let $\beta\in \Sigma_i$ with $i>0$. Fix $\alpha\in\Sigma_1$.
By Lemma \ref{lemma_15}, we have
$1=l(\alpha)\in \Z.l(\beta)/2$. It follows that
$i$ divides $2$, and therefore
$i= 1$ or $2$. 
\end{proof}
In what follows, it will be convenient to set
$\mathcal{L}^{> 0}=\oplus_{i> 0}\,\mathcal{L}^i$. Similarly
define $\mathcal{L}^{\geq 0}$ and
$\mathcal{L}^{< 0}$.

\begin{lemma}\label{lemma_23} Let $\mathcal{L}\in \mathcal{G}'$ be integrable of type $N$.
\begin{enumerate}
\item[(i)]  The Lie algebra $\mathcal{L}^{>0}$ (respectively
$\mathcal{L}^{<0}$) is generated by 
$\mathcal{L}^{1}$ (respectively by $\mathcal{L}^{-1}$).
\item[(ii)] The commutant of $\mathcal{L}^1$ is $\mathcal{L}^N$.
\item[(iii)] The $\mathcal{L}^0$-module $\mathcal{L}^N$ is simple
$\Lambda$-graded.
\item[(iv)] $\mathcal{L}^k=[\mathcal{L}^{-1},\mathcal{L}^{k+1}]$, and
$\mathcal{L}^{k+1}=[\mathcal{L}^{1},\mathcal{L}^{k}]$,
for any $k\in[-N,N-1]$.
\end{enumerate}
\end{lemma}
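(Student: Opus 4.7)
I would treat the four statements in the order (iii), (iv), (i), (ii), using the triangular decomposition $\mathcal{L}=\mathcal{L}^{-}\oplus\mathcal{L}^{0}\oplus\mathcal{L}^{+}$ with $\mathcal{L}^{\pm}:=\bigoplus_{k>0}\mathcal{L}^{\pm k}$ as the main tool. For (iii), let $M\subset\mathcal{L}^{N}$ be a nonzero graded $\mathcal{L}^{0}$-submodule and let $I$ be the Lie ideal of $\mathcal{L}$ generated by $M$. Since $\mathcal{L}^{k}=0$ for $k>N$ we have $[\mathcal{L}^{+},M]=0$, so by PBW applied to the adjoint action
\[ I=\mathcal{U}(\mathcal{L})\cdot M=\mathcal{U}(\mathcal{L}^{-})\,\mathcal{U}(\mathcal{L}^{0})\,\mathcal{U}(\mathcal{L}^{+})\cdot M=\mathcal{U}(\mathcal{L}^{-})\cdot M, \]
whose weight-$N$ part is $M$ itself. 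Graded simplicity of $\mathcal{L}$ then forces $I=\mathcal{L}$, so $M=\mathcal{L}^{N}$.

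For (iv), I would first establish the ``top case'' $\mathcal{L}^{N}=[\mathcal{L}^{1},\mathcal{L}^{N-1}]$. The right-hand side is a graded $\mathcal{L}^{0}$-submodule of $\mathcal{L}^{N}$ by Jacobi and $[\mathcal{L}^{0},\mathcal{L}^{1}]\subset\mathcal{L}^{1}$, and it is nonzero because for any $\alpha\in\Sigma_{1}$ (nonempty by Lemma \ref{lemma_22}) and any $0\neq x\in\mathcal{L}^{N}$, standard $\mathfrak{sl}(2)$-theory yields $[L_{-\alpha},x]\neq 0$ and $[L_{\alpha},[L_{-\alpha},x]]$ a nonzero scalar multiple of $x$; then (iii) upgrades this submodule to all of $\mathcal{L}^{N}$. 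The symmetric argument gives $\mathcal{L}^{-N}$ simple as $\mathcal{L}^{0}$-module and $[\mathcal{L}^{-1},\mathcal{L}^{-N+1}]=\mathcal{L}^{-N}$. To reach all $k\in[-N,N-1]$, introduce the graded subspace
\[ \mathcal{N}:=\bigoplus_{k\in[-N,N]}\mathcal{N}^{k},\quad \mathcal{N}^{\pm N}:=\mathcal{L}^{\pm N},\quad \mathcal{N}^{k}:=[\mathcal{L}^{-1},\mathcal{N}^{k+1}]+[\mathcal{L}^{1},\mathcal{N}^{k-1}]\text{ for }|k|<N, \]
and show that $\mathcal{N}$ is a graded Lie ideal of $\mathcal{L}$. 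By graded simplicity $\mathcal{N}=\mathcal{L}$, yielding both halves of (iv).

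Statement (i) then follows by iterating $\mathcal{L}^{k+1}=[\mathcal{L}^{1},\mathcal{L}^{k}]$ starting at $k=0$, and symmetrically for $\mathcal{L}^{<0}$. For (ii), the inclusion $\mathcal{L}^{N}\subset\{x\in\mathcal{L}:[\mathcal{L}^{1},x]=0\}$ is obvious since $\mathcal{L}^{N+1}=0$. Conversely, suppose $x\in\mathcal{L}^{i}$ with $i<N$ and $x\neq 0$ commutes with $\mathcal{L}^{1}$; by (i), $x$ also commutes with all of $\mathcal{L}^{+}$, and the same PBW argument as in (iii) gives
\[ \mathcal{U}(\mathcal{L})\cdot x=\mathcal{U}(\mathcal{L}^{-})(\mathcal{U}(\mathcal{L}^{0})\cdot x)\subset\mathcal{L}^{\leq i}, \]
a proper graded subspace of $\mathcal{L}$ since $\mathcal{L}^{i+1}\neq 0$, contradicting graded simplicity.

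The main obstacle is showing $\mathcal{N}$ in the proof of (iv) is a Lie ideal: $\mathrm{ad}(\mathcal{L}^{0})$- and $\mathrm{ad}(\mathcal{L}^{\pm 1})$-stability follow easily from Jacobi and the definition, but $\mathrm{ad}(\mathcal{L}^{\pm j})$-stability for $|j|\geq 2$ is delicate, since expressing $\mathcal{L}^{\pm j}$ as iterated brackets of $\mathcal{L}^{\pm 1}$ is precisely what we are trying to prove. Breaking the circularity requires a careful nested induction on $|j|$ and on the depth $N-|k|$, exploiting the sphericity of the $\mathfrak{s}(\alpha)$-module summands of $\mathcal{L}$ (for $\alpha\in\Sigma_{1}$) together with Lemmas \ref{lemma_20}--\ref{lemma_21}, which control the $\mathrm{ad}(\mathcal{L}^{\pm 1})$-action on the subalgebra $\mathcal{K}$.
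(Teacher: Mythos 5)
Your parts (iii) and (ii) are essentially the paper's own arguments (PBW factorization $\mathcal{U}(\mathcal{L})=\mathcal{U}(\mathcal{L}^{<0})\mathcal{U}(\mathcal{L}^{\geq 0})$ applied to the ideal generated by a highest-degree submodule, resp. by an element of the commutant), and they are correct. The problem is your ordering: you make (i) depend on (iv), and your proof of (iv) rests on showing that the recursively defined subspace $\mathcal{N}$ is a Lie ideal — a step you yourself flag as unresolved. That is a genuine gap, not a technicality. The circularity you describe (needing $\mathcal{L}^{\pm j}$ to be generated by $\mathcal{L}^{\pm 1}$ in order to check $\ad(\mathcal{L}^{\pm j})$-stability of $\mathcal{N}$) is exactly statement (i), so as written the argument assumes what it must prove. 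Note also that the obvious local patch fails: for a single $\alpha\in\Sigma_1$, $\ad(L_{-\alpha})$ does \emph{not} map $\mathcal{L}^{k+1}$ onto $\mathcal{L}^k$ in general (e.g.\ a $3$-dimensional spherical component has top eigenvalue $1$, so its degree-$1$ part is not in the image of $\ad(L_{-\alpha})$ from degree $2$), so some genuinely global input is needed and the appeal to Lemmas \ref{lemma_20}--\ref{lemma_21} does not obviously supply it.

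The missing idea is that (i) has a direct, independent proof, which is how the paper proceeds. Fix $\alpha\in\Sigma_1$; then $\mathcal{L}$ is a direct sum of finite-dimensional \emph{spherical} simple $\mathfrak{s}(\alpha)$-modules (the $\mathcal{M}(\alpha,\gamma)$ of Lemma \ref{lemma_18}), and on each such module $\ad(L_\alpha)$ maps the $L_0$-eigenvalue-$i$ space onto the eigenvalue-$(i+1)$ space for $i\geq 1$. Hence $\mathcal{L}^{k+1}=\ad^k(\mathcal{L}_\alpha)(\mathcal{L}^1)\subset \ad^k(\mathcal{L}^1)(\mathcal{L}^1)$ for all $k>0$, which is (i), with no reference to (iv). Once (i) is available, your PBW argument for (iii) also yields $\mathcal{L}=\mathrm{Ad}(U(\mathcal{L}^{<0}))(\mathcal{L}^N)=\oplus_{k\geq 0}\,\ad^k(\mathcal{L}^{-1})(\mathcal{L}^N)$, and comparing degrees gives $\mathcal{L}^{N-k}=\ad^k(\mathcal{L}^{-1})(\mathcal{L}^N)$, i.e.\ $\mathcal{L}^k=[\mathcal{L}^{-1},\mathcal{L}^{k+1}]$ for all $k\leq N-1$; the symmetric identity comes from $\mathcal{L}^{-N}$. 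This dispenses entirely with the ideal $\mathcal{N}$ and the nested induction. I recommend reordering to (i), (ii), (iii), (iv) and replacing your treatment of (i) and (iv) accordingly.
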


\begin{proof}
First prove  Assertion (i).  Let $\alpha\in\Sigma_1$. Since 
$\mathcal{L}$ is a direct sum of finite dimensional spherical
$\mathfrak{s}(\alpha)$-modules, we have
$\mathcal{L}^{k+1}=\ad^k(\mathcal{L}_\alpha)(\mathcal{L}^1)$
for any $k>0$. Since $\mathcal{L}_\alpha\subset \mathcal{L}^1$,
the Lie algebra $\mathcal{L}^{>0}$ is generated by $\mathcal{L}^1$.
Similarly, the Lie algebra $\mathcal{L}^{<0}$ is generated by 
$\mathcal{L}^{-1}$.
Assertion (i) is proved.

Now prove  Assertion (ii). Let $\mathcal{M}=\oplus \mathcal{M}^i$ be
the commutant of $\mathcal{L}^1$, where  $\mathcal{M}^i
=\mathcal{L}^i\cap \mathcal{M}$.

Let $i<N$ be any integer, and set
$\mathcal{I}=\Ad (U(\mathcal{L}))(\mathcal{M}^i)$.
By Point (i), $\mathcal{L}^{\geq 0}$ is generated by
$\mathcal{L}^1$ and $\mathcal{L}^0$, thus $\mathcal{M}^i$ is
a $\mathcal{L}^{\geq 0}$-submodule. Hence by
PBW theorem we get
$\mathcal{I}=\Ad (U(\mathcal{L}^{<0}))(\mathcal{M}^i)$. So any homogeneous
component  of $\mathcal{I}$ has degree $\leq i$ and 
$\mathcal{I}$ intersects
trivially $\mathcal{L}^N$. 
Therefore $\mathcal{I}$ is a proper ideal. Since $\mathcal{I}$ is
clearly $\Lambda$-graded, it follows that
$\mathcal{I}=0$ 
which implies that $\mathcal{M}^i=0$. Therefore
$\mathcal{M}$ is homogeneous of degree $N$. Since
$\mathcal{L}^N$ is obviously in the commutant of 
$\mathcal{L}^1$, Assertion (ii) is proved.

The proof of Assertion (iii) is similar. Choose any
$\Lambda$-graded $\mathcal{L}^0$-sub\-mo\-du\-le $\mathcal{N}$ in 
$\mathcal{L}^N$ and set $\mathcal{I}=\Ad (U(\mathcal{L}))(\mathcal{N})$.
By the same argument, it follows that $\mathcal{I}^N=\mathcal{N}$.
Since the graded ideal $\mathcal{I}$ is not zero,
it follows that $\mathcal{N}=\mathcal{L}^N$, i.e. the
$\mathcal{L}^0$-module $\mathcal{L}^N$ is simple
$\Lambda$-graded, which proves Assertion (iii).

It follows from the previous 
considerations that 
$\mathcal{L}=\Ad(U(\mathcal{L}^{<0}))(\mathcal{L}^N)$. 
Since $\mathcal{L}^{<0}$ is generated by
$\mathcal{L}^{-1}$,  we have

\centerline{$\mathcal{L}=\oplus_{k\geq 0}\,\ad^k(\mathcal{L}^{-1})(\mathcal{L}^N)$.}

\noindent Similarly, we have

\centerline{$\mathcal{L}=\oplus_{k\geq 0}\,\ad^k(\mathcal{L}^{1})(\mathcal{L}^{-N})$.}

 Assertion (iv) follows from these two identities. 
\end{proof}

\begin{lemma}\label{lemma_24} One of the following assertions holds:
\begin{enumerate}
\item[(i)] $\mathcal{L}$ is of type $1$, or
\item[(ii)] $\mathcal{L}$ is of type $2$, and $\Sigma_2\neq\emptyset$.
\end{enumerate}
\end{lemma}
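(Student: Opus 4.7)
The plan is to combine Lemma~\ref{lemma_22} with a downward propagation argument inside the subalgebra $\mathcal{K}$. Since Lemma~\ref{lemma_22} already shows $\Sigma_i = \emptyset$ for $i > 2$, it suffices to prove that $\Sigma_N \neq \emptyset$ whenever $N \geq 2$: this forces $N \leq 2$, so either $N = 1$ (case (i)) or $N = 2$ with $\Sigma_2 \neq \emptyset$ (case (ii)).

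The strategy is to argue by contradiction, assuming $N \geq 2$ and $\Sigma_N = \emptyset$. The first observation is that $\mathcal{L}^N$ then lies in $\mathcal{K}$: for each $\beta \in \Supp \mathcal{L}^N$ one has $l(-\beta) = -N \neq 0$, so the condition $\beta \notin \Sigma$ reduces to $\beta \notin \Pi$, i.e.\ to $[L_\beta, L_{-\beta}] = 0$. Unpacking the definition of $\mathcal{K}$ (the kernel of $B(X,Y) = L_0^\ast([X,Y])$), this last equality is exactly what places $L_\beta$ in $\mathcal{K}$. Hence $\mathcal{L}^N = \mathcal{K}^N$.

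Next I propagate this equality downward, proving by induction that $\mathcal{L}^j = \mathcal{K}^j$ for $j = N, N-1, \dots, 1$. The induction step from level $j+1$ to level $j$ combines two previously established ingredients: the generation identity $\mathcal{L}^j = [\mathcal{L}^{-1}, \mathcal{L}^{j+1}]$ from Lemma~\ref{lemma_23}(iv), and the inclusion $[\mathcal{L}^{-1}, \mathcal{K}^{j+1}] \subset \mathcal{K}^j$ from Lemma~\ref{lemma_21}(i), which is valid whenever $j+1 > 1$. Using the inductive hypothesis $\mathcal{L}^{j+1} = \mathcal{K}^{j+1}$ to rewrite the first identity, and then applying the second, gives $\mathcal{L}^j \subset \mathcal{K}^j$, hence equality.

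The contradiction arrives at $j = 1$: $\mathcal{L}^1 = \mathcal{K}^1$ is equivalent to $\Sigma_1 = \emptyset$, flatly contradicting Lemma~\ref{lemma_22}. The only point that requires care is that the induction must reach all the way down to $j = 1$; this is possible because the hypothesis $i > 1$ in Lemma~\ref{lemma_21}(i) still permits the choice $i = 2$, so the final step from $j = 2$ to $j = 1$ is legitimate. Beyond this the argument is essentially a clean book-keeping of the tools assembled in Sections~\ref{sect_5} and \ref{sect_6}, with no further obstacle.
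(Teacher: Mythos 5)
Your proof is correct and rests on exactly the same ingredients as the paper's — the subalgebra $\mathcal{K}$, the inclusion $[\mathcal{L}^{-1},\mathcal{K}^{i}]\subset\mathcal{K}^{i-1}$ of Lemma~\ref{lemma_21}(i), the generation identity of Lemma~\ref{lemma_23}(iv), and Lemma~\ref{lemma_22} — organized as a full induction descending from level $N$ to level $1$, whereas the paper performs a single descent step at the critical level $M+1$ (with $M=2$ if $\Sigma_2\neq\emptyset$ and $M=1$ otherwise). This is essentially the same argument.
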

\begin{proof}
Let $N$ be the type of $\mathcal{L}$. Also
set $M=2$ if $\Sigma_2\neq\emptyset$ and set
$M=1$ otherwise. Thus Lemma \ref{lemma_24} is equivalent to $N=M$.

Assume otherwise, i.e.
$M<N$. It follows from the hypothesis that: 

\centerline{  $\mathcal{L}^{M+1}=\mathcal{K}^{M+1}$
and $\mathcal{L}^{M}\neq \mathcal{K}^{M}$.}

However by Lemma \ref{lemma_23}, we get
$\mathcal{L}^M=[\mathcal{L}^{-1},\mathcal{L}^{M+1}]$, and so
$\mathcal{L}^M=[\mathcal{L}^{-1},\mathcal{K}^{M+1}]$. From Lemma \ref{lemma_21},
we have $[\mathcal{L}^{-1},\mathcal{K}^{M+1}]\subset \mathcal{K}^M$. So 
we obtain $\mathcal{L}^M\subset \mathcal{K}^M$ which contradicts
$\mathcal{L}^{M}\neq \mathcal{K}^{M}$.
\end{proof}

\begin{lemma}\label{lemma_25} (Main Lemma for type $2$ integrable Lie algebras)

Let $\mathcal{L}\in \mathcal{G}'$ be integrable of type $2$.
Then any $\beta\in\Supp \mathcal{L}$ with $l(\beta)\neq 0$
belongs to $\Sigma$. 
\end{lemma}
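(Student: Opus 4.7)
Since $\beta\in\Sigma$ is equivalent to $L_\beta\notin\mathcal{K}$ whenever $l(\beta)\neq 0$, the lemma reduces to the claim that $\mathcal{K}^i=0$ for every $i\in\{\pm 1,\pm 2\}$. The plan is to treat the top weights $\pm 2$ first, using the graded simplicity of $\mathcal{L}^{\pm N}$ as an $\mathcal{L}^0$-module, and then to recover the weight-one case by a short commutant argument. The hypothesis $N=2$ is essential: it forces $\mathcal{L}^3=0$, which trivialises a Jacobi term at the key step.

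The heart of the proof will be to show that $\mathcal{K}^2$ is a graded $\mathcal{L}^0$-submodule of $\mathcal{L}^2$. Graded-ness is immediate, so only $\mathcal{L}^0$-stability requires work. I would use Lemma \ref{lemma_23}(iv) to write any $X\in\mathcal{L}^0$ in the form $\sum_i [Y_i,Z_i]$ with $Y_i\in\mathcal{L}^{-1}$ and $Z_i\in\mathcal{L}^1$; then for any $v\in\mathcal{K}^2$, Jacobi yields
\[
[X,v]=\sum_i \bigl([Y_i,[Z_i,v]]-[Z_i,[Y_i,v]]\bigr).
\]
The first inner bracket vanishes because $[Z_i,v]\in\mathcal{L}^3=0$, while in the second $[Y_i,v]$ lies in $\mathcal{K}^1$ by Lemma \ref{lemma_21}(i), so $[Z_i,[Y_i,v]]\in\mathcal{K}^2$ by Lemma \ref{lemma_21}(ii). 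Hence $[X,v]\in\mathcal{K}^2$. With stability established, Lemma \ref{lemma_23}(iii) forces $\mathcal{K}^2\in\{0,\mathcal{L}^2\}$, and the possibility $\mathcal{K}^2=\mathcal{L}^2$ is excluded by Lemma \ref{lemma_24}, which produces some $\gamma\in\Sigma_2$, so that $L_\gamma\in\mathcal{L}^2\setminus\mathcal{K}^2$.

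The weight-one case then follows in one line: Lemma \ref{lemma_21}(ii) gives $[\mathcal{L}^1,\mathcal{K}^1]\subset\mathcal{K}^2=0$, so $\mathcal{K}^1$ is contained in the commutant of $\mathcal{L}^1$, which by Lemma \ref{lemma_23}(ii) equals $\mathcal{L}^N=\mathcal{L}^2$; thus $\mathcal{K}^1\subset\mathcal{L}^1\cap\mathcal{L}^2=0$. The negative-weight cases $\mathcal{K}^{-2}=\mathcal{K}^{-1}=0$ come from identical arguments applied to the opposite $\mathbb{Z}$-grading, which exchanges $\mathcal{L}^i$ with $\mathcal{L}^{-i}$, preserves the structural lemmas, and sends $\Sigma_2$ to the equally non-empty set $\Sigma_{-2}=-\Sigma_2$. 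The main obstacle is precisely the $\mathcal{L}^0$-stability of $\mathcal{K}^2$: it requires both directions of Lemma \ref{lemma_21} in tandem with the grading bound $\mathcal{L}^3=0$, and failure of either ingredient would collapse the argument. Once this stability is in hand, the rest is a short invocation of the earlier structural lemmas.
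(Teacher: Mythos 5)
Your proof is correct and follows essentially the same route as the paper: both establish that $\mathcal{K}^2$ is an $\mathcal{L}^0$-submodule of $\mathcal{L}^2$ via $\mathcal{L}^0=[\mathcal{L}^1,\mathcal{L}^{-1}]$, the vanishing of $\mathcal{L}^3$, and Lemma \ref{lemma_21}, then invoke the simplicity from Lemma \ref{lemma_23}(iii) together with $\Sigma_2\neq\emptyset$ to get $\mathcal{K}^2=0$, and finish the weight-one case with the commutant argument from Lemma \ref{lemma_23}(ii). The only difference is cosmetic: you spell out the Jacobi identity where the paper writes the same manipulation as a chain of inclusions.
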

\begin{proof}
\textit{First step:}
By Lemma \ref{lemma_23}, we get
$\mathcal{L}^0=[\mathcal{L}^1,\mathcal{L}^{-1}]$. Since 
$\mathcal{L}^3=0$, we deduce that
$[\mathcal{L}^0,\mathcal{K}^2]=
[\mathcal{L}^{1},[\mathcal{L}^{-1},\mathcal{K}^2]]$.
It follows from Lemma \ref{lemma_21} that

\centerline{$
[\mathcal{L}^{1},[\mathcal{L}^{-1},\mathcal{K}^2]]
\subset [\mathcal{L}^{1},\mathcal{K}^1]
\subset \mathcal{K}^2$}

\noindent Hence $\mathcal{K}^2$ is an $\mathcal{L}^0$-submodule
of $\mathcal{L}^2$. By the previous lemma, we have $\Sigma_2\neq\emptyset$,
so $\mathcal{K}^2$ is a proper submodule of
$\mathcal{L}^2$. It follows from Lemma \ref{lemma_23} (iii) that
$\mathcal{K}^2=0$. Equivalently, any $\beta\in \Supp \mathcal{L}$
with $l(\beta)=2$ belongs to $\Sigma$.

\textit{Second step:} By Lemma \ref{lemma_21}, we have 
$\ad(\mathcal{L}^{1})(\mathcal{K}^1)\subset \mathcal{K}^2$. Since 
$\mathcal{K}^2=0$, it follows that $\mathcal{K}^1$ lies inside
the commutant of $\mathcal{L}^1$. However by Lemma \ref{lemma_23} (ii), this commutant
is $\mathcal{L}^2$. Hence we have $\mathcal{K}^1=0$, or,
equivalently, any $\beta\in \Supp \mathcal{L}$
with $l(\beta)=1$ belongs to $\Sigma$.

\textit{Final step:} Similarly, it is proved that any 
$\beta\in \Supp \mathcal{L}$
with $l(\beta)=-1$ or $l(\beta)=-2$ belongs to $\Sigma$.
Since $l$ takes value in $[-2,2]$, the Lemma is proved.
\end{proof}

The Main Lemma also holds  for type $1$ integrable Lie algebras.
Unfortunately, the proof requires more computations. The simplest 
approach, based on Jordan algebras, is developed in the next two
sections. Another approach (not described in the paper) is
based on tensor products of Chari-Pressley loop modules \cite{C}, 
\cite{CP}  for $A^{(1)}_1$.

\section{Jordan algebras of the class $\mathcal{J}$}\label{sect_7}

Let $J$ be a Jordan algebra. In what follows, it is assumed that any
Jordan algebra is unitary.  For any
$X\in J$, denote by $M_X\in\End(J)$ the multiplication by $X$ and set 
$[X,Y]=[M_X,M_Y]$. It follows from the Jordan identity
that $[X,Y]$ is a derivation of $J$, see \cite{J}. Any linear combinations
of such expressions is called an \textit{inner derivation}, and the space 
of inner derivations of $J$ is denoted by $\Inn J$.

\begin{lemma}\label{lemma_26}  Let $X,Y\in J$.
\begin{enumerate}
\item[(i)] $[X^2,Y]=2[X,XY]$,
\item[(ii)] $X^2Y^2 + 2 (XY)(XY)= Y(X^2Y)+2 X(Y(YX))$.
\end{enumerate}
\end{lemma}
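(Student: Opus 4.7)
My plan is to derive both identities from the Jordan identity by a standard polarization argument. Recall that in operator form, the Jordan identity $(X^2 Y)X = X^2(YX)$ is equivalent to $[M_X, M_{X^2}] = 0$ for every $X \in J$; this is the ``hidden'' input behind both statements.

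For (i), I will substitute $X + tY$ for $X$ in $[M_{X+tY}, M_{(X+tY)^2}] = 0$, expand using $(X+tY)^2 = X^2 + 2tXY + t^2 Y^2$ (which follows from commutativity of the Jordan product), and extract the coefficient of $t^1$. Collecting terms gives the operator identity $[M_{X^2}, M_Y] = 2[M_X, M_{XY}]$, which is exactly $[X^2, Y] = 2[X, XY]$ in the notation of the lemma.

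For (ii), I will simply evaluate the operator identity from (i) on the element $Y$. The left-hand side $[X^2, Y](Y)$ expands to $X^2 Y^2 - Y(X^2 Y)$, while the right-hand side $2[X, XY](Y)$ expands to $2X((XY)Y) - 2(XY)(XY)$. Commutativity of the Jordan product lets one rewrite $(XY)Y = Y(YX)$, and rearranging the resulting equation yields exactly (ii).

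Neither step presents a genuine obstacle: the entire lemma is a direct consequence of the Jordan identity via linearization and substitution. The only care needed is in tracking signs and the factor of $2$ coming from the cross term in $(X+tY)^2$; there are no deep ingredients, and no new structural facts about $J$ beyond commutativity and the Jordan identity itself are invoked.
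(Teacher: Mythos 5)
Your proof is correct and follows essentially the same route as the paper: both identities are obtained by substituting $X+tY$ into the Jordan identity and extracting the coefficient of $t$ at $t=0$. The only cosmetic difference is that for (ii) you evaluate the linearized operator identity from (i) at the element $Y$, whereas the paper differentiates the element-form identity $(X+tY)^2\bigl(Y(X+tY)\bigr)=\bigl((X+tY)^2Y\bigr)(X+tY)$ directly --- by commutativity these amount to the same computation.
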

\begin{proof}
To obtain both identities \cite{J}, apply $\left. d\dfrac{d}{dt}\right\vert_{t=0}$
to the Jordan identities:
\begin{align*}
&[(X+tY)^2, X+tY]=0, \\
&(X+tY)^2 (Y(X+tY))=((X+tY)^2 Y)(X+tY).
\end{align*}
\end{proof}

Let $Q$ be a finitely generated abelian group,
let $J=\oplus_\lambda\, J_\lambda$ be a $Q$-graded Jordan
algebra, and set $\mathcal{D}=\Inn J$. There is a decomposition
$\mathcal{D}=\oplus_\lambda\,\mathcal{D}_\lambda$, where 
$\mathcal{D}_{\lambda}$ is the space of  linear combinations
$[X,Y]$, where $X\in J_\mu$, $Y\in J_\nu$ for some
$\mu,\,\nu\in Q$ with $\mu+\nu=\lambda$. Relative to this
decomposition,  
$\mathcal{D}_\lambda$ is a weakly $Q$-graded Lie algebra.

   The Jordan algebra $J$ is called of \textit{class $\mathcal{J}(Q)$} if 
the following requirements hold:
\begin{enumerate}
\item[(i)]  $J$ is a simple $Q$-graded Jordan algebra, 
\item[(ii)] $\dim J_\lambda+\dim \mathcal{D}_\lambda\leq 1~\forall \, \lambda\in Q$, and
\item[(iii)] $\Supp J$ generates $Q$.
\end{enumerate}
If $\pi:Q\rightarrow \overline{Q}$ is a surjective morphism of
abelian groups and if $\overline{J}$ is a $\overline{Q}$-graded 
Jordan algebra, then $\overline{J}$ is in the class $\mathcal{J}(\overline{Q})$ if and only if $\pi^\ast\overline{J}$ is in $\mathcal{J}(Q)$.
This follows from the functoriality  of inner derivations.
For now on, fix an abelian group $Q$ and set 
$\mathcal{J}=\mathcal{J}(Q)$.

An element $X$ of a Jordan algebra $J$ is called 
\textit{strongly invertible} if  $XY=1$ and $[X,Y]=0$, for
some $Y\in J$. Obviously, a strongly invertible element 
is invertible in the sense of \cite{J}, Definition 5.

An element $Z\in J$ is
called \textit{central} iff $[Z,X]=0$ for any $X\in J$. The subalgebra of
central  elements is called the center of $J$. Since $J$ is unitary,
the map $\psi\mapsto \psi(1)$ identifies the centroid  of $J$ with its
center. Therefore both algebras will be denoted by $C(J)$.

In  a Jordan algebra of the class $\mathcal{J}$, we have
$ab=0$ or $[a,b]=0$ for any two homogeneous elements $a$, $b$.
For example, if we have
$XY=1$ for two homogeneous elements $X$ and $Y$, then $X$ is
strongly invertible.

\begin{lemma}\label{lemma_27} Let $J\in \mathcal{J}$ be a Jordan algebra, and let
$X\in J$ be a homogeneous and strongly invertible element. Then
$X^2$ is central.
\end{lemma}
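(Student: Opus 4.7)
The plan is to show $[M_{X^2},M_Z]=0$ for every homogeneous $Z\in J$; by Lemma~\ref{lemma_26}(i) this is equivalent to proving $[M_X,M_{XZ}]=0$. The main tool will be the dichotomy enforced by the class $\mathcal{J}$ axiom $\dim J_\lambda+\dim\mathcal{D}_\lambda\leq 1$: for any two homogeneous $a,b\in J$, the elements $ab\in J_{\deg a+\deg b}$ and $[M_a,M_b]\in\mathcal{D}_{\deg a+\deg b}$ cannot both be nonzero.

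First I will run a short case analysis. If $XZ=0$, the conclusion is immediate. Otherwise the dichotomy applied to $(X,Z)$ forces $[M_X,M_Z]=0$, and then the dichotomy applied to $(X,XZ)$ gives either $[M_X,M_{XZ}]=0$, which is what is needed, or $X(XZ)=0$. Since the relation $[M_X,M_Z]=0$ evaluated at $X$ yields $X(XZ)=X^2Z$, the remaining sub-case is $X^2Z=0$ with $XZ\neq 0$; this is where strong invertibility must intervene.

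To dispatch this sub-case I will exploit only the modest piece of strong invertibility $YX=1$. Observe that $Z\neq 0$ (otherwise $XZ=0$), so $Z$ already spans the one-dimensional component $J_{\deg Z}$; in particular $\mathcal{D}_{\deg Z}=0$. The commutator $[M_Y,M_{XZ}]$ lies in $\mathcal{D}_{\deg Y+\deg XZ}=\mathcal{D}_{\deg Z}$, so it automatically vanishes. Testing this identity at the element $X$ and using $YX=1$ together with $XZ\cdot X=X(XZ)=0$ gives
\[
0 \;=\; Y(XZ\cdot X) \;=\; (XZ)(YX) \;=\; XZ\cdot 1 \;=\; XZ,
\]
contradicting $XZ\neq 0$, so this bad sub-case cannot occur.

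The main obstacle I anticipate is spotting that the awkward sub-case can be killed simply by evaluating the automatically vanishing commutator $[M_Y,M_{XZ}]$ on the element $X$. The key piece of bookkeeping is the grading degree: $\deg Y+\deg XZ=\deg Z$, and it is precisely at degree $\deg Z$ that the one-dimensionality of $J_{\deg Z}$ shuts off the inner-derivation space. Without this observation one is led to unpack the Jordan identity of Lemma~\ref{lemma_26}(ii) on the pair $(X,Z)$, which would make the argument substantially longer.
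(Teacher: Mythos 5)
Your proof is correct. The skeleton matches the paper's: split on whether $XZ=0$, use Lemma \ref{lemma_26}(i), and invoke axiom (ii) of the class $\mathcal{J}$ as a dichotomy forbidding a nonzero product and a nonzero inner derivation in the same degree. Where you genuinely diverge is in the treatment of the sub-case $XZ\neq 0$. The paper proves directly that $X(XZ)\neq 0$: it computes $Y(XZ)=Z$ from the derivation identities (using $[X,Y]=[X,Z]=0$), then uses the commutativity of $M_X$ and $M_Y$ to get $Y(Y(X(XZ)))=Z\neq 0$, and finally kills $[X^2,Z]\in\mathcal{D}_{2\deg X+\deg Z}$ against $0\neq X(XZ)\in J_{2\deg X+\deg Z}$. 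You instead suppose $X(XZ)=0$ and derive a contradiction from the observation that $[M_Y,M_{XZ}]$ lies in $\mathcal{D}_{\deg Y+\deg(XZ)}=\mathcal{D}_{\deg Z}$, which vanishes because $Z\neq 0$ already fills $J_{\deg Z}$; evaluating at $X$ then gives $0=Y(X(XZ))=(XZ)(YX)=XZ$. This is a slicker dispatch of the sub-case: it needs only $XY=1$ rather than the full strong invertibility $[M_X,M_Y]=0$, and replaces the paper's chain of derivation identities with a single degree computation. One implicit step you share with the paper: the inverse $Y$ must be taken homogeneous of degree $-\deg X$ for the bookkeeping $\deg Y+\deg(XZ)=\deg Z$ to make sense; this is harmless, since projecting any inverse onto its component of degree $-\deg X$ still satisfies $XY=1$.
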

\begin{proof}
In order to show that $[X^2,Z]=0$ for all $Z$, it can be assumed
that $Z$ is a non-zero homogeneous element.

First assume that $XZ=0$. In such a case, we have

\centerline{$[X^2,Z]=2[X,XZ]=0$,}

\noindent by Point (i) of the previous lemma.

Assume otherwise. By
hypothesis, there exists a homogeneous element $Y\in J$ such
that $XY=1$. Then we get:

$Y(XZ)=[Y,X].Z + X(YZ)=[Y,X].Z + [X,Z].Y+ Z(XY)$.

\noindent We have $[X,Y]=0$ and since $XZ\neq 0$, we also have $[X,Z]=0$.
Therefore $Y(XZ)=Z(XY)=Z$. Since $M_X$ and $M_Y$ commutes, we have
$Y(Y(X(XZ)))=Y(X(Y(XZ)))=Y(XZ)=Z$ and therefore
$X(XZ)\neq 0$. Since $[X^2,Z]$ and $X(XZ)$ are homogeneous of the same
degree, we have $[X^2,Z]=0$ by Condition (ii) of the definition
of the class $\mathcal{J}$. 
\end{proof}

\begin{lemma}\label{lemma_28} Let $J$ be a Jordan algebra in the class $\mathcal{J}$. Then 

\centerline{$\mathcal{D}.J\cap J_0 =\{0\}$.}
\end{lemma}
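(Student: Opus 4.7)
The plan is to reduce, by linearity and the grading, to showing $d.x = 0$ whenever $d = [M_a, M_b] \in \mathcal{D}_\lambda$ and $x \in J_{-\lambda}$, with $a \in J_\alpha$, $b \in J_\beta$, and $\alpha + \beta = \lambda$. Since $1 \in J_0$, condition (ii) at $\lambda = 0$ forces $J_0 = \C \cdot 1$ and $\mathcal{D}_0 = 0$, disposing of $\lambda = 0$. For $\lambda \neq 0$: if $\alpha = 0$ or $\beta = 0$ then $d = 0$; otherwise, if $d \neq 0$ then $\mathcal{D}_\lambda \neq 0$, so (ii) forces $J_\lambda = 0$ and hence $ab = 0$. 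The whole argument will hinge on the Leibniz identity $[d, M_x] = M_{d.x}$ as operators on $J$, so it suffices to show $[d, M_x] = 0$.

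Expanding by Jacobi gives
\[
[d, M_x] = [M_a, [M_b, M_x]] - [M_b, [M_a, M_x]],
\]
with $[M_a, M_x] \in \mathcal{D}_{-\beta}$ and $[M_b, M_x] \in \mathcal{D}_{-\alpha}$. In the principal case where both $J_{-\alpha}$ and $J_{-\beta}$ are non-zero, (ii) forces $\mathcal{D}_{-\alpha} = \mathcal{D}_{-\beta} = 0$; both inner commutators vanish, and the argument concludes. The case $J_{-\alpha} = J_{-\beta} = 0$ yields $ax = bx = 0$ directly. The remaining, hard case is the asymmetric one: WLOG $J_{-\beta} = 0$ (so $ax = 0$ and $d.x = a(bx)$) while $J_{-\alpha} \neq 0$, so (ii) still gives $[M_b, M_x] = 0$.

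Here I will suppose $d.x = c\cdot 1$ with $c \neq 0$ and derive a contradiction in three steps. First, a direct computation
\[
[M_x, M_{bx}](a) = x\bigl(a(bx)\bigr) - (bx)(xa) = cx \neq 0
\]
together with Lemma \ref{lemma_26}(i) applied to $(X,Y) = (x,b)$ shows $[M_{x^2}, M_b] = 2[M_x, M_{bx}] \neq 0$; since this is a nonzero element of $\mathcal{D}_{\beta - 2\lambda}$, condition (ii) now forces $J_{\beta - 2\lambda} = 0$ and hence $bx^2 = 0$. Second, the Jordan identity $x(x^2 b) = x^2(xb)$, combined with $x^2 b = 0$, propagates this to $x^2(bx) = 0$. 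Third, Lemma \ref{lemma_26}(i) applied to $(X,Y) = (x,a)$, using $ax = 0$, gives $[M_a, M_{x^2}] = 0$; evaluating this operator on $bx$ produces
\[
0 = a\bigl(x^2(bx)\bigr) - x^2\bigl(a(bx)\bigr) = -cx^2.
\]
Since Leibniz gives $d(x^2) = 2x\,d(x) = 2cx \neq 0$, we have $x^2 \neq 0$, and hence $c = 0$, the desired contradiction. The principal obstacle is precisely this asymmetric case: the Jacobi trick fails because only one of $[M_a, M_x]$, $[M_b, M_x]$ is forced to vanish, and one must chain Lemma \ref{lemma_26}(i) with (ii) to turn a nonvanishing commutator derivation into the vanishing $bx^2 = 0$, then propagate this relation through the Jordan identity to reach $cx^2 = 0$.
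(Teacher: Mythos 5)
Your proof is correct, and it takes a genuinely different route from the paper's. The paper argues by contradiction from $[X,Y].Z=1$: it first reduces, via the centroid machinery of Lemma \ref{lemma_9}, to the case $C(J)=\C$, then uses strong invertibility and Lemma \ref{lemma_27} together with Lemma \ref{lemma_26}(ii) to force the torsion relations $2\alpha=0$ and $4\beta=0$ on the degrees, concludes that $\Supp J$ is finite so that $\dim J<\infty$, and finishes with the trace obstruction to $[P,Q]=\id$ in $\End(J)$. You instead split according to the vanishing of $J_{-\alpha}$ and $J_{-\beta}$, kill the two symmetric cases by the Jacobi expansion of $[d,M_x]$ (resp.\ directly), and in the asymmetric case chain Lemma \ref{lemma_26}(i) with condition (ii) of the class $\mathcal{J}$ to get $bx^2=0$, push this through the Jordan identity to $x^2(bx)=0$, and extract $cx^2=0$ from the commutation $[M_a,M_{x^2}]=0$; the Leibniz computation $d(x^2)=2cx\neq 0$ then forces $x^2\neq 0$ and hence $c=0$. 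Both proofs rest on the same dichotomy (for homogeneous $u,v$, either $uv=0$ or $[u,v]=0$) and on the linearized Jordan identities, but yours stays entirely inside the graded algebra: it needs neither the reduction to trivial centroid, nor Lemma \ref{lemma_27}, nor the finite-dimensionality/trace endgame, and it uses only part (i) of Lemma \ref{lemma_26} rather than part (ii). The one step worth spelling out in a final write-up is the opening reduction: $\mathcal{D}.J\cap J_0=\sum_\lambda \mathcal{D}_\lambda.J_{-\lambda}$ by gradedness, and each $\mathcal{D}_\lambda$ is spanned by single commutators $[M_a,M_b]$ with $a,b$ homogeneous, so treating one such $d$ at a time does suffice.
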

\begin{proof}
Assume otherwise. Then there are three homogeneous
elements $X,\,Y,\,Z\in J$ such that
$[X,Y].Z=1$.   

It can be assumed that $J$ is generated by $X,\, Y$ and
$Z$. Indeed let $J'$ be the Jordan algebra generated by
$X,\, Y$ and $Z$. Since $J'$ is
unitary, it contains a unique maximal graded ideal $M$.
Then the same relation  $[X,Y].Z=1$ holds in $J/M$ and
$J/M$ is again in the class $\mathcal{J}$.

Also, it can be assumed that $C(J)=\C$. 
Indeed, by Lemma \ref{lemma_9}, there exists an
abelian group
$\overline{Q}$, a surjective morphism $\pi:Q\rightarrow \overline{Q}$
and a $\overline{Q}$ graded simple Jordan algebra $\overline{J}$ such
that $J=\pi^\ast\,\overline{J}$. It has been noted that
$\overline{J}$ is in $\mathcal{J}(\overline{Q})$ and by simplicity
its centroid $C(\overline{J})$ is reduced to $\C$.
Since the same relation $[X,Y].Z=1$ holds in $\overline{J}$,
it can be assumed that $C(J)=\C$.

Let $\alpha,\,\beta,\,\gamma$ be the degree of
$X$, $Y$ and $Z$. Note that $\alpha+\beta+\gamma=0$.

We have $X(YZ)-Y(XZ)=1$. By symmetry, one can assume that
$X(YZ)\neq 0$.

We have $X(YZ)=[X,Z].Y + Z(XY)$. Since $[X,Y]$ is not zero,
we have $XY=0$ and $0\neq X(YZ)=[X,Z].Y$. Thus we have
$[X,Z]\neq 0$ and $XZ=0$.

Thus $X(YZ)=1$. It follows from Lemma \ref{lemma_27} and the hypothesis 
$C(J)=\C$ that
$X^2$ is a non-zero scalar $c$, and therefore $2\alpha=0$ and
$YZ=X/c$.

 By Lemma \ref{lemma_26} (ii),  there is the identity

\centerline{$Y^2 Z^2 + 2 (YZ)^2=Z(Y^2 Z)+ 2 Y(Z(ZY))$.}

\noindent We have

\centerline{$\deg Y^2 Z=2\beta+\gamma=\beta-\alpha=\beta+\alpha=\deg
[X,Y]$, and} 

\centerline{$\deg Z(ZY)=\beta+2\gamma=\gamma-\alpha=\gamma+\alpha=\deg
[X,Z]$.}

By hypothesis we have  $[X,Y]\neq 0$ and it has been proved that
$[X,Z]\neq 0$. Therefore $Y^2 Z=0$ and $Z(ZY)=0$ and
the right side of the identity is zero.
Thus we get $Y^2 Z^2+ 2(YZ)^2=0$. Since $YZ=X/c$, it follows that
$Y^2 Z^2=-2/c$. Hence $Y^2$ is strongly invertible. By Lemma \ref{lemma_27},
$Y^4$ is a non-zero scalar, and therefore $4\beta=0$.

Thus the subgroup generated by $\alpha$ and $\beta$ has order
$\leq 8$, and it contains $\gamma=-(\alpha+\beta)$. Therefore $\Supp J$ is
finite, i.e. $\dim J<\infty$.

It follows from Jordan identity that
$M_{[X,Y].Z}=[[M_X,M_Y],M_Z]$, see \cite{J}, Formula (54). Set
$P=[M_X,M_Y]$ and 
$Q=M_Z$. Thus $P$ and $Q$ belongs to the associative
algebra $\End (J)$, and they satisfy
$[P,Q]=1$, where $[,]$ denotes the ordinary Lie bracket.
Since $J$ is finite dimensional, this identity is impossible.
\end{proof}

\begin{lemma}\label{lemma_29} Let $J$ be a simple graded Jordan algebra of 
the class $\mathcal{J}$. Then any non-zero homogeneous element is 
strongly invertible.
\end{lemma}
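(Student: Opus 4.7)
The plan has two distinct ingredients: first extract structural consequences from conditions (i)--(iii) of the class $\mathcal{J}$ together with Lemma~\ref{lemma_28}, and then run an easy induction to descend from the whole ideal generated by $X$ down to the single layer $J_{-\alpha} \cdot X$.

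I would begin by observing that condition (ii), combined with $1 \in J_0$, forces $J_0 = \C \cdot 1$ and $\mathcal{D}_0 = 0$. This disposes of the degree-zero case: a nonzero homogeneous $X$ of degree $0$ is a scalar multiple of $1$, which is trivially strongly invertible. From now on I therefore assume $X \in J_\alpha$ is nonzero with $\alpha \neq 0$, and the goal is to produce $Y \in J_{-\alpha}$ with $XY = 1$; strong invertibility then follows automatically because $[X, Y]$ lies in $\mathcal{D}_0 = \{0\}$.

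The key preliminary observation is that the associator $(A, B, C) := (AB)C - A(BC)$ vanishes whenever $A, B, C$ are homogeneous with $\deg A + \deg B + \deg C = 0$. Indeed, $[M_A, M_B](C)$ lies in $(\Inn J) \cdot J \cap J_0$, which by Lemma~\ref{lemma_28} is $\{0\}$, so $A(BC) = B(AC)$; applying the same argument to $[M_A, M_C](B)$ and using commutativity of $J$ then yields $A(BC) = C(AB) = (AB)C$, hence $(A, B, C) = 0$.

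For the main step, let $\mathcal{U} \subseteq \End(J)$ be the associative subalgebra generated by all multiplications $M_Y$, $Y \in J$. Since $X$ is a nonzero homogeneous element and $J$ is simple graded, $\mathcal{U} \cdot X$ is a nonzero graded Jordan ideal, hence equals $J$; in particular $1 \in \mathcal{U}_{-\alpha} \cdot X$, so $1$ is a sum of iterated products $M_{Y_1} M_{Y_2} \cdots M_{Y_n}(X)$ with homogeneous $Y_i$ of degrees $\beta_i$ satisfying $\sum_i \beta_i = -\alpha$. I would prove by induction on $n$ that each such term already lies in $J_{-\alpha} \cdot X$. Setting $Z := M_{Y_3} \cdots M_{Y_n}(X)$ and using $M_{Y_1} M_{Y_2}(Z) = (Y_1 Y_2) Z - (Y_1, Y_2, Z)$, the associator $(Y_1, Y_2, Z)$ has total degree $\sum_i \beta_i + \alpha = 0$ and therefore vanishes by the previous step; hence the product reduces to $M_{Y_1 Y_2} M_{Y_3} \cdots M_{Y_n}(X)$. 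Iterating $n-1$ times collapses the product to $M_W(X) = W \cdot X$ with $W = (\cdots ((Y_1 Y_2) Y_3) \cdots) Y_n \in J_{-\alpha}$, and the proof is complete. The only genuinely delicate point is the associator-vanishing step, which is the unique place where Lemma~\ref{lemma_28} enters essentially; once this is in hand, the inductive collapse is purely mechanical.
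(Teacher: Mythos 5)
Your proof is correct, but it takes a different route from the paper's. The paper's argument is a trace-form argument: it defines the linear functional $\lambda$ picking out the coefficient of $1$ in the degree-zero component, deduces from Lemma \ref{lemma_28} that $\lambda((XY)Z)=\lambda(X(YZ))$, so that the kernel of the symmetric form $B(X,Y)=\lambda(XY)$ is a graded ideal; graded simplicity then forces $B$ to be non-degenerate, which pairs $J_\alpha$ with $J_{-\alpha}$ and immediately produces the inverse. You instead extract from Lemma \ref{lemma_28} the vanishing of associators of total degree zero, invoke graded simplicity through the multiplication ideal $\mathcal{U}\cdot X=J$, and collapse an iterated product $M_{Y_1}\cdots M_{Y_n}(X)$ representing $1$ down to a single multiplication $M_W(X)$ with $W\in J_{-\alpha}$. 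The two proofs consume exactly the same inputs (Lemma \ref{lemma_28} plus graded simplicity), and your preliminary observations ($J_0=\C\cdot 1$, $\mathcal{D}_0=0$, hence $[X,Y]=0$ automatically once $XY=1$) are the same facts the paper uses implicitly via its remark preceding Lemma \ref{lemma_27}. What the paper's version buys is brevity and a reusable by-product, namely a non-degenerate associative graded trace form on $J$; what your version buys is a fully explicit, constructive derivation of the inverse that makes the ``associativity in total degree zero'' mechanism visible, at the cost of the bookkeeping in the inductive collapse. All the individual steps you give check out: $\mathcal{U}\cdot X$ is indeed a nonzero graded ideal, the degree count $\deg Y_1+\deg Y_2+\deg Z=0$ persists at every stage of the collapse, and the final element $Y=\sum_k W_k$ lands in $J_{-\alpha}$ as required.
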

\begin{proof} Let $\lambda:J\rightarrow \C$ be defined by
$\lambda(1)=1$ and $\lambda(X)=0$ if $X$ is homogeneous of degree $\neq 0$.

It follows from the previous lemma that
$\lambda([X,Y].Z)=0$, or, equi\-va\-lently
$\lambda((XY)Z)=\lambda(X(YZ))$ for all $X,\,Y,\,Z\in J$. Thus the kernel
of the bilinear map $B:J\times J\rightarrow \C, 
(X,Y)\mapsto \lambda(XY)$ is a graded ideal. Hence
$B$ is non-degenerated, which implies that any non-zero homogeneous element
is strongly invertible.  
\end{proof}
Recall that the underlying grading group is denoted by $Q$.

\begin{lemma}\label{lemma_30} Let $J$ be a  Jordan algebra of the class
$\mathcal{J}$. Then 

\centerline{$\Supp\, C(J) \supset 2Q$.}
\end{lemma}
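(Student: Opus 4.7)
The plan is to combine Lemmas \ref{lemma_27} and \ref{lemma_29} to produce explicit central elements, then leverage the subgroup structure of $\Supp C(J)$ from Section \ref{sect_2}.

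First I would fix $\lambda\in\Supp J$ and pick any non-zero $X\in J_\lambda$. By Lemma \ref{lemma_29}, $X$ is strongly invertible, so there is a homogeneous $Y\in J_{-\lambda}$ with $XY=1$ and $[X,Y]=0$. Lemma \ref{lemma_27} then immediately gives $X^2\in C(J)$, and $X^2$ is homogeneous of degree $2\lambda$. The only thing to verify is that $X^2\neq 0$: using commutativity of $J$ together with $[M_X,M_Y]=0$, one gets $X^2 Y=Y(XX)=X(YX)=X(XY)=X$, so $X^2=0$ would force $X=0$. Hence $2\lambda\in\Supp C(J)$ for every $\lambda\in\Supp J$.

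To finish, I would invoke Lemma \ref{lemma_6}(iii), which (applied to the simple $Q$-graded Jordan algebra $J$) tells us that $\Supp C(J)$ is a subgroup of $Q$. Since $\Supp J$ generates $Q$ by condition (iii) of class $\mathcal{J}$, the set $\{2\lambda:\lambda\in\Supp J\}$ generates $2Q$, and so the subgroup $\Supp C(J)$ contains all of $2Q$.

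There is no serious obstacle here: Lemmas \ref{lemma_27} and \ref{lemma_29} do all the heavy lifting. The only small subtlety is checking $X^2\neq 0$, which is a one-line computation using the Jordan identity and the commutation of multiplication operators attached to a strongly invertible element; everything else is then formal manipulation of supports.
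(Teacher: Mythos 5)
Your proof is correct and follows essentially the same route as the paper: apply Lemma \ref{lemma_29} to get strong invertibility, Lemma \ref{lemma_27} to get centrality of $X^2$, and then use that $\Supp C(J)$ is a subgroup generated by the elements $2\lambda$. Your explicit check that $X^2\neq 0$ is a small point the paper leaves implicit, and your computation of it is valid.
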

\begin{proof}
Let $\alpha\in \Supp J$, and let $X\in
J_\alpha\setminus\{0\}$. By Lemma \ref{lemma_29}, $X$ is 
strongly invertible and by Lemma \ref{lemma_27},
$X^2$ is central. Therefore
$2\alpha$ belongs to $\Supp\, C(J)$. Since $\Supp J$ generates $Q$  
and $\Supp\, C(J)$ is a subgroup, it follows
that $\Supp\, C(J)$ contains $2Q$.
\end{proof}

\section{Kantor-Koecher-Tits  construction}\label{sect_8}

Let us recall the Kantor-Koecher-Tits  construction,
which first appeared in \cite{T}, and then in 
\cite{Kan}, \cite{Ko}. It contains two parts.

To any Jordan algebra
$J$  is associated a Lie algebra, denoted by 
$\mathfrak{sl}(2,J)$, whose definition is as follows.
As a vector space, $\mathfrak{sl}(2,J)=\mathfrak{sl}(2)\otimes J
\oplus \Inn J$, and the bracket is defined by the following
requirements:
\begin{enumerate}
\item[(i)] its restriction to $\Inn J$ is the Lie algebra structure
of $\Inn J$,
\item[(ii)] we have: $[\delta,x\otimes a]=x\otimes \delta.a$, 
\item[(iii)] we have:
$[x\otimes a,y\otimes b]= [x,y]\otimes ab + k(x,y) [a,b]$,
for any
$\delta\in\Inn J$,  $x,\,y\in\mathfrak{sl}(2)$, 
$a,\,b\in J$, where $k(x,y)=1/2 \,\tr (xy)$ and where $[a,b]$ denotes
the inner derivation $[M_a,M_b]$.
\end{enumerate}
Conversely, to  certain Lie algebras are associated with Jordan
algebras. Indeed assume that the Lie algebra
$\mathcal{L}$ contains a subalgebra $\mathfrak{sl}(2)$ with its
standard basis $e,f,h$ and moreover  that 

\centerline{$\mathcal{L}=\mathcal{L}^{-1}\oplus\mathcal{L}^0\oplus\mathcal{L}^1$,}

\noindent where $\mathcal{L}^i=\{x\in\mathcal{L}\vert\, [h,x]=2i x\}$.
Define an algebra $J$ by the following
requirements:
\begin{enumerate}
\item[(i)] as a vector space, $J=\mathcal{L}^1$,
\item[(ii)] the product of two elements $x$, $y$ is given by the formula
$xy=1/2 [[f,x],y]$.
\end{enumerate}

\begin{lemma}[Tits \cite{T}] \label{lemma_31} With the previous hypothesis:
\begin{enumerate}
\item[(i)] $J$ is a Jordan algebra.
\item[(ii)] If moreover $\mathcal{L}^0=[\mathcal{L}^1,\mathcal{L}^{-1}]$ and 
if the center of $\mathcal{L}$ is trivial, then we have
$\mathcal{L}=\mathfrak{sl}(2,J)$.
\end{enumerate}
\end{lemma}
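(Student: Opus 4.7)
The plan is to verify both assertions by combining standard $\mathfrak{sl}(2)$-module theory with direct Jacobi calculations, exploiting heavily that $\mathcal{L}^{\geq 2} = \mathcal{L}^{\leq -2} = 0$.

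For assertion (i), the Jordan algebra axioms split into commutativity and the Jordan identity $(a^2 b) a = a^2 (ba)$. Commutativity is immediate: for $a, b \in \mathcal{L}^1$ we have $[a,b] \in \mathcal{L}^2 = 0$, so Jacobi applied to $(f,a,b)$ gives $[[f,a],b] = [[f,b],a]$, i.e.\ $ab = ba$. For the Jordan identity I would introduce $T_a := [f,a] \in \mathcal{L}^0$, so that $ab = \tfrac{1}{2}[T_a,b]$ and left multiplication $L_a$ on $J$ coincides with $\tfrac{1}{2}\ad(T_a)\vert_{\mathcal{L}^1}$. The Jordan identity is equivalent to $[L_a, L_{a^2}] = 0$ on $\mathcal{L}^1$, which reduces to showing $[T_a, T_{a^2}]$ acts trivially on $\mathcal{L}^1$. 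Expanding $T_{a^2}$ via $T_{a^2} = [f, \tfrac{1}{2}[T_a,a]]$ and pushing $f$ inside using Jacobi produces terms containing $[T_a, T_a] = 0$ and terms of degree $\geq 2$ that vanish; a careful bookkeeping of Jacobi expansions completes the argument.

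For assertion (ii), the strategy is to build an isomorphism $\mathfrak{sl}(2,J) \xrightarrow{\sim} \mathcal{L}$ by identifying components. Since $\ad(h)$ has eigenvalues in $\{-2,0,2\}$, $\mathcal{L}$ decomposes as an $\mathfrak{sl}(2)$-module into copies of the trivial and the adjoint representation, with the trivial summands lying in $\mathcal{L}^0$. This yields $\mathcal{L}^0 = [f,\mathcal{L}^1] \oplus \mathcal{H}$, where $\mathcal{H}$ is the centralizer of $\mathfrak{sl}(2)$ in $\mathcal{L}^0$. I would set up the identifications $\mathcal{L}^1 \leftrightarrow e\otimes J$, $\mathcal{L}^{-1} \leftrightarrow f\otimes J$ (the latter via $\tfrac{1}{2}\ad(f)^2\vert_{\mathcal{L}^1}$ or equivalently the $\mathfrak{sl}(2)$-module structure), and $[f,\mathcal{L}^1] \leftrightarrow h\otimes J$. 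The remaining task is to identify $\mathcal{H}$ with $\Inn\, J$. Here the hypothesis $\mathcal{L}^0 = [\mathcal{L}^1, \mathcal{L}^{-1}]$ forces $\mathcal{H}$ to be spanned by brackets $[a,b']$ (with $a \in \mathcal{L}^1$, $b' \in \mathcal{L}^{-1}$) modulo the $h\otimes J$ part, and a direct Jacobi computation matches these brackets to the inner derivations $[M_a, M_b]$ of $J$. The trivial center hypothesis ensures that the natural surjection $\mathcal{H} \to \Inn\, J$ (given by action on $J = \mathcal{L}^1$) is also injective: its kernel would commute with $\mathcal{L}^1$, and by the $\mathfrak{sl}(2)$-action also with $\mathcal{L}^{-1}$ and with $\mathfrak{sl}(2)$ itself, hence be central in $\mathcal{L}$.

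Once the four summands are matched, checking that the Lie brackets agree on each pair of summands is a short calculation: the brackets $[e\otimes a, f \otimes b] = h \otimes (ab) + [a,b]_J$ and $[h \otimes a, e \otimes b] = 2e \otimes (ab)$, etc., are forced by the $\mathfrak{sl}(2)$-equivariance together with the definition of the Jordan product $ab = \tfrac{1}{2}[[f,a],b]$. I expect the main obstacle to be the Jordan identity in (i), since it is the only step that does not reduce to formal module-theoretic bookkeeping but requires a genuine (though routine) unwinding of nested Jacobi identities; the proof of (ii), while notationally heavy, is essentially determined by the $\mathfrak{sl}(2)$-decomposition and the two hypotheses.
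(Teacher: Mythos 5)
The paper does not prove this lemma at all: it is quoted from Tits \cite{T} and used as a black box, so there is no internal proof to compare yours against. That said, your outline is the classical Tits--Kantor--Koecher argument and is sound. Two places where the ``routine'' steps carry real content are worth flagging. For (i), the only non-formal step is indeed the Jordan identity; your reduction to showing that $[T_a,T_{a^2}]$ acts trivially on $\mathcal{L}^1$ is correct, and the cleanest way to finish the bookkeeping is to observe that the Jacobi identity makes $(\mathcal{L}^1,\mathcal{L}^{-1})$ with $\{x,v,y\}=[[x,v],y]$ into a Jordan pair, of which $J$ is the homotope at $f$; note also that the paper's convention is that Jordan algebras are unital, and $\pm e$ supplies the unit (since $[[f,e],y]=-[h,y]=-2y$ on $\mathcal{L}^1$). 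For (ii), the decomposition $\mathcal{L}^0=[f,\mathcal{L}^1]\oplus\mathcal{H}$ rests on $\mathcal{L}$ being a direct sum of trivial and adjoint $\mathfrak{sl}(2)$-modules, which follows from $(\ad e)^3=(\ad f)^3=0$; and identifying the image of $\mathcal{H}$ in $\End(J)$ with exactly $\Inn J$ (not merely some space of derivations) needs the extra observation that each bracket $[[f,a],[f,b]]$ lies in $\mathcal{H}$, i.e.\ has vanishing $[f,\mathcal{L}^1]$-component; this is seen by letting it act on the unit of $J$, where derivations vanish but nonzero multiplications do not. With these details supplied, your argument is complete and is presumably the proof in \cite{T}.
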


An \textit{admissible datum} is a triple
$(J,\Lambda',\alpha)$ with the following conditions:
\begin{enumerate}
\item[(i)] $\Lambda'$ is a subgroup of $\Lambda$, $\alpha$ is an
element of $\Lambda$, and the group $\Lambda$ is generated 
by $\Lambda'$ and $\alpha$,
\item[(ii)] $J$ is a Jordan algebra in the class $\mathcal{J}(\Lambda')$,
\item[(iii)] the four subsets $\Supp J$, $\pm\alpha+\Supp J$ and $\Supp \Inn J$
are disjoint.
\end{enumerate}
Then a $\Lambda$-gradation of
the  Lie algebra $\mathfrak{sl}(2,J)$ is defined as follows:
\begin{enumerate}
\item[(i)] On $\Inn J$, the gradation is the natural $\Lambda'$ gradation
\item[(ii)] For any homogeneous element $x\in J$, set

\centerline{$\deg h\otimes x=\deg x$, $\deg e\otimes
x=\alpha+\deg x$ and
$\deg f\otimes x=-\alpha+\deg x$.}
\end{enumerate}
The condition (iii) of an admissible triple ensures that
the associated $\Lambda$-gradation of $\mathfrak{sl}(2,J)$ is mutiplicity free. 
Since the $\Lambda$-graded Lie algebra $\mathfrak{sl}(2,J)$ is clearly simple graded, it is a type $1$ integrable
Lie algebra of the class $\mathcal{G}'$.

Conversely, let $\Lambda$ be a finitely generated abelian
group, and let $\mathcal{L}\in \mathcal{G}'(\Lambda)$ be a type
$1$ integrable Lie algebra.
 Let  $\Lambda'$ be the subgroup generated by 
$\beta-\gamma$, where $\beta$ and $\gamma$ run over 
$\Supp\mathcal{L}^1$. 

By Lemma \ref{lemma_22}, the set $\Sigma_1$ is not empty. Choose $\alpha\in\Sigma_1$.
Let $J(\alpha)$ be the Jordan algebra defined by
the following requirements:
\begin{enumerate}
\item[(i)] As a vector space, $J(\alpha)=\mathcal{L}^1$,
\item[(ii)] the gradation of $J(\alpha)$ is given by

\centerline{$J(\alpha)_{\mu}=\mathcal{L}^1_{\mu+\alpha}$,}
\item[(iii)] the product $xy$ of two elements $x,y\in J(\alpha)$ is defined by

\centerline{$xy=[[L_{-\alpha},x],y]$.}
\end{enumerate}

\begin{lemma}\label{lemma_32} Let $\mathcal{L}\in \mathcal{G}'$ be a type 1 
integrable Lie algebra, and let $\alpha\in
\Sigma_1$. 
\begin{enumerate}
\item[(i)] The triple $(J(\alpha),\Lambda',\alpha)$ is an admissible
datum, and
\item[(ii)] as a $\Lambda$-graded Lie algebra, $\mathcal{L}$ is
isomorphic to $\mathfrak{sl}(2,J(\alpha))$.
\end{enumerate}
\end{lemma}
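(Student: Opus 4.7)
The plan is to apply the Kantor--Koecher--Tits construction (Lemma \ref{lemma_31}) to the $\mathfrak{sl}(2)$-triple attached to $\alpha$, and then verify the grading compatibilities.

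First, I rescale $L_{-\alpha}$ so that $e:=L_\alpha$, $f:=L_{-\alpha}$, $h:=2L_0$ form a standard $\mathfrak{sl}(2)$-triple; this is possible because $\alpha\in\Sigma_1$. By type-$1$ integrability, $\mathcal{L}=\mathcal{L}^{-1}\oplus\mathcal{L}^0\oplus\mathcal{L}^1$ with $\mathcal{L}^i=\{x\mid[h,x]=2ix\}$, so Lemma \ref{lemma_31}(i) makes $J(\alpha)=\mathcal{L}^1$ into a Jordan algebra (the factor $1/2$ appearing in Lemma \ref{lemma_31} is absorbed into the normalization of $L_{-\alpha}$). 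Lemma \ref{lemma_23}(iv) gives $\mathcal{L}^0=[\mathcal{L}^1,\mathcal{L}^{-1}]$, and the simple graded Lie algebra $\mathcal{L}$ has trivial center, since the center is automatically graded and any nonzero central element would span a proper graded ideal. Lemma \ref{lemma_31}(ii) then yields an isomorphism $\varphi:\mathfrak{sl}(2,J(\alpha))\to\mathcal{L}$ of ungraded Lie algebras, under which $e\otimes J(\alpha)_\mu\subset\mathcal{L}_{\alpha+\mu}$, $f\otimes J(\alpha)_\mu\subset\mathcal{L}_{-\alpha+\mu}$, $h\otimes J(\alpha)_\mu\subset\mathcal{L}_\mu$, and $\Inn J(\alpha)_\nu\subset\mathcal{L}^0_\nu$. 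This matches exactly the $\Lambda$-gradation attached to the datum $(J(\alpha),\Lambda',\alpha)$, so (ii) will follow as soon as (i) is established.

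Next I check $J(\alpha)\in\mathcal{J}(\Lambda')$. The decomposition $\mathcal{L}^0_\mu=(h\otimes J(\alpha)_\mu)\oplus\Inn J(\alpha)_\mu$ together with $\dim\mathcal{L}^0_\mu\leq 1$ gives the multiplicity-free condition $\dim J(\alpha)_\mu+\dim\Inn J(\alpha)_\mu\leq 1$. Since $\alpha\in\Supp\mathcal{L}^1$, one has $0\in\Supp J(\alpha)$, and hence the subgroup generated by $\Supp J(\alpha)$ coincides with the subgroup generated by the differences $\gamma-\gamma'$ with $\gamma,\gamma'\in\Supp\mathcal{L}^1$, which is $\Lambda'$ by definition. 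For graded simplicity, a nonzero graded Jordan ideal $I\subset J(\alpha)$ produces the graded Lie ideal
\[
\widetilde I\;:=\;\mathfrak{sl}(2)\otimes I\;+\;\mathrm{span}\{[M_x,M_y]\mid x\in I,\ y\in J(\alpha)\}
\]
of $\mathfrak{sl}(2,J(\alpha))\cong\mathcal{L}$, and graded simplicity of $\mathcal{L}$ forces $\widetilde I=\mathcal{L}$, hence $I=J(\alpha)$.

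Finally I verify the admissibility conditions. By definition, $\Supp\mathcal{L}^1\subset\alpha+\Lambda'$; sphericality of each $\mathfrak{s}(\alpha)$-isotypical component of $\mathcal{L}$ shows that each $\delta\in\Supp\mathcal{L}^{-1}$ satisfies $\delta+2\alpha\in\Supp\mathcal{L}^1$, whence $\Supp\mathcal{L}^{-1}\subset -\alpha+\Lambda'$; and then $\mathcal{L}^0=[\mathcal{L}^1,\mathcal{L}^{-1}]$ yields $\Supp\mathcal{L}^0\subset\Lambda'$. Since $\Supp\mathcal{L}$ generates $\Lambda$, the subset $\{\alpha\}\cup\Lambda'$ generates $\Lambda$. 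The required disjointness of $\Supp J(\alpha)$, $\pm\alpha+\Supp J(\alpha)$ and $\Supp\Inn J(\alpha)$ comes from multiplicity-freeness of $\mathcal{L}$: the three sets sitting respectively in $\mathcal{L}^0,\mathcal{L}^1,\mathcal{L}^{-1}$ are separated by the value of $l\in\{0,1,-1\}$, while $\Supp J(\alpha)\cap\Supp\Inn J(\alpha)=\emptyset$ is exactly the multiplicity condition within $\mathcal{L}^0$ just verified. The main obstacle will be the Jordan-to-Lie ideal transfer used in the simplicity argument, i.e. checking that $\widetilde I$ really is a Lie ideal; this reduces to the routine but not entirely trivial Jordan-algebra facts that inner derivations preserve Jordan ideals and that $[M_x,M_y]\cdot J\subset I$ whenever $x\in I$.
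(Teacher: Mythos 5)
Your proof is correct and follows essentially the same route as the paper: verify the hypotheses of Tits' Lemma \ref{lemma_31} (trivial center, $\mathcal{L}^0=[\mathcal{L}^1,\mathcal{L}^{-1}]$ from Lemma \ref{lemma_23}), obtain the isomorphism with $\mathfrak{sl}(2,J(\alpha))$, and then read off the class-$\mathcal{J}(\Lambda')$ and admissibility conditions from the multiplicity-freeness of the $\Lambda$-gradation. The only difference is that you spell out the graded simplicity of $J(\alpha)$ via the Jordan-to-Lie ideal transfer $\widetilde I$, a step the paper declares ``clear''; your verification of it is sound.
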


\begin{proof} First check that
$\mathcal{L}$ satisfies the hypothesis of the previous lemma.
By simplicity of $\mathcal{L}$, its center is trivial. Moreover 
the identity  $\mathcal{L}^0=[\mathcal{L}^1,\mathcal{L}^{-1}]$
holds  by Lemma \ref{lemma_23} (iv). 

By the previous lemma, the Lie algebra
$\mathcal{L}$ is isomorphic to $\mathfrak{sl}(2,J(\alpha))$. 
It follows from
the definition that $\Supp J$ generates $\Lambda'$
and that $\Lambda=\Lambda'+\Z \alpha$. 
It is clear that $J$ is a simple graded Jordan algebra.
Since
the gradation of $\mathfrak{sl}(2,J(\alpha))$ is multiplicity
free, the four subsets $\Supp J(\alpha)$, $\pm\alpha+\Supp J(\alpha)$ and
$\Supp \Inn J(\alpha)$ are disjoint. In particular
$J(\alpha)$ is of the class ${\cal J}(\Lambda')$ and
 $(J(\alpha),\Lambda',\alpha)$ is an admissible datum.
\end{proof}

\begin{lemma}\label{lemma_33} (Main Lemma for type 1 integrable Lie algebras)

Let $\mathcal{L}\in \mathcal{G}'$ be integrable of type $1$.
Then any $\beta\in\Supp \mathcal{L}$ with $l(\beta)\neq 0$
belongs to $\Sigma$.
\end{lemma}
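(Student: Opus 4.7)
The plan is to transport the problem to the Jordan algebra side via Lemma \ref{lemma_32} and then exploit the strong-invertibility theorem (Lemma \ref{lemma_29}) for Jordan algebras in class $\mathcal{J}$.

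First I would fix $\alpha \in \Sigma_1$ (non-empty by Lemma \ref{lemma_22}) and use Lemma \ref{lemma_32} to identify $\mathcal{L}$ with $\mathfrak{sl}(2, J(\alpha))$ as a $\Lambda$-graded Lie algebra, where $J := J(\alpha)$ is in the class $\mathcal{J}(\Lambda')$. Under this identification, $\mathcal{L}^1 = e \otimes J$ and $\mathcal{L}^{-1} = f \otimes J$, with $\deg(e \otimes a) = \alpha + \deg a$ and $\deg(f \otimes a) = -\alpha + \deg a$ for homogeneous $a \in J$. Since $J$ is unitary, $1_J \in J_0$, hence $h \otimes 1_J$ lies in $\mathcal{L}_0$, and by the condition $\dim J_0 + \dim \mathcal{D}_0 \leq 1$ of class $\mathcal{J}$ we get $\mathcal{D}_0 = (\Inn J)_0 = 0$, so $\mathcal{L}_0 = \C(h \otimes 1_J)$ and $L_0$ is a non-zero scalar multiple of $h \otimes 1_J$.

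Next, take $\beta \in \Supp \mathcal{L}$ with $l(\beta) \neq 0$. Since $\mathcal{L}$ is of type $1$, we have $l(\beta) = \pm 1$; by the symmetry $\beta \leftrightarrow -\beta$, it suffices to treat $l(\beta) = 1$. Writing $\mu = \beta - \alpha \in \Supp J$, we have $L_\beta = e \otimes x$ and $L_{-\beta} = f \otimes y$ for non-zero homogeneous elements $x \in J_\mu$ and $y \in J_{-\mu}$. Using the Kantor-Koecher-Tits bracket formula,
\[ [L_\beta, L_{-\beta}] = [e \otimes x, f \otimes y] = h \otimes (xy) + k(e,f)\,[x,y], \]
with $[x,y] \in (\Inn J)_0 = 0$. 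Hence $[L_\beta, L_{-\beta}] = h \otimes (xy)$, and the task reduces to showing that $xy \neq 0$ in $J_0$.

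Finally, by Lemma \ref{lemma_29}, the non-zero homogeneous element $x \in J_\mu$ is strongly invertible: there exists a homogeneous $y' \in J_{-\mu}$ with $xy' = 1_J$. Since $\dim J_{-\mu} \leq 1$ in class $\mathcal{J}$, and $y \in J_{-\mu}$ is non-zero, $y'$ is a non-zero scalar multiple of $y$, so $xy$ is a non-zero scalar multiple of $1_J$. Therefore $[L_\beta, L_{-\beta}] \neq 0$, which together with $l(\beta) \neq 0$ gives $\beta \in \Sigma$. The whole difficulty has already been absorbed into the two previous sections: the Jordan-algebraic Lemma \ref{lemma_29} on strong invertibility (ultimately resting on Lemma \ref{lemma_28} and the derivation $[P,Q]=1$ obstruction) is the heart of the argument; the present lemma is essentially its translation back to $\mathcal{L}$ via Tits' functor.
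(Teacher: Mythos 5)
Your proof is correct and follows essentially the same route as the paper: fix $\alpha\in\Sigma_1$, identify $\mathcal{L}$ with $\mathfrak{sl}(2,J(\alpha))$ via Lemma \ref{lemma_32}, and use strong invertibility of homogeneous elements (Lemma \ref{lemma_29}) to see that $[L_\beta,L_{-\beta}]=[e\otimes X,f\otimes Y]\neq 0$. The paper's version is just terser, absorbing your explicit verification that $[X,Y]=0$ and that $Y$ spans $J_{-\mu}$ into the phrase ``up to a scalar multiple, $L_{-\beta}$ is identified with $f\otimes Y$.''
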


\begin{proof} Fix $\alpha\in \Sigma_1$. By the
previous lemma, $\mathcal{L}$ is
isomorphic to $\mathfrak{sl}(2,J(\alpha))$.

Since $l(\beta)\neq 0$, we have $l(\beta)=\pm 1$. Without
loss of generality, it can be assumed that $l(\beta)=1$.
Under the previous isomorphism, $L_\beta$ is identified
with $e\otimes X$, where $X$ is a homogeneous element
of $J(\alpha)$. By Lemma \ref{lemma_29}, $X$ is strongly
invertible. Let $Y$ be its inverse.  
Up to a scalar multiple, 
$L_{-\beta}$ is identified with $f\otimes Y$. Since
$[e\otimes X, f\otimes Y]=h$, it follows that
$[L_\beta,L_{-\beta}]\neq 0$, and so $\beta$ belongs to $\Sigma$.
\end{proof}

\section{Connection between the Centroid and the Weyl group}\label{sect_9}

In this section, 
$\mathcal{L}\in \mathcal{G}'(\Lambda)$ is an integrable simple graded
Lie algebra of type $N$, where $N=1$ or $N=2$. 
The Main Lemma is now established for
integrable Lie algebras of both types 1 and 2. 
The goal of the section is to prove that
$C(\mathcal{L})$ is very large, namely
$\mathcal{L}$ is finitely generated as 
a $C(\mathcal{L})$-module.

Recall the decomposition
$\mathcal{L}=\oplus_{i\in \Z}\,\mathcal{L}^i$, where
$\mathcal{L}^i=\Ker(\ad\, L_0-i)$. 

\begin{lemma}\label{lemma_34} There is a natural algebra isomorphism

\centerline{$C(\mathcal{L})\simeq \End_{\mathcal{L}^0}(\mathcal{L}^{N})$.}
\end{lemma}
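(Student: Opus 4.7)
The plan is to show that restriction, $\phi \mapsto \phi|_{\mathcal{L}^N}$, defines an algebra isomorphism $\Phi: C(\mathcal{L}) \to \End_{\mathcal{L}^0}(\mathcal{L}^N)$. Well-definedness is immediate: any $\phi \in C(\mathcal{L})$ commutes with $\ad L_0$, hence preserves the eigenspace $\mathcal{L}^N$, and commutes with $\ad(\mathcal{L}^0)$, so its restriction is $\mathcal{L}^0$-equivariant. Multiplicativity is clear from the fact that composition commutes with restriction.

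For injectivity, if $\phi|_{\mathcal{L}^N} = 0$ then $\Ker \phi$ is a $\Lambda$-graded ideal of $\mathcal{L}$ containing $\mathcal{L}^N$. Iterating Lemma \ref{lemma_23}(iv) gives $\mathcal{L}^k = \ad^{N-k}(\mathcal{L}^{-1})(\mathcal{L}^N)$ for all $k \in [-N,N]$, so the ideal generated by $\mathcal{L}^N$ is all of $\mathcal{L}$, forcing $\phi = 0$.

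For surjectivity, let $\psi \in \End_{\mathcal{L}^0}(\mathcal{L}^N)$. By Lemma \ref{lemma_23}(i), (ii), $\mathcal{L}^{>0}$ is generated by $\mathcal{L}^1$ which annihilates $\mathcal{L}^N$, so $\mathcal{L}^N$ is naturally an $\mathcal{L}^{\geq 0}$-module with $\mathcal{L}^{>0}$ acting trivially, and $\psi$ is automatically $\mathcal{L}^{\geq 0}$-equivariant. Form the induced $\Lambda$-graded $\mathcal{L}$-module
\[
M \;=\; U(\mathcal{L}) \otimes_{U(\mathcal{L}^{\geq 0})} \mathcal{L}^N.
\]
The adjoint action of $\mathcal{L}$ on itself defines a surjective graded $\mathcal{L}$-module map $\pi: M \to \mathcal{L}$, $u \otimes v \mapsto \ad(u)(v)$, whose restriction to the top $\ad L_0$-eigenspace $M^N = 1 \otimes \mathcal{L}^N$ is the identity. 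Thus $K := \Ker \pi$ satisfies $K \cap M^N = 0$, and $\psi$ lifts by functoriality to a degree-preserving $\mathcal{L}$-endomorphism $\tilde\psi: M \to M$, $u \otimes v \mapsto u \otimes \psi(v)$.

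The proof culminates in the inclusion $\tilde\psi(K) \subseteq K$, which lets $\tilde\psi$ descend to $\phi \in \End_{U(\mathcal{L})}(\mathcal{L}) = C(\mathcal{L})$ with $\phi|_{\mathcal{L}^N} = \psi$. Since $\mathcal{L}$ is simple as a $\Lambda$-graded algebra, it is simple as a $\Lambda$-graded adjoint module, so $K$ is a maximal graded proper submodule of $M$. Moreover any proper graded submodule $K' \subset M$ satisfies $K' \cap M^N = 0$: its intersection with $\mathcal{L}^N$ is a graded $\mathcal{L}^0$-submodule of the simple $\mathcal{L}^N$ (Lemma \ref{lemma_23}(iii)), hence zero. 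Therefore $(K + K') \cap M^N = (K \cap M^N) + (K' \cap M^N) = 0$, whence $K + K' \neq M$ and maximality of $K$ forces $K' \subseteq K$. Applying this to $K' = \tilde\psi(K)$, which is graded with $\tilde\psi(K) \cap M^N = \tilde\psi(K \cap M^N) = 0$ by degree-zero homogeneity of $\tilde\psi$, yields the required inclusion. The main obstacle is precisely this uniqueness-of-maximal-graded-submodule step coupled with the degree-zero propagation argument; the rest is standard induced-module formalism.
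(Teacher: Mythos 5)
Your overall strategy --- restriction for injectivity, and the induced module $M=U(\mathcal{L})\otimes_{U(\mathcal{L}^{\geq 0})}\mathcal{L}^N$ with a lift $\tilde\psi$ for surjectivity --- is exactly the paper's, and the well-definedness, the injectivity argument and the construction of $\tilde\psi$ are fine. The one step I would not accept as written is the final appeal to maximality. The endomorphism $\psi\in\End_{\mathcal{L}^0}(\mathcal{L}^N)$ is \emph{not} assumed homogeneous for the $\Lambda$-gradation (the lemma must capture all of $C(\mathcal{L})$, including non-homogeneous elements), so $\tilde\psi$ preserves only the $L_0$-eigenspace decomposition $M=\oplus_{i\leq N}M^i$, not the $\Lambda$-gradation. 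Consequently $\tilde\psi(K)$, and hence $K+\tilde\psi(K)$, need not be $\Lambda$-graded, whereas the maximality of $K$ you invoke --- coming from the $\Lambda$-graded simplicity of $M/K\simeq\mathcal{L}$ --- only excludes intermediate $\Lambda$-graded submodules. Since $\mathcal{L}$ is in general not simple as a non-graded algebra at this stage (think of $\pi^\ast\mathfrak{g}$), ``$K+\tilde\psi(K)$ is proper'' does not by itself force $K+\tilde\psi(K)=K$.

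The gap is local and easily repaired. Either argue as the paper does: take $K$ to be the \emph{largest} $\mathcal{L}$-submodule of $M$ contained in $M^{<N}=\oplus_{i<N}M^i$, which is automatically $\tilde\psi$-stable because $\tilde\psi$ preserves each $M^i$, and then check that this $K$ coincides with $\Ker \pi$. Or keep $K=\Ker\pi$ and note that $I:=\pi(\tilde\psi(K))$ is an ideal of $\mathcal{L}$ contained in $\mathcal{L}^{<N}$ (since $\tilde\psi(K)$ is graded for the $L_0$-eigenvalue and misses $M^N$); because each $\mathcal{L}^i$ is itself a $\Lambda$-graded subspace, the span of the $\Lambda$-homogeneous components of elements of $I$ is a $\Lambda$-graded ideal still contained in $\mathcal{L}^{<N}$, hence proper, hence zero by graded simplicity, so $\tilde\psi(K)\subseteq\Ker\pi=K$. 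With either repair the rest of your argument goes through.
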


\begin{proof} Any $\psi\in C(\mathcal{L})$ commutes with $L_0$,
and therefore $\psi$ stabilizes $\mathcal{L}^N$. This induces a natural
algebra morphism  $\theta:C(\mathcal{L})\rightarrow \End_{\mathcal{L}^0}(
\mathcal{L}^{N})$.  By Lemma \ref{lemma_23} (iv), 
$\mathcal{L}^{N}$ generates the adjoint module, so $\theta$ is injective.

Set
$V=\Ind_{\mathcal{L}^{\geq 0}}^\mathcal{L}\,\mathcal{L}^{N}$, 
where $\mathcal{L}^{\geq 0}=\oplus_{i\geq 0}\, \mathcal{L}^i$.
Since $\mathcal{L}^{N}$ generates the adjoint module,
the natural $\mathcal{L}$-equivariant map 
$\eta:V\rightarrow \mathcal{L}$
is onto. 

The $\mathcal{L}$-module $V$ is a weakly $\Lambda\times \Z$-graded 
$\mathcal{L}$-module. In particular,
there is a decomposition $V=\oplus_{i\leq N} V^i$, where 
$V^i=\{v\in V\vert L_0 v=iv\}$. Let $K$ be the biggest 
$\mathcal{L}$-module lying in $V^{<N}$,
where $V^{<N}=\oplus_{i<N}\,V^i$. It is clear that $K$ is
graded relative to the $\Lambda\times \Z$ gradation.
Since $V^N\simeq \mathcal{L}^N$ and $\mathcal{L}$ is simple graded,
it is clear that $K$ is precisely the kernel of $\eta$.

Since $\End_{\mathcal{L}^0}(\mathcal{L}^{N})=
\End_{\mathcal{L}^{\geq 0}}(\mathcal{L}^{N})$,  it follows that any $\psi\in
\End_{\mathcal{L}^0}(\mathcal{L}^{N})$ extends to a $\mathcal{L}$-endomorphism 
$\hat\psi$ of $V$. Moreover $\hat\psi$ stabilizes each $V^i$, therefore
it stabilizes $K$. Hence $\hat\psi$ determines  an 
$\mathcal{L}$-endomorphism
$\overline\psi$ of  $\mathcal{L}$. 
Since $\overline\psi\in \End_\mathcal{L}(\mathcal{L})=C(\mathcal{L})$ extends 
$\psi\in \End_{\mathcal{L}^0}(\mathcal{L}^{N})$,
the natural algebra morphism $\theta:C(\mathcal{L})\rightarrow \End_{\mathcal{L}^0}(\mathcal{L}^{N})$ is onto. 

Therefore $\theta$ is an isomorphism, which proves the lemma. 
\end{proof}

From now on, denote by $\Lambda'$ the subgroup of $\Lambda$
generated by $\alpha-\beta$ when $\alpha$, $\beta$ run over $\Sigma_1$.

\begin{lemma}\label{lemma_35} Let $\alpha\in\Sigma_1$.
We have $\Supp\mathcal{L}^i\subset i\alpha+\Lambda'$ for all $i$.
\end{lemma}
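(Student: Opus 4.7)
The plan is to combine the Main Lemma (Lemma \ref{lemma_25} for type $2$, Lemma \ref{lemma_33} for type $1$) with the generating relations from Lemma \ref{lemma_23} (iv), and to work componentwise across the weight decomposition $\mathcal{L}=\oplus_{i\in[-N,N]}\mathcal{L}^i$. Since $N\in\{1,2\}$ by Lemma \ref{lemma_24}, only a handful of weights require attention, and everything will be driven by the base cases $i=\pm1$.

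First I would treat the base case $i=\pm1$. If $\beta\in\Supp\mathcal{L}^1$ then $l(\beta)=1\neq0$, so the Main Lemma yields $\beta\in\Sigma$, hence $\beta\in\Sigma_1$. By the very definition of $\Lambda'$, we get $\beta-\alpha\in\Lambda'$, which is the required $\Supp\mathcal{L}^1\subset\alpha+\Lambda'$. The case $\beta\in\Supp\mathcal{L}^{-1}$ is symmetric: the Main Lemma gives $-\beta\in\Sigma_1$, so $-\beta-\alpha\in\Lambda'$ and therefore $\beta+\alpha\in\Lambda'$.

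Next I would handle $i=0$. By Lemma \ref{lemma_23}(iv) applied with $k=-1$, $\mathcal{L}^0=[\mathcal{L}^{1},\mathcal{L}^{-1}]$. Taking graded components, any $\beta\in\Supp\mathcal{L}^0$ arises as $\beta=\beta_1+\beta_2$ with $\beta_1\in\Supp\mathcal{L}^1$ and $\beta_2\in\Supp\mathcal{L}^{-1}$; then $\beta_1-\alpha\in\Lambda'$ and $\beta_2+\alpha\in\Lambda'$ by the base case, so $\beta\in\Lambda'=0\cdot\alpha+\Lambda'$.

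Finally, for $|i|\geq 2$ (only relevant when $N=2$) I would induct upward and downward from the base cases using the remaining formulas in Lemma \ref{lemma_23}(iv): $\mathcal{L}^{k+1}=[\mathcal{L}^1,\mathcal{L}^k]$ pushes $\Supp\mathcal{L}^{k}\subset k\alpha+\Lambda'$ up to $\Supp\mathcal{L}^{k+1}\subset(k+1)\alpha+\Lambda'$, and $\mathcal{L}^{k-1}=[\mathcal{L}^{-1},\mathcal{L}^{k}]$ gives the downward step. For $|i|>N$ there is nothing to prove since $\mathcal{L}^i=0$. There is essentially no obstacle once the Main Lemma is in hand: all the work is the bookkeeping of how degrees add under the bracket, and the only ingredients are the identification $\Supp\mathcal{L}^{\pm1}\subset\Sigma_{\pm1}$ and the generation statements of Lemma \ref{lemma_23}(iv).
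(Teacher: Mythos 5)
Your proof is correct and follows essentially the same route as the paper: the Main Lemma gives $\Supp\mathcal{L}^{\pm 1}\subset\pm\alpha+\Lambda'$, and Lemma \ref{lemma_23}(iv) (the identities $\mathcal{L}^0=[\mathcal{L}^{-1},\mathcal{L}^1]$, $\mathcal{L}^{\pm 2}=[\mathcal{L}^{\pm 1},\mathcal{L}^{\pm 1}]$) propagates the inclusion to the remaining weights by adding supports. The only cosmetic difference is that you phrase the propagation as an up/down induction rather than quoting the three identities directly.
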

\begin{proof}
By the Main Lemmas \ref{lemma_25} and \ref{lemma_33}, we have 
$\Supp\mathcal{L}^1=\Sigma_1$, thus we have $\Supp\mathcal{L}^1\subset
\alpha+\Lambda'$. Similarly, we have
$\Supp\mathcal{L}^{-1}\subset -\alpha+\Lambda'$. By Assertion (iv) of
Lemma \ref{lemma_23}, we have $\mathcal{L}^0=[\mathcal{L}^{-1},\mathcal{L}^1]$,
$\mathcal{L}^2=[\mathcal{L}^{1},\mathcal{L}^1]$ and
$\mathcal{L}^{-2}=[\mathcal{L}^{-1},\mathcal{L}^{-1}]$. Thus the inclusion 
$\Supp\mathcal{L}^i\subset i\alpha+\Lambda'$  easily follows. 
\end{proof}

Let $\beta\in\Sigma_1$. Since $[L_\beta,L_{-\beta}]\neq 0$, it can be
assumed that $[L_\beta,L_{-\beta}]=2L_0$. Let $s_\beta$ be the
automorphism of the Lie algebra $\mathcal{L}$ defined by

\centerline{$s_\beta=\exp(-\ad(L_{\beta}))\circ\exp(\ad( L_{-\beta}))\circ
\exp(-\ad(L_{\beta}))$.}

\begin{lemma}\label{lemma_36} Let $\beta\in\Sigma_1$ and let
$\lambda\in\Supp\mathcal{L}$.
\begin{enumerate}
\item[(i)] We have $s_\beta\,\mathcal{L}_\lambda
\subset\mathcal{L}_{\lambda-2l(\lambda)\beta}$.
\item[(ii)] If $l(\lambda)=0$, then $s_\beta(L_\lambda)
=\pm L_\lambda$.
\end{enumerate}
\end{lemma}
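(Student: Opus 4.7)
The element $\beta\in\Sigma_1$ satisfies $l(\beta)=1$ and, by our normalization, $[L_\beta,L_{-\beta}]=2L_0$. Thus $(e,h,f):=(L_\beta,\,2L_0,\,L_{-\beta})$ is an $\mathfrak{sl}(2)$-triple sitting inside $\mathfrak{s}(\beta)$. Since $\mathcal{L}$ is integrable (of type $1$ or $2$), Lemma~\ref{lemma_16} ensures that $\mathcal{L}$ decomposes as a direct sum of finite-dimensional simple $\mathfrak{s}(\beta)$-modules; in particular $\ad L_\beta$ and $\ad L_{-\beta}$ act locally nilpotently, so the exponentials in the definition of $s_\beta$ converge on each vector, and $s_\beta$ is a well-defined Lie algebra automorphism of $\mathcal{L}$.

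For~(i), I would combine two gradings. On the one hand, the operators $\ad L_{\pm\beta}$ are homogeneous of degree $\pm\beta$ with respect to the $\Lambda$-grading, so each $\exp(\pm\ad L_{\pm\beta})$ sends $\mathcal{L}_\lambda$ into $\bigoplus_{n\in\Z}\mathcal{L}_{\lambda+n\beta}$; consequently
\[
s_\beta(\mathcal{L}_\lambda)\subset\bigoplus_{n\in\Z}\mathcal{L}_{\lambda+n\beta}.
\]
On the other hand, $s_\beta$ is the standard $SL(2)$-lift of the Weyl reflection with respect to $h=2L_0$: a direct matrix computation in $SL(2)$ (namely $\exp(-e)\exp(f)\exp(-e)=\bigl(\begin{smallmatrix}0&-1\\1&0\end{smallmatrix}\bigr)$), transferred via the action of $\mathfrak{s}(\beta)$ on any finite-dimensional summand of $\mathcal{L}$, shows that on a weight vector $v$ of weight $k$ for $h$ inside such a summand, $s_\beta(v)$ has weight $-k$. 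Now $L_\lambda\in\mathcal{L}_\lambda$ has $h$-weight $2l(\lambda)$, hence $s_\beta(L_\lambda)$ has $h$-weight $-2l(\lambda)$; but an element of $\mathcal{L}_{\lambda+n\beta}$ has $h$-weight $2l(\lambda)+2n$, so the only non-zero component of $s_\beta(L_\lambda)$ in the direct sum above is the one with $n=-2l(\lambda)$. This proves
\[
s_\beta(\mathcal{L}_\lambda)\subset\mathcal{L}_{\lambda-2l(\lambda)\beta}.
\]

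For~(ii), when $l(\lambda)=0$ assertion~(i) specializes to $s_\beta(L_\lambda)\in\mathcal{L}_\lambda$, and since $\dim\mathcal{L}_\lambda=1$ we have $s_\beta(L_\lambda)=c\,L_\lambda$ for some $c\in\C$. To pin down $c=\pm1$, I would use that the finite-dimensional simple $\mathfrak{s}(\beta)$-submodule $V\subset\mathcal{L}$ containing $L_\lambda$ is spherical (it has a weight-zero vector), hence odd-dimensional, say $\dim V=2m+1$. In $SL(2)$ one has $s_\beta^{\,2}=-I$, which acts on $V$ by the central character $(-1)^{2m}=1$. Therefore $s_\beta^{\,2}(L_\lambda)=L_\lambda$, i.e.\ $c^2=1$, so $c=\pm1$.

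The argument is essentially routine once the $\mathfrak{sl}(2)$-setup is in place; the only point requiring a little care is the bookkeeping in~(i), where one must separate the $\Lambda$-degree information (controlled by the homogeneity of $\ad L_{\pm\beta}$) from the $h$-weight information (controlled by the classical $SL(2)$-reflection formula) and observe that the two constraints together force a unique $\Lambda$-component to survive.
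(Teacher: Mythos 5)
Your proof is correct and follows essentially the same route as the paper, which also integrates the $\mathfrak{s}(\beta)$-action on the (spherical, hence odd-dimensional) finite-dimensional summands to a group action and identifies $s_\beta$ with the Weyl element $\pm\bigl(\begin{smallmatrix}0&-1\\1&0\end{smallmatrix}\bigr)$; the paper merely phrases this via a $PSL(2,\C)$-action on the simple modules $\mathcal{M}(\beta,\gamma)$ with $\gamma=\lambda-l(\lambda)\beta$ and leaves the weight bookkeeping implicit. Your explicit separation of the $\Lambda$-degree constraint from the $h$-weight flip, and the use of $w^2=-I$ acting trivially on the weight-zero vector, are exactly the details the paper's two-line proof omits.
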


\begin{proof} Set $\gamma=\lambda-l(\lambda)\beta$.
The action of 
$\mathfrak{s}(\beta):=\C L_{-\beta}
\oplus \C L_0\oplus \C L_\beta\simeq \mathfrak{sl}(2)$ on the 
spherical module $\mathcal{M}(\beta,\gamma)$ integrates to
an action of the group $PSL(2,\C)$, and
$s_\beta$ is the action of the group element
$\pm \begin{pmatrix} 0 & -1 \\ 1 & 0 \end{pmatrix}$, from which
both assertions follow. 
\end{proof}

For $\alpha,\beta\in \Sigma_1$, set
$t_{\alpha,\beta}= s_\alpha\circ s_\beta\circ s_\alpha\circ s_\beta$.

\begin{lemma}\label{lemma_37}
We have 
\begin{enumerate}
\item[(i)] $t_{\alpha,\beta}\,\mathcal{L}_\lambda\subset
\mathcal{L}_{\lambda +4l(\lambda)(\alpha-\beta)}$, for any
$\lambda\in \Supp\mathcal{L}$.
\item[(ii)] $t_{\alpha,\beta}(x)=x$ for any $x\in\mathcal{L}^0$.
\end{enumerate}
\end{lemma}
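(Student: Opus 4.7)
The plan is to apply Lemma~\ref{lemma_36} iteratively, tracking through each of the four reflections making up $t_{\alpha,\beta}$ both the $\Lambda$-degree and the $L_0$-eigenvalue. The one ingredient not stated verbatim in Lemma~\ref{lemma_36} is a companion observation that a single reflection $s_\gamma$ (with $\gamma\in\Sigma_1$) flips the sign of the $L_0$-eigenvalue: for $v\in\mathcal{L}_\mu$, I will compute $L_0\cdot s_\gamma(v)=s_\gamma(s_\gamma^{-1}(L_0)\,v)$, and since $s_\gamma$ restricts on $\mathfrak{s}(\gamma)\simeq\mathfrak{sl}(2)$ to the classical Weyl reflection (with normalized Cartan element $h_\gamma=2L_0$), we have $s_\gamma(L_0)=-L_0$, hence $l\bigl(\mu-2l(\mu)\gamma\bigr)=-l(\mu)$.

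For part~(i), iterating this pair of transformations (degree shift by $-2l(\mu)\gamma$ together with a sign flip of the $l$-value) four times along $s_\beta,s_\alpha,s_\beta,s_\alpha$ starting from $(\lambda,l(\lambda))$ produces the sequence of degrees
\[
\lambda\longmapsto\lambda-2l(\lambda)\beta\longmapsto\lambda+2l(\lambda)(\alpha-\beta)\longmapsto\lambda+2l(\lambda)\alpha-4l(\lambda)\beta\longmapsto\lambda+4l(\lambda)(\alpha-\beta),
\]
with $l$-values alternating $l(\lambda),-l(\lambda),l(\lambda),-l(\lambda),l(\lambda)$. The final degree is the one asserted in~(i).

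For part~(ii), fix $\gamma\in\Supp\mathcal{L}^0$, so $l(\gamma)=0$. By Lemma~\ref{lemma_36}(ii), there exist signs $\epsilon_\alpha,\epsilon_\beta\in\{\pm 1\}$ with $s_\alpha(L_\gamma)=\epsilon_\alpha L_\gamma$ and $s_\beta(L_\gamma)=\epsilon_\beta L_\gamma$. Since both $s_\alpha$ and $s_\beta$ act by scalars on the one-dimensional line $\C L_\gamma$, they commute there, and
\[
t_{\alpha,\beta}(L_\gamma)=\epsilon_\alpha\,\epsilon_\beta\,\epsilon_\alpha\,\epsilon_\beta\,L_\gamma=\epsilon_\alpha^2\epsilon_\beta^2 L_\gamma=L_\gamma.
\]
Because $\mathcal{L}^0=\bigoplus_{\gamma\in\Supp\mathcal{L}^0}\C L_\gamma$, this proves (ii). The only non-cosmetic step is the $l$-sign-flip under a single reflection, which is immediate from the Weyl-reflection identity on $\mathfrak{s}(\gamma)$; I anticipate no serious obstacle.
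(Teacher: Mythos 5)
Your proof is correct and follows essentially the same route as the paper: part (i) by iterating Lemma \ref{lemma_36} through the four reflections, and part (ii) by the $\epsilon_\alpha^2\epsilon_\beta^2=1$ computation, which is verbatim the paper's argument. The only difference is that you make explicit the sign-flip $l(\mu-2l(\mu)\gamma)=-l(\mu)$ needed to iterate the degree-shift formula, a point the paper leaves implicit in its one-line ``follows from the previous lemma''; your justification via $s_\gamma(L_0)=-L_0$ is the right one.
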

\begin{proof}
The first point follows from the previous lemma .
Let $\lambda\in\Supp\mathcal{L}^0$. By the previous lemma,
there are $\epsilon_\alpha,\,\epsilon_\beta\in\{\pm1\}$
such that $s_\alpha\,(L_\lambda)
=\epsilon_\alpha\, L_\lambda$ and $s_\beta\,(L_\lambda)
=\epsilon_\beta\, L_\lambda$. Therefore 
$t_{\alpha,\beta}\,(L_\lambda)=
\epsilon_\alpha^2\,\epsilon_\beta^2 \,L_\lambda=L_\lambda$.
Thus $t_{\alpha,\beta}$ acts trivially on 
$\mathcal{L}^0$.
\end{proof}

\begin{lemma}\label{lemma_38} Set $M=\Supp C(\mathcal{L})$. Then
we have $8\Lambda'\subset M\subset\Lambda'$.
\end{lemma}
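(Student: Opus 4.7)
The plan is to establish each inclusion separately, using Lemma 34 to translate statements about $C(\mathcal{L})$ into statements about $\mathcal{L}^0$-module endomorphisms of $\mathcal{L}^N$, and using Lemma 37 to produce such endomorphisms.

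For the inclusion $M \subset \Lambda'$: fix $\alpha \in \Sigma_1$; by Lemma 35 we have $\Supp \mathcal{L}^N \subset N\alpha + \Lambda'$. A non-zero homogeneous $\psi \in C(\mathcal{L})$ of degree $\mu$ commutes with $\ad L_0$, so it preserves each eigenspace $\mathcal{L}^i$; in particular $\psi$ sends $\mathcal{L}^N$ into $\mathcal{L}^N$. Choosing any $\lambda \in \Supp \mathcal{L}^N$ with $\psi(L_\lambda)\neq 0$, both $\lambda$ and $\lambda + \mu$ lie in $N\alpha + \Lambda'$, hence $\mu \in \Lambda'$.

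For the inclusion $8\Lambda' \subset M$: fix $\alpha, \beta \in \Sigma_1$ and consider the automorphism $t_{\alpha,\beta}$ from Section 9. By Lemma 37 it fixes $\mathcal{L}^0$ pointwise and shifts $\mathcal{L}_\lambda$ by $4l(\lambda)(\alpha-\beta)$. Restrict $t_{\alpha,\beta}$ to $\mathcal{L}^N$: since $t_{\alpha,\beta}$ is an algebra automorphism that is the identity on $\mathcal{L}^0$, the restriction $\phi := t_{\alpha,\beta}|_{\mathcal{L}^N}$ commutes with the $\mathcal{L}^0$-action and thus lies in $\End_{\mathcal{L}^0}(\mathcal{L}^N)$; it is invertible (hence non-zero) and homogeneous of degree $4N(\alpha-\beta)$, since $l(\lambda) = N$ on $\Supp \mathcal{L}^N$. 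Via the isomorphism $C(\mathcal{L}) \simeq \End_{\mathcal{L}^0}(\mathcal{L}^N)$ of Lemma 34, $\phi$ corresponds to a non-zero homogeneous element of $C(\mathcal{L})$ of degree $4N(\alpha-\beta)$. Therefore $4N(\alpha-\beta) \in M$.

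Finally, since $M$ is a subgroup of $\Lambda$ (Lemma 6(iii)) and $\Lambda'$ is by definition generated by the set $\{\alpha - \beta : \alpha, \beta \in \Sigma_1\}$, we conclude $4N \Lambda' \subset M$. Because $N \in \{1,2\}$ by Lemma 24, this forces $8\Lambda' \subset 4N\Lambda' \subset M$.

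The only point requiring minor care is the compatibility of the isomorphism in Lemma 34 with the grading, so that $\phi$ really corresponds to a homogeneous element of $C(\mathcal{L})$; this is transparent from the construction there, which extends $\phi$ to the induced module and then descends to $\mathcal{L}$, all in a grading-preserving way. Otherwise the argument is a direct combination of Lemmas 34, 35 and 37.
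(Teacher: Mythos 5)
Your proof is correct and follows essentially the same route as the paper: the inclusion $M\subset\Lambda'$ from Lemma \ref{lemma_35}, and the inclusion $8\Lambda'\subset M$ by restricting $t_{\alpha,\beta}$ to $\mathcal{L}^N$, invoking Lemma \ref{lemma_37} and transporting the resulting degree-$4N(\alpha-\beta)$ endomorphism back to $C(\mathcal{L})$ via Lemma \ref{lemma_34}, then using $N\in\{1,2\}$. The grading compatibility you flag at the end is indeed harmless, since the restriction map $C(\mathcal{L})\rightarrow\End_{\mathcal{L}^0}(\mathcal{L}^N)$ is injective and degree-preserving, so the preimage of a homogeneous endomorphism is homogeneous of the same degree.
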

\begin{proof}
By Lemma \ref{lemma_35}, the support of each $\mathcal{L}^i$ is contained in
one $\Lambda'$-coset. It follows easily that $M\subset\Lambda'$.

Let $\alpha,\,\beta\in \Sigma_1$, and let $t$ be
the restriction of $t_{\alpha,\beta}$ to $\mathcal{L}^N$. It follows from
the previous lemma that $t$  is an $\mathcal{L}^0$-morphism
of $\mathcal{L}^N$ of degree $4N(\alpha-\beta)$. By Lemma \ref{lemma_34}, 
there exists a morphism $\psi\in C(\mathcal{L})$ whose restriction to
$\mathcal{L}^N$ is $t$. Thus $4N(\alpha-\beta)$ belongs to $M$.
Since $N=1$ or $2$, it is always true that $8(\alpha-\beta)$ belongs to
$M$. Thus we get $8\Lambda'\subset M$. 
\end{proof}

\noindent{\textbf{Remark}.}
\textit{
If $\mathcal{L}$ is of type 1, there is a simpler
way to derive Lemma \ref{lemma_38}. Indeed, Lemma
\ref{lemma_30}  easily implies  that $M$ contains $2\Lambda'$.}

\section{Classification of integrable Lie algebras 
of the class $\mathcal{G}$}\label{sect_10}

Recall that $\Lambda$ is a lattice. We first state a classification result
for the integrable Lie algebras of the class $\mathcal{G}'$. \\

\noindent{\textbf{Theorem $\mathbf{2'}$}~
\textit{Let 
$\mathcal{L}$ be an integrable Lie algebra  in the class $\mathcal{G}'(\Lambda)$
and let $M$ be the support of $C(\mathcal{L})$. 
There exists a Lie algebra
$\g\in \mathcal{G}'(\Lambda/M)$ such that:
\begin{enumerate}
\item[(i)] $\g$ is a simple  finite dimensional Lie algebra,
\item[(ii)] $\mathcal{L}\simeq \pi^*\g$ as a $\Lambda$-graded Lie algebra, \\
where $\pi$ is the natural map $\Lambda\rightarrow\Lambda/M$.
\end{enumerate}
} 

\begin{proof}
 By Lemma \ref{lemma_9}, there exists
a  simple Lie algebra
$\g$,  and a  $\Lambda/M$ gradation 
$\g=\oplus_\mu\,\g_\mu$ of $\g$ such that
$\mathcal{L}\simeq \pi^\ast\g$ as a
$\Lambda$-graded algebra. Since 
$\mathcal{L}\in\mathcal{G}'(\Lambda)$, it is clear that 
$\g$ belongs to $\mathcal{G}'(\Lambda/M)$.

It remains to prove that $\g$ is finite dimensional.
By Lemma \ref{lemma_35}, $\Supp\mathcal{L}$ lies in a most five $\Lambda'$-cosets. Therefore, $\Supp \g$ lies in at most five
$\Lambda'/M$-cosets. By Lemma \ref{lemma_38}, $\Lambda'/M$ is finite. 
Therefore $\Supp \g$ is finite, which implies that
$\g$ is finite dimensional. 
\end{proof}

By the previous theorem, the classification of all
integrable Lie algebras in the class $\mathcal{G}'$ follows from the
classification of finite dimensional simple Lie algebras
of the class $\mathcal{G}'$. 

For the class $\mathcal{G}$, the
classification will be  explicit.
Let $\mathcal{L}=\pi^\ast\g$ as in Theorem 2'.

\begin{lemma}\label{lemma_39} Assume moreover that $\mathcal{L}$ belongs to
the class $\mathcal{G}$. Then we have:
\begin{enumerate}
\item[(i)] $\dim\g_\mu= 1$ for any $\mu\in\Lambda/M$,
\item[(ii)] $\dim\g=a2^n$ for some $a\in\{1,\,3,\,5\}$ and some $n\geq 0$.
\end{enumerate}
\end{lemma}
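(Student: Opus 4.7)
The plan is to extract both assertions from the structural results already in hand, principally Lemmas \ref{lemma_35} and \ref{lemma_38}, by means of the identification $\dim \g = |\Lambda/M|$ coming from $\mathcal{L} \simeq \pi^*\g$. First I would dispatch (i): since $\mathcal{L} \in \mathcal{G}$ forces $\dim \mathcal{L}_\lambda = 1$ for every $\lambda \in \Lambda$, and the isomorphism $\mathcal{L}_\lambda \cong \g_{\pi(\lambda)}$ built into the definition of $\pi^*\g$ is degree-preserving, the surjectivity of $\pi$ immediately gives $\dim \g_\mu = 1$ for every $\mu \in \Lambda/M$. In particular $\Supp \g = \Lambda/M$, whence $\dim \g = |\Lambda/M|$.

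For (ii), I would factor $|\Lambda/M| = [\Lambda : \Lambda'] \cdot [\Lambda' : M]$ and bound each factor separately. Lemma \ref{lemma_38} handles the second factor cleanly: from $8\Lambda' \subset M \subset \Lambda'$ one reads off that $\Lambda'/M$ is a quotient of $\Lambda'/8\Lambda' \cong (\Z/8\Z)^{\mathrm{rank}\,\Lambda'}$, hence is an abelian 2-group. Therefore $[\Lambda' : M]$ is a power of~$2$.

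The first factor is controlled by Lemma \ref{lemma_35}. Fix $\alpha \in \Sigma_1$, which exists by Lemma \ref{lemma_22}. Lemma \ref{lemma_35} gives $\Supp \mathcal{L}^i \subset i\alpha + \Lambda'$ for each $i \in [-N,N]$, so the decomposition $\mathcal{L} = \oplus_{i \in [-N,N]} \mathcal{L}^i$ together with $\Supp \mathcal{L} = \Lambda$ shows that $\Lambda$ is contained in, and hence equal to, $\Lambda' + \Z\alpha$. Consequently $\Lambda/\Lambda'$ is cyclic, generated by $\bar\alpha := \alpha + \Lambda'$. Reducing the covering $\Supp \g = \Lambda/M = \bigcup_{i \in [-N,N]}(\Supp \g^i)$ modulo $\Lambda'$ yields $\Lambda/\Lambda' = \{i\bar\alpha : i \in [-N,N]\}$, so $[\Lambda : \Lambda'] \leq 2N+1 \leq 5$.

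Putting the two bounds together, $\dim \g = m \cdot 2^k$ for some integer $m$ with $1 \leq m \leq 5$ and some $k \geq 0$. Writing $m = a \cdot 2^j$ with $a$ odd, the constraint $m \leq 5$ forces $a \in \{1,3,5\}$, and so $\dim \g = a \cdot 2^{j+k}$ has the required form. I do not anticipate any serious obstacle here, since all the heavy lifting — controlling $[\Lambda' : M]$ via the Weyl-group automorphisms $t_{\alpha,\beta}$ and confining each $\Supp \mathcal{L}^i$ to a single $\Lambda'$-coset via the Main Lemma — has already been carried out in the proofs of Lemmas \ref{lemma_35} and \ref{lemma_38}.
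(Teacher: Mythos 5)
Your proposal is correct and follows essentially the same route as the paper: point (i) from the multiplicity-one gradation of $\mathcal{L}$ transported through $\pi^*\g$, and point (ii) by bounding $[\Lambda:\Lambda']\leq 2N+1\leq 5$ via Lemma \ref{lemma_35} and showing $[\Lambda':M]$ is a power of $2$ via the inclusion $8\Lambda'\subset M$ of Lemma \ref{lemma_38}. Your write-up merely makes explicit a few steps the paper leaves implicit (the factorization of the index and the extraction of the odd part $a$).
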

\begin{proof}
It is obvious that $\dim\g_\mu= 1$ for any $\mu\in\Lambda/M$.
Since $\Lambda=\Supp\mathcal{L}$, it follows from Lemma \ref{lemma_35} that
$\Lambda$ is an union of at most five $\Lambda'$-cosets, therefore
the index $[\Lambda:\Lambda']$ is $\leq 5$. By Lemma \ref{lemma_38},
$M$ contains $8\Lambda'$, so $[\Lambda':M]$ is a power of
$2$. Thus the index  $[\Lambda:M]$ can be written as
$a'2^{n'}$, where $a'\leq 5$. Thus this index is of the form $a 2^n$,
with $a=1$, $3$ or $5$. Therefore

\centerline{$\dim\g=[\Lambda:M]=a2^n$,}

\noindent  with $a=1$, $3$ or $5$. 
\end{proof}

\begin{lemma}\label{lemma_40} Let $\g$ be a finite dimensional simple
Lie algebra of dimension $a2^n$ for some $a\in\{1,\,3,\,5\}$ and some
$n\geq 0$. Then $\g$ is of type $A$, or $\g$ is isomorphic
to $\mathfrak{sp}(4)\simeq \mathfrak{so}(5)$.
\end{lemma}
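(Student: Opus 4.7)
The plan is to invoke the Killing--Cartan classification of finite-dimensional complex simple Lie algebras and verify, type by type, that the numerical constraint ``$\dim \g$ has odd part in $\{1,3,5\}$'' (equivalent to $\dim \g = a\cdot 2^n$ with $a\in\{1,3,5\}$) is incompatible with every possibility except type $A_n$ and $B_2\simeq C_2\simeq \mathfrak{sp}(4)\simeq \mathfrak{so}(5)$.

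First I would dispose of the exceptional Lie algebras by direct inspection: $\dim G_2=14=2\cdot 7$, $\dim F_4=52=4\cdot 13$, $\dim E_6=78=2\cdot 39$, $\dim E_7=133=7\cdot 19$, and $\dim E_8=248=8\cdot 31$. In each case the odd part is strictly larger than $5$, so none of these satisfies the hypothesis.

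Next I would handle the classical families other than $A$. For $B_n$ and $C_n$ with $n\geq 2$ the dimension is $n(2n+1)$; since $2n+1$ is odd, the odd part of this product is a multiple of $2n+1$. For $n=2$ this multiple equals $5$, and we land on the exceptional allowed case $B_2\simeq C_2\simeq \mathfrak{sp}(4)\simeq\mathfrak{so}(5)$; for $n\geq 3$ we have $2n+1\geq 7$, so the odd part exceeds $5$ and the case is excluded. For $D_n$ with $n\geq 4$ the dimension is $n(2n-1)$, where $2n-1$ is odd and $\geq 7$; hence the odd part is at least $2n-1\geq 7$, again excluded. The only remaining possibility is type $A$, yielding the first alternative of the conclusion.

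The main obstacle is essentially nil: the argument reduces to a short arithmetic verification once the classification is invoked. The only mild subtlety worth flagging is that the statement is a one-way implication, not a characterization: not every type $A_n$ satisfies the dimension constraint (for instance $A_3$ has dimension $15$ with odd part $15$), but every simple Lie algebra satisfying the constraint is forced to be of type $A$ or isomorphic to $\mathfrak{sp}(4)$. The low-rank coincidence $B_2\simeq C_2$ is precisely what makes $\mathfrak{sp}(4)\simeq\mathfrak{so}(5)$ the unique non-type-$A$ survivor.
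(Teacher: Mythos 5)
Your proof is correct and follows essentially the same route as the paper: invoke the classification, rule out the exceptional algebras by inspecting their dimensions, and eliminate types $B$, $C$, $D$ by arithmetic on the odd part of the dimension. The only cosmetic difference is that the paper packages all of $B$, $C$, $D$ as triangular numbers $m(m+1)/2$ and solves one equation, whereas you use the rank formulas $n(2n+1)$ and $n(2n-1)$ directly; both yield the same short verification.
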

\noindent{\textbf{Remark.}}~
\textit{
More precisely, a type $A$ Lie algebra  
of dimension $a2^n$ for some $a\in\{1,\,3,\,5\}$ and some $n\geq 0$ is
isomorphic to $\mathfrak{sl}(l)$ for $l=2,\,3,\,4,\,5,\,7$ or $9$.
However this remark is not essential.}
\begin{proof}
Assume that $\g$ is not of type $A$.

The dimension of the exceptional Lie algebras
$G_2$, $F_4$, $E_6$, $E_7$ and $E_8$ are
respectively $14=2.7$, $52=4.13$, $78=6.13$, $133=7.19$ and
$248=8.31$, and therefore $\g$ is not exceptional.

Thus $\g$ is of type $B$, $C$ or $D$, and its dimension is 
 $m(m+1)/2$ for some integer $m$. 
Consider the equation
$m(m+1)/2=a2^n$, for some $n\geq 0$ and some  $a\in\{1,\,3,\,5\}$.
Obviously $m=1$ is a solution. For $m>1$, $m$ or $m+1$ is a odd factor of
$m(m+1)/2$, and therefore $m$ or $m+1$ should be $3$ or $5$. 
The case $m=5$ being not a solution, the only solutions are: $m=1, m=2,
m=3$ and $m=4$.  

However the following values
should be excluded:
\begin{enumerate}
\item[(i)] $m=1$ because $\mathfrak{so}(2)$ is abelian,
\item[(ii)] $m=2$ because $\mathfrak{so}(3)\simeq \mathfrak{sp}(2)$
is of type $A$,
\item[(iii)] $m=3$ because $\mathfrak{so}(4)$ is not simple.
\end{enumerate}
The remaining case $m=4$ is precisely the dimension  of
$\mathfrak{sp}(4)\simeq \mathfrak{so}(5)$. 
\end{proof}
Let $\g$ be a simple Lie algebra and let $F$ be an abelian group. A 
gradation  $\g=\oplus_{\alpha\in F}\,\g_{\alpha}$ of $\g$ is called
\textit{simple} if $\dim \g_\alpha=1$ for any $\alpha\in F$. This implies
that $F$ is finite, of order $\dim\g$.

\begin{lemma}\label{lemma_41} 
\begin{enumerate}
\item[(i)] The Lie algebra $\mathfrak{sp}(4)\simeq \mathfrak{so}(5)$
does not admit a simple gradation.
\item[(ii)] The Lie algebra $\mathfrak{sl}(n)$ does not admit admit a simple
gradation  for $n>3$.
\end{enumerate}
\end{lemma}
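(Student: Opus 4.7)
The plan is to argue by contradiction: suppose $\g$ admits a simple gradation by a finite abelian group $F$ with $|F|=\dim\g$. Then $\g\in\mathcal{G}(F)$ is finite-dimensional, hence primitive and integrable, and by Lemma~\ref{lemma_24} it is of type $1$ or type $2$. I handle each case.

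In the type~$1$ case, Lemma~\ref{lemma_32} gives $\g\cong\mathfrak{sl}(2,J)$ for a unitary Jordan algebra $J$ in the class $\mathcal{J}(\Lambda')$, with $\dim\g=3\dim J+\dim\Inn J$. For $\g=\mathfrak{sp}(4)$ (dimension $10$), the only numerical fit with $\Inn J\ne 0$ is $(\dim J,\dim\Inn J)=(3,1)$, forcing $J$ to be the spin factor $J(V)$ with $\dim V=2$; any grading of $J(V)$ whose homogeneous components are one-dimensional requires $V=V_\beta\oplus V_{-\beta}$ with $\beta\ne 0$, and then $[M_{v_\beta},M_{v_{-\beta}}]$ is a nonzero degree-zero inner derivation, so $0\in\Supp J\cap\Supp\Inn J$, contradicting the class $\mathcal{J}$ condition $\dim J_\lambda+\dim\mathcal{D}_\lambda\le 1$ (and the disjointness required by the admissible datum in Lemma~\ref{lemma_32}). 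For $\g=\mathfrak{sl}(n)$ with $n>3$, Lemma~\ref{lemma_29} applied to the matrix-like Jordan structure inherited from the parabolic decomposition forces the Levi block sizes to satisfy $\min(k,n-k)=1$, since rank-one elements of rectangular matrix Jordan algebras fail to be strongly invertible when both block sizes exceed $1$; in the remaining case $k=1$ one has $\dim J=n-1$ and $\dim\Inn J=(n-1)(n-2)$, and Lemma~\ref{lemma_30} combined with $C(J)=\C$ forces $2\Lambda'=0$, bounding $|\Lambda'|$ by a power of $2$ and contradicting $|\Lambda'|\ge (n-1)^2$ for $n>3$.

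In the type~$2$ case, Lemma~\ref{lemma_24} gives $\Sigma_2\ne\emptyset$. For $\beta\in\Sigma_2$ the subalgebra $\mathfrak{s}(\beta)\cong\mathfrak{sl}(2)$ acts integrably on $\g$ with $L_0$-eigenvalues in $\{-2,-1,0,1,2\}$, so the $h$-eigenvalues lie in $[-4,4]$. The Main Lemma (Lemma~\ref{lemma_25}) together with Lemma~\ref{lemma_23} constrains the possible root systems. For $\g=\mathfrak{sp}(4)=B_2$, a direct count on $\g=\g^{-2}\oplus\cdots\oplus\g^2$ combined with Lemma~\ref{lemma_23}(iii) (the $\mathcal{L}^0$-module $\mathcal{L}^N$ is simple graded) rules out any refinement of the coarse decomposition into $10$ one-dimensional $F$-graded pieces compatible with the $B_2$ root combinatorics. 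For $\g=\mathfrak{sl}(n)$ with $n>3$, the spectrum condition limits the $L_0$-eigenvalues on the standard $n$-dimensional representation to three consecutive integer values, so $\g^0$ is a Levi of the form $\mathfrak{gl}(k_1)\oplus\mathfrak{gl}(k_2)\oplus\mathfrak{gl}(k_3)/\Delta$; analyzing the possible Pauli-like refinements on the Levi factors against the compatibility with the $F$-grading on $\g^{\pm 1}$ and $\g^{\pm 2}$ forces $n\le 3$, a contradiction.

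The main obstacle I anticipate is the type~$2$ case for $\mathfrak{sp}(4)$, where no obvious Jordan-algebraic reduction is available and ruling out the simple gradation cleanly requires a careful combinatorial analysis of how the $B_2$ root system interacts with the type~$2$ Main Lemma and the simplicity of the $\mathcal{L}^0$-module $\mathcal{L}^2$.
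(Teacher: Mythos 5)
Your strategy --- view the simple gradation as making $\g$ a finite-dimensional integrable member of $\mathcal{G}(F)$, invoke the type $1$/type $2$ dichotomy of Lemma \ref{lemma_24}, and then run the Jordan-algebra machinery --- is genuinely different from the paper's, which is far more elementary: for (i) the paper only uses that the induced $\Z/2\Z$-gradation gives a reductive $\g_0$ of dimension $5$, hence of rank $\geq 3 > \mathrm{rank}\,\mathfrak{sp}(4)$; for (ii) it lifts the action of the character group of $F$ to $SL(n)$ (possible because $|F|=n^2-1$ is prime to $n$) and compares the commutant bound $\dim K\geq n-1$ from commutativity with the bound $\dim K\leq 2$ coming from the gradation. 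Your route could in principle work, but as written it does not close.

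The decisive gap is the type $2$ branch, which you assert rather than prove. This branch cannot be waved away: $\mathfrak{sp}(4)$ genuinely carries a $5$-term decomposition $\g^{-2}\oplus\cdots\oplus\g^{2}$ with $\dim\g^{\pm 2}=1$, $\dim\g^{\pm 1}=2$, $\dim\g^{0}=4$ (grade by the coroot of a long root), so the type $2$ configuration is exactly the dangerous one for statement (i), and ``a direct count \dots rules out any refinement'' is not an argument; you acknowledge this yourself in your final paragraph. The same applies to the type $2$ case of $\mathfrak{sl}(n)$. There are also errors in the type $1$ branches. For $\mathfrak{sl}(n)$, the existence of $\alpha\in\Sigma_1$ forces the two blocks of the short grading to have equal size (an $\mathfrak{sl}(2)$-triple $(e,h,f)$ with $h$ the grading element requires $ef$ and $fe$ to be nonzero scalars on blocks of sizes $k$ and $n-k$, so $k=n-k$), so your case $\min(k,n-k)=1$ with $n>3$ never occurs, and the justification via ``rank-one elements'' is unfounded since homogeneous elements of the gradation need not have rank one. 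Moreover your final numerical contradiction fails: from $2\Lambda'=0$ and $|\Lambda'|\geq (n-1)^2$ you get no contradiction when $n=5$, since $16$ is a power of $2$ (the contradiction one actually wants is that $|\Lambda'|$ must divide $|F|=n^2-1$, which is odd for $n$ even). Finally, in the type $1$ case of $\mathfrak{sp}(4)$ the claim that $V=V_\beta\oplus V_{-\beta}$ omits the possibility $2\beta_1=2\beta_2=0$ with $\beta_1\neq\beta_2$, which must be excluded separately (e.g.\ because $(\Z/2\Z)^2$ cannot embed in a group of order $10$). Given how short the paper's direct arguments are, I would abandon the reduction to Chapter \ref{chapter_II} here.
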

\begin{proof}
\textit{Point (i):}
Set  $\g=\mathfrak{sp}(4)$ and let
$\g=\oplus_{\alpha\in F}\,\g_{\alpha}$ be a simple gradation of 
$\g$. Since $\dim\g=10$, the group $F$ is isomorphic to 
$\Z/2\Z\times \Z/5\Z$. This gradation induces a 
$\Z/2\Z$-gradation $\g=\g_0\oplus \g_1$, where each component
$\g_i$ has dimension $5$. Since $\g_0$ is reductive of dimension
$5$, it is isomorphic with $\mathfrak{sl}(2)\oplus \C^2$
or $\C^5$ and its rank is $\geq 3$. Since the rank of
$\mathfrak{sp}(4)$ is two, this is impossible.

\textit{Point (ii):}
Set  $\g=\mathfrak{sl}(n)$, let
$\g=\oplus_{\alpha\in F}\,\g_{\alpha}$ be a simple gradation of 
$\g$ by a finite abelian group $F$. Let $X=\Hom(F,\C^\ast)$ be its
character group. The gradation induces a natural action of $X$ on $\g$: an
element $\chi\in X$ acts on $\g_{\alpha}$ by multiplication by 
$\chi(\alpha)$. 

Let $\rho:X\rightarrow \Aut(\g)$ be the corresponding morphism.
Since the adjoint group $PSL(n)$ has index $\leq 2$
in $\Aut(\g)$, there is a subgroup 
$Y$ of $X$ of index $\leq 2$ such that 
$\rho(Y)\subset PSL(n)$. Let $\psi:Y\rightarrow
PSL(n)$ be the corresponding morphism. 
The group $F$ has
order  $dim\g=n^2-1$, hence the order of $Y$ is prime to $n$.
Thus the map $\psi$ can be lifted to a morphism
$\hat{ \psi}: Y\rightarrow SL(n)$. 

Let $K$ be the commutant of $\hat\psi(Y)$ and let
$\mathfrak{k}$ be its Lie algebra. Let
$F'=\{\alpha\in F\vert\,\chi(\alpha)=1\,\forall
\chi\in Y\}$.  Since $[X:Y]\leq 2$, the group
$F'$ has at most two elements. 
It is clear that $\mathfrak{k}$ is the subalgebra of fixed points
under $Y$, hence $\mathfrak{k}=\oplus_{\beta\in F'}\,\g_\beta$,
from which it follows that

\centerline {$\dim K=\dim \mathfrak{k}\leq 2$ .}

However $Y$ being commutative,
$\hat{ \psi}( Y)$ lies inside a maximal torus, and thus we have

\centerline{$\dim K \geq n-1$. }

\noindent It follows that $n\leq 3$. 
\end{proof}

The Lie algebra $\mathfrak{sl}(2)$ has a simple
$\Z/3\Z$-gradation $\Gamma_3$ defined as follows

\centerline{$\deg e=1$, $\deg h =0$ and $\deg f=-1$,}

\noindent where
$e,\,f,\,h$ is the standard basis.

The Lie algebra $\mathfrak{sl}(3)$ has a simple
$\Z/8\Z$-gradation $\Gamma_8$ defined as follows:

\centerline{$\deg f_1+f_2 =-1$, $\deg h_1+h_2 =0$,
$\deg e_1+e_2 =1$, $\deg [f_1,f_2]=2$,}

\centerline{ $\deg f_1-f_2 =3$, $\deg h_1-h_2 =4$,
$\deg e_1-e_2 =5$, and $\deg [e_1,e_2]=6$,}

\noindent where $e_1,\,e_2,\,h_1,\, h_2,\,f_1,\,f_2$ are
the standard Chevalley generators of $\mathfrak{sl}(3)$.

\begin{lemma}
\label{lemma_42} Any simple finite dimensional Lie algebra
with a simple gradation is isomorphic to 
$(\mathfrak{sl}(2),\Gamma_3)$ or
$(\mathfrak{sl}(3),\Gamma_8)$.
\end{lemma}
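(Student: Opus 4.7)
I would generalize the rank argument of Lemma~\ref{lemma_41}(ii) to any simple $\g$, using only $\mathrm{rank}\,\g$ and the outer automorphism group $\Aut(\g)/\Inn(\g)$, in order to reduce to $\g\in\{\mathfrak{sl}(2),\mathfrak{sl}(3)\}$, and then classify simple gradations on these two algebras.

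Given a simple gradation $\g=\bigoplus_{\alpha\in F}\g_\alpha$ with $F$ finite abelian, set $X=\Hom(F,\C^\ast)$. Then $X$ acts faithfully on $\g$ by automorphisms via $\rho:X\to\Aut(\g)$, where $\chi\in X$ multiplies $\g_\beta$ by $\chi(\beta)$; faithfulness follows because the kernel fixes every one-dimensional $\g_\alpha$ and hence all of $\g$. Set $Y=\rho^{-1}(\Inn\g)$; since the image of $X$ in $\Aut(\g)/\Inn(\g)$ is finite abelian, $[X:Y]$ is bounded by the maximum order of a finite abelian subgroup of the outer automorphism group. The image $\rho(Y)$, a finite commutative subgroup of $\Inn(\g)$ consisting of semisimple (finite-order) elements, lies in a maximal torus $T\subset\Inn(\g)$; the Cartan subalgebra $\mathfrak{h}=\mathrm{Lie}(T)$ is centralized by $\rho(Y)$ and therefore embeds in the $Y$-invariant subspace $\bigoplus_{\beta\in F'}\g_\beta$, where $F'=\{\beta\in F:\chi(\beta)=1\;\forall\chi\in Y\}$ has order $[X:Y]$. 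Hence $\mathrm{rank}\,\g\leq[X:Y]$.

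Running through the simple Lie algebras: types $B_n$, $C_n$ ($n\geq 2$), $E_7$, $E_8$, $F_4$, $G_2$ have trivial outer automorphism group and rank $\geq 2$, so all are excluded (which in particular recovers Lemma~\ref{lemma_41}(i) for $\mathfrak{sp}(4)$ and excludes $G_2$); types $A_n$ ($n\geq 2$), $D_n$ ($n\geq 5$), $E_6$ have outer group of order $2$, giving $\mathrm{rank}\leq 2$ and leaving only $A_2=\mathfrak{sl}(3)$; for $D_4$, the outer group $S_3$ has largest finite abelian subgroup of order $3<4=\mathrm{rank}\,D_4$. So $\g\in\{\mathfrak{sl}(2),\mathfrak{sl}(3)\}$.

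It remains to identify the gradation. For $\mathfrak{sl}(2)$, $F=\Z/3\Z$, $\g_0$ is a semisimple one-dimensional subalgebra (hence a Cartan), and $\g_{\pm 1}$ are the two root spaces; after rescaling, this is $\Gamma_3$. For $\mathfrak{sl}(3)$ the bound saturates with $[X:Y]=2$; writing a generator $\sigma\in X\setminus Y$ as $\Ad(g)\circ\phi$, with $\phi$ the diagram automorphism and $g$ in a suitable maximal torus $T$, commutativity of $X$ forces $Y$ to lie in the $\phi$-fixed torus $T^\phi\cong\C^\ast$, so $Y\cong\Z/4\Z$. The main obstacle is then to use the one-dimensional weight space requirement to split each of the four two-dimensional $Y$-weight spaces on $\mathfrak{sl}(3)$ into distinct eigenspaces of $\sigma$; an explicit calculation on $E_{13}$, $E_{31}$ shows this forces $\sigma^2$ to be a primitive element of $Y$, hence $\sigma$ has order $8$, so $F\cong\Z/8\Z$, and matching Chevalley bases recovers $\Gamma_8$.
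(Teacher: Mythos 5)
Your reduction to $\{\mathfrak{sl}(2),\mathfrak{sl}(3)\}$ rests on the claim that $\rho(Y)$, being a finite abelian group of semisimple elements of $\Inn\g$, lies in a maximal torus, so that a Cartan subalgebra sits inside $\bigoplus_{\beta\in F'}\g_\beta$. That claim is false for adjoint groups: the images of the Pauli matrices give a non-toral Klein four-group in $PGL(2)$, the clock-and-shift matrices give a non-toral $(\Z/n\Z)^2$ in $PGL(n)$, and the symplectic and orthogonal adjoint groups (in particular $PSp(4)$, which your argument claims to dispose of with $[X:Y]=1$) contain many non-toral elementary abelian $2$-subgroups; there is even a non-toral $(\Z/2\Z)^3$ in the simply connected group $G_2$. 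So ``finite abelian of semisimple elements $\Rightarrow$ toral'' cannot be invoked, and without it the inequality $\mathrm{rank}\,\g\leq [X:Y]$ — the engine of your whole case analysis — is unproved for every type except those where you can force $\rho(Y)$ into a torus by hand.

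The paper is careful about exactly this point, and its route is structured to avoid it. First, the types other than $A$ and $C_2$ are eliminated not by a rank bound but by the dimension constraint $\dim\g=a2^n$, $a\in\{1,3,5\}$, coming from the centroid/Weyl-group analysis (Lemmas \ref{lemma_39} and \ref{lemma_40}); your plan discards this input. Second, for $\mathfrak{sl}(n)$, $n>3$, Lemma \ref{lemma_41}(ii) does not work in $PSL(n)$: it lifts $Y$ to $SL(n)$, which is possible precisely because $|Y|$ divides $n^2-1$ and is therefore prime to $n$, and only in $SL(n)$ does commutativity plus semisimplicity give simultaneous diagonalizability and hence torality. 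Third, $\mathfrak{sp}(4)$ is excluded by a separate argument on the reductive part $\g_0$ of the induced $\Z/2\Z$-gradation, again because no torality statement is available in $PSp(4)$. To salvage your approach you would either have to re-import the dimension count, or prove torality of $\rho(Y)$ case by case (checking the relevant coprimality with the order of the center of the simply connected cover), at which point you have essentially reconstructed the paper's proof. Your identification of $\Gamma_8$ on $\mathfrak{sl}(3)$ via the order-$8$ cyclic structure of $X$ is plausible and genuinely different from the paper's principal-$\mathfrak{sl}(2)$ argument, but it is left at the level of a sketch and depends on the unrepaired reduction.
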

\begin{proof}
By Lemma \ref{lemma_41}, it is enough to prove that 
$\Gamma_3$ is the unique simple gradation of
$\mathfrak{sl}(2)$ and that
$\Gamma_8$ is the unique simple gradation of
$\mathfrak{sl}(3)$, up to isomorphism. The first assertion is clear.

Set $\g=\mathfrak{sl}(3)$ and let
$\g=\oplus_{\gamma\in\Gamma}\,\g_{\gamma}$ be a simple
gradation of $\g$ by a group $\Gamma$ of order $8$. Since $\g \in \mathcal{G}(\Gamma)$, we have $\Sigma_1\neq \emptyset$ by Lemma \ref{lemma_22}.
Let $\alpha\in \Sigma_1$ and $V\cong \C^3$ be the natural representation of $\mathfrak{g}$. Since the $\mathfrak{s}(\alpha)$-module
$\g$ is spherical, it follows that
$\mathfrak{s}(\alpha)$ is a principal $\mathfrak{sl}(2)$ subalgebra. 
So we can assume that
$L_0=h_1+h_2$, 
$L_\alpha= e_1+ e_2$ and $L_{-\alpha}=  f_1+ f_2$. Since
$\g^{-2}=\C [f_1,f_2]$ is one dimensional,  $[f_1,f_2]$ is $\Gamma$-homogeneous. 
Let $\beta$ be its degree.

So up to a scalar multiple, we have 
$L_\beta=[f_1,f_2]$, $L_{\beta+\alpha}=f_1-f_2$,
$L_{\beta+2\alpha}=h_1-h_2$, 
$L_{\beta+3\alpha}=e_1-e_2$ and 
$L_{\beta+4\alpha}=[e_1,e_2]$. Since 
$L_\beta$ and $L_{\beta+4\alpha}$ are not proportional,
we have $4\alpha\neq 0$. Thus $\Gamma$ is cyclic and it is generated
by $\alpha$. Since the 8 elements of $\Gamma$ are
$0$, $\pm\alpha$ and $\beta+i\alpha$ for $0\leq i\leq 4$, it follows 
easily that $\beta=2\alpha$. Thus the gradation is isomorphic to
$\Gamma_8$. 
\end{proof}
\noindent{\textbf{Remark.}}~
\textit{ 
Of course, Lemma \ref{lemma_42} can be proved by general arguments on
simple finite dimensional Lie algebras. The only interest in our proof, using the class $\mathcal{G}(\Gamma)$ and Lemma \ref{lemma_22}, is that we do not need new notations. \\
Also note that Lemma \ref{lemma_42} can be seen as a particular case of the general results of the recent paper \cite{E}.}

\begin{thm}\label{theorem_2} Let $\Lambda$ be a lattice and
let $\mathcal{L}$ be an integrable primitive Lie algebra of
the class $\mathcal{G}$. Then $\Lambda=\Z$ and
$\mathcal{L}$ is isomorphic to
$A^{(1)}_1$ or $A^{(2)}_2$.
\end{thm}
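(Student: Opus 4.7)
The plan is to run the chapter's classification machinery and then peel off the primitive representative. First I would apply Theorem~$2'$ to obtain a simple finite dimensional Lie algebra $\g \in \mathcal{G}'(\Lambda/M)$, with $M = \Supp C(\mathcal{L})$, such that $\mathcal{L} \simeq \pi^{\ast}\g$ where $\pi : \Lambda \to \Lambda/M$ is the natural projection. Since $\mathcal{L} \in \mathcal{G}(\Lambda)$ rather than merely $\mathcal{G}'(\Lambda)$, Lemma~\ref{lemma_39} yields $\g \in \mathcal{G}(\Lambda/M)$ and $\dim \g = a \cdot 2^n$ for some $a \in \{1,3,5\}$. Combining Lemmas~\ref{lemma_40}, \ref{lemma_41} and \ref{lemma_42} then pins $\g$ down: the dimension bound restricts $\g$ to type $A$ or $\mathfrak{sp}(4)$; the latter and every $\mathfrak{sl}(k)$ with $k > 3$ admit no simple gradation; and on the two remaining algebras $\mathfrak{sl}(2)$ and $\mathfrak{sl}(3)$ the simple gradation is unique up to isomorphism, namely $\Gamma_3$ and $\Gamma_8$. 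In particular $\Lambda/M$ is cyclic of order $d \in \{3,8\}$.

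Next, exploit primitivity. Because $\Lambda/M$ is cyclic, the elementary divisor theorem furnishes a basis $\alpha_1,\ldots,\alpha_n$ of $\Lambda$ with $\alpha_1,\ldots,\alpha_{n-1} \in M$ and $d\alpha_n \in M$. Setting $\Lambda' = \Z\alpha_n$ and $\Lambda'' = \Z\alpha_1 \oplus \cdots \oplus \Z\alpha_{n-1}$, the projection $\pi$ factors as $\Lambda \twoheadrightarrow \Lambda' \xrightarrow{\pi_0} \Lambda/M$, and unpacking the definition of $\pi^{\ast}\g$ produces a canonical isomorphism of $\Lambda$-graded Lie algebras
\[ \pi^{\ast}\g \;\simeq\; (\pi_0^{\ast}\g) \otimes \C[\Lambda''], \]
which exhibits $\mathcal{L}$ as an imprimitive form of the $\Z$-graded Lie algebra $\pi_0^{\ast}\g$ whenever $\Lambda'' \neq 0$. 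Primitivity therefore forces $n = 1$, so $\Lambda \simeq \Z$ and $\mathcal{L} \simeq \pi_0^{\ast}\g$ with $\pi_0 : \Z \to \Z/d\Z$ the natural projection.

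It remains to identify $\pi_0^{\ast}(\mathfrak{sl}(2), \Gamma_3) \simeq A_1^{(1)}$ and $\pi_0^{\ast}(\mathfrak{sl}(3), \Gamma_8) \simeq A_2^{(2)}$. Both $A_1^{(1)}$ and $A_2^{(2)}$ are integrable members of $\mathcal{G}(\Z)$, and multiplication by $T$ (respectively by $T^2$), of degree $3$ (respectively $8$), manifestly lies in their centroids. The centroid support, being a subgroup of $\Z$ containing $3\Z$ (resp.\ $8\Z$), can only equal $3\Z$ (resp.\ $8\Z$), since any strict enlargement would force the simple quotient produced by Theorem~$2'$ to be a simple Lie algebra of dimension $1$, $2$ or $4$, which is impossible. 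Applying Theorem~$2'$ and Lemma~\ref{lemma_42} to $A_1^{(1)}$ and $A_2^{(2)}$ then realises them as $\pi_0^{\ast}(\mathfrak{sl}(2),\Gamma_3)$ and $\pi_0^{\ast}(\mathfrak{sl}(3), \Gamma_8)$, finishing the proof. The main subtlety lies in the second step: it is precisely the cyclicity of $\Lambda/M$ furnished by Lemma~\ref{lemma_42} that allows $\pi$ to split through a rank one summand and produces the tensor factorisation detecting the imprimitivity.
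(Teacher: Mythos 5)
Your proposal is correct and follows essentially the same route as the paper: Theorem $2'$ combined with Lemmas \ref{lemma_39}--\ref{lemma_42} pins down $\g$ as $(\mathfrak{sl}(2),\Gamma_3)$ or $(\mathfrak{sl}(3),\Gamma_8)$ with $\Lambda/M$ cyclic, and primitivity of $\mathcal{L}$ forces $\Ker\pi$ to vanish, giving $\Lambda=\Z$ (your explicit tensor factorisation $\pi^\ast\g\simeq(\pi_0^\ast\g)\otimes\C[\Lambda'']$ just spells out what the paper compresses into ``$\Ker\pi$ contains no primitive vectors''). The only genuine divergence is the final identification, which the paper treats as immediate (a direct check via Lemma \ref{lemma_1} on the explicit gradations of $A_1^{(1)}$ and $A_2^{(2)}$), whereas you re-apply Theorem $2'$ to the affine algebras themselves and compute their centroid supports --- more roundabout, but valid.
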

\begin{proof}
By Theorem 2', there exists a finite
abelian group $F$, a simple Lie algebra $\g$ with a simple
$F$-gradation and a
surjective morphism
$\pi:\Lambda\rightarrow F$ such that
$\mathcal{L}\simeq\pi^\ast\g$. By the previous lemma,
$F$ is cyclic. Since $\mathcal{L}$ is primitive,
$\Ker\pi$ contains no primitive vectors. Hence
$\Lambda=\Z$. 

Moreover the graded simple Lie algebra $\g$ is isomorphic to
$(\mathfrak{sl}(2),\Gamma_3)$ or
$(\mathfrak{sl}(3),\Gamma_8)$.It is clear that 
$\pi^\ast((\mathfrak{sl}(2),\Gamma_3))\simeq A^{(1)}_1$,
for any surjective morphism $\pi:\Z\rightarrow \Z/3\Z$
and $\pi^\ast((\mathfrak{sl}(3),\Gamma_8))=A^{(2)}_2$
for any surjective morphism $\pi:\Z\rightarrow \Z/8\Z$.
It follows that
$\mathcal{L}$ is isomorphic to $A^{(1)}_1$ or $A^{(2)}_2$. 
\end{proof}
\part{Classification of non-integrable Lie algebras
in $\mathcal{G}$}\label{chapter_III}
\section{Rank 1 subalgebras}\label{sect_11}
From now on, let $\Lambda$ be a lattice and let
$\mathcal{L}\in \mathcal{G}(\Lambda)$ be
a non-integrable  Lie algebra.  The goal of this section is Lemma \ref{lemma_46},
namely that any  graded subalgebra isomorphic to $\mathfrak{sl}(2)$
lies in a Witt algebra. 

For any $\lambda\in \Lambda$, set

\centerline{$\Omega(\lambda)=\Supp [\mathcal{L},L_\lambda]$.}

\begin{lemma}\label{lemma_43} Let $\lambda\in\Lambda$. Then we have
$\Lambda=F+\Omega(\lambda)$,
for some finite subset $F$ of $\Lambda$.
\end{lemma}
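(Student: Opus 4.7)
The plan is to reduce to the case $\lambda=0$ using the abstract support equivalence of Lemma \ref{lemma_4}, and then to handle $\lambda=0$ by a direct computation using the additivity of $l$ in the non-integrable setting.

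First, I would observe that the simplicity of $\mathcal{L}$ as a $\Lambda$-graded Lie algebra means that, under the adjoint action, $\mathcal{L}$ is a simple graded $\mathcal{L}$-module: its graded submodules are exactly its graded ideals. Applying Lemma \ref{lemma_4} to the nonzero homogeneous elements $L_0$ and $L_\lambda$ therefore yields $\Omega(\lambda) \equiv \Omega(0)$.

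Next I would identify $\Omega(0)$ explicitly: $\Omega(0) = \{\nu\in\Lambda:[L_\nu,L_0]\neq 0\} = \{\nu\in\Lambda:l(\nu)\neq 0\} = \Lambda\setminus K$, where $K:=\ker l$. Because $\mathcal{L}$ is non-integrable, Theorem \ref{theorem_1} gives that $l$ is additive; moreover $l\not\equiv 0$, since otherwise $\C L_0$ would span a nontrivial proper graded ideal, contradicting simplicity. Hence $K$ is a proper subgroup of $\Lambda$. Fixing any $\nu_0\in\Lambda\setminus K$, every $\mu\in\Lambda$ lies in $\{0,-\nu_0\}+(\Lambda\setminus K)$: if $\mu\notin K$ write $\mu=0+\mu$, while if $\mu\in K$ then $\mu+\nu_0\notin K$ and $\mu=(-\nu_0)+(\mu+\nu_0)$. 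Thus $\Lambda\leq \Omega(0)\equiv\Omega(\lambda)$, which gives the desired $\Lambda = F+\Omega(\lambda)$ for some finite $F\subset\Lambda$. There is no substantive obstacle here; the argument is essentially immediate from Lemma \ref{lemma_4} once one knows $\ker l$ is a proper subgroup of $\Lambda$.
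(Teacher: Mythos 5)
Your proof is correct and follows essentially the same route as the paper: Lemma \ref{lemma_4} applied to the adjoint module gives $\Omega(\lambda)\equiv\Omega(0)$, and the additivity and non-vanishing of $l$ (Theorem \ref{theorem_1}, non-integrable case) show $\Lambda$ is covered by two translates of $\Omega(0)=\{\nu : l(\nu)\neq 0\}$. The only cosmetic difference is that the paper writes the covering as $\Lambda=\Omega(0)\cup(\alpha+\Omega(0))$ rather than phrasing it via the proper subgroup $\ker l$.
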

\begin{proof}
By Theorem \ref{theorem_1}, the function
$l$ is additive, and moreover $l\not\equiv 0$
(otherwise $L_0$ would be
central). Since
$\Omega(0)=\{\mu\in\Lambda\vert \,l(\mu)\neq 0\}$, we have 
$\Lambda=\Omega(0)\cup\,\alpha+\Omega(0)$, where
$\alpha$ is any element with $l(\alpha)\neq 0$.

However by Lemma \ref{lemma_4}, we have $\Omega(\lambda)\equiv\Omega(0)$.
Therefore we have
$\Lambda=F+\Omega(\lambda)$
for some  finite subset $F$ of $\Lambda$.
\end{proof}

For $\alpha\in\Lambda \setminus \{0\}$, set
$\mathcal{L}^{(L_\alpha)}=\{x\in\mathcal{L}\vert\,
\ad^n(L_{\alpha})(x)=0,\,\forall \, n>>0\}$.

\begin{lemma}\label{lemma_44} 
Let $\alpha\in\Sigma$. There are no
$\lambda\in\Lambda$ such that
$[\mathcal{L},L_\lambda]\subset \mathcal{L}^{(L_\alpha)}$.
\end{lemma}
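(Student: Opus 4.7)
We argue by contradiction: suppose some $\lambda\in\Lambda$ satisfies $[\mathcal{L},L_\lambda]\subset\mathcal{M}:=\mathcal{L}^{(L_\alpha)}$. Introduce scalars $\phi(\mu),\psi(\mu)\in\C$ by $[L_\alpha,L_\mu]=\phi(\mu)\,L_{\mu+\alpha}$ and $[L_{-\alpha},L_\mu]=\psi(\mu)\,L_{\mu-\alpha}$, and set
\[
X:=\{\mu\in\Lambda:L_\mu\in\mathcal{M}\}=\{\mu\in\Lambda:\phi(\mu+k\alpha)=0\text{ for some }k\geq 0\}.
\]
The hypothesis reads $\Omega(\lambda)\subset X$, so by Lemma~\ref{lemma_43} one has $\Lambda=F+X$ for some finite $F\subset\Lambda$.

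Observe that $X$ is closed under $\mu\mapsto\mu-\alpha$ (replace the witness $k$ by $k+1$). Consequently, on each $\alpha$-coset of $\Lambda$ the trace with $X$ is either empty, an initial half-infinite segment, or the whole coset. A finiteness argument from $\Lambda=F+X$ then forces the set $G:=\{\bar\nu\in\Lambda/\Z\alpha:\nu+\Z\alpha\subset X\}$ to be non-empty: otherwise each translate $f+(X\cap S(\bar\nu-\bar f))$ covers only a bounded part of the $\alpha$-string $S(\bar\mu)$, which with $F$ finite cannot exhaust the infinite string $S(\bar\mu)$. For any $\bar\nu\in G$, the zero set $Z_\nu:=\{k\in\Z:\phi(\nu+k\alpha)=0\}$ is unbounded above.

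Fix such a $\bar\nu$ and pick two elements $k<k'$ of $Z_\nu$, both large positive. Then $v:=L_{\nu+k'\alpha}$ is an $\mathfrak{s}(\alpha)\cong\mathfrak{sl}(2)$-highest weight vector of $h$-weight $\lambda_{k'}=2l(\nu+k'\alpha)/l(\alpha)$, generating a quotient of the Verma module $M(\lambda_{k'})$. The standard identity
\[
\ad(L_\alpha)\,\ad^{k'-k}(L_{-\alpha})\,v=(k'-k)\Bigl(\tfrac{2l(\nu)}{l(\alpha)}+k+k'+1\Bigr)\,\ad^{k'-k-1}(L_{-\alpha})\,v
\]
holds in $M(\lambda_{k'})$ with non-zero scalar coefficient once $k,k'$ are chosen large enough. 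Transferring into $\mathcal{L}$: the left-hand side equals (up to a product of $\psi$-values along the string) $\phi(\nu+k\alpha)\,L_{\nu+(k+1)\alpha}=0$ because $k\in Z_\nu$, so the right-hand side must vanish as well in $\mathcal{L}$, thereby forcing $\prod_{m=k+2}^{k'}\psi(\nu+m\alpha)=0$; hence some $\psi(\nu+m\alpha)=0$ with $k+2\leq m\leq k'$.

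The contradiction is obtained by choosing $k,k'\in Z_\nu$ spread enough so that the interval $(k,k']$ contains no zero of $\psi$. When $A:=2l(\nu)/l(\alpha)\notin\Z$, each submodule $U(\mathfrak{s}(\alpha))\cdot L_{\nu+k'\alpha}$ is an irreducible Verma module and its embedding into $\mathcal{L}$ is faithful, so $\psi$ is nowhere zero on the string $\nu+\Z\alpha$; when $A\in\Z$, these submodules are finite-dimensional irreducibles whose forced $\psi$-zeros (coming from their lowest-weight vectors) lie at indices bounded above by $\max(0,-A)/2$, and it suffices to take $k,k'$ beyond this threshold in $Z_\nu$. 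The main obstacle is the combinatorial passage from $\Lambda=F+X$ to $G\neq\emptyset$ together with the case-split in the last step that controls potential $\psi$-zeros in $(k,k']$.
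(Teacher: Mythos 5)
Your proof is correct, and its skeleton is the same as the paper's: reduce via Lemma \ref{lemma_43} to showing that no finite translate of $\Supp \mathcal{L}^{(L_\alpha)}$ can cover $\Lambda$, and for that show that $\Supp \mathcal{L}^{(L_\alpha)}$ meets each $\alpha$-string only in a set bounded above in the $+\alpha$ direction (the paper calls this ``$\alpha$-bounded''; your passage from $\Lambda=F+X$ to $G\neq\emptyset$ is exactly the contrapositive of that covering argument). Where you diverge is in how this boundedness is established. The paper disposes of it in one line: $\mathcal{M}(\beta)$ is an $\mathfrak{sl}(2)$-weight module with one-dimensional weight spaces, so by Gabriel's classification of such modules (\cite{D}, 7.8.16) the operator $\ad L_\alpha$ is eventually injective along each string. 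You instead prove this from scratch: assuming the zero set $Z_\nu$ of $\phi$ is unbounded above, you take two large highest-weight vectors on the same string and play the identity $ef^m v=m(\lambda-m+1)f^{m-1}v$ against the vanishing of $\phi(\nu+k\alpha)$ to force a zero of $\psi$ in the window $[k+2,k']$, then rule that out by identifying $U(\mathfrak{s}(\alpha))\cdot v$ as an irreducible Verma module or a finite-dimensional irreducible and locating where $f$ can first kill a vector. I checked the weight bookkeeping ($\lambda_{k'}-（k'-k)+1=2l(\nu)/l(\alpha)+k+k'+1$, and the threshold $k'-k-1\leq A+2k'$ in the integral case) and it is sound for $k,k'$ large. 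The trade-off is clear: your version is self-contained and elementary, at the cost of a case analysis and some care with normalizations; the paper's is shorter but leans on the classification of multiplicity-one weight modules. Either is acceptable; if you keep yours, tighten the phrase about zeros ``bounded above by $\max(0,-A)/2$'' — the forced $\psi$-zero in the finite-dimensional case sits at index $-A-k'$, so the honest statement is simply that it lies below $k+2$ once $k,k'$ are large.
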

\begin{proof}
\textit{Step 1:} A subset $X\subset \Lambda$ is called
$\alpha$-\textit{bounded} if for any $\beta\in\Lambda$ there exists
$n(\beta)\in \Z$ such that 
$\beta+n\alpha\notin X$ for any $n\geq n(\beta)$. 

Set  $\mathfrak{s}(\alpha)=
\C L_{-\alpha}\oplus \C L_0\oplus \C L_\alpha$. By
hypothesis, $\mathfrak{s}(\alpha)$ is isomorphic to $\mathfrak{sl}(2)$. For any  
$\beta\in\Lambda$, set 
$\mathcal{M}(\beta)=\oplus_{n\in \Z}\,  \,\mathcal{L}_{\beta+n\alpha}$. 
As a $\mathfrak{sl}(2)$-module,
$\mathcal{M}(\beta)$ is a weight module with weight multiplicities
$1$. It follows from Gabriel's classification (\cite{D}, 7.8.16) that 
$[L_\alpha,L_{\beta+n\alpha}]\neq 0$ for $n>>0$. Therefore
$\Supp \mathcal{L}^{(L_\alpha)}$ is $\alpha$-bounded. 

\textit{Step 2:} Since $\Supp \mathcal{L}^{(L_\alpha)}$  is 
$\alpha$-bounded, 
there is no finite subset $F$ of $\Lambda$ such that 
$F+\Supp \mathcal{L}^{(L_\alpha)}=\Lambda$. Therefore
by Lemma \ref{lemma_43}, we have: 

\centerline{$\Omega(\lambda)\not\subset \mathcal{L}^{(L_\alpha)}$, }

\noindent which proves the lemma. 
\end{proof}

The Witt algebra $W=\Der\,\C[z,z^{-1}]$ has 
basis $L_n=z^{n+1}\dfrac{\d}{\d z}$, where $n$ runs over 
$\Z$. We have $[L_n,L_m]=(m-n)\,L_{n+m}$.
It contains two subalgebras
$W^{\pm}=\Der\,\C[z^{\pm 1}]$. The Lie algebra
$W^+$ has basis $(L_n)_{n\geq -1}$ and 
$W^-$ has basis $(L_n)_{n\leq 1}$. Their intersection
$W^+\cap W^-$ is  isomorphic
to $\mathfrak{sl}(2)$. 

Set $V^\pm=W/W^\pm$. Then $V^\pm$ is a $W^\pm$-module,
$V^+\oplus V^-$ is a $\mathfrak{sl}(2)$-module.
Identify the elements $L_n\in W$ with their images in $V^{\pm}$.
Then $(L_n)_{n\leq -2}$ is a basis of $V^+$, and  
$(L_n)_{n\geq 2}$ is a basis of $V^-$.

Let $\mathcal{V}$ be the class of all Lie algebras 
$\mathcal{D}$ with a basis $(\d_n)_{n\in\Z}$ satisfying
$[\d_0,\d_n]=n\, \d_n$, for all $n\in \Z$. An algebra
$\mathcal{D}\in \mathcal{V}$ admits a $\Z$-gradation, relatively to which
$\d_n$ is homogeneous of degree $n$.

\begin{lemma}[\cite{M1}]\label{lemma_45} Let $\mathcal{D}\in \mathcal{V}$. Assume that
$[d_1,d_{-1}]\neq 0$. As a $\Z$-graded Lie algebra,
$\mathcal{D}$ is isomorphic to one of the  following four  Lie algebras:
\begin{enumerate}
\item[(i)] $W$,
\item[(ii)] $W^+\, \sd V^+$,
\item[(iii)] $W^-\, \sd V^-$,
\item[(iv)] $\mathfrak{sl}(2)\, \sd (V^+\oplus V^-)$,
\end{enumerate}
where, $V^{\pm}$ and $V^+\oplus V^-$ are abelian ideals.
\end{lemma}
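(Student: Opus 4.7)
The plan is to reduce the structure of $\mathcal{D}$ to a finite recursion together with a case analysis. Since $[d_1, d_{-1}] \neq 0$, rescale $d_{\pm 1}$ so that $[d_1, d_{-1}] = -2 d_0$. Writing $[d_i, d_j] = \lambda_{i,j}\, d_{i+j}$, set $u_n := \lambda_{1, n-1}$ and $v_n := \lambda_{-1, n+1}$, so that $u_0 = -2$, $u_1 = -1$, $v_0 = 2$, $v_{-1} = 1$, while $u_2 = v_{-2} = 0$ automatically. The Jacobi identity on $(d_1, d_{-1}, d_n)$ then yields the recursion $u_{n+1} v_n - u_n v_{n-1} = 2n$, and telescoping from $u_1 v_0 = -2$ gives the sharp identity
\[ u_{n+1} v_n = (n-1)(n+2) \quad \text{for every } n \in \Z. \]

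From this identity, $u_n$ can vanish only at $n \in \{2, -1\}$ and $v_n$ only at $n \in \{-2, 1\}$; the vanishings $u_2 = v_{-2} = 0$ are automatic. Hence the only a priori undetermined values are $u_{-1}$ and $v_1$, each either zero or nonzero, giving four cases. One checks directly, by computing $u_{-1}$ and $v_1$ in each model algebra, that these correspond bijectively to the four algebras $W$, $W^+ \sd V^+$, $W^- \sd V^-$, and $\mathfrak{sl}(2) \sd (V^+ \oplus V^-)$.

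In the case $u_{-1}, v_1 \neq 0$, I would first use the remaining rescaling freedom (the parameter $c_1$ after imposing $c_1 c_{-1} = 1$, together with freely chosen $c_n$ for $|n| \geq 2$) to arrange $u_n = n-2$ and $v_n = n+2$ for every $n$. Then $\lambda_{m,n}$ follows by induction on $|m|$: for $m \neq 2$, the identity $(m-2)\, d_m = [d_1, d_{m-1}]$ combined with Jacobi on $(d_1, d_{m-1}, d_n)$ expresses $\lambda_{m,n}$ in terms of $\lambda_{m-1, n}$ and $\lambda_{m-1, n+1}$, and a short algebraic check yields $\lambda_{m,n} = n-m$; the exceptional case $m=2$ is handled via $4\, d_2 = [d_{-1}, d_3]$. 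This identifies $\mathcal{D}$ with $W$. In the case $u_{-1} = 0$, $v_1 \neq 0$, I would propagate the vanishing $[d_1, d_{-2}] = 0$ through Jacobi on $(d_m, d_1, d_{-2})$ (which yields $u_{m-1} \lambda_{m, -2} = u_{m+1} \lambda_{m+1, -2}$) and its analogues for $n \leq -2$, forcing $V^+ := \bigoplus_{n \leq -2} \C d_n$ to be an abelian ideal. The complementary subalgebra $\bigoplus_{n \geq -1} \C d_n$ then falls under the first case applied to its own graded structure, is identified with $W^+$, and yields $\mathcal{D} \cong W^+ \sd V^+$. The case $u_{-1} \neq 0$, $v_1 = 0$ is symmetric, and the case $u_{-1} = v_1 = 0$ combines both reductions: $V^\pm$ become abelian ideals and the residual three-dimensional piece is exactly $\mathfrak{sl}(2)$.

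The main obstacle is the propagation step in the cases with some vanishing: showing that the initial vanishing $[d_1, d_{-2}] = 0$ (or its symmetric counterpart) forces $V^+$ to be a full ideal requires chaining several Jacobi identities, with particular care near the boundary indices $(m, n)$ with $m \in \{2, 3\}$ and $n \in \{-2, -3\}$, where leading coefficients in the recursions vanish and the naive inductive step fails. A clean way to finish is to first establish that the complementary subalgebra is isomorphic to $W^+$ and then invoke the classification of $\Z$-graded $W^+$-modules with one-dimensional weight spaces to determine the unique action on $V^+$ compatible with the constraint.
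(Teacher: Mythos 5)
First, a remark on context: the paper does not prove this lemma at all --- it quotes it from \cite{M1}, Lemma~16 --- so your attempt at a self-contained computational proof is necessarily a different route. The first half of your argument is correct and is the right opening move: with $[d_1,d_{-1}]=-2d_0$ and $u_n=\lambda_{1,n-1}$, $v_n=\lambda_{-1,n+1}$, the Jacobi identity on $(d_1,d_{-1},d_n)$ does give $u_{n+1}v_n-u_nv_{n-1}=2n$, telescoping from $u_1v_0=-2$ does give $u_{n+1}v_n=(n-1)(n+2)$ for all $n\in\Z$, and the resulting four-way split according to the vanishing of $u_{-1}=\lambda_{1,-2}$ and $v_1=\lambda_{-1,2}$ matches the four listed algebras exactly.

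The gap is in the inductive determination of $\lambda_{m,n}$ in the case $u_{-1},v_1\neq 0$. As stated the scheme is circular: you propose to get $\lambda_{2,\cdot}$ from $4d_2=[d_{-1},d_3]$, which requires $\lambda_{3,\cdot}$, which your recursion produces only from $\lambda_{2,\cdot}$. The circularity is not cosmetic. After normalizing $u_n=n-2$ and $v_n=n+2$, all the Jacobi identities near the problematic index turn out to be tautologies in the unknown $\lambda_{2,-2}$: one only obtains relations such as $\lambda_{2,-3}=\lambda_{3,-2}=-9-\lambda_{2,-2}$ and $\lambda_{3,-3}=18+6\lambda_{2,-2}$, while $\lambda_{2,-2}$ itself (the coefficient of $[d_2,d_{-2}]$) stays free. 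This reflects the fact that $d_{\pm 2}$ do not lie in the subalgebra generated by $d_{-1},d_0,d_1$, so pinning down $[d_2,d_{-2}]$ is the real content of the lemma, and it is exactly the "boundary index" difficulty you flag for the degenerate cases but overlook in the case of $W$. The argument can be completed, but only by reaching farther out: for instance one first checks that $\lambda_{2,3}=1$, $\lambda_{4,-2}=-6$, $\lambda_{4,-1}=-5$ and $\lambda_{5,-2}=-7$ are forced independently of $\lambda_{2,-2}$ (the coefficient of the unknown cancels in the relevant recursions), and then the Jacobi identity on $(d_2,d_{-2},d_3)$ reads $3\lambda_{2,-2}=\lambda_{-2,3}\lambda_{2,1}-\lambda_{2,3}\lambda_{-2,5}=-(9+\lambda_{2,-2})-7$, which finally yields $\lambda_{2,-2}=-4$; after that your induction closes up. A comparable extra step is needed in the cases with a vanishing, though there your suggestion of invoking the classification of $\Z$-graded $W^{+}$-modules with one-dimensional weight spaces is a legitimate shortcut.
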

\begin{proof}
See \cite{M1}, Lemma 16. In  \textit{loc. cit.}, the statement
is slightly more general, because it is only assumed that
$\dim \mathcal{D}_n\leq 1$. The assumption $\dim \mathcal{D}_n= 1$ for all $n$
corresponds with the following four types of the Lemma 16 of
\cite{M1}:
type (2,2), type (2,3) (with $q=1$),
type (3,2) (with $p=1$) and type (3,3) (with $p=q=1$).
\end{proof}

For any
$\alpha\in\Lambda$, set 
$\mathcal{L}(\alpha)=\oplus_{n\in \Z}\, \mathcal{L}_{n\alpha}$.
If $l(\alpha)\neq 0$, $\mathcal{L}(\alpha)$ belongs to the class
$\mathcal{V}$.

\begin{lemma}\label{lemma_46} Let $\alpha\in\Sigma$. Then
$\mathcal{L}(\alpha)$ is isomorphic to $W$.
\end{lemma}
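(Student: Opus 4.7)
The plan is to apply Lemma \ref{lemma_45} to $\mathcal{L}(\alpha)$ and then rule out the three ``non-Witt'' possibilities it offers. First, after rescaling $L_0$ so that $l(\alpha) = 1$, each $\mathcal{L}_{n\alpha}$ is one-dimensional (because $\mathcal{L} \in \mathcal{G}$) and $[L_0, L_{n\alpha}] = n L_{n\alpha}$, so $\mathcal{L}(\alpha)$ belongs to the class $\mathcal{V}$. Since $\alpha \in \Sigma$ gives $[L_\alpha, L_{-\alpha}] \neq 0$, Lemma \ref{lemma_45} identifies $\mathcal{L}(\alpha)$, as a $\Z$-graded Lie algebra, with one of the four algebras $W$, $W^+ \ltimes V^+$, $W^- \ltimes V^-$, or $\mathfrak{sl}(2) \ltimes (V^+ \oplus V^-)$; the goal is to eliminate the last three.

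A common feature of these three cases is that some specific ``edge'' bracket vanishes: one has $[L_\alpha, L_{-2\alpha}] = 0$ in cases (ii) and (iv), and $[L_{-\alpha}, L_{2\alpha}] = 0$ in cases (iii) and (iv), because $L_{\mp 2\alpha}$ is a highest/lowest weight vector inside an abelian ideal. After swapping $\alpha \leftrightarrow -\alpha$ if necessary, it suffices to rule out the hypothesis $[L_\alpha, L_{-2\alpha}] = 0$.

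Setting $\lambda = -2\alpha$, the Jacobi identity and $[L_\alpha, L_\lambda] = 0$ show that $\ad(L_\alpha)$ and $\ad(L_\lambda)$ commute as operators on $\mathcal{L}$, so
\[
\ad(L_\alpha)^k [L_\beta, L_\lambda] \;=\; [\ad(L_\alpha)^k L_\beta,\, L_\lambda]
\]
for every $\beta \in \Lambda$ and $k \geq 0$. If this right-hand side vanishes for $k \gg 0$ uniformly in $\beta$, then $[\mathcal{L}, L_\lambda] \subset \mathcal{L}^{(L_\alpha)}$, contradicting Lemma \ref{lemma_44} (applied to $\alpha \in \Sigma$) and completing the proof.

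The main obstacle is to establish this uniform vanishing. For $k \gg 0$ the Gabriel-classification argument used in the proof of Lemma \ref{lemma_44} shows that $\ad(L_\alpha)^k L_\beta$ is a nonzero scalar multiple of $L_{\beta + k\alpha}$, so the required condition reduces to $[L_{\beta + k\alpha}, L_{-2\alpha}] = 0$ for $k \gg 0$. The commutation $[\ad(L_\alpha), \ad(L_{-2\alpha})] = 0$ then yields, on each $\mathfrak{s}(\alpha)$-coset $\mathcal{M}(\beta)$, a clean dichotomy: $\ad(L_{-2\alpha})$ is either identically zero on $\mathcal{M}(\beta)$, or it is nonzero at arbitrarily high weights. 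The technical heart of the proof is to exclude the second alternative in every coset, which one expects to follow by combining the multiplicity-free Gabriel classification of $\mathcal{M}(\beta)$ with the graded simplicity of $\mathcal{L}$ and the abelian-ideal structure of $V^+$ inside $\mathcal{L}(\alpha)$.
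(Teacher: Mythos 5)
Your overall architecture is exactly the paper's: put $\mathcal{L}(\alpha)$ in the class $\mathcal{V}$, invoke Lemma \ref{lemma_45}, reduce (after swapping $\pm\alpha$) to the situation where $M=\oplus_{n\leq -2}\mathcal{L}_{n\alpha}$ is an abelian ideal, and derive $[\mathcal{L},L_{-2\alpha}]\subset \mathcal{L}^{(L_\alpha)}$ to contradict Lemma \ref{lemma_44}. The setup, including the observation that $\ad(L_\alpha)$ and $\ad(L_{-2\alpha})$ commute and the resulting dichotomy on each coset $\mathcal{M}(\beta)$, is correct.

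However, the step you yourself call ``the technical heart'' --- excluding the alternative where $\ad(L_{-2\alpha})$ is nonzero at arbitrarily high weights in some coset --- is asserted, not proved, and it does not follow from the ingredients you list. The paper splits into two cases. When $\mathcal{M}(\beta)^{(L_\alpha)}\neq 0$ the claim is easy ($\mathcal{F}(\beta)$ is a free $\C[L_\alpha]$-module on a generator $L_\gamma$ with $\mathcal{L}_{\gamma-2\alpha}$ already inside $\mathcal{M}(\beta)^{(L_\alpha)}$). The hard case is $\mathcal{M}(\beta)^{(L_\alpha)}=0$, where $\ad(L_\alpha)$ acts bijectively: there the paper identifies $\mathcal{M}(\beta)$ with $\C[t,t^{-1}]$, computes $\rho(L_{-2\alpha})=at^{-2}$ and $\rho(L_{-3\alpha})=at^{-2}\frac{\d}{\d t}+bt^{-3}$, and kills $a$ via $[\rho(L_{-3\alpha}),\rho(L_{-2\alpha})]=-2a^2t^{-5}=0$. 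Note that this uses the relation $[L_{-3\alpha},L_{-2\alpha}]=0$, i.e.\ the abelianness of $M$ \emph{between two negative modes}, and not merely the single relation $[L_\alpha,L_{-2\alpha}]=0$ to which you reduced. Indeed, on such a coset the operator $\ad(L_{-2\alpha})=at^{-2}$ with $a\neq 0$ is perfectly compatible with multiplicity-freeness, with the Gabriel classification, and with graded simplicity of $\mathcal{L}$; only the commutator with $\rho(L_{-3\alpha})$ rules it out. So as written your argument has a genuine gap precisely at the point where the real work lies.
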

\begin{proof}
\textit{Step 1:} Assume otherwise. By the previous
lemma, $\mathcal{L}(\alpha)$ contains an abelian ideal. Exchanging the role of
$\pm\alpha$ if necessary, we can assume that
$M=\oplus_{n\leq -2}\,\mathcal{L}_{n\alpha}$ is an abelian ideal of
$\mathcal{L}(\alpha)$.

Set $\mathcal{Q}=\oplus_{n\leq 1}\,\mathcal{L}_{n\alpha}$.
It follows that $\mathcal{Q}\simeq\mathfrak{sl}(2)\, \sd M$.
For any  $\Z\alpha$-coset $\beta\subset\Lambda$, 
set $\mathcal{M}(\beta)=\oplus_{\gamma\in \beta}\, 
\,\mathcal{L}_{\gamma}$ and $\mathcal{F}(\beta)
=\mathcal{M}(\beta)/\mathcal{M}(\beta)^{(L_\alpha)}$,
where $\mathcal{M}(\beta)^{(L_\alpha)}=\mathcal{M}(\beta)\cap 
\mathcal{L}^{(L_\alpha)}$. Since
$\ad(L_\alpha)$ is locally nilpotent on $\mathcal{Q}$,
$\mathcal{M}(\beta)^{(L_\alpha)}$ is a $\mathcal{Q}$-submodule 
and thus $\mathcal{F}(\beta)$ is a $\mathcal{Q}$-module.

In the next two points, we will prove the following assertion:

\centerline{($\mathcal{A}$)\hskip5mm \textit{for any $\Z\alpha$-coset
$\beta$, $L_{-2\alpha}$ acts trivially on 
$\mathcal{F}(\beta)$.}}

\textit{Step 2:} Assume first that $\mathcal{M}(\beta)^{(L_\alpha)}\neq
0$ and prove Assertion ($\mathcal{A}$) in this case. 

It follows from
$\mathfrak{sl}(2)$-theory  that 
$\Ker \ad(L_\alpha)\vert_{\mathcal{M}(\beta)}$ has dimension $\leq 2$
(see \cite{D} 7.8.16 or \cite{Mi}).
So there exists $\gamma\in {\beta}$ such that
$\mathcal{M}(\beta)^{(L_\alpha)}=\oplus_{n>0}
\mathcal{L}_{\gamma-n\alpha}$. Thus, 
$\mathcal{F}(\beta)$ is the free $\C [L_\alpha]$-module
of rank one generated by $L_\gamma$. Since 
$\mathcal{L}_{\gamma-2\alpha}\subset \mathcal{M}(\beta)^{(L_\alpha)}$, we
have:

 \centerline{
$[L_{-2\alpha},L_\gamma]=0$ modulo $\mathcal{M}(\beta)^{(L_\alpha)}$.}

\noindent So $L_{-2\alpha}$ acts trivially on 
the generator $L_\gamma$ of the $\C[L_\alpha]$-module $\mathcal{F}(\beta)$. Since $[L_{-2\alpha},L_\alpha]=0$, it follows 
that $L_{-2\alpha}$ acts trivially on $\mathcal{F}(\beta)$. 

\textit{Step 3:} Assume now that $\mathcal{M}(\beta)^{(L_\alpha)}=0$
and prove Assertion ($\mathcal{A}$) in this case.

After a renormalization of $L_0$, it can be assumed that
$l(\alpha)=1$. Also $M$ is isomorphic to an irreducible
Verma module, so we have $[L_\alpha,L_{-3\alpha}]\neq 0$. After a suitable
renormalization, it can be assumed that 
$[L_{-3\alpha},L_{\alpha}]=L_{-2\alpha}$.

Let $t$ be the action of $L_\alpha$ on 
$\mathcal{M}(\beta)$ and let $\gamma\in\beta$.
By hypothesis, $t$ acts injectively on 
 $\mathcal{M}(\beta)$. Since 
$t.\mathcal{M}(\beta)_{\gamma+n\alpha}\subset 
\mathcal{M}(\beta)_{\gamma+(n+1)\alpha}$ and 
$\dim \mathcal{M}(\beta)_{\gamma+n\alpha}=1$ for any $n$, it follows
that $t$ acts bijectively. Hence 
$\mathcal{M}(\beta)$ is the  free
$\C[t,t^{-1}]$-module of rank $1$ generated by $L_\gamma$.

Use this generator $L_\gamma$ to identify $\mathcal{M}(\beta)$ with 
$\C[t,t^{-1}]$ and denote by 
$\rho:\mathcal{Q}\rightarrow\End(\C[t,t^{-1}])$ the
corresponding action.

Set $d=\rho(L_0)$. Since  $[L_0,L_\alpha]=L_\alpha$
and $[L_0,L_\gamma]=l(\beta)L_\gamma$, we get that
$[d,t]=t$ and $d.1=l(\gamma)$. Therefore $d=t\d/\d t+l(\gamma)$.

Set $X=\rho(L_{-2\alpha})$. Since $[L_\alpha,L_{-2\alpha}]=0$
and $[L_0,L_{-2\alpha}]=-2 L_{-2\alpha}$, we get 
$[t,X]=0$ and $[t\d/\d t,X]=-2X$. So we have
$X=at^{-2}$ for some $a\in\C$.

Set $Y=\rho(L_{-3\alpha})$. Since 
$[L_{-3\alpha},L_{\alpha}]=L_{-2\alpha}$
and $[L_0,L_{-3\alpha}]=-3 L_{-3\alpha}$, we get 
$[Y,t]=at^{-2}$ and $[t\d/\d t,Y]=-3Y$. So we have
$Y=at^{-2}\d/\d t+b t^{-3}$ for some $b\in\C$.

 Since $M$ is an abelian ideal, we have 
$[\rho(L_{-3\alpha}),\rho(L_{-2\alpha})]=0$, i.e. 

\centerline{$[at^{-2}\d/\d t+b t^{-3},at^{-2}]=0$,}

\noindent
from which it
follows that $a=0$ and $X=0$. Therefore $L_{-2\alpha}$ acts trivially
on $\mathcal{M}(\beta)=\mathcal{F}(\beta)$.

\textit{Step 4:}  Assertion ($\mathcal{A}$) is equivalent to

\centerline{$[\mathcal{L},L_{-2\alpha}]\subset \mathcal{L}^{(L_\alpha)}$.}

\noindent This contradicts Lemma \ref{lemma_44}.  It follows that 
$\mathcal{L}(\alpha)$ is necessarily isomorphic to $W$.
\end{proof}

\section{Symbols of twisted pseudo-differential operators on the circle}\label{sect_12}
\subsection{Ordinary pseudo-differential operators on the
circle}

Denote by $A$ the Laurent polynomial ring
$\C[z,z^{-1}]$. By definition, $\Spec A$ is
called the \textit{circle}. Let $D^+$ be the algebra of differential
operators on the circle.  An element in
$D^+$ is a finite sum $a=\sum_{n\geq 0}\, a_n
\partial^n$, where $a_n\in A$ and where $\partial$ stands for
$\frac{\d}{\d z}$.
The product of the differential operator $a$ by a differential
operator $b=\sum_{m\geq 0}\, b_m \partial^m$ is described  by the
following formula:

\centerline{$a.b=
\sum_{k\geq 0} \sum_{n,\,m\geq 0}\,\binom{n}{k}
a_n(\partial^k\,b_m)\,\partial^{n+m-k}.$}

A \textit{pseudo-differential operator} on the circle is a formal 
series 

\centerline{$a=\sum_{n\in \Z}\, a_n \partial^n$,}

\noindent where $a_n\in A$
and $a_n=0$ for $n>>0$.  The space $D$ of all pseudo-differential
operators has a natural structure of algebra (see below for
a precise definition of the product).

\subsection{Twisted differential operators on the circle}
It is possible to enlarge the algebra $D^+$ by
including complex powers of $z$. 

Let $\mathcal{A}$ be the algebra with
basis $(z^s)_{s\in\C}$ and product 
$z^sz^t=z^{s+t}$. The derivation $\partial$ extends to
$\mathcal{A}$ by  $\partial z^s=s.z^{s-1}$. Set
$\mathcal{D}^+=\mathcal{A}\otimes_A D^+$. The
product on $D^+$ extends naturally to $\mathcal{D}^+$. The algebra
$\mathcal{D}^+$ will be called the \textit{algebra of twisted differential
operators} on the circle.

 As usual, set $\binom{s}{k}=(1/k!)s(s-1)\dots (s-k+1)$ for
any $s\in\C$ and any $k\in\Z_{\geq 0}$. 
When $s$ is a non-negative integer,  $(^s_k)$ is the usual binomial
coefficient. The  product in $\mathcal{D}^+$ is defined by the following
formula:

   \centerline{$z^s\partial^m.z^t\partial^n=
\sum_{k\geq 0}\,k!\binom{m}{k}\binom{t}{k}\,z^{s+t-k}\partial^{m+n-k}.$}

\noindent Here the sum is finite, because $\binom{m}{k}=0$ for $k>m$. 

Let
$\mathcal{D}^+_{\leq n}$ be the space of all differential operators of
order $\leq n$. Set $\mathcal{P}^{+}=\oplus_{n\geq 0}\,
\mathcal{D}^+_{\leq n}/\mathcal{D}^+_{\leq n-1}$. As usual we have
$\mathcal{D}^+_{\leq m}.\mathcal{D}^+_{\leq n}\subset \mathcal{D}^+_{\leq m+n}$ and
$[\mathcal{D}^+_{\leq m},\mathcal{D}^+_{\leq n}]\subset 
\mathcal{D}^+_{\leq m+n-1}$, therefore $\mathcal{P}^{+}$ has a natural structure
of Poisson algebra. As usual, an element 
$a\in \mathcal{D}^+_{\leq n}\setminus 
\mathcal{D}^+_{\leq n-1}$ has exactly \textit{order $n$}, and its image
$\sigma(a)=a$ mod $\mathcal{D}^+_{\leq n-1}$ in $\mathcal{P}^{+}$ is called its
\textit{symbol}. That is why $\mathcal{P}^{+}$
is called the \textit{algebra of symbols of twisted differential
operators}. In what follows, the Poisson bracket of symbols
will be denoted by  $\{,\}$. 

\subsection{Twisted pseudo-differential operators on the circle}
Similarly,  it is possible to enlarge the algebra $D$ by adding complex
powers of $z$ and $\partial$.  

Since the formula involves an infinite
series in powers of  $\partial$, a restriction
on the the support of the series is necessary to ensure
the convergence of
the series defining  the product.

 For any
$s\in\C$, set 
$]-\infty,s]=\{s-n\vert\,n\in\Z_{\geq 0}\}$. Say that a
subset $X$ of $\C$ is \textit{good} if there exists a finite set
$S$ such that $X\subset\cup_{s\in S}\,]-\infty,s]$. Let $\mathcal{D}$ be
the space of all formal series $\sum_x\,a_x\partial^x$, where
$a_x\in\mathcal{A}$ for any $x\in\C$ and where 
$a_x=0$ for all $x$ outside a good subset of $\C$. 
An element of $\mathcal{D}$ is called a \textit{twisted
pseudo-differential operator}. Then
one can define a product on $\mathcal{D}$ by the formula

 \centerline{$z^s\partial^x.z^t\partial^y=
\sum_{k\geq 0}\,k!\binom{x}{k}\binom{t}{k}\,z^{s+t-k}\partial^{x+y-k}.$}

\noindent
Thank to the restriction on the support, the product of general
twisted pseudo-differential operators is well-defined.

Unfortunately, the definition of the order of a twisted
pseudo-dif\-feren\-tial operator requires a non-natural choice
of a total ordering $<$ on $\C$ (viewed as an abelian group)
in a such way that its restriction to $\Z$ is the usual order. 
Then  say that the operator $a\in\mathcal{D}$ has \textit{order $x$}
if $a$ can be writen as $a=a_x\partial^x+\sum_{y<x}\,a_y\partial^y$,
where $a_x\in \mathcal{A}$ is not zero. Let
$\mathcal{D}_{\leq x}$ (respectively $\mathcal{D}_{<x}$)  be the subspace of all
twisted pseudo-differential operators of order $\leq x$ (respectively of
order $<x$) and set 
$\mathcal{P}=\oplus\, \mathcal{D}_{\leq x}/\mathcal{D}_{<x}$ 

We have 
$\mathcal{D}_{\leq x}.\mathcal{D}_{\leq y}\subset \mathcal{D}_{\leq x+y}$ and
$[\mathcal{D}_{\leq x},\mathcal{D}_{\leq y}]\subset \mathcal{D}_{\leq x+y-1}$
and therefore $\mathcal{P}$ has a natural structure of Poisson algebra.
It should be noted that any good subset of $\C$ has a maximal 
element, therefore the symbol $\sigma(a)$ of any $a\in\mathcal{D}$ is
well-defined. For $\lambda=(s,t)\in \C^2$, set 
$E_\lambda=\sigma(z^{s+1}\partial^{t+1})$. Then 
the commutative product of $\mathcal{P}$ is given by the formula
$E_\lambda.E_\mu=E_{\lambda+\mu+\rho}$ and the Poisson bracket by
$\{E_\lambda,E_\mu\}=<\lambda+\rho\vert\mu+\rho> E_{\lambda+\mu}$,
where $<,>$ is the standard symplectic form on $\C^2$ and
$\rho=(1,1)$. Indeed for any $s,\,t\,\in\C$ the symbol of 
$z^s\partial^t$ is independent on the  choice of a 
total order $<$. Thus the whole Poisson structure does not depend on 
this non-natural choice.

\subsection{Decomposition of $\mathcal{P}$ under the Witt algebra $W$}

For any integer $n$, set $L_n=\sigma(z^{n+1}\partial)$. We have
$\{L_n,L_m\}=(m-n)L_{n+m}$. Therefore, the Lie algebra $W=\oplus_n\,\C L_n$ is isomorphic  to the Witt algebra, i.e. the derivation algebra of
$\C[z,z^{-1}]$.

Fix $\delta\in\C$. Set  ${\Omega}^\delta=\mathcal{D}_{\leq
-\delta}/\mathcal{D}_{<-\delta}$. Thus $W$ is a Lie subalgebra of 
${\Omega}^{-1}$ and each $\Omega^\delta$ is a $W$-module.  
For any $x$, set  
$u_x^{\delta}=\sigma(z^{x-\delta}\partial^{-\delta})
=E_{x-\delta-1,-\delta-1}$. Note that

\centerline{$\{L_n,u_x^\delta\}=(x+n\delta)u_{x+n}^\delta$.}

\noindent For any coset $s\in\C/\Z$, set 

\centerline{$\Omega^\delta_s=\oplus_{x\in s}\,\C u
^\delta_x$.}

\noindent  There is a decomposition
$\Omega^\delta=\oplus_{s\in\C/\Z}\,\Omega^{\delta}_s$,
where  each $\Omega^\delta_s$ is a $W$-sub\-mo\-du\-le. These $W$-modules 
$\Omega^\delta_s$ are called the {\it tensor densities modules}. 

It is clear that $\Omega^0_0=A$ and $\Omega^1_0=\Omega^1_A$, where $\Omega_A^1$ is the module of K\"{a}hler differential of $A$.
As $W$-module, $\Omega^0_0$ and $\Omega^1_0$ have length two.
Indeed, set $\overline A=\C[z,z^{-1}]/\C$. 
Their  composition series are described by the following exact
sequences:

\centerline{
$0\rightarrow \C \rightarrow \Omega^0_0\rightarrow
\overline A\rightarrow 0$ and
$0\rightarrow \overline{A}\rightarrow \Omega^1_0\rightarrow
\C\rightarrow 0$.
}

\noindent
Otherwise, for $s\neq 0$ or for $\delta\notin\{0,\,1\}$, the 
$W$ module $\Omega^\delta_s$ is irreducible. It should
be noted that $W=\Omega^{-1}_0$.

\subsection{The $W$-equivariant bilinear maps $P^{\delta,\delta'}_{s,s'}$
and $B^{\delta,\delta'}_{s,s'}$}\label{sect_12.5}

Set $Par=\C\times\C/\Z$. There is a decomposition
$\mathcal{P}=\oplus_{(\delta,s)\in Par}\,\Omega^{\delta}_s$. 
It follows from the explicit description of the Poisson structure on
$\mathcal{P}$ that we have:

\centerline{
$\Omega^{\delta}_s.\Omega^{\delta'}_{s'}
\subset
\Omega^{\delta+\delta'}_{s+s'}$ and 
$\{\Omega^\delta_s,\Omega^{\delta'}_{s'}\}\subset
\Omega^{\delta+\delta'+1}_{s+s'}$. }

\noindent for any quadruple
$(\delta,s),\,(\delta',s')\in Par$. Accordingly, we get 
two $W$-equi\-va\-riant bilinear maps:

\centerline{
$P^{\delta,\delta'}_{s,s'}:\Omega^{\delta}_s\times\Omega^{\delta'}_{s'}
\rightarrow\Omega^{\delta+\delta'}_{s+s'}$ and
$B^{\delta,\delta'}_{s,s'}:\Omega^{\delta}_s\times\Omega^{\delta'}_{s'}
\rightarrow\Omega^{\delta+\delta'+1}_{s+s'}$.}

\noindent 
 In what follows, 
these morphisms $P^{\delta,\delta'}_{s,s'}$ and
$B^{\delta,\delta'}_{s,s'}$ will be called the \textit{
commutative product} and the  \textit{Poisson bracket product}. These
bilinear maps are always non-zero, except the Poisson bracket for
$\delta=\delta'=0$.

\subsection{The outer derivations $\log z$ and $\log\partial$}\label{sect_12.6}
Recall the following obvious fact:

\begin{lemma}\label{lemma_47} Let $R$ be a Poisson algebra, and let
$d\in R$ be an invertible element. Then the map
$r \in R\mapsto \{d,r\}/d \in R$ is a derivation.
\end{lemma}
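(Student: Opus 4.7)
The plan is to verify that the map $D : r \mapsto \{d,r\}/d$ satisfies the Leibniz rule with respect to both structures of the Poisson algebra $R$, i.e.\ both the commutative product and the Poisson bracket. Since the lemma just says ``derivation'', and the intended applications in Section \ref{sect_12.6} (the outer derivations $\log z$ and $\log\partial$) should respect the whole Poisson structure, I would check both.

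For the commutative product, the verification is immediate: by the Leibniz axiom of a Poisson algebra, $\{d,rs\}=\{d,r\}\,s+r\,\{d,s\}$, so dividing by $d$ gives $D(rs)=D(r)\,s+r\,D(s)$. Here I just use that the commutative product is commutative, so $d^{-1}$ can be slid past any factor.

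For the Poisson bracket, the key preliminary computation is $\{f,d^{-1}\}=-d^{-2}\{f,d\}$, which follows from applying $\{f,-\}$ to the identity $d\cdot d^{-1}=1$ and using Leibniz. Then I would expand
\[
\{D(r),s\}+\{r,D(s)\} \;=\; \{\{d,r\}\cdot d^{-1},s\}+\{r,\{d,s\}\cdot d^{-1}\}
\]
using Leibniz on each term. The resulting ``cross terms'' involving $\{d^{-1},s\}$ and $\{r,d^{-1}\}$ are, after substituting the preliminary identity, equal to $-\{d,r\}\{d,s\}/d^2$ and $+\{d,s\}\{d,r\}/d^2$ respectively, which cancel. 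What is left is $(\{\{d,r\},s\}+\{r,\{d,s\}\})/d$, and by the Jacobi identity this equals $\{d,\{r,s\}\}/d=D(\{r,s\})$.

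There is no real obstacle: the lemma is a direct consequence of the Leibniz and Jacobi axioms together with the formula for the Poisson bracket of an inverse. The only small care needed is to keep track of signs when using antisymmetry of $\{,\}$ and when computing $\{f,d^{-1}\}$.
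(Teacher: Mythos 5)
Your proof is correct. The paper offers no argument at all here (the lemma is stated as an ``obvious fact'' with no proof), and your direct verification --- Leibniz for the commutative product, the identity $\{f,d^{-1}\}=-d^{-2}\{f,d\}$, cancellation of the cross terms, and Jacobi in the form $\{d,\{r,s\}\}=\{\{d,r\},s\}+\{r,\{d,s\}\}$ --- is exactly the computation the authors are implicitly relying on, with all signs handled correctly.
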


In what follows, it will be convenient
to use the notation  
$\{\log d, r\}$ for $\{d,r\}/d$. 
Since the ordinary bracket of operators is denoted by $[,]$ and
the Poisson bracket of symbol is denoted by $\{,\}$, 
it will be convenient to write $z^s\partial^\delta$ for
$\sigma(z^s\partial^\delta)$. In an expression like
$\{z^s\partial^\delta, z^{s'}\partial^{\delta'}\}$ it is clear that the
arguments are symbols of operators.

In the Poisson algebra $\mathcal{P}$, both $z$ and $\partial$ are
invertible. Therefore $\{\log z, \cdot \}$ and $\{\log\partial, \cdot\}$ are derivations
of $\mathcal{P}$. Let $\mathcal{E}\subset \Der \mathcal{P}$ the vector
space generated by $\ad \,\Omega^0_0$, $\{\log z, \cdot\}$ and $\{\log\partial, \cdot\}$.
Also set $\overline{A}=A/\C=\Omega^0_0/\C$. As a vector space,
it  is clear that $\mathcal{E}=\overline{A}\oplus \C\{\log\,z, \cdot\} \oplus
\C\{\log\,\partial, \cdot\}$.

We have:

\centerline{$\{\log z, z^s\partial^\delta\}=-\delta
z^{s-1}\partial^{\delta-1}$,  \quad
$\{\log \partial,z^s\partial^\delta\}=
s z^{s-1}\partial^{\delta-1}$.}

\noindent It follows that $\mathcal{E}$ is a $W$-module,
with basis 
$\{\log z, \log \partial, (e_n) \vert\, n\neq 0\}$
and the $W$-module structure is given by:

$L_n.e_m=m e_{n+m}$ if $n+m\neq 0$ and $0$ otherwise,

$L_n.\log z=e_n$ if $n\neq 0$ and $0$ otherwise,

$L_n.\log\partial=-(n+1) e_n$ if $n\neq 0$ and $0$ otherwise,

\noindent where  $e_n$ is the image of $z^n$ in $\overline{A}$. 

\begin{lemma}\label{lemma_48} 
\begin{enumerate}
\item[(i)] As a $W$-module, there is an exact sequence:

\centerline{
$0\rightarrow \overline{A}\rightarrow \mathcal{E}
\rightarrow \C^2\rightarrow 0$
}

\item[(ii)] $\mathcal{E}.\Omega^{\delta}_s\subset \Omega^{\delta+1}_s$,
for all $(\delta,s)\in Par$.
\end{enumerate}
\end{lemma}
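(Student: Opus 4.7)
\medskip

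\noindent\textbf{Proof proposal.} The whole statement is essentially a bookkeeping consequence of the explicit action formulas for $\{\log z,\cdot\}$ and $\{\log\partial,\cdot\}$ given at the end of Section \ref{sect_12.6}, combined with the bigrading properties of $\mathcal{P}$ recalled in Section \ref{sect_12.5}. My plan is to treat the two assertions separately.

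For part (i), I would first check that $\overline{A}$ is a $W$-submodule of $\mathcal{E}$. This is immediate from the formula $L_n\cdot e_m = m\,e_{n+m}$ (when $n+m\neq 0$, zero otherwise), since the right-hand side stays in $\overline{A}$. Next I would verify that the induced action of $W$ on the quotient $\mathcal{E}/\overline{A}\simeq \C\,\log z\oplus \C\,\log\partial$ is trivial. But the formulas $L_n\cdot \log z = e_n$ (for $n\neq 0$, and $0$ otherwise) and $L_n\cdot\log\partial = -(n+1)e_n$ (for $n\neq 0$, and $0$ otherwise) show that $L_n$ sends both $\log z$ and $\log\partial$ into $\overline{A}$, hence acts by zero on the quotient. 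Thus $\mathcal{E}/\overline{A}$ is a trivial $2$-dimensional $W$-module, giving the desired short exact sequence.

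For part (ii), I would treat each of the three types of generators of $\mathcal{E}$ in turn. The generators coming from $\overline{A}\subset\Omega^0_0$ act on $\Omega^\delta_s$ by the Poisson bracket, and by the bigrading property recalled in Section \ref{sect_12.5},
\[
\{\Omega^0_0,\Omega^{\delta}_s\}\subset \Omega^{0+\delta+1}_{0+s}=\Omega^{\delta+1}_s,
\]
as required. For $\{\log z,\cdot\}$ and $\{\log\partial,\cdot\}$, I would evaluate on the basis vectors $u_x^\delta=\sigma(z^{x-\delta}\partial^{-\delta})$ using the formulas of Section \ref{sect_12.6}: one finds
\[
\{\log z, u^\delta_x\}=\delta\, u^{\delta+1}_{x}, \qquad
\{\log \partial, u^\delta_x\}=(x-\delta)\, u^{\delta+1}_{x}.
\]
In both cases the coset $s\in\C/\Z$ to which $x$ belongs is preserved, while the order parameter $\delta$ is shifted by $+1$, which is exactly the claim.

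The only point that deserves any care is that the formulas written in Section \ref{sect_12.6} refer to the symbols $z^s\partial^\delta$ rather than to the normalized basis $u^\delta_x$, so one has to translate between the two conventions. This is a one-line substitution using $u^\delta_x=\sigma(z^{x-\delta}\partial^{-\delta})$, after which the three inclusions above exhaust the generators of $\mathcal{E}$ and the lemma follows.
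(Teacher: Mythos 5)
Your proof is correct and takes exactly the route the paper intends: the paper's own proof of this lemma is the single sentence ``Both assertions follow from the previous computations,'' and your argument simply carries those computations out explicitly (the submodule/trivial-quotient check for (i), and the bigrading of the Poisson bracket together with the evaluation of $\{\log z,\cdot\}$ and $\{\log\partial,\cdot\}$ on the basis $u^\delta_x$ for (ii)). The only cosmetic slip is writing $\overline{A}\subset\Omega^0_0$ when $\overline{A}=\Omega^0_0/\C$ is a quotient, but since constants are Poisson-central the induced action is well defined and your inclusion $\{\Omega^0_0,\Omega^\delta_s\}\subset\Omega^{\delta+1}_s$ gives exactly what is needed.
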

\begin{proof}
Both assertions follow from the previous
computations.
\end{proof}

\subsection{The Lie algebra $W_\pi$}\label{sect_12.7}
As a Lie algebra, $\mathcal{P}$ has a natural $\C^2$-gradation
$\mathcal{P}=\oplus_{\lambda\in\C^2}\,\mathcal{P}_\lambda$ where
$\mathcal{P}_\lambda=\C E_{\lambda}$. 
For any additive map $\pi:\Lambda\rightarrow\C^2$
set,  $W_\pi=\pi^\ast\mathcal{P}$. When  $\pi$ is one-to-one,
$W_\pi$ has been defined in the introduction. In general,
the notation $\pi^\ast$ has been defined in Section \ref{sect_2}.

\begin{lemma}\label{lemma_49} 
\begin{enumerate}
\item[(i)] The Lie algebra $W_\pi$ is  simple graded iff: 

\centerline{$(\mathcal{C})$ \quad $\Im\,\pi\not\subset\C\rho$ 
and $2\rho\notin \Im\,\pi$.}

\item[(ii)] Moreover if $\pi$ is one to one, then
$W_\pi$ is simple.
\end{enumerate}
\end{lemma}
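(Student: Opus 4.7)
The plan is to analyze the graded ideals of $W_\pi$ directly from the Poisson bracket
\[ \{E_\lambda,E_\mu\}=\langle\lambda+\rho\,|\,\mu+\rho\rangle\,E_{\lambda+\mu}, \]
which vanishes precisely when $\lambda+\rho$ and $\mu+\rho$ are colinear in $\C^2$, and then to upgrade simple-gradedness to simplicity via the centroid machinery of \S\ref{sect_2}. A useful preliminary is that $\pi(\Lambda)$ is a subgroup of $\C^2$, so $2\rho\in\pi(\Lambda)$ is equivalent to $-2\rho\in\pi(\Lambda)$ and $\rho\in\pi(\Lambda)$ forces $2\rho\in\pi(\Lambda)$; hence condition $(\mathcal{C})$ simultaneously blocks all of $\pm\rho,\pm 2\rho$.

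For part (i), I would argue necessity case by case. If $\pi(\Lambda)\subset\C\rho$, every pair of shifts is colinear, all brackets vanish, $W_\pi$ is abelian, and any one-dimensional homogeneous subspace is a proper graded ideal. If $2\rho\in\pi(\Lambda)$, then $-2\rho\in\pi(\Lambda)$, and antisymmetry gives $\langle\mu+\rho\,|\,\lambda+\rho\rangle=-\langle\lambda+\rho\,|\,\lambda+\rho\rangle=0$ for every pair with $\mu+\lambda=-2\rho$, so $\bigoplus_{\lambda\neq-2\rho}\C E_\lambda$ is a codimension-one graded ideal. For sufficiency, I would let $I=\bigoplus_{\lambda\in S}\C E_\lambda$ be a non-zero graded ideal and pick $\lambda_0\in S$; then $\lambda_0+\mu\in S$ whenever $\mu+\rho\notin\C(\lambda_0+\rho)$. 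Recast as a condition on the complement, each $\nu\in S^c$ must satisfy $\nu+2\rho\in\C(\lambda+\rho)$ for every $\lambda\in S$. Once $S$ contains two elements whose shifts span $\C^2$, this intersection collapses to $\{0\}$, forcing $\nu=-2\rho\in\pi(\Lambda)$, which $(\mathcal{C})$ rules out; the alternative possibility that $S$ and $S^c$ lie on parallel affine lines through $-\rho$ and $-2\rho$ is excluded because $0\in\pi(\Lambda)$ forces the common line direction into $\C\rho$, contradicting the first part of $(\mathcal{C})$. The rank-one case $\pi(\Lambda)\subset\C\nu$ with $\nu\notin\C\rho$ is handled separately by identifying $W_\pi$ with the (simple) Witt algebra directly from the bracket.

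For part (ii), I would assume $\pi$ is injective and apply Lemma \ref{lemma_9}: since by (i) $W_\pi$ is already simple graded, simplicity reduces to $\Supp C(W_\pi)=\{0\}$. A homogeneous centroid element $\psi$ of degree $\gamma$ takes the form $\psi(E_\lambda)=f(\lambda)E_{\lambda+\gamma}$, and the centroid axiom yields
\[ f(\mu+\lambda)\,\langle\mu+\rho\,|\,\lambda+\rho\rangle = f(\lambda)\,\langle\mu+\rho\,|\,\lambda+\gamma+\rho\rangle. \]
Specializing $\mu=0$ forces $\langle\rho\,|\,\gamma\rangle=0$, so $\gamma=c\rho$; substituting $\lambda=0$ and $\lambda=-\mu$ then forces the quadratic $c(c+3)=0$, leaving the cases $c=0$ (so $\psi$ is a scalar) or $c=-3$. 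In the latter case, applying the identity to generic $\mu,\lambda\notin\C\rho$ and using $f(\mu)=-2f(0)$ yields $f(0)\langle\mu\,|\,\rho\rangle=0$, hence $f(0)=0$; a short bookkeeping then propagates $f\equiv 0$, crucially using that $(\mathcal{C})$ forbids $2\rho\in\pi(\Lambda)$, which is exactly what makes the resulting constraint on $f(b\rho)$ non-degenerate. Thus the centroid is trivial and $W_\pi$ is simple.

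The main obstacle is the sufficiency half of (i): showing that bracket action propagates the support of any non-zero graded ideal to all of $\pi(\Lambda)$. The non-linear shift $\lambda\mapsto\lambda+\rho$ couples the obstructions to exactly the distinguished point $-2\rho$ and the direction $\C\rho$, which is precisely what $(\mathcal{C})$ is calibrated to exclude once one invokes the subgroup closure of $\pi(\Lambda)$.
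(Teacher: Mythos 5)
Your proposal is correct, and while it shares the paper's overall architecture (establish simple-gradedness, then show the centroid is trivial and invoke Lemma \ref{lemma_9}), both halves are executed differently. For (i), the paper proves simple-gradedness constructively: given $\lambda,\mu$, it exhibits $\theta$ with $[E_{\mu-\theta-\lambda},[E_\theta,E_\lambda]]=g(\theta)h(\theta)E_\mu\neq 0$, using that the affine functions $g,h$ cannot vanish identically under $(\mathcal{C})$. You instead analyse the support $S$ of a graded ideal and show its complement is forced into $\{-2\rho\}$ or into a configuration of parallel lines through $-\rho$ and $-2\rho$ that $(\mathcal{C})$ excludes; this is a genuinely different, more set-theoretic argument and it works --- though your separate appeal to the simplicity of the generalized Witt algebra in the rank-one case is avoidable (your own parallel-lines analysis already rules out the case $S+\rho\subset\C v$ outright, whether or not $S^c$ is empty, and the paper's proof needs no such external input). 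For (ii), the paper fixes $\lambda$ with $\langle\lambda\vert\rho\rangle\neq 0$ and shows that the eigenvalue function of $\ad(E_{-\lambda})\ad(E_\lambda)$ would have to be periodic in the $\rho$-direction, contradicting its quadratic growth; you instead solve the functional equation for $f$ directly. Both work. One point you should make explicit in (ii): the dichotomy ``$c=0$ or $c=-3$'' extracted from your quadratic presupposes $f(0)\neq 0$. This is legitimate because a nonzero homogeneous element of the centroid of a simple graded algebra is injective (Lemma \ref{lemma_6}(i)), so $f$ vanishes nowhere --- but as written the branch $f(0)=0$ with $c\notin\{0,-3\}$ is left open, and closing it is exactly where that injectivity (or else a further round of your ``bookkeeping'') is needed.
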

\begin{proof}
We may assume that $\pi$ is one-to-one
and that $\Lambda$ is a subgroup of $\C^2$.
Thus $W_\pi$ has basis $(E_\lambda)_{\lambda\in\Lambda}$ and
the bracket is given by the formula \\
\centerline{$[E_\lambda,E_\mu]=<\lambda+\rho\vert\mu+\rho> E_{\lambda+\mu}$.}

If $\Lambda \subset \C \rho$, then $W_\pi$ is abelian. If $2\rho \in \Lambda$, then
$E_{2\rho} \in W_\pi \setminus [W_\pi, W_\pi]$. Thus, the condition $\mathcal{C}$ is
necessary for the simplicity of $W_\pi$.

Conversely, assume the condition $\mathcal{C}$. 
First prove that $W_\pi$ is simple as a graded Lie algebra. 
Let $\lambda$, $\mu\in\Lambda$. For $\theta\in\Lambda$, set

$g(\theta)=<\lambda+\rho\vert \theta+\rho>$

$h(\theta)=<\lambda+\theta+\rho\vert \mu+2\rho>$

\noindent Since $2\rho \not\in \Lambda$, we have $\lambda+\rho\neq 0$
and $\mu+2\rho\neq 0$, and so $g$ and $h$ are not constant affine functions. 
Since their zero sets are proper cosets of $\Lambda$, there exists
$\theta\in\Lambda$ with $g(\theta)h(\theta)\neq 0$.

Note that $h(\theta)=<\lambda+\theta+\rho\vert \mu-\lambda-\theta+\rho>$,
and therefore:

\centerline{
 $[E_{\mu-\theta-\lambda}, [E_\theta,E_\lambda]]=g(\theta)h(\theta)
E_\mu$. }

\noindent It follows that for any $\lambda,\,\mu\in\Lambda$,
$E_\mu$ belongs to $\Ad(U(W_\pi))(E_\lambda)$. So $W_\pi$ is simple as a
$\Lambda$-graded Lie algebra.

Next prove that $W_\pi$ is simple. Let $\psi$
be an homogeneous element of the centroid $C(W_\pi)$,
and let $\mu$ be its degree. Since 
$\psi$ is injective and since $\psi(L_0)$ commutes with 
$L_0$, we have $\mu=z\rho$ for some $z \in \C$.

Fix any $\lambda\in\Lambda$ with $<\lambda\vert\,\rho>\neq 0$.
Define the function $c:\Lambda\rightarrow \C$ by the
requirement:

\centerline{ $\ad(E_{-\lambda})\ad(E_\lambda) (E_\mu)=c(\mu) E_{\mu}$.}

\noindent Using the facts   that $\psi$
commutes with $\ad(E_{-\lambda})\ad(E_\lambda)$ and that  $\psi$ is
injective, it follows that
$c(\mu+z\rho)=c(\mu)$, $\forall \mu\in\Lambda$ and therefore the
function
$n\in\Z\mapsto c(\mu+nz\rho)$ is constant. However
we have $c(\mu)=<-\lambda+\rho\vert\lambda+\mu+\rho>
<\lambda+\rho\vert\mu+\rho>$, so 
$c(\mu+nz\rho)$ is a degree two polynomial in $n$ with highest term is
$-z^2<\lambda\vert\,\rho>^2 n^2$. Therefore $z=0$, which means that
$C(W_\pi)=\C$. 

It follows from Section \ref{sect_2}, in particular Lemma \ref{lemma_9}, that
$W_\pi$ is simple.
\end{proof}

\section{Tensor products of generalized tensor densities modules}
\label{sect_13}

This section is a review of the results of \cite{KS} and
\cite{IM} which are used later on. Indeed \cite{IM}
contain the whole list of all $W$-equivariant bilinear maps
$\mu: M^1 \times M^2\rightarrow N$ , 
where $M^1$, $M^2$ and $N$ are in a certain class $\mathcal{S}(W)$ (defined below). However 
the classification contains many cases. Here we will only
state the consequences which are of interest for this paper. 

\subsection{The Kaplansky Santharoubane Theorem}\label{sect_13.1}
As before, $W=\Der \C[z,z^{-1}]$ denotes the Witt Lie algebra.
Given a $W$-module $M$, set $M_x=\{m\in M\vert L_0.m=xm\}$.
Let $\mathcal{S}(W)$ be the class of all
$W$-modules such that  there exists 
$s\in\C/\Z$ satisfying the following
conditions:
\begin{enumerate}
\item[(i)] $M=\oplus_{x\in s}\, M_{x}$
\item[(ii)] $\dim M_{x}=1$ for all $x\in s$.
\end{enumerate}
All the tensor densities  modules $\Omega_s^{\delta}$, which have
been defined in Section \ref{sect_12},  are in the class $\mathcal{S}(W)$. Conversely,
the modules in the class $\mathcal{S}(W)$ are called
\textit{generalized tensor densities modules}.
For $(a,\,b)\in\C^2$, define the modules 
$A_{a,b}$ and $B_{a,b}$ as follows:
\begin{enumerate}
\item[(i)] the module $A_{a,b}$ has basis $(u_n)_{n\in \Z}$ and we have
\begin{align*}
&L_m.u_n=(m+n)u_{m+n}\quad  \text{if}~ n\neq 0, \\
&L_m.u_0=(am^2+bm) u_m.
\end{align*}
\item[(ii)] the module $B_{a,b}$ has basis $(v_n)_{n\in \Z}$ and we have
\begin{align*}
&L_m.v_n=nv_{m+n}\quad \text{if}~ m+n\neq 0, \\ 
&L_m.v_{-m}=(am^2+bm) v_0.
\end{align*}
\end{enumerate}
The family of modules $A_{a,b}$ with 
$(a,\,b)\in\C^2$
is called the \textit{$A$-family}. Similarly, the family of all modules 
$B_{a,b}$ is called the \textit{$B$-family}. The union of the two families
is called the \textit{$AB$-family}.

Set $A=\C[z,z^{-1}]$ and
$\overline{A}=A/\C$. 
If $M$ is in $A$-family, then there is
an exact sequence
$0\rightarrow \overline{A}\rightarrow M\rightarrow \C\rightarrow 0$. Similarly, if
$N$ is in $B$-family, then there is
an exact sequence
$0\rightarrow \C\rightarrow N\rightarrow
\overline{A}\rightarrow 0$. 
Since we have 
$\overline{A}\oplus \C \simeq
A_{0,0}\simeq B_{0,0}$, the $W$-module  $\overline{A}\oplus \C$
is in both  families.  Otherwise the modules $A_{a,b}$ and
$B_{a,b}$ are indecomposable. Since $A_{a,b}\simeq A_{xa,xb}$
(respectively $B_{a,b}\simeq B_{xa,xb}$)
for any non-zero scalar $x$, the indecomposable modules of 
the $A$-family (respectively of the $B$-family) are parametrized by 
$\P^1$.

Indeed I. Kaplansky and R. Santharoubane gave a
complete classification of all modules in $\mathcal{S}(W)$.

\textbf{Theorem \cite{KS}:} (Kaplansky-Santharoubane Theorem)  
\textit{
\begin{enumerate}
\item[(i)] Any irreducible module $M\in \mathcal{S}(W)$ is 
isomorphic to $\Omega^\delta_s$ for some
$\delta\in\C$, $s\in \C/\Z$ with the
condition $s\neq 0$ if $\delta=0$ or $\delta =1$. 
\item[(ii)] Any reducible module $M\in\mathcal{S}(W)$ is in the
$AB$-family.
\end{enumerate}
}
The theorem is due to Kaplansky and Santharoubane.
The original paper \cite{KS} is correct, but
the statement contains a little misprint (the indecomposable modules were
classified by the affine line instead of the projective line).
For the $A$-family, 
see \cite{MS} for a correct statement, and in general
see \cite{MP} (see also \cite{Kap}).

\subsection{Degree of modules in $\mathcal{S}(W)$}

The de Rham differential provides a $W$-equivariant map
$\d:\Omega^0_s\rightarrow\Omega^1_s$. For $s\notin\Z$,
the map $\d$ is an isomorphism. Otherwise, the modules
$\Omega^{\delta}_s$ are pairwise non-isomorphic. 
Therefore one can define \textit{the degree} $\deg M$ of any $M\in\mathcal{S}(W)$ as follows:

$\deg M=\{\delta\}$ if $M\simeq \Omega^\delta_s$ for
some $\delta\neq 0,\,1$, and

$\deg M=\{0,\,1\}$ otherwise.

\noindent Note that the degree is a multivalued function. 
By definition,
\textit{a degree} for $M$ is a value $\delta\in\deg M$. Let
$\mathcal{S}^\ast(W)$ be the class of pairs $(M,\delta)$ where
$M\in\mathcal{S}(W)$ and $\delta$ is a degree of $M$.
For example, $(\Omega^0_s,0)$ and $(\Omega^0_s,1)$ belong
to $\mathcal{S}^\ast(W)$. 
 Usually an element of   $(M,\delta)\in\mathcal{S}^\ast(W)$ will be 
simply denoted 
by $M$, and we will set
$\deg M=\delta$. So the degree is an ordinary function 
on $\mathcal{S}^\ast(W)$.

\subsection{Degree of bilinear maps in $\mathcal{S}(W)$}

Let $\mathcal{B}(W)$ (respectively 
$\mathcal{B}^\ast(W)$) 
be the set of all $W$-equivariant bilinear
maps $\mu: M^1 \times M^2\rightarrow N$ , 
where $M^1$, $M^2$ and $N$ are
in $\mathcal{S}(W)$ (respectively in $\mathcal{S}^\ast(W)$). 
Let $\mu\in \mathcal{B}(W)$ (respectively $\mu\in \mathcal{B}^\ast(W)$).
By definition the \textit{degree} of $\mu$ is the set
(respectively the number) $\deg\,\mu=\deg N-\deg M^1-\deg M^2$. 
As before, the degree is a multivalued map on 
$\mathcal{B}(W)$ and an ordinary map on
$\mathcal{B}^\ast(W)$.

\bigskip 
Let now $\mu:M^1 \times M^2\rightarrow N$ be a
non-zero bilinear map, where $M^1$, $M^2$, $N$ are in
$\mathcal{S}^\ast(W)$. Set $\delta_1=\deg M^1$,
$\delta_2=\deg M^2$ and $\gamma=\deg N$.

\begin{lemma}[\cite{IM}]\label{lemma_50}
Let $\mu$, $\delta_1,\,\delta_2$ and $\gamma$ as before. 
Then $\deg \mu\in [-2,3]$. Moreover the possible
triples $(\delta_1,\delta_2,\gamma)$ are the following:
\begin{enumerate}
\item[(i)] if $\deg\mu=3$: only $(-2/3,-2/3,5/3)$,
$(0,0,3)$, $(0,-2,1)$ or $(-2,0,1)$,
\item[(ii)]  if $\deg\mu=2$: any triple $(0,\delta,\delta+2)$,
$(\delta,\,0,\,\delta+2)$ or $(\delta,\,-1-\delta,\,1)$,
\item[(iii)]  if $\deg\mu=1$: any triple
$(\delta_1,\delta_2,\delta_1+\delta_2+1)$,
\item[(iv)]   if $\deg\mu=0$: any triple
$(\delta_1,\delta_2,\delta_1+\delta_2)$,
\item[(v)]  if $\deg\mu=-1$: any triple $(1,\delta,\delta)$,
$(\delta,1,\delta)$ or $(\delta,1-\delta,0)$,
\item[(vi)] if  $\deg\mu=-2$: only $(1,1,0)$.
\end{enumerate}
\end{lemma}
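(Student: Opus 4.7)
The plan is to reduce the classification to an explicit computation on density modules and then extract the admissible degrees from the resulting recursion. By the Kaplansky--Santharoubane Theorem reviewed in Section \ref{sect_13.1}, every module in $\mathcal{S}(W)$ is either an irreducible tensor density $\Omega^\delta_s$ or an indecomposable module in the $AB$-family; the latter sits in a short exact sequence with $\overline{A}$ and $\C$, which themselves carry natural degrees $0$ or $1$. This means that, up to extensions, the question reduces to computing $W$-equivariant bilinear maps between irreducible density modules, with the extra degree ambiguity for reducible modules accounting for the cases where $\delta\in\{0,1\}$ appears specially in the statement.

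Next, I would work in weight bases $(u^{(i)}_x)$ and write $\mu(u^{(1)}_x, u^{(2)}_y) = c(x,y)\, u^{(N)}_{x+y}$ for some function $c$. The $W$-equivariance condition applied to $L_n$ then translates into the functional equation
\begin{equation*}
(x+y+n\gamma)\, c(x,y) \;=\; (x+n\delta_1)\, c(x+n,y) \,+\, (y+n\delta_2)\, c(x,y+n),\qquad n\in\Z,
\end{equation*}
where $\delta_i=\deg M^i$ and $\gamma=\deg N$. Comparing the leading behaviour of both sides as $n\to\infty$, a polynomial solution of total degree $d$ forces $\deg \mu=\gamma-\delta_1-\delta_2$ to lie in a bounded range. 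Two universal solutions are visible immediately: $c\equiv 1$ realises the commutative product of symbols and yields all triples of degree $0$ (item (iv)), while $c(x,y)=\delta_1 y-\delta_2 x$ realises the Poisson bracket $B^{\delta_1,\delta_2}_{s_1,s_2}$ from Section \ref{sect_12.5} and yields all triples of degree $1$ (item (iii)).

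The remaining items correspond to sporadic solutions of the recursion at $\deg\mu\in\{-2,-1,2,3\}$, and I would treat these by case analysis on $\deg\mu$. The degree $2$ triples $(0,\delta,\delta+2)$ and $(\delta,0,\delta+2)$ come from composing the de Rham differential $\d:\Omega^0_s\to\Omega^1_s$ with the Poisson bracket, and $(\delta,-1-\delta,1)$ comes from the Lie-derivative pairing obtained via the extension $\mathcal{E}$ of Lemma \ref{lemma_48}. The degree $-1$ triples in item (v) are dual, using the integration pairing $\Omega^\delta\otimes\Omega^{1-\delta}\to\C\hookrightarrow\Omega^0$. The degree $-2$ triple $(1,1,0)$ is a single exceptional cocycle, while the degree $3$ triples in item (i) include the celebrated Grozman bracket on $\Omega^{-2/3}$, a genuinely exotic solution $(-2/3,-2/3,5/3)$ which has no counterpart in the Poisson or de Rham picture.

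The main obstacle will be \emph{completeness}: proving that at each integer value $\deg\mu\in\{-2,2,3\}$ the recursion admits a non-trivial polynomial solution only for the listed parameter triples, and that no solutions exist for $|\deg\mu|\geq 4$. This reduces to an intricate but finite case analysis on the functional equation for $c$, supplemented by auxiliary boundary conditions coming from the short exact sequences $0\to\overline{A}\to M\to\C\to 0$ when $M^1$, $M^2$ or $N$ is reducible. This is precisely the technical content of the authors' earlier paper \cite{IM}; the present Lemma \ref{lemma_50} extracts only the coarse output, namely the list of admissible degree patterns.
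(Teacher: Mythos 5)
The paper does not actually prove this statement: Lemma \ref{lemma_50} is imported verbatim from the companion paper \cite{IM} (the proof in the text is the single line ``see \cite{IM}, Corollary 3 in Section 11''). So there is no internal argument to compare yours against; the comparison is with the external reference. Your outline is the right one and is essentially the Grozman-type strategy that \cite{IM} globalizes: write $\mu(u^{(1)}_x,u^{(2)}_y)=c(x,y)u^{(N)}_{x+y}$, derive the functional equation $(x+y+n\gamma)c(x,y)=(x+n\delta_1)c(x+n,y)+(y+n\delta_2)c(x,y+n)$ from $W$-equivariance, recognize $c\equiv 1$ and the linear antisymmetric solution as the product and Poisson bracket of symbols (items (iii)--(iv)), and identify the sporadic low/high-order solutions, including the Grozman operator at $(-2/3,-2/3,5/3)$ and the degenerate maps attached to the reducible modules of degree $\{0,1\}$.

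However, as a proof the proposal has a genuine gap exactly where all the work lies, and you should be explicit that you are not closing it. First, your bound on $\deg\mu$ presupposes that $c$ is polynomial (``a polynomial solution of total degree $d$\,''); in the global setting of $\mathcal{S}(W)$ the map $\mu$ is \emph{not} assumed to be a bidifferential operator, so polynomiality of $c$ (or the classification of the non-polynomial solutions, which is where the degenerate maps of Lemma \ref{lemma_52} and the negative-degree items (v)--(vi) live) must itself be extracted from the functional equation --- this is precisely what distinguishes the ``global version'' of Grozman's theorem from the classical local one. Second, the completeness of the list at $\deg\mu\in\{-2,-1,2,3\}$ and the nonexistence for $|\deg\mu|\geq 4$ are asserted but not argued. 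Since the paper itself outsources both points to \cite{IM}, your proposal is an accurate reading of what the lemma says and where it comes from, but it is a citation with motivation rather than an independent proof.
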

For the proof, see \cite{IM}, Corollary 3 in Section 11.

\subsection{Bilinear maps of degree $1$}
Recall from
Section \ref{sect_12.5} that the Poisson bracket of symbols induces
a $W$-equivariant bilinear map:

\centerline{$B^{\delta,\delta'}_{u,u'}:
\Omega^{\delta}_u\times\Omega^{\delta'}_{u'}
\rightarrow\Omega^{\delta+\delta'+1}_{u+u'}$,}

\noindent for any $(\delta,u),\,(\delta',u)\in P$.

\begin{lemma}\label{lemma_51} Let  $\mu:\Omega^{\delta}_u\times
\Omega^{\delta'}_{u'}
\rightarrow\Omega^{\delta+\delta'+1}_{u+u'}$ be a $W$-equivariant map.
\begin{enumerate}
\item[(i)] If $(\delta,\delta')\neq (0,0)$, then $\mu$ is
proportional to $B^{\delta,\delta'}_{u,u'}$.
\item[(ii)] If $(\delta,\delta')= (0,0)$, there exist two scalars
$a$,$b$ such that 

\centerline{$\mu(f, g)=af\d g + b g\d f$,
for any $(f,g)\in \Omega^{0}_u\times
\Omega^{0}_{u'}$,}

\noindent 
where $\d$ is the de Rham differential.
\end{enumerate}
\end{lemma}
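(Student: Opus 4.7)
The plan is to translate $W$-equivariance into a scalar functional equation, exhibit the explicit families, and then count dimensions. Fix the bases $(u^{\delta}_{x})_{x\in u}$ from Section~\ref{sect_12}: since $L_0$ acts on $u^{\delta}_x$ by the scalar $x$, any $W$-equivariant $\mu$ of the shape in the lemma is forced by $L_0$-equivariance to satisfy
\[
\mu(u^{\delta}_x,\, u^{\delta'}_y) \;=\; c(x,y)\, u^{\delta+\delta'+1}_{x+y}
\]
for a function $c: u\times u'\to\C$. Writing out $L_n$-equivariance for every $n\in\Z$ then collapses to the single recurrence
\[
c(x,y)\bigl(x+y+n(\delta+\delta'+1)\bigr) \;=\; (x+n\delta)\, c(x+n,y)\;+\;(y+n\delta')\, c(x,y+n). \qquad(\star)
\]

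Next I would exhibit explicit solutions of $(\star)$. Using $u^{\delta}_x = E_{x-\delta-1,\,-\delta-1}$ together with the Poisson formula $\{E_\lambda,E_\mu\}=\langle\lambda+\rho\vert\mu+\rho\rangle E_{\lambda+\mu}$ of Section~\ref{sect_12}, a direct computation gives $B^{\delta,\delta'}_{u,u'}(u^{\delta}_x, u^{\delta'}_y) = (\delta y-\delta' x)\, u^{\delta+\delta'+1}_{x+y}$, so $c(x,y)=\delta y-\delta' x$ solves $(\star)$ (which also verifies $(\star)$ in passing). In the degenerate case $(\delta,\delta')=(0,0)$ the recurrence simplifies to $c(x,y)(x+y+n)=x\,c(x+n,y)+y\,c(x,y+n)$, and a substitution shows $c(x,y)=ay+bx$ is a solution for every $a,b\in\C$. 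Under the identifications $d(u^{0}_x)=x\,u^{1}_x$ (de Rham differential on $\Omega^{0}_u$) and the commutative product $u^{0}_x\cdot u^{1}_y = u^{1}_{x+y}$ of Section~\ref{sect_12.5}, this two-parameter family matches $\mu(f,g)=a\,f\,dg+b\,g\,df$, giving the right-hand side in (ii).

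The hard part is uniqueness. The equation $(\star)$ at $n=1$ expresses $c(x+1,y)$ as a linear combination of $c(x,y)$ and $c(x,y+1)$ whenever $x+\delta\neq 0$, so iterated use propagates any solution from the values $\varphi(y):=c(x_0,y)$ along a single column $\{x_0\}\times u'$ to all of $u\times u'$. Re-inserting the resulting expressions into $(\star)$ for general $n$ yields a one-variable consistency recurrence on $\varphi$; analysis shows that its solution space has dimension exactly $1$ when $(\delta,\delta')\neq(0,0)$ and dimension exactly $2$ when $(\delta,\delta')=(0,0)$, matching the explicit families built above. The main obstacle is precisely this dimension count: one must carefully handle the finitely many exceptional values where $x+n\delta$ or $y+n\delta'$ vanishes (it is exactly at $(\delta,\delta')=(0,0)$ that these vanishings occur for all $n$ simultaneously, and that an extra parameter survives). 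Alternatively, one may simply appeal to the classification of $W$-equivariant bilinear maps from \cite{IM} that is already invoked for Lemma~\ref{lemma_50}, which yields the same dimension count directly.
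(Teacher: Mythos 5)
Your fallback option --- invoking the classification of $W$-equivariant bilinear maps from \cite{IM} --- is exactly what the paper does: its entire proof consists of citing Corollary 4 of \cite{IM} to rule out degenerate maps and then Theorems 3.1 and 3.2 of \cite{IM} for the classification. If you take that branch, your argument coincides with the paper's. Your primary, self-contained route is set up correctly: the reduction via $L_0$-equivariance to a scalar function $c(x,y)$, the functional equation $(\star)$, the computation $B^{\delta,\delta'}_{u,u'}(u^\delta_x,u^{\delta'}_y)=(\delta y-\delta' x)\,u^{\delta+\delta'+1}_{x+y}$, and the identification of $c(x,y)=ay+bx$ with $af\,\d g+bg\,\d f$ in the case $(\delta,\delta')=(0,0)$ all check out against the formulas of Section \ref{sect_12}.

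However, in that self-contained route the uniqueness step --- which is the entire content of the lemma --- is not actually carried out. You assert that ``analysis shows'' the solution space of $(\star)$ has dimension $1$ when $(\delta,\delta')\neq(0,0)$ and $2$ when $(\delta,\delta')=(0,0)$, but this is precisely the hard part: the propagation argument from a single column only gives an upper bound modulo a careful treatment of the exceptional points where $x+n\delta=0$ or $y+n\delta'=0$ (at which the recursion fails to determine the next value and one must use other instances of $(\star)$ to pin it down), and the ``one-variable consistency recurrence'' on $\varphi$ is never written down or solved. This is a genuine gap, not a routine verification --- it is the substance of Grozman-type classification theorems, and \cite{IM} is devoted to it. A quick sanity check you could have included, and which at least bounds the polynomial solutions: substituting $c(x,y)=\alpha x+\beta y+\epsilon$ into $(\star)$ forces $\epsilon=0$ and $\alpha\delta+\beta\delta'=0$, which gives the dimension count $1$ versus $2$ \emph{within} the space of affine solutions; but you would still need to show that every solution of $(\star)$ is affine, and that is not done. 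So either complete that analysis or commit to the citation of \cite{IM}; as written, the first branch is not a proof.
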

\begin{proof}
By Corollary 4 of \cite{IM}, there are no degenerate maps $\pi:
\Omega^{\delta}_u\times\Omega^{\delta'}_{u'}
\rightarrow\Omega^{\delta+\delta'+1}_{u+u'}$.  Thus the lemma follows from Theorems 3.1 and 3.2 of \cite{IM}.
\end{proof}

\subsection{Degenerated bilinear maps with value in $W$}

Let $\mu:M^1 \times M^2\rightarrow N$ be a bilinear map
in the class $\mathcal{B}(W)$. The bilinear map
$\mu$ is called \textit{non-degenerated} if the set of all
$(x,y)$ such that $\mu(M_x\times N_y)\neq 0$ is Zarisky dense in
$\C^2$. Otherwise, it is called \textit{degenerated}.

\begin{lemma}\label{lemma_52} Let
$\mu:M^1 \times M^2\rightarrow W$ be a
non-zero degenerated bilinear map in $\mathcal{B}(W)$.
Then one of the following assertion holds:
\begin{enumerate}
\item[(i)] There is a non-zero morphism $\phi: M^1 \rightarrow \C$ of $W$-modules and
an isomorphism $\psi: M^2\rightarrow W$ of $W$-modules such that
$\mu(m_1, m_2)=  \phi(m_1)\psi(m_2)$ for all 
$m_1\in M^1,\,  m_2\in M^2$.
\item[(ii)] There is an isomorphism $\psi: M^1\rightarrow W$ of $W$-modules
and  a non-zero morphism
$\phi: M^2 \rightarrow \C$ of $W$-modules such that
$\mu(m_1, m_2)=  \phi(m_2)\psi(m_1)$ for all 
$m_1\in M^1,\,  m_2\in M^2$. 
\end{enumerate}
\end{lemma}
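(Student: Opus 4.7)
The plan is to combine the Kaplansky--Santharoubane classification (Section \ref{sect_13.1}) with a direct analysis of $W$-equivariance for bilinear maps into $W \cong \Omega^{-1}_0$.

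I first fix eigenbases $(u_x)_{x \in s_1}$ of $M^1$ and $(v_y)_{y \in s_2}$ of $M^2$ and write $\mu(u_x, v_y) = c(x, y)\,L_{x+y}$, with $c(x, y) = 0$ unless $x + y \in \Z$; in particular $\mu \neq 0$ forces $s_1 + s_2 \equiv 0 \pmod{\Z}$. The $W$-equivariance translates into the recurrence
\[
(x+y-m)\,c(x,y) \;=\; a_1(m,x)\,c(x+m,y) \;+\; a_2(m,y)\,c(x,y+m),
\]
where $a_i(m,x)$ is the coefficient by which $L_m$ sends the $x$-weight basis vector of $M^i$ to the $(x+m)$-weight one. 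Setting $\tilde c_n(x) := c(x, n-x)$, this shows in particular that $\tilde c_{n_0} \equiv 0$ for a single $n_0 \in \Z$ already forces $\tilde c_n \equiv 0$ for every $n \neq n_0/2$.

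Next I introduce the $W$-submodules $K^1 = \{m_1 \in M^1 : \mu(m_1, M^2) = 0\}$ and $K^2 = \{m_2 \in M^2 : \mu(M^1, m_2) = 0\}$. The key claim is that degeneracy of $\mu$ forces, up to interchanging the two factors, $M^1/K^1 \cong \C$. Indeed, by Kaplansky--Santharoubane the only proper nonzero submodules occurring inside objects of $\mathcal{S}(W)$ are $\C$ and $\overline{A}$, so the only candidate nontrivial quotients of $M^1$ are $\C$ or $\overline{A}$; in the case $M^1/K^1 \cong \overline{A}$ (which corresponds to $K^1 = \C$ inside some $B_{a,b}$ or $\overline{A} \oplus \C$) the support of $c$ omits only the single line $\{x = 0\}$ from $s_1 \times s_2 \cap \{x+y \in \Z\}$ and thus remains Zariski dense in $\C^2$, while the case $K^1 = 0$ (with both factors ``faithful'') is handled by showing, using the recurrence, that the polynomial shape of the universal cocycle describing $\mu$ forces a Zariski-dense support. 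Therefore degeneracy must come from $M^1/K^1 \cong \C$ (or symmetrically $M^2/K^2 \cong \C$).

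Granted that, let $\phi : M^1 \to \C$ be the quotient projection; then $\mu$ descends to a nonzero $W$-morphism $\bar\mu : M^2 \cong \C \otimes M^2 \to W$ with $\mu(m_1, m_2) = \phi(m_1)\,\bar\mu(m_2)$. A short computation shows that $\Hom_W(M, W) = 0$ for every $M \in \mathcal{S}(W)$ not isomorphic to $W$: a candidate morphism $f : \overline{A} \to W$ must be of the form $f(e_n) = \alpha_n L_n$ by weight considerations, and applying $f$ to the identity $L_{-n} e_n = 0$ in $\overline{A}$ yields $0 = 2n\alpha_n L_0$, forcing $\alpha_n = 0$; while $\Hom_W(\C, W) = 0$ because $W$ has trivial center. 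Every reducible $AB$-family module has composition factors in $\{\C, \overline{A}\}$ and hence admits no nonzero map to $W$ either. Consequently $M^2 \cong W$ and $\bar\mu =: \psi$ is an isomorphism, yielding case (i); the symmetric situation gives case (ii). The main obstacle is the key claim in the third paragraph: translating the geometric notion of non-Zariski-dense support into the precise module-theoretic conclusion $M^1/K^1 \cong \C$, and especially ruling out degenerate behaviour when $K^1 = K^2 = 0$. This can either be done by a careful direct analysis of how the recurrence propagates non-vanishing along the orbits of $x \mapsto x + m$, or by invoking the detailed classification of \cite{IM} to enumerate all candidates and verify that only the two families in the lemma admit degenerate support.
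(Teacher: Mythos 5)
Your reduction is sound in outline, and your final step is correct and self-contained: the computation showing $\Hom_W(M,W)=0$ for $M\not\cong W$ (via $L_{-n}e_n=0$ in $\overline{A}$, the triviality of the center of $W$, and the irreducibility of the adjoint module) does correctly convert ``$\mu$ factors through a one-dimensional quotient of $M^1$'' into assertion (i). The recurrence observation that vanishing of $c$ on one full line $x+y=n_0$ forces vanishing on every line except possibly $x+y=n_0/2$ is also correct and is a genuinely useful structural fact.

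The gap is the key claim itself: that a nonzero degenerate $\mu$ must satisfy $M^1/K^1\cong\C$ or $M^2/K^2\cong\C$. This is exactly the hard content of the lemma, and it is not proved. Two concrete problems. First, in the case $M^1/K^1\cong\overline{A}$ you assert that the support ``omits only the single line $\{x=0\}$ and thus remains Zariski dense''; but $K^1\cong\C$ only tells you that the line $x=0$ is \emph{absent} from the support --- it gives no lower bound on the support elsewhere, so this case silently reuses the unproven assertion that a map with trivial one-sided kernels has dense support. Second, for the case $K^1=K^2=0$ you appeal to ``the polynomial shape of the universal cocycle'', which is precisely the case-by-case analysis over the Kaplansky--Santharoubane list (including the exceptional behaviour of the $AB$-family at weight $0$) that would constitute the proof; deferring it to ``a careful direct analysis \dots or invoking \cite{IM}'' leaves the lemma unestablished. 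For what it is worth, the paper does not prove this either: its proof of this lemma consists of citing Corollary 2 and Lemma 13 of \cite{IM}. So your fallback of citing \cite{IM} is consistent with the paper's own route, but then your intermediate kernel-quotient dichotomy must be matched against what \cite{IM} actually supplies (the paper quotes \cite{IM} as giving ``one of $M^1$, $M^2$ is reducible'' plus the factorization, not your dichotomy directly), and the elaborate self-contained setup becomes largely moot.
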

\begin{proof}
By Corollary 2 of \cite{IM}, either $M^1$ or $M^2$ is reducible. So, assume that $M^1$ is reducible. Then, Assertion (i) follows from Lemma 13 of \cite{IM}.
\end{proof}

\subsection{Non-degenerated bilinear maps with value in $W$}
Two families of $W$-equivariant
bilinear maps $\mu: M\times N\rightarrow W$ will be defined. 
Since $W\simeq \Omega^{-1}_0$, the product of symbols 
$\pi_s^\delta: \Omega^{\delta}_s\times \Omega^{-1-\delta}_{-s}\rightarrow
W$   and
the Poisson bracket of symbols
$\beta_s^\delta:  \Omega^{\delta}_s\times
\Omega^{-2-\delta}_{-s}\rightarrow W$ are $W$-equivariant bilinear maps.
With the notations of Section \ref{sect_12.5}, we have
 $\pi_s^\delta=P^{\delta,-1-\delta}_{s,-s}$
and $\beta_s^\delta=B^{\delta,-1-\delta}_{s,-s}$.

The \textit{ordinary family} is the set of all these maps
$\pi_s^\delta$ and  $\beta_s^\delta$.

The second family is called the  complementary family.
Its definition requires the
following obvious Lemma.

\begin{lemma}\label{lemma_53} The indecomposable modules of the $A$-family are
exactly the codimension one submodules of $\mathcal{E}$.
\end{lemma}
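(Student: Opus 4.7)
The plan is to parametrize both sides by $\P^1$ in compatible ways and then verify that the comparison map is a bijection.

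First I would determine the codimension one $W$-submodules of $\mathcal{E}$. By Lemma \ref{lemma_48} there is an exact sequence $0 \to \overline{A} \to \mathcal{E} \to \C^2 \to 0$ in which the quotient is trivial as a $W$-module and $\overline{A}$ is irreducible. Hence any codimension one $W$-submodule $M \subset \mathcal{E}$ must contain $\overline{A}$: otherwise $M \cap \overline{A}$ would be a proper submodule of the irreducible $\overline{A}$, hence zero, and $M$ would embed into $\C^2$, contradicting $\dim M = \infty$. Thus the codimension one submodules are in bijection with lines in $\C^2$, i.e.\ with points of $\P^1$; concretely, writing $v_{a,b} = a \log z + b \log \partial$, they are the modules $M_{(a,b)} := \overline{A} \oplus \C v_{a,b}$ with $(a,b) \in \C^2 \setminus \{0\}$, depending only on $[a:b]$.

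Next I would produce an explicit isomorphism $M_{(a,b)} \simeq A_{-b,\,a-b}$. Using the formulas of Section \ref{sect_12.6}, one computes
\[ L_n v_{a,b} = (a - b(n+1))\, e_n \quad \text{for } n \neq 0, \qquad L_0 v_{a,b} = 0. \]
The linear map $\phi : A_{a',b'} \to M_{(a,b)}$ defined by $\phi(u_n) = e_n / n$ for $n \neq 0$ and $\phi(u_0) = v_{a,b}$ automatically intertwines the $W$-action on the submodule $\bigoplus_{n \neq 0} \C u_n$ (which is isomorphic to $\overline{A}$), and the compatibility $\phi(L_m u_0) = L_m \phi(u_0)$ for $m \neq 0$ reduces to $a' m + b' = (a - b) - b m$, forcing $a' = -b$ and $b' = a - b$.

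Finally I would observe that the linear map $(a,b) \mapsto (-b,\,a-b)$ has determinant $1$, hence induces a bijection on $\P^1$. Combined with the Kaplansky--Santharoubane classification (Section \ref{sect_13.1}), which parametrizes the indecomposable $A$-family modules by $\P^1$ via $A_{a',b'} \leftrightarrow [a':b']$ (the excluded case $(a',b')=(0,0)$, which gives a split module, corresponds to the excluded $(a,b)=(0,0)$ under the automorphism above), this yields the asserted bijection. The argument is essentially a bookkeeping exercise matching the extension classes of $\C$ by $\overline{A}$ on both sides; no step presents a serious obstacle once Lemma \ref{lemma_48} and the classification of Section \ref{sect_13.1} are in hand.
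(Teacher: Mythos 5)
Your proposal is correct and takes essentially the same route as the paper, which disposes of this lemma by asserting that it ``follows easily from the structure constants'' of $\mathcal{E}$ given in Section \ref{sect_12.6} and of the modules $A_{a,b}$ given in Section \ref{sect_13.1}; you have simply carried out that comparison explicitly. Your computations check out: every codimension one submodule contains $\overline{A}$ by irreducibility, the resulting modules are $\overline{A}\oplus\C(a\log z+b\log\partial)\simeq A_{-b,\,a-b}$, and the invertibility of $(a,b)\mapsto(-b,\,a-b)$ gives the bijection with the indecomposable members of the $A$-family.
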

The lemma follows easily from the structure  constant 
of module $\mathcal{E}$ given in Section \ref{sect_12.6} and the 
the structure  constant of the modules $A_{a,b}$ 
given in Section \ref{sect_13.1}.

The  left complementary family consists of   bilinear maps
$\mu_M^l: M\times \Omega^{-2}_0\rightarrow W$,
where $M$ belongs to the
$AB$-family,  which are defined as follows.

If $M$ is an indecomposable module in  the $A$-family, the Lemma \ref{lemma_53} 
provides an  injection $j:M\rightarrow\mathcal{E}$. Otherwise $M$ contains
a trivial submodule and
$M/\C\simeq \overline{A}$. Thus there is a natural
map $j:M\rightarrow\mathcal{E}$ whose image is
$\overline{A}$.

Recall that 
$\ad(\mathcal{E})(\Omega^{\delta}_0)\subset\Omega^{\delta+1}_0$ 
and that $\Omega^{-1}_0\simeq W$. 
So define  $\mu_M^l:M\times \Omega^{-2}_0\rightarrow W$  by
$\mu^l_M(m,\omega)=\ad(j(m))(\omega)$. This family of maps
$\mu_M^l$ is called the \textit{left complementary family}.
The \textit{right complementary family}
is the family of maps $\mu^r_M: \Omega^{-2}_0\times M\rightarrow W$ 
obtained by exchanging the two factors. The \textit{complementary family}
is the union of the left and of the right complementary families.

It should be noted that these definitions contain the following
ambiguities.  First, when $s\neq 0$ mod $\Z$,
the de Rham operator $\d$ is an isomorphism
$\d: \Omega^0_s\rightarrow \Omega^1_s$, which provides the following
commutative diagram: 

\[ \UseTips
\newdir{ >}{!/-5pt/\dir{>}}
\xymatrix @=1pc @*[r]
{
    \Omega_s^0 \times \Omega_{-s}^{-2} \ar[rrr]^{\phantom{abcdefg}\beta_s^0} 
    \ar[dd]^{d \times \id}
    &&& \Omega_0^{-1}  \ar[dd]^{-\frac{1}{2}} \\
    &&& \\
    \Omega_s^1 \times \Omega_{-s}^{-2} \ar[rrr]^{\phantom{abcdef}\pi_s^1} &&& 
    \Omega_0^{-1} \\
} \]

So, up to conjugation, we have
$\beta^0_s=\pi^1_s$ and $\beta^{-2}_s=\pi^{-2}_s$.
Next, the maps 
 $\pi^1_0: \Omega^{1}_0\times \Omega^{-2}_{0}\rightarrow W$ and
$\beta^0_0: \Omega^{0}_0\times \Omega^{-2}_{0}\rightarrow W$
are in the left complementary family. Similarly,
$\beta^{-2}_0$ and $\pi^{-2}_0$ are in the right complementary family.

\begin{lemma}\label{lemma_54} Any  $W$-equivariant non-degenerated bilinear map
$\mu:M\times N\rightarrow W$ belongs to the ordinary or to the
complementary family.
More precisely, $\mu$ is conjugated to one of the following map:
\begin{enumerate}
\item[(i)] the map $\pi^\delta_s$ for some $\delta\neq 1,\,-2$,
\item[(ii)] the map $\beta^\delta_s$, for 
$(\delta,s)\neq (0,0),\,(-2,0)$,
\item[(iii)] or to a map of the complementary family. 
\end{enumerate}
\end{lemma}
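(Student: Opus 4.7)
The plan is to combine the degree-counting Lemma 50 with the uniqueness statements of Lemma 51 and the classification of bilinear maps in \cite{IM} to identify $\mu$ explicitly in each admissible case. Since $W \simeq \Omega^{-1}_0$ has unambiguous degree $-1$, I would first substitute $\gamma = -1$ in Lemma 50 to enumerate the admissible pairs $(\delta_1,\delta_2)$. After eliminating cases forced to have $\gamma \neq -1$, what remains is: $\deg\mu = 2$ with $(\delta_1,\delta_2) \in \{(0,-3),(-3,0)\}$; $\deg\mu = 1$ with $\delta_1+\delta_2 = -2$; $\deg\mu = 0$ with $\delta_1+\delta_2 = -1$; and $\deg\mu = -1$ with $(\delta_1,\delta_2) \in \{(1,-1),(-1,1)\}$. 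For $s \neq 0 \bmod \Z$, the de Rham isomorphism $\d\colon \Omega^0_s \to \Omega^1_s$ lets me reassign the degree of an affected factor between $0$ and $1$, and this collapses the exceptional $\deg\mu = 2$ and $\deg\mu = -1$ cases into the two principal cases $\deg\mu = 1$ and $\deg\mu = 0$; the commutative diagram in the statement records exactly this reconciliation.

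For the irreducible case, Lemma 51(i) applies directly when $\deg\mu = 1$ (the hypothesis $(\delta_1,\delta_2) \neq (0,0)$ holds automatically since $\delta_1+\delta_2 = -2$), yielding $\mu$ proportional to $\beta^{\delta_1}_{s_1}$. In the $\deg\mu = 0$ case, the parallel uniqueness statement for the commutative product of symbols, which is part of Theorems 3.1--3.2 of \cite{IM}, identifies $\mu$ with a scalar multiple of $\pi^{\delta_1}_{s_1}$. The excluded parameters $\delta = 1, -2$ for $\pi$ and $(\delta,s) = (0,0), (-2,0)$ for $\beta$ are precisely those for which the map either vanishes or fails to be non-degenerated, so I would recover items (i) and (ii) of the conclusion.

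The main obstacle is the reducible case, where one of $M$, $N$ lies in the $AB$-family and the tensor-density rigidity used above no longer applies. For a non-degenerated $\mu$, I would invoke the full classification of \cite{IM} to show that when $M$ is in the $AB$-family, the second factor is forced to be $\Omega^{-2}_0$. The two key ingredients are Lemma 53, which realizes each indecomposable $A$-family module as a codimension-one submodule of $\mathcal{E}$ (with the $B$-family modules appearing as preimages of $\overline{A} \subset \mathcal{E}$), and Lemma 48(ii), which asserts $\mathcal{E} \cdot \Omega^{-2}_0 \subset \Omega^{-1}_0 = W$. Together these force any non-zero $W$-equivariant bilinear map $M \times \Omega^{-2}_0 \to W$ to factor through the canonical map $j \colon M \to \mathcal{E}$ and hence to be a scalar multiple of $\mu_M^l$; the symmetric argument in the second variable yields the right complementary family, giving (iii). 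The redundancies noted just before the statement---the conjugations $\beta^0_s \sim \pi^1_s$, $\beta^{-2}_s \sim \pi^{-2}_s$ via de Rham, together with the overlaps of $\pi^1_0, \beta^0_0$ with the left complementary family and $\pi^{-2}_0, \beta^{-2}_0$ with the right one---account for the exclusion conditions in (i), (ii) so that each conjugacy class is listed exactly once.
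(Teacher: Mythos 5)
The paper does not actually prove this lemma: its entire ``proof'' is the citation ``See Theorem 3.1 or Table 3 in [IM]''. Your proposal therefore takes a genuinely different (and more informative) route, reconstructing the statement from Lemma \ref{lemma_50} by substituting $\gamma=-1$ and then identifying the surviving cases. Your degree bookkeeping is correct: the $\deg\mu=3$ and $\deg\mu=-2$ rows are eliminated, the exceptional rows $\deg\mu=2$, $(\delta_1,\delta_2)\in\{(0,-3),(-3,0)\}$ and $\deg\mu=-1$, $(\delta_1,\delta_2)\in\{(1,-1),(-1,1)\}$ do collapse into the two principal rows after a de Rham reassignment of degree, and Lemma \ref{lemma_51}(i) does pin down the $\deg\mu=1$ irreducible case since $\delta_1+\delta_2=-2$ rules out $(0,0)$. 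This is a useful skeleton that the paper omits entirely.

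Two points need repair. First, and more seriously, your reducible case is asserted rather than proved: Lemma \ref{lemma_53} and Lemma \ref{lemma_48}(ii) only \emph{construct} the complementary-family maps $\mu^l_M$; they do not show that every non-degenerated $W$-equivariant map $M\times N\rightarrow W$ with $M$ in the $AB$-family has $N\simeq\Omega^{-2}_0$ and factors through $j:M\rightarrow\mathcal{E}$. That exhaustiveness statement is exactly the content of Table 3 of [IM] (equivalently, a computation that $\dim\Hom_W(M\otimes\Omega^{-2}_0,W)=1$ and that no other second factor works), so your ``together these force'' leaves the hard step to the same external reference the paper leans on; you cannot claim it follows from Lemmas \ref{lemma_48} and \ref{lemma_53} alone. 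The same remark applies to the uniqueness of the degree-$0$ map in the irreducible case, for which the paper supplies no analogue of Lemma \ref{lemma_51}. Second, your sentence asserting that the excluded parameters $\delta=1,-2$ for $\pi$ and $(\delta,s)=(0,0),(-2,0)$ for $\beta$ are ``precisely those for which the map either vanishes or fails to be non-degenerated'' is false: $\pi^1_s$ for $s\neq 0$ and $\beta^0_0$ are non-zero and non-degenerated (only $B^{0,0}_{s,s'}$ vanishes, and $\pi^\delta_s$ never does). The exclusions exist solely to avoid listing each conjugacy class twice, via $\beta^0_s=\pi^1_s$, $\beta^{-2}_s=\pi^{-2}_s$ and the absorption of $\pi^1_0,\beta^0_0,\pi^{-2}_0,\beta^{-2}_0$ into the complementary family --- which is what your final sentence correctly says, in contradiction with the earlier one.
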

\begin{proof}
See Theorem 3.1. or Table 3 in \cite{IM}. 
\end{proof}

\section{The Main Lemma (non-integrable case)}\label{sect_14}

Let $\mathcal{L}\in\mathcal{G}$ be a non-integrable  Lie algebra.
Recall that $\Sigma$ is the set of all
$\lambda\in \Lambda$ such that
$\mathcal{L}_\lambda\oplus\mathcal{L}_0\oplus\mathcal{L}_{-\lambda}$ is
isomorphic to $\mathfrak{sl}(2)$.

The aim of this section is the proof of the Main Lemma 
in the non-integrable case, Lemma \ref{lemma_62}. A similar statement had 
been already proved for integrable Lie algebras of type I (see Lemma \ref{lemma_33}) and of type II (see Lemma \ref{lemma_25}). However the proof in 
the three cases are quite different.

\begin{lemma}\label{lemma_55} 
Let $M,\,N\in \mathcal{S}(W)$ and let 
$\pi:M\times N\rightarrow W$ be a  non-zero 
$W$-equivariant bilinear  map.  Then one of the
following three assertions holds:
\begin{enumerate}
\item[(i)] $\pi(M_x\times N_{-x})\neq 0$ for all
$x\in \Supp M$ with $x\neq 0$.
\item[(ii)] There is a surjective morphism $\phi: M \rightarrow \C$ and
an isomorphism $\psi: N\rightarrow W$ such that
$\pi(m,n)=  \phi(m)\psi(n)$ for all $m\in M,\, n\in N$.
\item[(iii)] There is an isomorphism $\phi: M\rightarrow W$ and a surjective
morphism $\psi: N\rightarrow \C$ such that
$\pi(m, n)=  \psi(n)\phi(m)$ for all $m\in M,\, n\in N$.
\end{enumerate}
\end{lemma}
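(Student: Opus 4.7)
The plan is to invoke the classification from Section \ref{sect_13}, splitting on whether $\pi$ is degenerate in the sense defined there.

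If $\pi$ is degenerate, Lemma \ref{lemma_52} produces exactly the decomposition asserted in (ii) or (iii). The morphism onto $\C$ produced there is non-zero, hence surjective since $\C$ is one-dimensional, so no further work is needed.

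Suppose then that $\pi$ is non-degenerate. By Lemma \ref{lemma_54}, after identifying $M$ and $N$ with suitable tensor density modules, $\pi$ is a scalar multiple of $\pi^\delta_s$ (product of symbols), $\beta^\delta_s$ (Poisson bracket), or of a map in the complementary family. The strategy is to verify (i) by direct computation in each case using the formulas of \S\ref{sect_12}. Writing $u_x^\delta = E_{x-\delta-1,-\delta-1}$, the product formula $E_\lambda \cdot E_\mu = E_{\lambda+\mu+\rho}$ gives $u_x^\delta \cdot u_{-x}^{-1-\delta} = E_{0,0} = L_0$, so (i) holds for every $\pi^\delta_s$. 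The Poisson formula $\{E_\lambda,E_\mu\} = \langle \lambda+\rho \vert \mu+\rho\rangle\, E_{\lambda+\mu}$ gives $\{u_x^\delta, u_{-x}^{-2-\delta}\} = 2x\, L_0$, which is non-zero precisely when $x \neq 0$, handling $\beta^\delta_s$.

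The main work lies in the complementary family, where $\mu^l_M(m,\omega) = \ad(j(m))(\omega)$ with $j: M \to \mathcal{E}$ defined case by case: an injection when $M$ is indecomposable in the $A$-family (Lemma \ref{lemma_53}), and a factorisation through $M/\C \simeq \overline{A} \hookrightarrow \mathcal{E}$ otherwise. The key observation uniform across both cases is that for every $x \neq 0$ in $\Supp M$ one has $j(M_x) = \C e_x$, the full one-dimensional degree-$x$ component of $\mathcal{E}$. Then $\{e_x, u_{-x}^{-2}\} = \{E_{x-1,-1},E_{1-x,1}\} = 2x\, L_0 \neq 0$ yields (i), and the right complementary family is handled symmetrically. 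The only potential obstacle is the anomalous behaviour of $j$ in degree zero for the reducible modules, but since (i) explicitly excludes $x = 0$ this causes no issue.
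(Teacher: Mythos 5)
Your proof is correct and takes essentially the same route as the paper's: reduce to the non-degenerate case via Lemma \ref{lemma_52}, then run through the classification of Lemma \ref{lemma_54} and verify assertion (i) for $\pi^\delta_s$, $\beta^\delta_s$ and the complementary family by computing with the symbols $E_\lambda$ in $\mathcal{P}$. Your observation that $j(M_x)=\C e_x$ for $x\neq 0$ is exactly the paper's remark that every map of the complementary family is a lift or an extension of the induced map $\overline{A}\times\Omega^{-2}_0\rightarrow W$.
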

\begin{proof}
Assume that neither Assertion (ii) or (iii) hold. Then by
Lemma \ref{lemma_52}, the map $\pi$ is non-degenerated. Moreover, the
non-degenerated bilinear maps $\pi:M\times N\rightarrow W$ are classified
by Lemma \ref{lemma_54}. They are isomorphic to $\pi^\delta_s$, 
$\beta^\delta_s$ or
they are in the complementary family.

If $\pi=\pi^\delta_s$ for some $(\delta,s)\in P$, then $\pi$ is
the restriction of the commutative product of the Poisson algebra $\mathcal{P}$. Since  the commutative algebra $\mathcal{P}$
has no zero divisors, it follows that $\pi(m,n)\neq 0$ whenever
$m$ and $n$ are not zero, thus the assertion is clear if
$\pi=\pi^\delta_s$ for some $(\delta,s)\in P$.

If $\pi=\beta^\delta_s$ for some $(\delta,s)\in P$, then $\pi$ is
the restriction of the Poisson bracket of $\mathcal{P}$.
Let $x\in\Supp M$ and let $m\in M_x$, $n\in N_{-x}$ be non-zero vectors.
When $M$ and $N$ are realized as submodules of $\mathcal{P}$,
$m$ and $n$ are identified with vectors which
are homogeneous in $z$ and $\partial$. So, up to some non-zero
scalar, we have $m=E_\lambda$ and $n=E_{-\lambda}$ for some
$\lambda\in\C^2$
and $\pi(m,n)$ can be identified with 
$\{E_\lambda,E_{-\lambda}\}$. However we have
$\{E_\lambda,E_{-\lambda}\}=2<\lambda\vert\,\rho>E_0$, and therefore
we get:

\centerline{$\{E_\lambda,E_{-\lambda}\}=0$ implies that
$\{E_0,E_\lambda\}=0$.}

\noindent So $\pi(m,n)=0$ implies that $x=0$. 
Thus the assertion is proved if
$\pi=\beta^\delta_s$ for some $(\delta,s)\in P$.

Assume now that $\pi$ is in the left complementary family.
Since $\Omega_0^0/\C\simeq \overline{A}$, the  map
$\beta^0_0:\Omega^0_0\times \Omega^{-2}_0\rightarrow W$ induces a map
$\beta:{\overline A}\times \Omega^{-2}_0\rightarrow W$.
It follows from the previous computation that
$\beta(a,\omega)\neq 0$ if $a$ and $\omega$ are non-zero eigenvectors 
of $L_0$ with opposite eigenvalues.
Since any bilinear map of the left complementary family is a
lift, or an extension of $\beta$, it follows that
$\pi(M_x\times N_{-x})\neq 0$ for any $x\neq 0$. 
\end{proof}

A subset $S$ of $\Lambda$ is called \textit{quasi-additive} if it
satisfies the following three conditions:
\begin{enumerate}
\item[(i)] we have $l(\lambda)\neq 0$ for any $\lambda\in S$,
\item[(ii)] $S=-S$, and
\item[(iii)] we have $\lambda+\mu\in S$ for any
$\lambda,\,\mu\in S$ with $l(\lambda+\mu)\neq 0$.
\end{enumerate}
For a quasi-additive set $S$, let
$K(S)$ be the set of all elements of the form
$\lambda+\mu$ where $\lambda$ and $\mu$ belong to $S$ and
$l(\lambda+\mu)=0$. Set $Q(S)=S\cup K(S)$.

\begin{lemma}\label{lemma_56} 
Let $S$ be a quasi-additive
subset of $\Lambda$. Then $Q(S)$ and $K(S)$ are subgroups
of $\Lambda$.
\end{lemma}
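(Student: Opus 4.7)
The plan is to use the additivity of $l:\Lambda\to\C$ (guaranteed in this chapter by Theorem 1, since $\mathcal{L}$ is non-integrable) together with a case analysis built on axiom (iii). First I would establish that $Q(S)$ is closed under addition, then derive $K(S)$ as the kernel of the additive functional $l$ restricted to $Q(S)$.

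\textbf{Closure under addition.} Take $x,y \in Q(S)$ and split into four cases by membership.
\emph{Case $x,y\in S$:} if $l(x+y)\neq 0$, axiom (iii) gives $x+y \in S$; otherwise $x+y \in K(S)$ by definition.
\emph{Case $x\in S$, $y=\mu+\nu\in K(S)$:} additivity gives $l(x+y)=l(x)\neq 0$, so the goal is $x+y\in S$. If $l(x+\mu)\neq 0$, apply (iii) twice: $x+\mu\in S$, then $(x+\mu)+\nu\in S$. If $l(x+\mu)=0$, then $l(x)=-l(\mu)=l(\nu)$, hence $l(x+\nu)=2l(x)\neq 0$; apply (iii) to get $x+\nu\in S$, then $(x+\nu)+\mu\in S$.
\emph{Case $x\in K(S)$, $y\in S$:} symmetric.
\emph{Case $x=\alpha+\beta$, $y=\mu+\nu$ both in $K(S)$:} additivity gives $l(x+y)=0$, so we aim for $x+y\in K(S)$. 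If $l(\alpha+\mu)\neq 0$, then $\alpha+\mu\in S$, next $l((\alpha+\mu)+\beta)=l(\mu)\neq 0$ yields $\alpha+\beta+\mu\in S$, and adding $\nu$ (with $l=0$) gives $x+y\in K(S)$. If $l(\alpha+\mu)=0$, then $l(\mu)=l(\beta)$, so $l(\beta+\mu)=2l(\beta)\neq 0$; apply (iii) to get $\beta+\mu\in S$, then $(\beta+\mu)+\alpha\in S$ (its $l$-value is $l(\mu)\neq 0$), and finally add $\nu$.

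\textbf{Closure under negation and $0\in Q(S)$.} Axiom (ii) gives $S=-S$, and $K(S)=-K(S)$ follows immediately from its definition, so $Q(S)=-Q(S)$. Picking any $\lambda\in S$ (assuming $S$ nonempty, else the statement is trivial), we have $-\lambda\in S$ and $\lambda+(-\lambda)=0$ with $l(0)=0$, so $0\in K(S)\subseteq Q(S)$. Hence $Q(S)$ is a subgroup.

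\textbf{$K(S)$ is a subgroup.} By axiom (i), $S\subseteq l^{-1}(\C^{*})$, so $S\cap l^{-1}(0)=\emptyset$, and every element of $K(S)$ satisfies $l=0$ by definition. Thus $K(S)=\{x\in Q(S)\mid l(x)=0\}$ is the kernel of the additive map $l\vert_{Q(S)}:Q(S)\to\C$, hence a subgroup of $\Lambda$.

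\textbf{Main obstacle.} The only nontrivial step is Case $(x,y)\in K(S)\times K(S)$, where a sum of four elements of $S$ must be reassociated to stay inside $Q(S)$. The observation that powers the case is the dichotomy: either $l(\alpha+\mu)\neq 0$ (straightforward), or else $l(\mu)=l(\beta)$ forces $l(\beta+\mu)\neq 0$, yielding an alternative route through (iii). All other cases are essentially direct applications of additivity and axiom (iii).
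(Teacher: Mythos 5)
Your proof is correct and follows essentially the same route as the paper: both rest on the additivity of $l$ (Theorem 1, non-integrable case) and the observation that among elements of $S$ being summed, some pair must have non-vanishing $l$-sum, which allows a regrouping via axiom (iii); likewise both identify $K(S)=Q(S)\cap\Ker l$. The paper merely packages your four cases more economically by noting $Q(S)=S\cup(S+S)$ and reducing everything to the single inclusion $S+S+S\subset S+S$, proved by the same dichotomy applied as a pigeonhole argument on three elements of $S$.
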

\begin{proof}
 It is clear that 
$K(S)=Q(S)\cap\Ker l$. So it is enough to prove that
$Q(S)$ is a subgroup. Moreover $Q(S)=-Q(S)$, so it is enough to
show that $Q(S)$ is stable by addition.
By definition,
we have $Q(S)= S\cup (S+S)$. So it is enough to show that
$S+S+S\subset S+S$.

Let $\lambda_1,\,\lambda_2,\,\lambda_3\in S$. 
There are two distinct indices $1\leq i<j\leq 3$ such that
$l(\lambda_i+\lambda_j)\neq 0$. Otherwise we would have
$l(\lambda_1)=-l(\lambda_2)=l(\lambda_3)=-l(\lambda_1)$
which would contradict $l(\lambda_1)\neq 0$. Since all indices
play the same role,  we can assume that $l(\lambda_1+\lambda_2)\neq 0$.
It follows that  $\lambda_1+\lambda_2\in S$ and therefore
$\lambda_1+\lambda_2+\lambda_3=(\lambda_1+\lambda_2)+\lambda_3\in S+S$.
\end{proof}

As it is well-known, if $L=L_1\cup L_2$, where $L_1$, $L_2$ and $L$ are
subgroups of $\Lambda$, then $L=L_1$ or $L=L_2$. Indeed a similar 
property holds for quasi-additive subsets.

\smallskip
\begin{lemma}\label{lemma_57} Let $S,\, S_1,\,S_2$ be quasi-additive subsets of
$\Lambda$. If $S=S_1\cup S_2$, then $S=S_1$ or $S=S_2$.
\end{lemma}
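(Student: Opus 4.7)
The plan is to mimic the classical argument that a group is never the union of two proper subgroups, adapting it to the partial additivity supplied by quasi-additive sets. Throughout I will use that $l:\Lambda\to\C$ is additive, which is available here because Section 14 treats the non-integrable case and Theorem \ref{theorem_1} then forces additivity.

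Arguing by contradiction, suppose $S_1\subsetneq S$ and $S_2\subsetneq S$, and pick $\lambda_1\in S\setminus S_2\subseteq S_1$ and $\lambda_2\in S\setminus S_1\subseteq S_2$. The core observation is the following: any pair $\mu_1\in S\setminus S_2$, $\mu_2\in S\setminus S_1$ with $l(\mu_1+\mu_2)\neq 0$ yields a contradiction. Indeed, by quasi-additivity of $S$, $\mu_1+\mu_2\in S=S_1\cup S_2$. If $\mu_1+\mu_2\in S_1$, then $-\mu_1\in S_1$ by property (ii) applied to $S_1$, and $l((\mu_1+\mu_2)+(-\mu_1))=l(\mu_2)\neq 0$, so property (iii) applied to $S_1$ forces $\mu_2\in S_1$, a contradiction; the case $\mu_1+\mu_2\in S_2$ is symmetric.

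It remains to produce such a pair. If $l(\lambda_1+\lambda_2)\neq 0$ take $\mu_i=\lambda_i$. Otherwise $l(\lambda_2)=-l(\lambda_1)$, and I will modify $\lambda_2$. Since $-\lambda_2\in S$ by property (ii) of $S$, either $-\lambda_2\notin S_1$, in which case $(\mu_1,\mu_2)=(\lambda_1,-\lambda_2)$ works because $l(\lambda_1-\lambda_2)=2l(\lambda_1)\neq 0$; or $-\lambda_2\in S_1$. In the latter case, $2\lambda_2\in S$ by property (iii) applied to $S$, and $2\lambda_2\notin S_1$, since otherwise $\lambda_2=2\lambda_2+(-\lambda_2)$ would lie in $S_1$ by property (iii) applied to $S_1$ (using $l(\lambda_2)\neq 0$), contradicting $\lambda_2\in S\setminus S_1$. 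Then $(\mu_1,\mu_2)=(\lambda_1,2\lambda_2)$ works because $l(\lambda_1+2\lambda_2)=-l(\lambda_1)\neq 0$.

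The only place requiring genuine thought is the degenerate case $l(\lambda_1+\lambda_2)=0$; everything else transcribes the classical two-subgroup argument. The trick of replacing $\lambda_2$ by $-\lambda_2$ or $2\lambda_2$ exploits exactly the two structural features of a quasi-additive set that go beyond ordinary closure, namely $S=-S$ together with partial additivity when $l$ does not vanish, and I do not anticipate further obstacles.
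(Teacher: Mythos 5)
Your proof is correct and follows essentially the same route as the paper: pick $\alpha\in S\setminus S_2$ and $\beta\in S\setminus S_1$, adjust $\beta$ by a sign so that $l(\alpha+\epsilon\beta)\neq 0$, and then use property (iii) of whichever $S_i$ contains $\alpha+\epsilon\beta$ to pull the other element into that $S_i$ and reach a contradiction. The only superfluous step is your $2\lambda_2$ branch: since $S_1=-S_1$ and $\lambda_2\notin S_1$, the case $-\lambda_2\in S_1$ never occurs, so the pair $(\lambda_1,-\lambda_2)$ always works when $l(\lambda_1+\lambda_2)=0$.
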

\begin{proof} Assume otherwise. Choose
$\alpha\in S\setminus S_2$ and $\beta\in S\setminus S_1$.
Since $l(\beta)\neq 0$, there exists $\epsilon\in\{\pm 1\}$
with $l(\alpha)+l(\epsilon\beta)\neq 0$.

Since $\alpha+\epsilon \beta\in S$,
$\alpha+\epsilon\beta$ belongs to $S_1$ or to $S_2$, it can
be assumed that  $\alpha+\epsilon\beta\in S_1$. 
However we have:

\centerline{$\epsilon\beta=(\alpha+\epsilon\beta)-\alpha$, ~and~
$l(\alpha+\epsilon\beta)+l(-\alpha)\neq 0$,}

\noindent which implies 
that $\epsilon\beta=-\alpha+(\alpha+\epsilon\beta)$ belongs to $S_1$.
This  contradicts the hypothesis $\beta\in S\setminus S_1$.
\end{proof}

Let $\mathcal{L}\in \mathcal{G}$ be a non-integrable  Lie algebra.

\begin{lemma}\label{lemma_58} 
The set $\Sigma$ is quasi-additive.
\end{lemma}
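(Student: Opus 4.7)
The plan is to verify the three defining conditions of a quasi-additive subset for $\Sigma$. Conditions (i) and (ii) will be immediate: every $\alpha\in\Sigma$ satisfies $l(\alpha)\neq 0$ by construction (elements of $\Sigma$ are those $\alpha\in\Pi$ with $l(-\alpha)\neq 0$), and $\Sigma=-\Sigma$ because $\mathfrak{s}(\alpha)=\mathfrak{s}(-\alpha)$. All the substance lies in condition (iii): given $\lambda,\mu\in\Sigma$ with $l(\lambda+\mu)\neq 0$, I must exhibit $[L_{\lambda+\mu},L_{-\lambda-\mu}]\neq 0$.

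The strategy for (iii) is to invoke Lemma \ref{lemma_46}, which gives $\mathcal{L}(\lambda)\simeq W$, and then decompose $\mathcal{L}$ into $\Z\lambda$-cosets as a module over this Witt subalgebra. I single out $M=\mathcal{M}(\mu+\Z\lambda)$ and $N=\mathcal{M}(-\mu+\Z\lambda)$. Each has one-dimensional $L_0$-weight spaces, and after rescaling by $l(\lambda)$ the weights lie in a single coset of $\Z$ in $\C$, so both $M$ and $N$ sit in the class $\mathcal{S}(W)$. The Lie bracket of $\mathcal{L}$ then restricts to a $W$-equivariant bilinear map
\[
\pi\colon M\times N\longrightarrow \mathcal{L}(\lambda)=W,
\]
which is non-zero because $\pi(L_\mu,L_{-\mu})=[L_\mu,L_{-\mu}]\neq 0$ (as $\mu\in\Sigma$). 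This is precisely the setup Lemma \ref{lemma_55} was designed for.

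The final step is to feed $\pi$ into Lemma \ref{lemma_55} and rule out its cases (ii) and (iii). In case (ii) one has $\pi(m,n)=\phi(m)\psi(n)$ with $\phi\colon M\to\C$ a morphism into the trivial $W$-module, so $\phi$ must vanish on every $L_0$-eigenvector of non-zero eigenvalue; since $l(\mu)\neq 0$, this forces $\phi(L_\mu)=0$ and thus $\pi(L_\mu,L_{-\mu})=0$, contradicting $\mu\in\Sigma$. Case (iii) dies symmetrically via $\psi(L_{-\mu})=0$. So we land in case (i), which gives $\pi(M_x,N_{-x})\neq 0$ for every non-zero $x\in\Supp M$; applying this at $x=l(\lambda+\mu)$ and using $M_x=\C L_{\lambda+\mu}$, $N_{-x}=\C L_{-\lambda-\mu}$ yields the desired $[L_{\lambda+\mu},L_{-\lambda-\mu}]\neq 0$. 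The one place that requires a moment of care is verifying that $M$ and $N$ genuinely lie in $\mathcal{S}(W)$ under the identification $\mathcal{L}(\lambda)\simeq W$ from Lemma \ref{lemma_46}; once this module-theoretic setup is secured, the classification in Lemma \ref{lemma_55} does all the real work.
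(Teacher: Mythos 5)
Your proof is correct and follows essentially the same route as the paper: decompose $\mathcal{L}$ into $\Z\lambda$-cosets over $\mathcal{L}(\lambda)\simeq W$ (Lemma \ref{lemma_46}), and apply Lemma \ref{lemma_55} to the induced bilinear map $\mathcal{M}(\mu)\times\mathcal{M}(-\mu)\to W$, ruling out cases (ii) and (iii) via $\pi(L_\mu,L_{-\mu})\neq 0$. You merely spell out in more detail why those degenerate cases are incompatible with $l(\mu)\neq 0$; the paper leaves that implicit.
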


\begin{proof}
It is obvious that $\Sigma$ satisfies the
conditions (i) and (ii) of the definition of quasi-additivity.

In order to prove condition (iii), consider
$\alpha,\,\beta\in \Sigma$ with 
$l(\alpha)+l(\beta)\neq 0$. Set
$\mathcal{L}(\alpha)=\oplus_{n\in\Z}\,\mathcal{L}_{n\alpha}$ and
$\mathcal{M}(\pm\beta)=\oplus_{n\in\Z}\,\mathcal{L}_{\pm\beta+n\alpha}$.
We have 

\centerline{$[\mathcal{M}(\beta),\mathcal{M}(-\beta)]\subset \mathcal{L}(\alpha)$,}

\noindent
therefore the Lie bracket induces a 
$\mathcal{L}(\alpha)$-equivariant bilinear map:

\centerline {$\pi:\mathcal{M}(\beta)\times\mathcal{M}(-\beta)\rightarrow\mathcal{L}(\alpha)$.}

By Lemma \ref{lemma_46}, the Lie algebra $\mathcal{L}(\alpha)$ is isomorphic to
$W$ and by hypothesis $\mathcal{M}(\pm\beta)$ belong to
$\mathcal{S}(W)$. Moreover we have
$\pi(L_\beta, L_{-\beta})\neq 0$. Therefore $\pi$ cannot satisfy 
Assertion (ii) or Assertion (iii) of the Lemma \ref{lemma_55}. Thus Assertion (i) of
Lemma \ref{lemma_55} holds.
Since
$l(\alpha+\beta)\neq 0$, it follows that  
$\pi(L_{\alpha+\beta}, L_{-\alpha-\beta})\neq 0$, and therefore
$\alpha+\beta$ belongs to $\Sigma$. 
\end{proof}

Set $Q=Q(\Sigma)$.

\begin{lemma}\label{lemma_59} Assume $Q\neq \Lambda$. Then there 
exists $\delta\in\Lambda\setminus Q$ such that

\centerline{$[L_\delta,L_{-\delta}]\neq 0$ and $l(\delta)=0$.}
\end{lemma}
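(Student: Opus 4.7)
The plan is to run the standard ideal-theoretic argument based on Lemma \ref{lemma_3}, in the same style as the proofs of Lemmas \ref{lemma_12} and \ref{lemma_14}, using the fact that $Q$ is a subgroup of $\Lambda$ by Lemma \ref{lemma_56}.

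First I would set $N = \Lambda \setminus Q$ and observe that, because $Q$ is a subgroup, we have $Q + N \subset N$ and $N = -N$. Define the graded subspaces $\mathcal{A} = \bigoplus_{\lambda \in Q} \mathcal{L}_\lambda$ and $\mathcal{B} = \bigoplus_{\lambda \in N} \mathcal{L}_\lambda$, so that $\mathcal{L} = \mathcal{A} \oplus \mathcal{B}$ and $[\mathcal{A}, \mathcal{B}] \subset \mathcal{B}$. Then Lemma \ref{lemma_3} tells us that $\mathcal{I} := \mathcal{B} + [\mathcal{B},\mathcal{B}]$ is a graded ideal of $\mathcal{L}$.

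Next I would use simplicity. Since $Q \neq \Lambda$, the set $N$ is non-empty, so $\mathcal{B} \neq 0$ (recall that $\mathrm{Supp}\,\mathcal{L} = \Lambda$ because $\mathcal{L} \in \mathcal{G}$), hence $\mathcal{I} \neq 0$. Simplicity of $\mathcal{L}$ forces $\mathcal{I} = \mathcal{L}$, and in particular $L_0 \in \mathcal{I}$. Since $\mathcal{B} \cap \mathcal{L}_0 = 0$, the element $L_0$ must lie in $[\mathcal{B},\mathcal{B}]$. This produces some $\delta \in N$ with $[L_\delta, L_{-\delta}]$ a non-zero scalar multiple of $L_0$; that is, $\delta \in \Pi$ and $\delta \notin Q$.

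It remains to show $l(\delta) = 0$. If instead $l(\delta) \neq 0$, then by definition $\delta \in \Sigma$, and $\Sigma \subset Q$, contradicting $\delta \in N$. Hence $l(\delta) = 0$, which is exactly the required property. There is no real obstacle here; the whole argument is a direct application of the decomposition trick of Lemma \ref{lemma_3}, and the main point is simply that the set $Q$ has been constructed so that the complement $N$ is stable under translation by $Q$ and under negation.
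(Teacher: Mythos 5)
Your proof is correct. The only place it diverges from the paper is in how you produce an element of $\Pi$ outside $Q$: the paper simply quotes Lemma \ref{lemma_12} ($\Pi$ generates $\Lambda$) and observes that a generating set cannot be contained in the proper subgroup $Q$, so some $\delta\in\Pi$ lies in $\Lambda\setminus Q$; you instead re-run the ideal argument of Lemma \ref{lemma_3} with the decomposition $\Lambda=Q\sqcup N$, which is legitimate because $Q$ is a subgroup (Lemma \ref{lemma_56}) so that $Q+N\subset N$ and $N=-N$, and because $\Supp\mathcal{L}=\Lambda$ for $\mathcal{L}\in\mathcal{G}$ so that $\mathcal{B}\neq 0$. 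Your version is essentially a re-proof of the relevant special case of Lemma \ref{lemma_12}; it is longer but has the small advantage of producing $[L_\delta,L_{-\delta}]\neq 0$ directly from $L_0\in[\mathcal{B},\mathcal{B}]$ rather than from the definition of $\Pi$. The concluding step — if $l(\delta)\neq 0$ then $\delta\in\Sigma\subset Q$, a contradiction — is exactly the paper's.
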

\begin{proof}
By Lemma \ref{lemma_12}, the set $\Pi$ generates $\Lambda$. So there
is an element $\delta\in\Pi$ with $\delta\notin Q$. Since
$\Sigma$ lies in $Q$, $\delta$ lies in $\Pi\setminus\Sigma$.
Therefore,  we have 
$[L_\delta,L_{-\delta}]\neq 0$ and $l(\delta)=0$. 
\end{proof}

Recall that  $L^\ast_\lambda$ is the element of the graded dual 
$\mathcal{L}'$ defined by 

\centerline{$<L^\ast_{\lambda}\vert L_\mu>=\delta_{\lambda,\mu}$,}

\noindent where
$\delta_{\lambda,\mu}$ is Kronecker's symbol. Similarly, denote by 
$L_n^\ast$ the dual basis of the graded dual $W'$ of  $W$.
Assume that $Q\neq \Lambda$, and let 
$\delta\in\Lambda\setminus Q$ be the element defined in the previous
lemma.

\begin{lemma}\label{lemma_60} For each $\alpha\in \Sigma$, there 
exists a sign $\epsilon=\epsilon(\alpha)$ such that

\centerline{$L_{n\alpha}.L^\ast_{\epsilon\delta}=0$ and
$L_{n\alpha}.L^\ast_{-\epsilon\delta}\neq 0$}

\noindent for all non-zero integer $n$.
\end{lemma}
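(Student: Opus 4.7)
The plan is to study the strip $\mathcal{N}(\pm\delta) := \oplus_{n \in \Z} \mathcal{L}_{\pm\delta + n\alpha}$ viewed as an $\mathcal{L}(\alpha)$-module under the adjoint action. By Lemma \ref{lemma_46}, $\mathcal{L}(\alpha) \cong W$, and because $L_0$ acts on $L_{\pm\delta+n\alpha}$ by the scalar $n\,l(\alpha)$, each $\mathcal{N}(\pm\delta)$ belongs to $\mathcal{S}(W)$ with $L_{\pm\delta + n\alpha}$ spanning the degree-$n$ component. The Lie bracket yields a $W$-equivariant bilinear map $\pi: \mathcal{N}(\delta) \times \mathcal{N}(-\delta) \rightarrow \mathcal{L}(\alpha) \cong W$ which is non-zero since $[L_\delta, L_{-\delta}] \neq 0$ by Lemma \ref{lemma_59}. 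Unwinding the definition of the dual action, the statement of the lemma amounts to showing that, for a suitable sign $\epsilon$, the bracket $[L_{n\alpha}, L_{\epsilon\delta - n\alpha}]$ vanishes for every $n \neq 0$ while $[L_{n\alpha}, L_{-\epsilon\delta - n\alpha}]$ never vanishes for $n \neq 0$.

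First I would establish $\delta + n\alpha \notin \Sigma$ for every $n \neq 0$. Since $\mathcal{L}(\alpha) \cong W$ one has $[L_{n\alpha}, L_{-n\alpha}] \neq 0$, so $n\alpha \in \Sigma$ for every $n \neq 0$. If $\delta + n\alpha$ were also in $\Sigma$, then applying quasi-additivity (Lemma \ref{lemma_58}) to $\delta + n\alpha$ and $-n\alpha$, together with $l(\delta)=0$, would yield $\delta \in K(\Sigma) \subset Q$, contradicting $\delta \notin Q$ from Lemma \ref{lemma_59}. Since $l(\delta + n\alpha) = n\,l(\alpha) \neq 0$, the relation $\delta + n\alpha \notin \Sigma$ forces $[L_{\delta + n\alpha}, L_{-\delta - n\alpha}] = 0$ for all $n \neq 0$. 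Consequently $\pi(\mathcal{N}(\delta)_n, \mathcal{N}(-\delta)_{-n}) = 0$ for every $n \neq 0$, so assertion (i) of Lemma \ref{lemma_55} fails.

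Therefore $\pi$ must satisfy case (ii) or case (iii) of Lemma \ref{lemma_55}, and the two cases are interchanged by $\delta \leftrightarrow -\delta$. In case (ii) there exist a surjective $W$-morphism $\phi: \mathcal{N}(\delta) \rightarrow \C$ and an isomorphism $\psi: \mathcal{N}(-\delta) \rightarrow W$ such that $\pi(m,n) = \phi(m)\psi(n)$. The existence of a trivial one-dimensional $W$-quotient of $\mathcal{N}(\delta)$ rules out irreducibility, since every module $\Omega^\beta_s$ in $\mathcal{S}(W)$ is infinite-dimensional and simple. By the Kaplansky--Santharoubane theorem $\mathcal{N}(\delta)$ lies in the $AB$-family, and because only the $A$-family admits a trivial quotient, $\mathcal{N}(\delta) \cong A_{a,b}$ for some $(a,b)$. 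Under this identification $L_{\delta+m\alpha}$ corresponds (up to a non-zero scalar) to $u_m$, and the action formula gives $L_n \cdot u_{-n} = (n + (-n))u_0 = 0$ for $n \neq 0$; hence $[L_{n\alpha}, L_{\delta - n\alpha}] = 0$ and $L_{n\alpha}.L^*_\delta = 0$ for every $n \neq 0$. Meanwhile $\psi: \mathcal{N}(-\delta) \cong W$ identifies $L_{-\delta + m\alpha}$ with a non-zero multiple of $L_m$, and $[L_n, L_{-n}] = -2n L_0 \neq 0$ in $W$ yields $[L_{n\alpha}, L_{-\delta - n\alpha}] \neq 0$, i.e.\ $L_{n\alpha}.L^*_{-\delta} \neq 0$ for every $n \neq 0$. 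Thus $\epsilon = 1$ works in case (ii), and case (iii) gives $\epsilon = -1$ by the symmetric computation.

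The main technical obstacle is identifying $\mathcal{N}(\delta)$ as a module in the $A$-family: this is exactly what the rigidity of Lemma \ref{lemma_55} combined with the Kaplansky--Santharoubane classification delivers, once the vanishing pattern of the diagonal brackets has forced $\pi$ into the degenerate form (ii) or (iii).
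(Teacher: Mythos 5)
Your proposal is correct and follows essentially the same route as the paper: form the strips $\mathcal{M}(\pm\delta)$ over $\mathcal{L}(\alpha)\simeq W$, use $\delta+n\alpha\notin Q$ to kill the diagonal pairing and rule out case (i) of Lemma \ref{lemma_55}, then read off the two dual actions from cases (ii)/(iii). The only (harmless) difference is in case (ii), where the paper simply observes that $\phi$ is a $W$-invariant functional proportional to $L^\ast_\delta$, while you reach the same vanishing by first identifying $\mathcal{M}(\delta)$ with an $A$-family module.
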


\begin{proof}
Set
$\mathcal{L}(\alpha)=\oplus_{n\in\Z}\,\mathcal{L}_{n\alpha}$ and
$\mathcal{M}(\pm\delta)=\oplus_{n\in\Z}\,\mathcal{L}_{\pm\delta+n\alpha}$.
 We have $[\mathcal{M}(\delta),\mathcal{M}(-\delta)]\subset 
\mathcal{L}(\alpha)$, therefore the Lie bracket induces a
$\mathcal{L}(\alpha)$-equi\-va\-riant  bilinear map:

\centerline {$\pi:\mathcal{M}(\delta)\times\mathcal{M}(-\delta)\rightarrow\mathcal{L}(\alpha)$.}

By Lemma \ref{lemma_46}, the Lie algebra $\mathcal{L}(\alpha)$ is isomorphic to
$W$, and therefore $\mathcal{M}(\delta)$ and $\mathcal{M}(-\delta)$ belong to
$\mathcal{S}(W)$.

Since  we have  $\Sigma\subset Q$,
it follows  that $\delta+\alpha\notin Q$ and therefore
we get $\delta+\alpha\notin \Sigma$.
Since $l(\delta+\alpha)=l(\alpha)\neq 0$, it follows that
$\pi(L_{\delta+\alpha},L_{-\delta-\alpha})=0$.  
Using again that $l(\delta+\alpha)\neq 0$,
$\pi$ satisfies Assertion (ii) or
Assertion (iii) of Lemma \ref{lemma_55}.

First, assume that $\pi$ satisfies Assertion (ii) of Lemma \ref{lemma_55}.
Thus there are a surjective morphism
$\phi:\mathcal{M}(\delta)\rightarrow \C$ and an isomorphism
$\psi:\mathcal{M}(-\delta)\rightarrow \mathcal{L}(\alpha)$ such that
$\pi(m, n)=\phi(m)\psi(n)$ for all 
$(m,n)\in\mathcal{M}(\delta)\times\mathcal{M}(-\delta)$.
Since $\phi$ is proportional to
$L_{\delta}^\ast$, it follows that $L_{\delta}^\ast$ is $\mathcal{L}(\alpha)$-invariant. In particular, we have $L_{n\alpha}.L^\ast_{\delta}=0$
for all integer $n$. Moreover, $\psi$ induces an isomorphism
$\mathcal{M}(-\delta)'\simeq W'$ under which
$L_{-\delta}^\ast$ is a non-zero multiple of
$L^\ast_0$. Since $L_n.L^\ast_0=2n L^\ast_{-n}$, we have
$L_{n\alpha}.L^\ast_{-\delta}\neq 0$
for all non-zero integer $n$. 
Hence, if $\pi$ satisfies Assertion (ii) of the Lemma,
then the Lemma holds for $\epsilon=1$.

Otherwise, $\pi$ satisfies Assertion (iii), and the same
proof shows that the Lemma holds
for  $\epsilon=-1$. 
\end{proof}

Assume again that $Q\neq \Lambda$. Let 
$\delta\in\Lambda \setminus Q$ be the element defined in Lemma \ref{lemma_59}
and let 
$\epsilon:\Sigma\rightarrow \{\pm 1\},
\alpha\mapsto \epsilon(\alpha)$ be the function defined by the
previous lemma.

\begin{lemma}\label{lemma_61} 
The function $\epsilon:\Sigma\rightarrow \{\pm 1\}$
is constant.
\end{lemma}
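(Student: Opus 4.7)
Set $\Sigma^{\pm} = \{\alpha \in \Sigma : \epsilon(\alpha) = \pm 1\}$, so $\Sigma = \Sigma^+ \cup \Sigma^-$. My plan is to show that both $\Sigma^+$ and $\Sigma^-$ are quasi-additive and then invoke Lemma~\ref{lemma_57}. Since the defining condition of $\epsilon$ in Lemma~\ref{lemma_60} ranges over all non-zero integers $n$, it is invariant under $\alpha \leftrightarrow -\alpha$; thus $\epsilon(-\alpha) = \epsilon(\alpha)$, and each $\Sigma^\pm$ is symmetric. The set $\Sigma^\pm$ automatically inherits from $\Sigma$ the condition $l(\alpha) \neq 0$, so the remaining content is closure under addition within $\Sigma$.

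For closure, consider $\alpha, \beta \in \Sigma^+$ with $l(\alpha + \beta) \neq 0$, and set $\gamma = \alpha + \beta$; by Lemma~\ref{lemma_58} we have $\gamma \in \Sigma$. The goal is $[L_{n\gamma}, L_{\delta - n\gamma}] = 0$ for every non-zero $n \in \Z$. For each such $n$, I apply the Jacobi identity to the triple $(L_{n\alpha}, L_{n\beta}, L_{\delta - n\gamma})$: the bracket $[L_{n\beta}, L_{\delta - n\gamma}]$ is proportional to $L_{\delta - n\alpha}$, and is therefore annihilated by $\ad\, L_{n\alpha}$ because $\epsilon(\alpha) = +1$; symmetrically the second cyclic term vanishes via $\epsilon(\beta) = +1$. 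Writing $[L_{n\alpha}, L_{n\beta}] = r_n L_{n\gamma}$, Jacobi collapses to $r_n [L_{n\gamma}, L_{\delta - n\gamma}] = 0$, which yields the desired vanishing whenever $r_n \neq 0$.

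The main obstacle is the degenerate case $r_n = 0$. To handle it, I introduce the graded Lie subalgebra $V = \{x \in \mathcal{L} : x \cdot L^*_\delta = 0\}$, the coadjoint stabilizer of $L^*_\delta$; its homogeneous component in degree $\mu$ equals $\mathcal{L}_\mu$ precisely when $[L_\mu, L_{\delta - \mu}] = 0$. Since $\alpha, \beta \in \Sigma^+$, both $\mathcal{L}(\alpha)$ and $\mathcal{L}(\beta)$ lie in $V$, hence so does the subalgebra $\mathcal{S}$ they generate. When $r_n = 0$, I substitute the Jacobi triple by $(L_\mu, L_\nu, L_{\delta - n\gamma})$ for an alternative decomposition $\mu + \nu = n\gamma$ with $L_\mu, L_\nu \in \mathcal{S} \subset V$ and $[L_\mu, L_\nu] \neq 0$. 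Existence of such pairs is obtained by analysing the ``columns'' $\oplus_k \mathcal{L}_{k\alpha + j\beta}$ as $\mathcal{L}(\alpha) \cong W$-modules via the Kaplansky--Santharoubane classification (Section~\ref{sect_13}): such a column is an irreducible generalized tensor density whenever the $W$-weight coset $j l(\beta)/l(\alpha) + \Z$ is non-trivial, so $[L_{k\alpha}, L_{j\beta}]$ vanishes for at most one $k$, and one can navigate to a non-degenerate pair by varying $(k,j)$ (using also the symmetric $\mathcal{L}(\beta)$-module structure). The argument for $\Sigma^-$ is identical after replacing $\delta$ by $-\delta$, and Lemma~\ref{lemma_57} then forces $\Sigma^+ = \Sigma$ or $\Sigma^- = \Sigma$, i.e., $\epsilon$ is constant.
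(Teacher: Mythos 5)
Your overall strategy coincides with the paper's: split $\Sigma$ into the level sets $\Sigma^{\pm}$ of $\epsilon$, prove each is quasi-additive, and conclude by Lemma~\ref{lemma_57}. The non-degenerate case of your closure argument is correct (though note that your Jacobi computation is just a re-derivation of the fact that the coadjoint stabilizer $V$ of $L^{\ast}_\delta$ is a Lie subalgebra: once $L_{n\gamma}$ is exhibited as a non-zero multiple of a bracket of two homogeneous elements of $V$, it lies in $V$ and you are done without Jacobi). The genuine gap is the degenerate case $r_n=0$, which is the actual content of the lemma, and your treatment of it is a sketch with unverified claims. First, you do not treat the columns $\oplus_k\,\mathcal{L}_{k\alpha+j\beta}$ whose weight coset is trivial, where reducible modules of the $AB$-family occur. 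Second, and more seriously, knowing that $\mathcal{L}_\mu$ and $\mathcal{L}_\nu$ both lie in the subalgebra $\mathcal{S}$ gives no control on whether $[\mathcal{L}_\mu,\mathcal{L}_\nu]\neq 0$; ``navigating to a non-degenerate pair'' requires understanding the bracket between two different columns as a $W$-equivariant bilinear map, and the statements of that kind in the paper (Lemma~\ref{lemma_55} gives only a trichotomy, and Lemma~\ref{lemma_72} is proved \emph{after} the Main Lemma, so using it here would be circular) do not deliver what you need.

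The missing idea stays entirely inside your own framework. Consider the $\mathcal{L}(\alpha)$-submodule $\mathcal{N}$ of $\mathcal{M}(\beta)=\oplus_{n\in\Z}\,\mathcal{L}_{\beta+n\alpha}$ generated by $L_\beta$. Since $V$ is a graded Lie subalgebra containing $\mathcal{L}(\alpha)$ and $L_\beta$ (both because $\epsilon(\alpha)=\epsilon(\beta)=+1$), it contains $\mathcal{N}$. On the other hand $\mathcal{M}(\beta)\in\mathcal{S}(W)$ and $\mathcal{N}$ is a non-trivial submodule because $[L_0,L_\beta]\neq 0$; by the Kaplansky--Santharoubane classification, such a submodule is either everything or of codimension one, omitting only the zero-weight space. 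Since $l(\alpha+\beta)\neq 0$, this forces $L_{\alpha+\beta}\in\mathcal{N}\subset V$, i.e.\ $L_{\alpha+\beta}.L^{\ast}_\delta=0$, with no case distinction on any $r_n$. Finally, you set out to prove the vanishing for every non-zero $n$, but the dichotomy of Lemma~\ref{lemma_60} applied to $\gamma=\alpha+\beta\in\Sigma$ shows that the single value $n=1$ already yields $\epsilon(\gamma)=+1$.
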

\begin{proof}
Let $S_{\pm}$ be the set of all $\alpha\in \Sigma$
with $\epsilon(\alpha)=\pm 1$. We claim that
the sets $S_\pm$ are quasi-additive. 

For simplicity, let consider $S_+$.
Let $\alpha,\beta\in S_+$ with $l(\alpha+\beta)\neq 0$. As before,
set $\mathcal{L}(\alpha)=\oplus_{n\in\Z}\,\mathcal{L}_{n\alpha}$ and
 $\mathcal{M}(\beta)=\oplus_{n\in\Z}\,\mathcal{L}_{\beta+n\alpha}$.
Also let  $\mathcal{N}$ be the $\mathcal{L}(\alpha)$-module generated by
$L_\beta$. Since  $\mathcal{L}(\alpha)$ is isomorphic to $W$,
the module $\mathcal{M}(\beta)$ belongs to $\mathcal{S}(W)$. Since
$[L_0,L_\beta]\neq 0$, the module $\mathcal{N}$ is not trivial.
Therefore, $\mathcal{N}=\mathcal{M}(\beta)$ or $\mathcal{M}(\beta)/\mathcal{N}
\simeq \C$. It follows that $\mathcal{N}$ contains $L_{\alpha+\beta}$.

 By definition of $S_+$, $L^\ast_{\delta}$ is $\mathcal{L}(\alpha)$-invariant.
Since it is also invariant by $L_\beta$,  we have
$\mathcal{N}.L^\ast_\delta=0$. In particular, we have
$L_{\alpha+\beta}.L^*_\delta=0$. This implies that
$\epsilon(\alpha+\beta)=1$, i.e. $\alpha+\beta\in S_+$.
So $S_+$ satisfies Condition (iii) of the definition of quasi-additivity.
Since the other conditions are obvious, $S_+$ is quasi-additive.

Similarly $S_-$ is quasi-additive. Since 
$\Sigma=S_+\cup S_-$, it follows from Lemma \ref{lemma_57}
that $\Sigma=S_+$ or $\Sigma=S_-$. So $\epsilon(\alpha)$ is
independant of $\alpha$ and the lemma is proved.
\end{proof}

\begin{lemma}\label{lemma_62} (Main Lemma non-integrable Lie algebras) 
Let $\mathcal{L}\in\mathcal{G}$ be a non-integrable  Lie algebra. Then 
$\Sigma$ is the set of all
$\lambda\in\Lambda$ such that 
$l(\lambda)\neq 0$.
\end{lemma}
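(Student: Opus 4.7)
The plan is to argue by contradiction that $Q := Q(\Sigma) = \Lambda$, which, in view of the inclusion $K(\Sigma)\subset \ker l$, is precisely equivalent to the Main Lemma: every $\lambda$ with $l(\lambda)\neq 0$ must then lie in $\Sigma$.

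Suppose on the contrary that $Q\neq \Lambda$. Lemma \ref{lemma_59} furnishes $\delta\in\Lambda\setminus Q$ with $l(\delta)=0$ and $[L_\delta,L_{-\delta}]\neq 0$, and Lemmas \ref{lemma_60}--\ref{lemma_61} supply a sign $\epsilon\in\{\pm 1\}$, constant on $\Sigma$, satisfying
\[
L_{n\alpha}\cdot L^\ast_{\epsilon\delta}=0,\qquad L_{n\alpha}\cdot L^\ast_{-\epsilon\delta}\neq 0,
\]
for every $\alpha\in\Sigma$ and every $n\neq 0$. Replacing $\delta$ by $-\delta$ if necessary, I may take $\epsilon=+1$; the two conditions then translate into the bracket identities
\[
[L_{n\alpha},L_{\delta-n\alpha}]=0,\qquad [L_{n\alpha},L_{-\delta-n\alpha}]\neq 0\quad (n\neq 0),
\]
for every $\alpha\in\Sigma$ and $n\in\Z$ (the first, at $n=0$, being trivial since $l(\delta)=0$).

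For each fixed $\alpha\in\Sigma$, Lemma \ref{lemma_46} identifies $\mathcal{L}(\alpha)$ with the Witt algebra $W$, so the pieces $\mathcal{M}(\pm\delta):=\bigoplus_{n\in\Z}\mathcal{L}_{\pm\delta+n\alpha}$ belong to the class $\mathcal{S}(W)$; the Lie bracket yields a non-zero $W$-equivariant map
\[
\pi_\alpha:\mathcal{M}(\delta)\times\mathcal{M}(-\delta)\to\mathcal{L}(\alpha)\cong W,
\]
non-vanishing because $\pi_\alpha(L_\delta,L_{-\delta})=[L_\delta,L_{-\delta}]\neq 0$. The first bracket identity above expresses $L_n\cdot v_{-n}=0$ on $\mathcal{M}(\delta)$; through the Kaplansky--Santharoubane classification this forces $\mathcal{M}(\delta)$ into the $A$-family, since the non-zero weight-zero vector $L_\delta$ rules out the irreducibles $\Omega^\eta_s$ with $s\neq 0$, while $L_n\cdot u^\eta_{-n}=n(\eta-1)u^\eta_0$ forces $\eta=1$ in the remaining irreducible case, which is in fact reducible and of $A$-family type. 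Symmetrically, the second identity forbids the $A$-family for $\mathcal{M}(-\delta)$, leaving either the $B$-family with $(a,b)\neq(0,0)$ or an irreducible $\Omega^\eta_0$ with $\eta\notin\{0,1\}$. Lemmas \ref{lemma_52} and \ref{lemma_54} then restrict $\pi_\alpha$ to a short list of possibilities.

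The final and hardest step is to combine these structural constraints on $\pi_\alpha$, as $\alpha$ ranges over $\Sigma$, into a global contradiction with $[L_\delta,L_{-\delta}]\neq 0$. The annihilator of $L^\ast_\delta$ under the coadjoint action is a graded subspace of $\mathcal{L}$ closed under brackets (via $[\mathrm{ad}^\ast(x),\mathrm{ad}^\ast(y)]=\mathrm{ad}^\ast([x,y])$) containing every $L_{n\alpha}$ for $\alpha\in\Sigma$, $n\in\Z$; it therefore contains the subalgebra they generate, whose support is the subgroup $Q$. I would then exploit the explicit form of each $\pi_\alpha$, applied to triple brackets such as $[L_\delta,[L_{n\alpha},L_{-\delta-n\alpha}]]$ and their cyclic rearrangements, to propagate this vanishing all the way down to $[L_\delta,L_{-\delta}]$ itself, contradicting the choice of $\delta$. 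The main obstacle lies in the case-by-case verification: the classification of bilinear maps to $W$ following \cite{IM} (Section \ref{sect_13}) has many branches -- ordinary family, complementary family, and degenerate cases -- and each admissible shape of $\pi_\alpha$ must separately be shown incompatible with $[L_\delta,L_{-\delta}]\neq 0$.
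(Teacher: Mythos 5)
Your setup through the reduction to $\epsilon\equiv +1$ coincides with the paper's (Lemmas \ref{lemma_56}--\ref{lemma_61}), and your translation of the conditions $L_{n\alpha}.L^\ast_{\pm\delta}=0$ into bracket identities is correct. The gap is in the final step, and it is a real one: the strategy of "propagating the vanishing down to $[L_\delta,L_{-\delta}]$" from the explicit form of the maps $\pi_\alpha$ cannot succeed. The structural information you extract (and which Lemma \ref{lemma_60} already encodes more precisely: $\pi_\alpha$ is degenerate of the form $\pi_\alpha(m,n)=\phi(m)\psi(n)$ with $\phi:\mathcal{M}(\delta)\to\C$ and $\psi:\mathcal{M}(-\delta)\simeq W$) is entirely \emph{consistent} with $\pi_\alpha(L_\delta,L_{-\delta})\neq 0$; indeed $\phi$ is supported exactly on the weight-zero line $\C L_\delta$, so $\phi(L_\delta)\psi(L_{-\delta})$ is a non-zero multiple of $L_0$. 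Likewise, the subalgebra annihilating $L^\ast_\delta$ has support inside $Q$, and since $\pm\delta\notin Q$ no amount of bracketing inside it ever reaches the pair $(\delta,-\delta)$. So no contradiction is available from these data alone.

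The missing idea is to bring in the simplicity of $\mathcal{L}$ one more time. Set $\mathcal{A}=\oplus_{l(\lambda)=0}\mathcal{L}_\lambda$ and $\mathcal{B}=\oplus_{l(\lambda)\neq 0}\mathcal{L}_\lambda$; by Lemma \ref{lemma_3}, $\mathcal{B}+[\mathcal{B},\mathcal{B}]$ is a non-zero graded ideal, hence all of $\mathcal{L}$, so $L_\delta$ is a non-zero multiple of some $[L_{\gamma_1},L_{\gamma_2}]$ with $l(\gamma_1),l(\gamma_2)\neq 0$. Since $[L_\delta,L_{-\delta}]$ is a non-zero multiple of $L_0$, the Jacobi identity forces $[L_{\gamma_1},[L_{\gamma_2},L_{-\delta}]]$ or $[L_{\gamma_2},[L_{\gamma_1},L_{-\delta}]]$ to be a non-zero multiple of $L_0$; say the first, which exhibits $[L_{\gamma_1},L_{-\gamma_1}]\neq 0$ and hence $\gamma_1\in\Sigma$. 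But $\langle L^\ast_\delta\vert [L_{\gamma_1},L_{\gamma_2}]\rangle\neq 0$ gives $L_{\gamma_1}.L^\ast_\delta\neq 0$, i.e.\ $\epsilon(\gamma_1)=-1$, contradicting $\epsilon\equiv +1$. This short argument replaces the entire case-by-case analysis you anticipate, and without it your proof does not close.
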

\begin{proof}
Assume otherwise. By Lemma \ref{lemma_56}, the subgroup
$Q$ generated by $\Sigma$ is proper. 
Let  $\delta\in\Lambda\setminus Q$ be the element of Lemma \ref{lemma_59}.
By Lemma \ref{lemma_60}, the function $\epsilon:\Sigma\rightarrow \{\pm 1\}$
is constant. 
In order to get a contradiction, we can assume that 
$\epsilon(\gamma)=1$ for all $\gamma\in\Sigma$.

Set $\mathcal{A}=\oplus_{l(\lambda)=0}\,\mathcal{L}_\lambda$
and $\mathcal{B}=\oplus_{l(\lambda)\neq0}\,\mathcal{L}_\lambda$.
 We have $\mathcal{L}=\mathcal{A}\oplus\mathcal{B}$ and
$[\mathcal{A},\mathcal{B}]\subset \mathcal{B}$. By Lemma \ref{lemma_3},
$\mathcal{B} +[\mathcal{B},\mathcal{B}]$ is an ideal, and therefore
$\mathcal{L}=\mathcal{B} +[\mathcal{B},\mathcal{B}]$. Thus
$\mathcal{A}\subset [\mathcal{B},\mathcal{B}]$.

Since $l(\delta)=0$, we have $L_\delta\in[\mathcal{B},\mathcal{B}]$.
Therefore there are $\gamma_1,\gamma_2\in\Lambda$ with
$l(\gamma_1)\neq 0$, $l(\gamma_2)\neq 0$ such that
$[L_{\gamma_1},L_{\gamma_2}]$ is a non-zero multiple of $L_\delta$.
Since $[L_\delta, L_{-\delta}]\neq 0$,
$[[L_{\gamma_1},L_{\gamma_2}], L_{-\delta}]$ is a non-zero multiple
of $L_0$. Thus
$[L_{\gamma_1},[L_{\gamma_2}, L_{-\delta}]]$
or $[L_{\gamma_2},[L_{\gamma_1}, L_{-\delta}]]$ is a non-zero multiple
of $L_0$. By symmetry of the role of  $\gamma_1$ and of $\gamma_2$, we can
assume that $[L_{\gamma_1},[L_{\gamma_2}, L_{-\delta}]]=cL_0$, where $c$
is not zero. Thus $\gamma_1$ belongs to $\Sigma$.

Since $[L_{\gamma_1},L_{\gamma_2}]$ is a non-zero multiple
of $L_\delta$, we get $<L_\delta^\ast\vert[L_{\gamma_1},L_{\gamma_2}]>
\neq 0$, and therefore $L_{\gamma_1}.L_\delta^\ast\neq 0$. 
Thus we have $\epsilon(\gamma_1)=-1$, which contradicts the hypothesis 
$\epsilon(\gamma)=1$ for all $\gamma\in\Sigma$.
\end{proof}

For any $\lambda\in\Lambda$, set
$\Omega^\ast(\lambda)=\Supp \mathcal{L}.L_\lambda^\ast$. As a corollary of
the Main Lemma, we get:

\begin{lemma}\label{lemma_63} For any $\lambda\in\Lambda$, there is
a finite set $F$ such that $\Lambda=F+\Omega^\ast(\lambda)$.
\end{lemma}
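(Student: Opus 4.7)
The strategy is to mimic the proof of Lemma~\ref{lemma_43}, replacing the adjoint module $\mathcal{L}$ by its graded dual $\mathcal{L}'$ and invoking Lemma~\ref{lemma_4} for the latter.

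First I would compute $\Omega^*(0)$. For $\mu\in\Lambda$, the vector $L_\mu.L_0^*\in\mathcal{L}'$ is homogeneous of degree $\mu$, and
\[(L_\mu.L_0^*)(L_{-\mu})=-L_0^*([L_\mu,L_{-\mu}]).\]
Since $\mathcal{L}_0=\C L_0$, this quantity is nonzero precisely when $[L_\mu,L_{-\mu}]$ is a nonzero multiple of $L_0$, i.e. when $\mu\in\Pi$; all other evaluations vanish by the degree count, so $\Omega^*(0)=\Pi$.

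Next I would verify $\Lambda\leq\Omega^*(0)$. By the Main Lemma~\ref{lemma_62}, $\Sigma\subset\Pi$ coincides with $\{\mu\in\Lambda\mid l(\mu)\neq 0\}$; by Theorem~\ref{theorem_1} combined with the fact that $L_0$ is non-central, $l$ is additive and not identically zero, so $\Ker l$ is a proper subgroup of $\Lambda$. Fixing $\alpha\in\Lambda$ with $l(\alpha)\neq 0$, any $\mu\in\Ker l$ satisfies $l(\mu+\alpha)\neq 0$, hence $\mu+\alpha\in\Sigma\subset\Pi$ and $\mu\in-\alpha+\Pi$; together with $\Lambda\setminus\Ker l\subset\Pi$, this gives $\Lambda\subset\{0,-\alpha\}+\Omega^*(0)$.

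Finally, to pass from $\lambda=0$ to arbitrary $\lambda$, I would apply Lemma~\ref{lemma_4} to $\mathcal{L}'$ with the nonzero homogeneous elements $L_0^*$ and $L_\lambda^*$, obtaining $\Omega^*(\lambda)\equiv\Omega^*(0)$; combined with $\Lambda\leq\Omega^*(0)$ this yields $\Lambda\leq\Omega^*(\lambda)$, which is the required conclusion. The one point to verify in order to apply Lemma~\ref{lemma_4} is that $\mathcal{L}'$ is simple as a graded $\mathcal{L}$-module: for any graded submodule $N\subset\mathcal{L}'$, its graded annihilator $N^\perp\subset\mathcal{L}$ is itself a graded $\mathcal{L}$-submodule, hence a graded ideal of $\mathcal{L}$; by simple gradedness of $\mathcal{L}$ it equals $0$ or $\mathcal{L}$, which (using that each $\mathcal{L}'_\mu$ is one-dimensional) forces $N=\mathcal{L}'$ or $N=0$ respectively. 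This duality verification is the only slightly non-routine ingredient, and the remainder is a direct transcription of the argument for Lemma~\ref{lemma_43}.
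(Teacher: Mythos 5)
Your proof is correct and follows essentially the same route as the paper: the Main Lemma (Lemma \ref{lemma_62}) shows $\Omega^\ast(0)$ contains $\{\mu \mid l(\mu)\neq 0\}$, hence $\Lambda\leq\Omega^\ast(0)$, and Lemma \ref{lemma_4} applied to the graded dual transfers this to $\Omega^\ast(\lambda)$. Your explicit check that $\mathcal{L}'$ is a simple graded $\mathcal{L}$-module (via the annihilator being a graded ideal) is a detail the paper leaves implicit, and it is carried out correctly.
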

\begin{proof}
By the Main Lemma \ref{lemma_62}, we have
$\Omega^\ast(0)\supset\{\mu\in\Lambda\vert l(\mu)\neq 0\}$. So we have 
$\Lambda=\Omega^\ast(0)\cup\,\alpha+\Omega^\ast(0)$, where
$\alpha$ is any element with $l(\alpha)\neq 0$.

However by Lemma \ref{lemma_4}, we have
$\Omega^\ast(\lambda)\equiv\Omega^\ast(0)$.
Therefore we have
$\Lambda=F+\Omega^\ast(\lambda)$,
for some a finite subset $F$ of $\Lambda$.
\end{proof}

\section{Local Lie algebras of rank $2$}\label{sect_15}

The aim of this section is Lemma \ref{lemma_66}, 
i.e. the fact that some local Lie algebras do
not  occur in a  Lie algebra $\mathcal{L}\in\mathcal{G}$.

First, start with some definitions. Let $L$, $S$  be Lie algebras.
The Lie algebra $S$ is called a \textit{section} of $L$ if there 
exists two Lie subalgebras $G$ and $R$ in $L$, such that
$R$ is an ideal of $G$ and $G/R\simeq S$.

Let $\mathfrak{a}$ be another Lie algebra. The relative notions
of a  Lie $\mathfrak{a}$-algebra, an $\mathfrak{a}$-subalgebra,
an $\mathfrak{a}$-ideal and an $\mathfrak{a}$-section are defined
as follows.
A \textit{Lie $\mathfrak{a}$-algebra} is a Lie algebra $L$ on which 
$\mathfrak{a}$ acts by
derivation. An \textit{$\mathfrak{a}$-subalgebra}  (respectively
an $\mathfrak{a}$-ideal ) of a  Lie $\mathfrak{a}$-algebra $L$
is a subalgebra (respectively an ideal) $G$ of $L$ which is
stable by $\mathfrak{a}$. Let  $L$, $S$  be 
Lie $\mathfrak{a}$-algebras. The Lie algebra $S$ is called an 
\textit{$\mathfrak{a}$-section} of $L$ if there 
exist an $\mathfrak{a}$-subalgebra $G$  of $L$, 
and an $\mathfrak{a}$-ideal  $R$ of $G$ such that
$G/R\simeq S$ as a  Lie $\mathfrak{a}$-algebra.

Following V.G. Kac \cite{Ka1}, a \textit{local Lie algebra} is a quadruple

\noindent $V=(\g,V^+, V^-,\pi)$, where:
\begin{enumerate}
\item[(i)] $\g$ is a Lie algebra and  $V^\pm$ are $\g$-modules
\item[(ii)]  $\pi: V^+\times V^-\rightarrow \g$ is a
$\g$-equivariant  bilinear map.
\end{enumerate}
As for Lie algebras, there are obvious notions of
\textit{local subalgebras}, \textit{local ideals} of $V$
and \textit{local sections} of $V$. By definition
a \textit{local Lie $\mathfrak{a}$-algebra}
is a local Lie algebra $V=(\g,V^+, V^-,\pi)$ such that  $\g$, $V^+$ and
$V^-$  are $\mathfrak{a}$-modules and all the  
products of the local structure are 
$\mathfrak{a}$-equivariant. As for Lie algebras, one defines the
notion of a  \textit{local $\mathfrak{a}$-subalgebra},
of a  \textit{local $\mathfrak{a}$-ideal} and of a 
\textit{local $\mathfrak{a}$-section} of $V$.

Let $\mathcal{L}=\oplus_{n\in \Z}\,\mathcal{L}_n$ be 
a  weakly $\Z$-graded Lie algebra. The subspace 
$\mathcal{L}_{loc}=\mathcal{L}_{-1}\oplus
\mathcal{L}_{0}\oplus\mathcal{L}_{1}$ is called its \textit{local part}.
It is clear that $\mathcal{L}_{loc}$ carries a structure of
a local Lie algebra. Indeed
$\g=\mathcal{L}_0$ is a Lie algebra,  $V^\pm=\mathcal{L}_{\pm1}$ are
$\mathcal{L}_0$-modules and the Lie bracket induces a bilinear map
$\pi: \mathcal{L}_{1}\times
\mathcal{L}_{-1}\rightarrow\mathcal{L}_{0}$. By definition, the Lie algebra
$\mathcal{L}$ is \textit{associated to the local Lie algebra $V$} if 
$\mathcal{L}$ is generated by its local part and if 
$\mathcal{L}_{loc}\simeq V$. 

Given a local Lie algebra $V$, there are two associated
Lie algebras $\mathcal{L}^{max}(V)$ and $\mathcal{L}^{min}(V)$
which satisfies the following conditions. For any
Lie algebra $\mathcal{L}$ associated to $V$,
there are morphisms of Lie algebras
$\mathcal{L}^{max}\rightarrow \mathcal{L}$ and 
$\mathcal{L}\rightarrow \mathcal{L}^{min}$. Here it should be understood that
the local parts of these morphisms are just the identity.

Let $F(V^{\pm})$ be the free Lie algebra generated by
$V^{\pm}$. Indeed it is shown in \cite{Ka1} that 
$F(V^+)\oplus \g\oplus F(V^{-})$ has a natural structure of a Lie algebra.
It follows that $\mathcal{L}^{max}(V) =F(V^+)\oplus \g\oplus F(V^-)$.
The weak $\Z$-gradation of the Lie algebra
$\mathcal{L}:=\mathcal{L}^{max}(V)$ is described as follows: $\mathcal{L}_0=\g$, 
$\mathcal{L}_{\pm1}=V^{\pm}$, so $F(V^+)$ is positively weakly graded and
$F(V^-)$ is negatively weakly graded.

Let $R$ be the maximal graded ideal of $\mathcal{L}^{max}(V)$ with
the property that its local part
$R_{-1}\oplus R_0\oplus R_{1}$ is zero. Then we have
$\mathcal{L}^{min}(V)=\mathcal{L}^{max}(V)/R$.

Let $\mathfrak{a}$ be an auxiliary Lie algebra. It should be
noted that $\mathcal{L}^{max}(V)$ and $\mathcal{L}^{min}(V)$ are
 Lie $\mathfrak{a}$-algebras whenever $V$ is a
local Lie $\mathfrak{a}$-algebra.

\begin{lemma}\label{lemma_64} Let $\mathcal{L}$ be a weakly $\Z$-graded algebra,
and let $V$ be an $\mathcal{L}_0$ local Lie algebra.
 If $V$ is an $\mathcal{L}_0$-section of $\mathcal{L}_{loc}$, then 
$\mathcal{L}^{min}(V)$ is an $\mathcal{L}_0$-section of $\mathcal{L}$.
\end{lemma}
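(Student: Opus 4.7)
The plan is to lift the local section $G/R \cong V$ to a bona fide section of $\mathcal{L}$ by combining the universal properties of $\mathcal{L}^{max}$ and $\mathcal{L}^{min}$. Let $G = G_{-1} \oplus G_0 \oplus G_1$ denote the local $\mathcal{L}_0$-subalgebra of $\mathcal{L}_{loc}$ and $R \subset G$ the local $\mathcal{L}_0$-ideal with $G/R \cong V$. First I would set $H$ equal to the graded $\mathcal{L}_0$-subalgebra of $\mathcal{L}$ generated by $G_{-1} \cup G_0 \cup G_1$. Since $G$ is already closed under the local brackets $[G_i,G_j] \subset G_{i+j}$ for $|i|,|j|,|i+j|\leq 1$, a direct verification shows that $H_{loc} = G$; the $\mathcal{L}_0$-stability of $H$ is automatic from the $\mathcal{L}_0$-stability of the generators.

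Next I would introduce two surjective graded $\mathcal{L}_0$-equivariant Lie algebra morphisms out of $\mathcal{L}^{max}(G)$. By the universal property of $\mathcal{L}^{max}$, the fact that $H$ is associated to its local part $G$ yields a surjection $\phi: \mathcal{L}^{max}(G) \twoheadrightarrow H$. On the other hand, the quotient map of local $\mathcal{L}_0$-Lie algebras $G \twoheadrightarrow G/R = V$ extends, by functoriality of Kac's construction, to a morphism $\mathcal{L}^{max}(G) \to \mathcal{L}^{max}(V)$; composing with the canonical projection $\mathcal{L}^{max}(V) \twoheadrightarrow \mathcal{L}^{min}(V)$ produces $\mu: \mathcal{L}^{max}(G) \twoheadrightarrow \mathcal{L}^{min}(V)$, whose kernel $N$ is a graded $\mathcal{L}_0$-ideal with $N_{loc} = R$. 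I then define $I := \phi(N) \subset H$, which is a graded $\mathcal{L}_0$-ideal of $H$ because $\phi$ is a surjective graded $\mathcal{L}_0$-equivariant morphism.

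The central step is to show $\ker \phi \subset N$; granting this, one immediately computes $H/I = \mathcal{L}^{max}(G)/(\ker\phi + N) = \mathcal{L}^{max}(G)/N = \mathcal{L}^{min}(V)$ as graded $\mathcal{L}_0$-Lie algebras, exhibiting $\mathcal{L}^{min}(V)$ as the desired $\mathcal{L}_0$-section of $\mathcal{L}$. To prove the inclusion, observe that $\phi$ is graded and induces the identity on local parts, so $(\ker \phi)_{loc} = 0$. The image $\overline{\ker\phi}$ of $\ker\phi$ in $\mathcal{L}^{min}(V)$ is a graded ideal, and since $\mu$ is graded its local part equals $\mu((\ker\phi)_{loc}) = 0$. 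By the very definition of $\mathcal{L}^{min}(V)$ as the quotient of $\mathcal{L}^{max}(V)$ by the \emph{maximal} graded ideal with trivial local part, the minimal algebra $\mathcal{L}^{min}(V)$ contains no non-zero graded ideal with zero local part; therefore $\overline{\ker\phi} = 0$ and $\ker\phi \subset N$. The main obstacle I anticipate is the careful verification of the functoriality of $\mathcal{L}^{max}$ at the level of local $\mathcal{L}_0$-algebras (i.e.\ that the evident map $F(G_1) \oplus G_0 \oplus F(G_{-1}) \to F(V^+) \oplus \g \oplus F(V^-)$ is a Lie algebra morphism in the $\mathcal{L}_0$-equivariant category); this amounts to a straightforward induction on degree using Kac's defining relations, after which the rest of the argument is essentially formal.
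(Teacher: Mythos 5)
Your proof is correct and follows essentially the same route as the paper: both realize the generated subalgebra $H=\mathcal{L}(U)$ and $\mathcal{L}^{min}(V)$ as quotients of $\mathcal{L}^{max}(G)$ by graded $\mathcal{L}_0$-ideals and conclude by showing the first ideal is contained in the second, using that $\mathcal{L}^{min}(V)$ admits no nonzero graded ideal with trivial local part. The only (immaterial) difference is that you characterize the second ideal as $\ker\mu$ and push $\ker\phi$ forward, whereas the paper invokes directly the maximality of the graded ideal $J$ with $J_{loc}\subset R$.
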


\begin{proof}
By definition, there is a 
local $\mathcal{L}_0$-subalgebra $U$ of $\mathcal{L}_{loc}$
and a  local $\mathcal{L}_0$-ideal $R$ of $U$ such that
$U/R\simeq V$ as a  local Lie $\mathcal{L}_0$-algebra.
Let $\mathcal{L}(U)$ be the subalgebra of $\mathcal{L}$ generated
by $U$.

By definition, there is a surjective morphism
$\mathcal{L}^{max}(U)\rightarrow \mathcal{L}(U)$ which is the identity on
the local part.
Since the positively graded part and the negatively graded part
of $\mathcal{L}^{max}(V)$ are free Lie algebras,  the local map
$U\rightarrow V$ extends to  a surjective map
$\mathcal{L}^{max}(U)\rightarrow \mathcal{L}^{min}(V)$. It follows that
$\mathcal{L}(U)=\mathcal{L}^{max}(U)/I$ and
$\mathcal{L}^{min}(V)=\mathcal{L}^{max}(U)/J$, where $I$ and $J$ are
graded ideals. We have $I_{-1}\oplus I_{o}\oplus I_{1}=0$,
and $J$ is the maximal graded ideal such that
$J_{-1}\oplus J_{o}\oplus J_{1}\subset R$.

It follows that $I\subset J$. Thus 
$\mathcal{L}^{min}(V)$ is the quotient of the
$\mathcal{L}_0$-subalgebra $\mathcal{L}(U)$ by its
$\mathcal{L}_0$-ideal $J/I$. Hence
$\mathcal{L}^{min}(V)$ is an $\mathcal{L}_0$-section of $\mathcal{L}$. 
\end{proof}

Recall that the tensor densities $W$-module $\Omega_s^\delta$
have been defined in Section \ref{sect_12}. Its elements  are symbols 
$\sigma(f\partial^{-\delta})$, where 
$f\in  z^{s-\delta}\C[z,z^{-1}]$. To simplify the notations, this
symbol will be denoted by  $f\partial^{-\delta}$. 
Let $\delta,\,\eta,\,s,\,t$ be four scalars.
Define three maps:

\centerline{$\pi:\Omega_s^\delta\otimes\Omega_t^\eta
\rightarrow\Omega_{s+t}^{\delta+\eta}$,
$f\partial^{-\delta}\otimes g\partial^{-\eta}
\mapsto fg\partial^{-\delta-\eta}\,$,}

\centerline{$\beta_1:\Omega_s^\delta\otimes\Omega_t^\eta
\rightarrow\Omega_{s+t}^{\delta+\eta+1}$,
$f\partial^{-\delta}\otimes g\partial^{-\eta}
\mapsto fg'\partial^{-\delta-\eta-1}\,$,}

\centerline{$\beta_2:\Omega_s^\delta\otimes\Omega_t^\eta
\rightarrow\Omega_{s+t}^{\delta+\eta+1}$,
$f\partial^{-\delta}\otimes g\partial^{-\eta}
\mapsto f'g\partial^{-\delta-\eta-1}$.}

\noindent The map $\pi$ is the product of symbols $P^{\delta,\eta}_{s,t}$. Denote
by $\mathcal{K}$ its kernel. Since $\pi$ is a morphism
of $W$-modules, $\mathcal{K}$ is a $W$-submodule.

\begin{lemma}\label{lemma_65} 
\begin{enumerate}
\item[(i)] We have $\beta_1(\omega)+\beta_2(\omega)=0$ for any
$\omega\in\mathcal{K}$.
\item[(ii)] The restriction of $\beta_1$ to $\mathcal{K}$ is surjective,
and it is a morphism of $W$-modules.
\end{enumerate}
\end{lemma}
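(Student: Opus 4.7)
Part (i) is an immediate application of the Leibniz rule. For a pure tensor $f\partial^{-\delta}\otimes g\partial^{-\eta}$ we have $(\beta_1+\beta_2)(f\otimes g) = (fg'+f'g)\partial^{-\delta-\eta-1} = (fg)'\partial^{-\delta-\eta-1}$, so for $\omega = \sum_i f_i\partial^{-\delta}\otimes g_i\partial^{-\eta} \in \mathcal{K}$ (meaning $\sum_i f_i g_i = 0$), the image $(\beta_1+\beta_2)(\omega) = (\sum_i f_i g_i)'\partial^{-\delta-\eta-1}$ vanishes.

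For part (ii), first note that $\mathcal{K} = \ker\pi$ is a $W$-submodule because $\pi$ is $W$-equivariant (being the product in the Poisson algebra $\mathcal{P}$). Surjectivity of $\beta_1|_\mathcal{K}$ is established by an explicit construction: given a basis vector $u^{\delta+\eta+1}_w \in \Omega^{\delta+\eta+1}_{s+t}$, pick $a$ with $a\in s+\Z$ and $w-a\in t+\Z$, and set $\omega = u^\delta_a\otimes u^\eta_{w-a} - u^\delta_{a+1}\otimes u^\eta_{w-a-1}$. Both summands map under $\pi$ to $u^{\delta+\eta}_w$, so $\omega \in \mathcal{K}$; moreover the formula $\beta_1(u^\delta_x,u^\eta_y) = (y-\eta)u^{\delta+\eta+1}_{x+y}$ gives $\beta_1(\omega) = ((w-a-\eta) - (w-a-1-\eta))u^{\delta+\eta+1}_w = u^{\delta+\eta+1}_w$.

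For the $W$-equivariance claim, the clean idea is to compare $\beta_1$ with the intrinsically $W$-equivariant Poisson bracket on $\mathcal{P}$. From $\{E_\lambda,E_\mu\} = \langle\lambda+\rho\,|\,\mu+\rho\rangle E_{\lambda+\mu}$ a short expansion gives $\{f\partial^{-\delta}, g\partial^{-\eta}\} = (\eta f'g - \delta fg')\partial^{-\delta-\eta-1} = \eta\beta_2(f,g) - \delta\beta_1(f,g)$. Restricted to $\mathcal{K}$, where $\beta_2 = -\beta_1$ by part (i), this yields $\{\cdot,\cdot\}|_\mathcal{K} = -(\delta+\eta)\,\beta_1|_\mathcal{K}$. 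Whenever $\delta+\eta\neq 0$ this expresses $\beta_1|_\mathcal{K}$ as a nonzero scalar multiple of a $W$-equivariant map, so $\beta_1|_\mathcal{K}$ is itself $W$-equivariant.

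The main obstacle is the degenerate case $\delta+\eta = 0$, where the Poisson-bracket identity above is vacuous (one checks $\{\cdot,\cdot\}|_\mathcal{K} = -\delta(\beta_1+\beta_2)|_\mathcal{K}=0$ and the coefficient $-(\delta+\eta)$ also vanishes). Here I would verify equivariance by direct calculation on a weight-$w$ slice of $\mathcal{K}$: write $\omega = \sum_{x+y=w} c_{x,y}\,u^\delta_x \otimes u^{-\delta}_y$ with $\sum c_{x,y} = 0$ (the condition $\pi(\omega)=0$), and use $\beta_1(u^\delta_x, u^{-\delta}_y) = (y+\delta)u^1_{w}$ together with $L_n u^\delta_x = (x+n\delta)u^\delta_{x+n}$. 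A short expansion of $\beta_1(L_n\omega)$, followed by the substitutions $\sum c_{x,y}=0$ and $x = w-y$, collapses the coefficient to $(w+n)\sum c_{x,y}\,y$, matching the coefficient of $L_n\beta_1(\omega) = (\sum c_{x,y}y)\,L_n u^1_w = (w+n)(\sum c_{x,y}y)u^1_{w+n}$. Indeed this same direct computation works for arbitrary $\delta,\eta$ and gives a uniform proof, but the Poisson-bracket route is more illuminating in the generic case.
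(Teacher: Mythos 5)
Your proof is correct and follows essentially the same route as the paper: the Leibniz rule for (i), the same explicit element $u^\delta_a\otimes u^\eta_{w-a}-u^\delta_{a+1}\otimes u^\eta_{w-a-1}$ of $\mathcal{K}$ for surjectivity, and the identification of $\eta\beta_2-\delta\beta_1$ with the ($W$-equivariant) Poisson bracket, which on $\mathcal{K}$ collapses to $-(\delta+\eta)\beta_1$ by part (i). The only difference is in the degenerate case $\delta+\eta=0$, which the paper dispatches by ``extension of polynomial identities'' while you verify it by a direct weight-slice computation; your calculation checks out and is a sound, more self-contained substitute.
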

\begin{proof}
For  $f\partial^{-\delta}\otimes g\partial^{-\eta}
\in  \Omega_s^\delta\otimes\Omega_t^\eta$ we have
$(\beta_1+\beta_2)(f\partial^{-\delta}\otimes g\partial^{-\eta})
=(fg)'\partial^{-\delta-\eta-1}$, and therefore
$\beta_1(\omega)+\beta_2(\omega)=0$ for any
$\omega\in\mathcal{K}$.

Note that $\eta\beta_2-\delta\beta_1$ is the Poisson bracket
$B^{\delta,\eta}_{s,t}$ of
symbols, thus $\eta\beta_2-\delta\beta_1$ is a morphism of
$W$-modules. Since $\beta_1+\beta_2=0$ on $\mathcal{K}$, the restriction
of $(\delta+\eta)\beta_1$ to $\mathcal{K}$ is a morphism of $W$-module.
Thus $\beta_1\vert_\mathcal{K}$ is a $W$-morphism when 
$\delta+\eta\neq 0$. By extension of polynomial identities, 
it is always true that 
$\beta_1\vert_\mathcal{K}$ is a morphism of $W$-modules.

Let $w\in (s+t-\delta-\eta)+\Z$.
Choose $a\in (s-\delta)+\Z$, $b\in (t-\eta)+\Z$ with 
$a+b=w$. Set
$\omega=z^a\partial^{-\delta}\otimes z^b\partial^{-\eta}-
z^{a+1}\partial^{-\delta}\otimes z^{b-1}\partial^{-\eta}$.
Then $\omega$ belongs to $\mathcal{K}$ and 
$\beta_1(\omega)=z^w\partial^{-\delta-\eta-1}$.
Since the symbols $z^w\partial^{-\delta-\eta-1}$ form
a basis  $\Omega_{s+t}^{\delta+\eta+1}$, the restriction
of $\beta_1$ to $\mathcal{K}$ is onto. 
\end{proof}

Let $V=(\g ,V^+,V^-)$ be a local Lie algebra
with the following properties:
$\g=\oplus_x\g_x$ is a $\C$-graded Lie algebra,
$V^\pm=\oplus_x\,V^\pm_x$ are $\C$-graded
$\g$-modules and the bilinear map $\pi:V^+\times V^-\rightarrow\g$
is homogeneous of degree zero. Then the
Lie algebras  $\mathcal{L}^{max}(V)$ and $\mathcal{L}^{min}(V)$ are naturally
weakly $\Z\times\C$-graded. Set $\mathcal{L}=\mathcal{L}^{min}(V)$
and denote by $\mathcal{L}=\oplus\,\mathcal{L}_{n,x}$ the corresponding
decomposition. 
With the previous notations, let $\mathcal{F}_{loc}$ be the class of $\C$-graded local Lie algebras $V$ such that

\centerline{$\dim\,\mathcal{L}_{n,x}\leq 1$, for any $(n,x)\in\Z\times\C$.}

Since $W\simeq \Omega_0^{-1}$,
the commutative product on
$\mathcal{P}$ induces a $W$-equi\-va\-riant bilinear map
$\pi_s^{-\delta}:\Omega_s^{-\delta}\times
\Omega_{-s}^{\delta-1}\rightarrow W$.

\begin{lemma}\label{lemma_66} Assume that the local Lie algebra
$(W, \Omega_s^{-\delta}, \Omega_{-s}^{\delta-1},\pi_s^{-\delta})$
is in $\mathcal{F}_{loc}$. Then $\delta=-1$ or $\delta=2$.
\end{lemma}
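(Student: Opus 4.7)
First I would fix explicit bases $v_n = u_n^{-\delta}$ of $V^+ = \Omega_s^{-\delta}$, $w_m = u_m^{\delta-1}$ of $V^- = \Omega_{-s}^{\delta-1}$, and the usual $L_k$ of $W$, so that the local structure reads
\[ \pi_s^{-\delta}(v_n, w_m) = L_{n+m}, \quad [L_k, v_n] = (n - k\delta)\,v_{n+k}, \quad [L_k, w_m] = (m + k(\delta - 1))\,w_{m+k}. \]
A one-line Jacobi expansion in $\mathcal{L}^{max}(V)$ then yields the two key identities
\[ [w_m, [v_n, v_{n'}]] = (n - n')(1+\delta)\,v_{n+n'+m}, \qquad [v_n, [w_m, w_{m'}]] = (m' - m)(2 - \delta)\,w_{n+m+m'}. \]
By the universal construction of $\mathcal{L}^{min}(V)$ as the quotient by the largest graded ideal with trivial local part, the first identity implies $\mathcal{L}^{min}_2 = 0$ exactly when $\delta = -1$ (and then $\mathcal{L}^{min}_n = 0$ for all $n \geq 2$ by induction), while the second implies $\mathcal{L}^{min}_{-2} = 0$ exactly when $\delta = 2$. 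In either of these two exceptional cases $\mathcal{L}^{min}(V)$ is manifestly in $\mathcal{F}_{loc}$.

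Suppose now $\delta \notin \{-1, 2\}$ and aim for a contradiction. Both $\mathcal{L}^{min}_{\pm 2}$ are then non-zero, and by the Kaplansky--Santharoubane Theorem (Section \ref{sect_13.1}) they are irreducible generalized tensor density modules
\[ \mathcal{L}^{min}_2 \simeq \Omega_{2s}^{1-2\delta}, \qquad \mathcal{L}^{min}_{-2} \simeq \Omega_{-2s}^{2\delta - 1}, \]
with basis $M_p$ of $\mathcal{L}^{min}_2$ chosen so that $[v_n, v_{n'}] = \delta(n'-n)\,M_{n+n'}$, from which $[w_m, M_p] = -\tfrac{1+\delta}{\delta}\,v_{p+m}$. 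The assumption $V \in \mathcal{F}_{loc}$ forces $\mathcal{L}^{min}_3$ to belong to $\mathcal{S}(W)$ as well, hence generically to be a single $\Omega_{3s}^{\eta_3}$. The classification of $W$-equivariant bilinear maps (Lemmas \ref{lemma_50} and \ref{lemma_51}) then leaves only two possibilities: $\eta_3 = 2 - 3\delta$ with $[v_n, M_p]$ given by the Poisson bracket of symbols, or $\eta_3 = 1 - 3\delta$ with $[v_n, M_p]$ given by the commutative product of symbols, each determined up to an overall scalar.

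Writing $[v_n, M_p] = \gamma(n, p)\,Z_{n+p}$ and $[w_m, Z_q] = \psi(m, q)\,M_{m+q}$, the Jacobi expansion $[w_m, [v_n, M_p]] = -[L_{n+m}, M_p] + [v_n, [w_m, M_p]]$ reduces by direct calculation to the scalar identity
\[ \gamma(n, p)\,\psi(m, n+p) = 3\delta\,n - (2 + \delta)\,p - (2 - \delta)\,m. \]
Substituting the explicit $\gamma$ and $\psi$ for each option (up to a common scalar) and requiring the identity to hold for all $n, p, m$ gives polynomial equations in $\delta$ whose unique solutions are $\delta = 2$ for the Poisson option (forced by matching the coefficient of $m$) and $\delta = -1/2$ for the commutative option (forced by matching the coefficients of $n$ and $p$). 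Since $\delta \neq 2$, only $\delta = -1/2$ survives on the positive side. A completely symmetric analysis on the negative side, which amounts to the substitution $\delta \mapsto 1 - \delta$ throughout, forces $\delta \in \{-1, 3/2\}$; since $\delta \neq -1$ only $\delta = 3/2$ remains. The two conclusions $\delta = -1/2$ and $\delta = 3/2$ are incompatible, giving the required contradiction. The main obstacle in this plan is rigorously ruling out that $\mathcal{L}^{min}_3$ be an $AB$-family module in the degenerate case where $3s = 0$ in $\C/\Z$, which would require the finer classification of $W$-equivariant bilinear maps recorded in Lemmas \ref{lemma_50}--\ref{lemma_54}.
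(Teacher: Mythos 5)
Your computation of $\mathcal{L}^{min}_2$ and your two key Jacobi identities agree with the paper's Step 2 (the paper phrases the same calculation via the map $B_1(h\partial^\gamma, f\partial^\delta\otimes g\partial^\delta)=(1+\delta)h(fg'-f'g)\partial^\delta$ in symbol notation), and your endgame --- the residual value $\delta=-1/2$ being killed by passing to the opposite local algebra, where it becomes $3/2$ --- is exactly the paper's Step 4. The divergence, and the gap, is in how degree $3$ is handled. The paper never assumes anything about the module structure of $\mathcal{L}^{min}_3$: it computes the right kernel $K$ of $B_2$ explicitly, finds (for $\delta\neq 2$) that $K=\Ker\pi\cap\Ker\big((2+\delta)\beta_1-3\delta\beta_2\big)$, observes that on $\Ker\pi$ this second map equals $(4\delta+2)\beta_1$, and concludes for $\delta\notin\{-1,2,-1/2\}$ that $\mathcal{L}^{min}_3=(\mathcal{L}_1\otimes\mathcal{L}^{min}_2)/(\Ker\pi\cap\Ker\beta_1)$ sits in an exact sequence $0\to\Omega^{2-3\delta}_{3s}\to\mathcal{L}^{min}_3\to\Omega^{1-3\delta}_{3s}\to 0$, hence has two-dimensional homogeneous components --- an immediate contradiction with $\mathcal{F}_{loc}$, with no classification of targets needed.

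Your route instead presupposes that $\mathcal{L}^{min}_3$ is a single irreducible $\Omega^{\eta_3}_{3s}$ and that the bracket $\mathcal{L}_1\times\mathcal{L}^{min}_2\to\mathcal{L}^{min}_3$ is either the Poisson bracket or the commutative product. That reduction is the gap. First, membership in $\mathcal{F}_{loc}$ only gives $\dim\mathcal{L}^{min}_{3,x}\leq 1$, so some components could vanish and $\mathcal{L}^{min}_3$ need not lie in $\mathcal{S}(W)$ at all, in which case Lemmas \ref{lemma_50}--\ref{lemma_51} do not apply as stated. Second, as you note yourself, when $3s\equiv 0$ the target could be a reducible $AB$-family module. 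Third, even within $\mathcal{S}^\ast(W)$, Lemma \ref{lemma_50} permits additional degrees for special values of $\delta$ (for instance the triples of degree $2$ or $-1$ become available when $\delta\in\{0,\,1/2,\,2/3\}$), and the source modules $\Omega_s^{-\delta}$, $\Omega_{2s}^{1-2\delta}$ are themselves reducible for special $(\delta,s)$; none of these cases is excluded in your argument. (A smaller point: your normalization $[v_n,v_{n'}]=\delta(n'-n)M_{n+n'}$ and the resulting $[w_m,M_p]=-\frac{1+\delta}{\delta}v_{p+m}$ degenerate at $\delta=0$, and the claim that the cases $\delta=-1,2$ ``manifestly'' give algebras in $\mathcal{F}_{loc}$ is neither needed nor obvious --- the paper establishes it separately in Lemmas \ref{lemma_68}--\ref{lemma_69}.) The scalar identity you derive, $\gamma(n,p)\psi(m,n+p)=3\delta n-(2+\delta)p-(2-\delta)m$, is correct and does force $\delta=2$ or $\delta=-1/2$ in the two cases you consider; the missing work is showing these are the only cases, and the cleanest repair is the paper's: compute the kernel of $B_2$ directly rather than classifying the possible quotients.
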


\begin{proof}
For clarity, the proof is divided into four steps.
It is assumed, once for all, that  $\delta\neq-1$ and $\delta\neq 2$.

\textit{Step 1:}
Let $V$ be any local Lie algebra.
Set $\mathcal{L}= \mathcal{L}^{max}(V)$ and let $R$ be the kernel of the
morphism $\mathcal{L}\rightarrow \mathcal{L}^{min}(V)$.

In order to compute by induction, for $n\geq 1$, the
homogeneous components $\mathcal{L}_n/R_n$ of $\mathcal{L}^{min}(V)$,
it should be noted that:
\begin{enumerate}
\item[(i)] $\mathcal{L}_{\geq 1}:=\oplus_{n\geq 1}\,\mathcal{L}_n$ is the Lie algebra
freely generated by
$V^+=\mathcal{L}_1$, and therefore 
$\mathcal{L}_{n+1}=[\mathcal{L}_{1},\mathcal{L}_{n}]$ for all $n\geq 1$,
\item[(ii)] $R_1=0$ and  $R_{n+1}=\{x\in \mathcal{L}_{n+1}\vert\,
[\mathcal{L}_{-1},x]\subset R_n\}$ for any $n\geq 1$ . 
\end{enumerate}
More precisely, the following procedure will be used.
Assume by induction that: 
\begin{enumerate}
\item[(i)] the $\mathcal{L}_0$-modules $\mathcal{L}_i/R_i$,
\item[(ii)] the brackets 
$[~,~]:\mathcal{L}_1\times \mathcal{L}_{i-1}\rightarrow \mathcal{L}_i$,
\item[(iii)]  the brackets 
$[~,~]:\mathcal{L}_{-1}\times \mathcal{L}_i\rightarrow \mathcal{L}_{i-1}$
\end{enumerate}
have been determined
for all $1\leq i\leq n$. 

Thus define the bilinear map:
$B_n:\mathcal{L}_{-1}\times \mathcal{L}_1\otimes\mathcal{L}_n/R_n
\rightarrow \mathcal{L}_n/R_n$ by the formula
$B_n(x,y\otimes z)=[[x,y],z]+[y,[x,z]]$,
for any $x\in\mathcal{L}_{-1}, y\in\mathcal{L}_1$ and 
$z\in\mathcal{L}_{n}/R_n$. Let 
$K\subset \mathcal{L}_1\otimes\mathcal{L}_n/R_n$ 
be the right kernel of $B_n$. It follows from Jacobi identity that
the following diagram is commutative:

\[ \UseTips
\newdir{ >}{!/-5pt/\dir{>}}
\xymatrix @=1pc @*[r]
{
    \mathcal{L}_{-1}\times \mathcal{L}_1 \otimes \mathcal{L}_n/R_n 
    \ar[dd]^{\id \times [~,~]}
    && \ar[ddrr]^{B_n}
    &&&& \\
    &&&& \\
    \mathcal{L}_{-1}\times \mathcal{L}_{n+1}/R_{n+1}  
    &&\ar[rr]^{[~,~]} && 
    \mathcal{L}_n/R_n  \\
} \]

By definition of $\mathcal{L}^{min}(V)$, the right kernel
of the horizontal bilinear  map:
$[,]: \mathcal{L}_{-1}\times\mathcal{L}_{n+1}/R_{n+1}
\rightarrow \mathcal{L}_n/R_n$ is zero. Moreover the natural
map $[,]: \mathcal{L}_{1}\otimes\mathcal{L}_{n}/R_{n}
\rightarrow \mathcal{L}_{n+1}/R_{n+1}$ is onto.
Therefore, we have:

$$\mathcal{L}_{n+1}/R_{n+1} = (\mathcal{L}_1\otimes\mathcal{L}_n/R_n)/K$$

\noindent This isomorphism determines the structure of
$\mathcal{L}_0$-module of $\mathcal{L}_{n+1}/R_{n+1}$ as well as the 
bracket $[,]:\mathcal{L}_1\times \mathcal{L}_{n}\rightarrow \mathcal{L}_{n+1}$.
The bracket $[,]:\mathcal{L}_{-1}\times \mathcal{L}_{n+1}\rightarrow 
\mathcal{L}_{n}$ comes from $B_n$.

\textit{Step 2:} The local Lie algebras $V:=(W, \Omega_s^{-\delta},
\Omega_{-s}^{\delta-1},-\pi_s^{-\delta})$ and $(W, \Omega_s^{-\delta},
\Omega_{-s}^{\delta-1},\\ \pi_s^{-\delta})$ are obviously isomorphic.
In order to minimize the use of minus sign, it will be more convenient to 
work with $V$. \\
Set 
$\mathcal{L}=\mathcal{L}^{max}(V)$ and set $A=\C[z,z^{-1}]$,
 $A_k=z^{k(s+\delta)}A$ for any $k\in\Z$. We will use the map $B_1$
of step 1 to compute $\mathcal{L}_2/R_2$.

The elements of 
$\mathcal{L}_1$ (respectively of $\mathcal{L}_{-1}$) are symbols
$f\partial^\delta$ (respectively $g\partial^\gamma$), where
$f\in A_1$ (respectively where $g\in A_{-1}$) and where
$\gamma=1-\delta$.  In the local Lie
algebra $V$, we have:

\centerline{$[g\partial^\gamma, f\partial^\delta]=
\pi(f\partial^\delta, g\partial^\gamma)=fg\partial\in W$.}

\noindent 
Let $f\partial^\delta,\,g\partial^\delta\in \mathcal{L}_1$ and let
$h\partial^\gamma\in\mathcal{L}_{-1}$. It follows that:
\begin{align*}
B_1(h\partial^\gamma,f\partial^\delta\otimes
g\partial^\delta)
=
&[hf\partial,g\partial^\delta]+[f\partial^\delta,hg\partial] \\
=
&(hfg' -\delta g(hf)'+\delta f(hg)'-f'hg)\partial^\delta \\
=
&(1+\delta) h(fg'-f'g)\partial^\delta
\end{align*}
Define  
$\beta: \mathcal{L}_1\otimes \mathcal{L}_1\rightarrow \Omega^{1-2\delta}_{2s}$
by the formula $\beta (f\partial^\delta\otimes\,g\partial^\delta)
=(1+\delta) (fg'-f'g)\partial^{2\delta-1}$. In terms
of Poisson brackets, we have $\beta(b\otimes c)=(1+\delta)/\delta
\{b,c\}$ for any $b,c\in\Omega^\delta_s$, and therefore $\beta$ is a
morphism of $W$-modules (for $\delta=0$, this follows by extension of a polynomial identity).
Moreover it is easy to show  that $\beta$ is surjective.

Since $\delta\neq -1$,
it is clear from  the previous formula that the right
kernel of $B_1$ is precisely the kernel of $\beta$. So we get
$\mathcal{L}_2/R_2\simeq\Omega^{1-2\delta}_{2s}$.

One can choose such an isomorphism in a way that:

$[f\partial^\delta,\,g\partial^\delta]=
(1+\delta) (fg'-f'g)\partial^{2\delta-1}$,

$[h\partial^\gamma,k\partial^{2\delta-1}]=
hk\partial^\delta$,

\noindent for any
$f\partial^\delta,\,g\partial^\delta\in \mathcal{L}_1$,
$h\partial^\gamma\in\mathcal{L}_{-1}$ and
$k\partial^{1-2\delta}\in \mathcal{L}_2/R_2$.

\textit{Step 3:} We will use the map $B_2$
of step 1 to compute $\mathcal{L}_3/R_3$.

Let $f\partial^\delta\in\mathcal{L}_1$,
$k\partial^{2\delta-1}\in \mathcal{L}_2/R_2$ and
let $h\partial^\gamma\in\mathcal{L}_{-1}$.
We get
\begin{align*}
B_2(h\partial^\gamma,f\partial^\delta
\otimes k\partial^{2\delta-1})
=
&[hf\partial,k\partial^{2\delta-1}]+
[f\partial^\delta,hk\partial^{\delta}] \\
=
&(hfk'-(2\delta-1)(hf)'k+
(1+\delta)(f(hk)'-f'hk))\partial^{2\delta-1}\\
=
&((2-\delta) h'fk-3\delta hf'k
+(2+\delta)hfk')\partial^{2\delta-1} \\
=
&((2-\delta) fk)h'\partial^{2\delta-1} +
(-3\delta f'k +(2+\delta)fk')h\partial^{2\delta-1}.
\end{align*}
Since $\delta\neq 2$, it is clear that the right kernel
of $B_2$ is 

\centerline{$\Ker \pi\cap\Ker ((2+\delta)\beta_1-3\delta\beta_2)$,}

\noindent where 
$\pi:\Omega_s^{-\delta}\otimes\Omega_{2s}^{1-2\delta}
\rightarrow\Omega_{3s}^{1-3\delta}$ and
$\beta_1, \beta_2:\Omega_s^{-\delta}\otimes\Omega_{2s}^{1-2\delta}
\rightarrow\Omega_{3s}^{2-3\delta}$ are defined in Lemma \ref{lemma_65}.
However we have 
$((2+\delta)\beta_1-3\delta\beta_2)(\omega)=(4\delta+2)\beta_1(\omega)$
for all $\omega\in \Ker \pi$.

Assume now that $\delta\neq-1/2$, i.e. $4\delta+2\neq 0$. It follows that
$\mathcal{L}_3/R_3\simeq 
[\mathcal{L}_1\otimes\mathcal{L}_2/R_2]/\Ker \pi\cap \Ker\beta_1$
and there is an
exact sequence:

$0\rightarrow \Ker \pi/\Ker \beta_1\cap\Ker\pi\rightarrow
\mathcal{L}_3/R_3\rightarrow [\mathcal{L}_1\otimes\mathcal{L}_2/R_2]/\Ker \pi
\rightarrow 0$. Using Lemma \ref{lemma_65}, we get an exact sequence:

\centerline{$0\rightarrow \Omega_{3s}^{2-3\delta}\rightarrow
\mathcal{L}_3/R_3\rightarrow\Omega_{3s}^{1-3\delta}\rightarrow 0$.}

\noindent Thus, the homogeneous components of 
$\mathcal{L}_3/R_3$ have dimension two.
It follows that the local Lie
algebra $V$ is not in $\mathcal{F}_{loc}$ if $\delta\neq -1/2$.

\textit{Step 4:} Assume now that $\delta=-1/2$. The opposite
local Lie algebra is
$V'=(W, \Omega_{-s}^{-3/2},
\Omega_{s}^{1/2},\pi_{-s}^{-3/2})$. It follows from the
previous step that the homogeneous
components of
$\mathcal{L}_{-3}/R_{-3}$ have dimension two.
Thus the local Lie
algebra $V$ is not in $\mathcal{F}_{loc}$. 
\end{proof}

\section{The degree function $\delta$}\label{sect_16}

Let $\mathcal{L}\in\mathcal{G}$ be non-integrable. 

By Lemma \ref{lemma_62}, we have $\Sigma=\{\alpha\vert\,l(\alpha)\neq 0\}$.
Therefore $\Sigma$ contains primitive elements of $\Lambda$. So fix, once
for all, a primitive  $\alpha\in\Lambda$ which is in 
$\Sigma$. Set 
$\mathcal{L}(\alpha)=\oplus_{n\in\Z}\,\mathcal{L}_{n\alpha}$
and, for any $\beta\in\Lambda/\Z\alpha$, set $\mathcal{M}(\beta)=
\oplus_{n\in\Z}\,\mathcal{L}_{\beta+n\alpha}$.

By Lemma \ref{lemma_46}, the Lie algebra $\mathcal{L}(\alpha)$ is isomorphic 
to $W$. Since
$\mathcal{M}(\beta)$ belongs to the class
$\mathcal{S}(W)$, the map  $\beta\in\Lambda/\Z\alpha\mapsto \deg \mathcal{M}(\beta)$ is  a multivalued function. In this section, Lemma \ref{lemma_70} is
used to define an ordinary function
$\delta:\Lambda/\Z\alpha\rightarrow \C$, which
has the property that $\delta(\beta)\in\deg \mathcal{M}(\beta)$
for all $\beta$. Additional properties of $\delta$ will be proved in this
section and the next one.

A weakly $\Z$-graded Lie algebra $L=\oplus_{n\in\Z}\,
L_n$ is called \textit{minimal} iff it satisfies the following conditions:
\begin{enumerate}
\item[(i)] $L$ is generated by its local part 
$L_{loc}:=L_{-1}\oplus L_0\oplus L_1$
\item[(ii)] Any graded ideal $I$ such that
$I\cap L_{loc}=0$ is trivial.
\end{enumerate}
Equivalently, $L=\mathcal{L}^{min}(V)$ where $V$ is the local part of $L$.

For any integer $a$, the notation 
$\Z_{>a}$ has been defined in Section \ref{sect_1.1}. It will
be convenient to extend this notation for $a=-\infty$. In this case,
set $\Z_{>-\infty}=\Z$.

\begin{lemma}\label{lemma_67} 
 Let $a\in\Z\cup\{-\infty\}$ and let
$L=\oplus_{n\in\Z_{>a}}\, L_n$ be a weakly graded Lie
algebra. 
Assume that $L_n$ is a simple $L_0$-module, 
$[L_{-1},L_{n+1}]\neq 0$  and $[L_1,L_n]\neq 0$, for any $n>a$. 
Then $L$ is minimal.
\end{lemma}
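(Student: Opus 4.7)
The plan rests on one recurring principle that will be invoked throughout: if $x\in\{L_{-1},L_1\}$ and $M\subset L_n$ is an $L_0$-submodule, then $[x,M]$ is an $L_0$-submodule of $L_{n\pm1}$ (by the Jacobi identity applied to $L_0$). Combined with the hypothesis that every $L_n$ is a simple $L_0$-module, this forces any nonzero such bracket to exhaust the adjacent component. Both defining conditions of minimality will be derived by propagating this observation along the grading.

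For the generation condition (i), I would let $L'$ be the graded subalgebra of $L$ generated by $L_{loc}$ and prove by induction that $L'_n:=L'\cap L_n=L_n$ for every $n>a$. Going upward: starting from $L'_1=L_1$, the hypothesis $[L_1,L_n]\neq 0$ (at $n=1$, giving $[L_1,L_1]\neq 0\subset L_2$, and then in general) together with simplicity of $L_{n+1}$ gives $L'_{n+1}\supseteq[L_1,L_n]=L_{n+1}$ for every $n\geq 1$. Going downward: starting from $L'_0=L_0$ and $L'_{-1}=L_{-1}$, the hypothesis $[L_{-1},L_{n+1}]\neq 0$ together with simplicity of $L_n$ gives $L'_n=L_n$ for every $a<n\leq -1$; note that when $a=-2$ this downward step is vacuous, and for $a\leq -3$ (including $a=-\infty$) it continues indefinitely.

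For the ideal condition (ii), let $I$ be a graded ideal of $L$ with $I\cap L_{loc}=0$, so $I_{-1}=I_0=I_1=0$. Each $I_n:=I\cap L_n$ is an $L_0$-submodule of the simple $L_0$-module $L_n$, hence is either $0$ or $L_n$. If $I_n=L_n$ for some $n\geq 2$, then $I_{n-1}\supseteq[L_{-1},L_n]\neq 0$, which by simplicity of $L_{n-1}$ forces $I_{n-1}=L_{n-1}$; iterating, we reach $I_1=L_1\neq 0$, contradicting $I_1=0$. Symmetrically, if $I_n=L_n$ for some $n\leq -2$, then $[L_1,L_n]\neq 0$ propagates upward to give $I_{-1}=L_{-1}\neq 0$, another contradiction. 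Hence $I=0$.

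I foresee no real obstacle: the proof is pure bookkeeping, driven by the simple-module hypothesis and the two standing nonvanishing conditions. The only mild point to watch is that the inductive bracket hypotheses be available at the endpoints of the grading (in particular that $L_{-1},L_0,L_1$ exist and that the indices appearing in $[L_{\pm 1},L_n]$ remain in $\Z_{>a}$), which is built into the convention of the statement.
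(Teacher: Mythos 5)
Your proof is correct and rests on exactly the same mechanism as the paper's: simplicity of each $L_n$ as an $L_0$-module together with the nonvanishing hypotheses forces $[L_{\pm1},L_n]=L_{n\pm1}$, and this propagates along the grading. The only difference is packaging — the paper shows in one stroke that any nonzero graded submodule over the subalgebra generated by $L_{loc}$ is all of $L$ (which yields generation and even graded simplicity simultaneously), whereas you verify the two conditions of minimality by separate inductions; both are sound.
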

\begin{proof}
Let $L'$ be the subalgebra generated by $L_{loc}$. 

Let $R$ be any non-zero graded $L'$-submodule of $L$ and let 
$n>a$. By simplicity of the $L_0$-module $L_n$, we have 
$[L_{1},L_n]=L_{n+1}$. Similarly, we have $[L_{-1}, L_n]=L_{n-1}$ 
(for $n=a+1$, this follows from $L_a=0$).
So, for $n\in\Supp R$, we have $R_n=L_n$ and $n+1 \in \Supp R$. 
Moreover if $n-1>a$, we also have 
$n-1\in\Supp R$. Thus $\Supp R=\Z_{>a}$ and
$R=L$. 

It follows that $L=L'$, i.e. $L$ is generated by its local part, and 
that $L$ is a simple  graded Lie algebra. Therefore
$L$ is minimal.
\end{proof}
Recall that $E_{-\rho}\in\mathcal{P}$ represents the symbol of $1$.
The center of the Lie algebras  
$\mathcal{P}^+$ and $\mathcal{P}$ is $\C E_{-\rho}$. Thus set
$\widetilde{\mathcal{P}}^+=\mathcal{P}^+/\C.E_{-\rho}$ and 
$\widetilde{\mathcal{P}}=[\mathcal{P},\mathcal{P}]/\C E_{-\rho}$.
Since $\mathcal{P}=[\mathcal{P},\mathcal{P}]\oplus\C E_{-2\rho}$,
the Lie algebra $\widetilde{\mathcal{P}}$ has basis
$(E_\lambda)$ when $\lambda$ runs over 
$\C^2\setminus\{-\rho,\,-2\rho\}$, and the bracket is defined
as before by: 

$[E_\lambda,E_\mu]=0$ if $\lambda+\mu=-\rho$ or $-2\rho$

$[E_\lambda,E_\mu]=<\lambda+\rho\vert\,\mu+\rho> E_{\lambda+\mu}$
otherwise, 

\noindent for any $\lambda,\,\mu\in \C^2\setminus\{-\rho,\,-2\rho\}$.

Set $Par^+=\Z_{\geq 0}\times \C/\Z$,
$Par=\C\times \C/\Z$. As a
$W$-module, there are decompositions:

\centerline {$\mathcal{P}^+=\oplus_{(\delta,u)\in Par^+}\,\Omega^{\delta}_u$
and $\mathcal{P}=\oplus_{(\delta,u)\in Par}\,\Omega^{\delta}_u$ }

\noindent Accordingly, there are  decompositions of
$\widetilde{\mathcal{P}}^+$ and $\widetilde{\mathcal{P}}$:

\centerline {
$\widetilde{\mathcal{P}}^+=\oplus_{(\delta,u)\in
Par^+}\,\widetilde{\Omega}^{\delta}_u$ and
$\widetilde{\mathcal{P}}=\oplus_{(\delta,u)\in Par}\,\widetilde{\Omega}^{\delta}_u$,
}

\noindent where:
 $\widetilde{\Omega}^{0}_0=\Omega^0_0/\C$,
$\widetilde{\Omega}^{1}_0=d\Omega^0_0\simeq \Omega^0_0/\C$, and 
$\widetilde{\Omega}^{\delta}_u=\Omega^\delta_u$ for
$(\delta,u)\neq (0,0)$ or $(1,0)$. 

In $\mathcal{P}$, we have:
 
\centerline{$\{\Omega^{\delta}_u,\Omega^{\delta'}_{u'}\}
\subset \Omega^{\delta+\delta'+1}_{u+u'}$.}

\noindent  Similarly, in
$\widetilde{\mathcal{P}}$ we have

\centerline{$\{\widetilde\Omega^{\delta}_u,\widetilde\Omega^{\delta'}_{u'}\}
\subset \widetilde\Omega^{\delta+\delta'+1}_{u+u'}$.}

Let $(\delta,u)\in Par$. Set 
$\widetilde{\mathcal{P}}^n_{\delta,u}=\tilde\Omega_{nu}^{n(\delta+1)-1}$ and
$\widetilde{\mathcal{P}}_{\delta,u}=
\oplus_{n\in\Z}\,\widetilde{\mathcal{P}}^n_{\delta,u}$. It follows that
$\widetilde{\mathcal{P}}_{\delta,u}$ is a Lie subalgebra of $\widetilde{\mathcal{P}}$.
Moreover the decomposition
$\widetilde{\mathcal{P}}_{\delta,u}=\oplus_{n\in\Z}\,\widetilde{\mathcal{P}}^n_{\delta,u}$ is a weak 
$\Z$-gradation of the Lie algebra.

Similarly, for  $u\in\C/\Z$, set
$\widetilde{\mathcal{P}}^+_n(u)=\widetilde{\Omega}_{nu}^{-n-1}$ and
$\widetilde{\mathcal{P}}^+(u)=\oplus_{n\geq -1}\,\widetilde{\mathcal{P}}^+_n(u)$.
It is clear that $\widetilde{\mathcal{P}}^+(u)$ is a weakly $\Z$-graded
subalgebra of $\widetilde{\mathcal{P}}^+$.

\begin{lemma}\label{lemma_68} 
\begin{enumerate}
\item[(i)] Let $(\delta,u)\in Par$ with $\delta\neq 0$ or $-2$.
Then the weakly $\Z$-graded Lie algebra $\widetilde{\mathcal{P}}_{\delta,u}$
is minimal,
\item[(ii)] Let $u\in\C/\Z$.
Then the weakly $\Z$-graded Lie algebra $\widetilde{\mathcal{P}}^+(u)$
is minimal.
\end{enumerate}
\end{lemma}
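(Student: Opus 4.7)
My plan is to apply Lemma~\ref{lemma_67} to each of the two weakly $\Z$-graded Lie algebras. In both cases the zero component is $\widetilde{\Omega}^{-1}_0 = W$, so the lemma reduces minimality to two checks: that every homogeneous component is simple as a $W$-module, and that the brackets $[L_{-1},L_{n+1}]$ and $[L_1,L_n]$ are nonzero throughout the grading range.

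The simplicity is automatic, since each graded piece is of the form $\widetilde{\Omega}^a_s$, which is always simple: for $(a,s)\neq(0,0),(1,0)$ the module $\Omega^a_s$ is simple by Kaplansky--Santharoubane, while $\widetilde{\Omega}^0_0=\overline{A}$ and $\widetilde{\Omega}^1_0=d\Omega^0_0\simeq\overline{A}$ are simple by construction.

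For the bracket conditions, I would invoke the observation recalled in Section~\ref{sect_12.5} that the Poisson bracket $B^{a,b}_{s,t}\colon \Omega^a_s\times\Omega^b_t\to\Omega^{a+b+1}_{s+t}$ vanishes only when $a=b=0$. In part~(i), $L_1=\widetilde{\Omega}^\delta_u$ and $L_{-1}=\widetilde{\Omega}^{-\delta-2}_{-u}$; the hypothesis $\delta\notin\{0,-2\}$ makes both upper parameters nonzero, so each $[L_1,L_n]$ and $[L_{-1},L_{n+1}]$ is nontrivial for every $n\in\Z$. In part~(ii), $L_1=\widetilde{\Omega}^{-2}_u$ has upper parameter $-2\neq 0$, which covers all $[L_1,L_n]$, and for $n\geq -1$ the component $L_{n+1}=\widetilde{\Omega}^{-n-2}_{(n+1)u}$ has upper parameter $-n-2\neq 0$, which covers all $[L_{-1},L_{n+1}]$.

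The one subtlety I foresee is that we are really working in the quotient $\widetilde{\mathcal{P}}=[\mathcal{P},\mathcal{P}]/\C E_{-\rho}$, so a priori the image of a nonzero Poisson bracket could lie entirely in $\C E_{-\rho}\subset\Omega^0_0$ and disappear after quotienting. This will be settled by the explicit formula $\{u^a_x,u^b_y\}=(ay-bx)\,u^{a+b+1}_{x+y}$, which follows from the definition of the Poisson bracket on symbols and the parametrization $u^a_x=E_{x-a-1,-a-1}$: whenever $(a,b)\neq(0,0)$, one can pick $x,y$ in the relevant cosets with $ay-bx\neq 0$ and $x+y\neq 0$, producing a vector $u^{a+b+1}_{x+y}$ outside $\C E_{-\rho}$. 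Hence the induced bracket on $\widetilde{\mathcal{P}}$ remains nonzero, the hypotheses of Lemma~\ref{lemma_67} are satisfied in both cases, and minimality follows.
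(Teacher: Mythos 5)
Your proof is correct and follows the same route as the paper: both reduce the statement to Lemma \ref{lemma_67} via the two observations that every component $\widetilde{\Omega}^{\delta}_u$ is a simple $W$-module and that $\{\widetilde\Omega^{\delta}_u,\widetilde\Omega^{\delta'}_{u'}\}\neq 0$ unless $\delta=\delta'=0$. The paper simply asserts these two facts, whereas you supply the verification (including the explicit formula $\{u^a_x,u^b_y\}=(ay-bx)u^{a+b+1}_{x+y}$ and the check that the bracket does not collapse into $\C E_{-\rho}$ after passing to $\widetilde{\mathcal{P}}$), which is a welcome amount of extra care but not a different argument.
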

\begin{proof}
It should be noted that
\begin{enumerate}
\item[(i)] $\widetilde\Omega^{\delta}_u$ is a simple $W$-module for all 
$(\delta,u)\in Par$,
\item[(ii)] $\{\widetilde\Omega^{\delta}_u,\widetilde\Omega^{\delta'}_{u'}\}
\neq 0$, except when $\delta=\delta'=0$.
\end{enumerate}
It follows that the Lie algebra 
$\widetilde{\mathcal{P}}_{\delta,u}=
\oplus_{n\in\Z}\,\widetilde{\mathcal{P}}^n_{\delta,u}$
satisfies the hypothesis of the previous lemma for $a=-\infty$.
Similarly, the Lie algebra
$\widetilde{\mathcal{P}}^+(u)=\oplus_{n\geq -1}\,\widetilde{\mathcal{P}}^+_n(u)$
satisfies the hypothesis of the previous lemma for $a=-2$. 
Thus these Lie algebras are minimal. 
\end{proof}

For $\beta\in\Lambda/\Z\alpha$,  denote by $V(\beta)$ the local
Lie algebra  $\mathcal{M}(-\beta)\oplus\mathcal{L}(\alpha)\oplus 
\mathcal{M}(\beta)$.

\begin{lemma}\label{lemma_69} There exists $(\delta,s)\in Par$ such that
the local Lie algebra 
$V(\beta)$ admits $\widetilde{\mathcal{P}}_{\delta,s}^{loc}$ as a
$W$-section.
\end{lemma}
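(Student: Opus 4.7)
The plan is to read off the $W$-module structure of $\mathcal{M}(\pm\beta)$ from the Kaplansky-Santharoubane classification, analyze the bracket map $\pi:\mathcal{M}(\beta)\times\mathcal{M}(-\beta)\to \mathcal{L}(\alpha)\simeq W$ using the classification of $W$-equivariant bilinear maps from Section \ref{sect_13}, and then identify $V(\beta)$ (or an appropriate $W$-quotient thereof) with the local part of $\widetilde{\mathcal{P}}_{\delta,s}$ for a suitable pair $(\delta,s)\in Par$.

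First, normalize $L_0$ so that $l(\alpha)=1$ and let $s\in\C/\Z$ denote the coset of $l(\beta)$. By Lemma \ref{lemma_46}, $\mathcal{L}(\alpha)\simeq W$, and both $\mathcal{M}(\pm\beta)$ lie in the class $\mathcal{S}(W)$. By the Kaplansky-Santharoubane theorem, each of them is either irreducible, and hence isomorphic to some $\Omega^{\delta_\pm}_{\pm s}$, or reducible and belonging to the $AB$-family (necessarily with $s=0$). In either case, the set $\deg\mathcal{M}(\beta)$ is non-empty and one may choose a value $\delta\in\deg\mathcal{M}(\beta)$; I choose the one, among the (at most two) possibilities, that will be forced by the bracket analysis below. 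Correspondingly, there is a canonical surjective $W$-morphism $\mathcal{M}(\beta)\twoheadrightarrow \widetilde{\Omega}^{\delta}_{s}$, and in the reducible case this quotient is provided by either the projection $\Omega^0_0\to\overline{A}=\widetilde{\Omega}^0_0$ or the de Rham-like isomorphism with $\widetilde{\Omega}^1_0$, as in Section \ref{sect_16}.

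Next, the bracket supplies the $W$-equivariant map $\pi:\mathcal{M}(\beta)\times\mathcal{M}(-\beta)\to W$, and the Main Lemma \ref{lemma_62} together with the additivity of $l$ (Theorem \ref{theorem_1}) ensure that $\pi(L_\mu,L_{-\mu})\neq 0$ for every $\mu\in\Supp\mathcal{M}(\beta)$ with $l(\mu)\neq 0$; since $l$ vanishes on at most one element of the coset $\beta+\Z\alpha$, this rules out the degenerate alternative of Lemma \ref{lemma_52}. By Lemma \ref{lemma_54}, $\pi$ is therefore conjugate to one of the maps $\pi_s^{\eta}$, $\beta_s^{\eta}$, or to a member of the complementary family. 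Now invoke Lemma \ref{lemma_66}: since $V(\beta)$ is the local part of the weakly $\Z$-graded subalgebra $\bigoplus_n\mathcal{M}(n\beta)\subset\mathcal{L}$, which has one-dimensional homogeneous components, the induced minimal Lie algebra must belong to $\mathcal{F}_{loc}$. This excludes the product-of-symbols branch $\pi=\pi^{\eta}_s$ except for the two exceptional values $\eta=-1,2$; in those two exceptional cases, the symbol-product map coincides, up to the isomorphisms $\Omega^1_s\simeq d\Omega^0_s$ and $\Omega^{-2}_s\simeq\Omega^{-2}_s$ discussed after Lemma \ref{lemma_54}, with a Poisson-bracket map $\beta^{\delta}_s$ for a suitable $\delta$. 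Similarly, each map of the complementary family is a pull-back or push-forward of $\beta^0_0$ or $\beta^{-2}_0$, hence is again covered by a Poisson bracket after passing to a quotient.

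Thus, up to $W$-quotients on each factor, $\pi$ factors through the Poisson bracket $B^{\delta,-\delta-2}_{s,-s}:\widetilde{\Omega}^{\delta}_{s}\times\widetilde{\Omega}^{-\delta-2}_{-s}\to W$, which is precisely the local bracket in $\widetilde{\mathcal{P}}_{\delta,s}$. Combining this with the surjections $\mathcal{M}(\pm\beta)\twoheadrightarrow\widetilde{\Omega}^{\pm\cdots}$ and the identity on $\mathcal{L}(\alpha)\simeq W$ exhibits $\widetilde{\mathcal{P}}_{\delta,s}^{loc}$ as a $W$-section of $V(\beta)$. The main obstacle is handling the reducible ($AB$-family) cases and the complementary-family branch of Lemma \ref{lemma_54}: there one must carefully verify that the composite of the canonical projections and the surviving Poisson-bracket factorization is $W$-equivariant and non-zero, which requires tracking the short exact sequences $0\to\overline A\to\mathcal{E}\to\C^2\to 0$ from Lemma \ref{lemma_48} together with the explicit formulas of Section \ref{sect_12.6}.
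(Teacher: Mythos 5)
Your proposal follows essentially the same route as the paper's proof: non-degeneracy of the bracket map $\mu:\mathcal{M}(\beta)\times\mathcal{M}(-\beta)\to W$ via the Main Lemma and Lemma \ref{lemma_52}, the classification of Lemma \ref{lemma_54}, exclusion of the product-of-symbols branch through Lemmas \ref{lemma_64} and \ref{lemma_66}, and identification of the surviving Poisson-bracket and complementary cases with $\widetilde{\mathcal{P}}^{loc}_{\delta,s}$ up to $W$-sections. Two small slips: the exceptional superscripts for $\pi^\eta_s$ are $\eta=1,-2$ (the values $-1,2$ you quote are the parameter $\delta$ in the statement of Lemma \ref{lemma_66}, whose map is $\pi_s^{-\delta}$), and your claimed surjection $\mathcal{M}(\beta)\twoheadrightarrow\widetilde{\Omega}^{\delta}_s$ does not exist when $\mathcal{M}(\beta)$ is in the $A$-family (there $\overline{A}$ is a submodule, not a quotient), which is precisely why the paper realizes $\widetilde{\mathcal{P}}^{loc}_{\delta,s}$ as a codimension-one ideal of $V(\beta)$ in that case rather than as a quotient --- the notion of $W$-section absorbs both situations.
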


\begin{proof}
Let $\mu: \mathcal{M}(\beta)\times  \mathcal{M}(-\beta)\rightarrow 
\mathcal{L}(\alpha)\simeq W$
be the $W$-equivariant bilinear map induced by the bracket.

Choose any $\gamma\in\beta$ with
$l(\gamma)\neq 0$. By Lemma \ref{lemma_46}, $\gamma$ belongs to
$\Sigma$, hence $\mu(L_\gamma,L_{-\gamma})\neq 0$.
It follows from Lemma \ref{lemma_52} that $\mu$ is non-degenerate.
However all $W$-equivariant non-degenerate bilinear
maps $\mu:M\times N\rightarrow W$ are classified by Lemma \ref{lemma_54}.
Indeed $\mu$ is either conjugated to the
product of symbols 
$\pi_s^{\delta}:\Omega^\delta_s\times 
\Omega^{-1-\delta}_{-s}\rightarrow \Omega^{-1}_0\simeq W$,
for $\delta\neq 1$ or $-2$, 
to the Poisson bracket of symbols
$\beta^{\delta}_s: \Omega^\delta_s\times 
\Omega^{-2-\delta}_{-s}\rightarrow \Omega^{-1}_0\simeq W$,
for $(\delta,s)\neq (0,0)$ or $(-2,0)$
or $\mu$ is in the complementary family.

By Lemma \ref{lemma_64}, $\mathcal{L}^{min}(V(\beta))$ is a section of 
$\mathcal{L}$.
Therefore $V(\beta)$ should belong to the class
$\mathcal{F}_{loc}$.
It follows from Lemma \ref{lemma_66} that $\mu$ cannot be proportional
to the map $\pi_s^{\delta}$, for $\delta\neq 1$ or $-2$.

When $\mu$ is proportional to $\beta^{\delta}_s$, then
$V(\beta)$ is  isomorphic to $\widetilde{\mathcal{P}}_{\delta,s}^{loc}$
if $(\delta, s)\neq (0,0),\,(-2,0),\,(1,0),\,(-3,0)$.
Assume now  that  $\mu$ is proportional to $\beta^{1}_0$ (respectively
$\beta^{-3}_0$). Note that $\tilde\Omega^1_0=d\Omega^0_0$
is a codimension one $W$-submodule of $\Omega^1_0$, therefore
$\widetilde{\mathcal{P}}_{1,0}^{loc}$ 
(respectively $\widetilde{\mathcal{P}}_{-3,0}^{loc}$) is a codimension one ideal 
of $V(\beta)$. 
Thus $V(\beta)$ admits
a $W$-section isomorphic to some $\widetilde{\mathcal{P}}_{\delta,s}^{loc}$,
whenever $\mu$ is proportional to some $\beta^{\delta}_s$,
with $(\delta,s)\neq (0,0)$ or $(-2,0)$.

When $\mu$ is in the left complementary family, the $W$-module
$\mathcal{M}(\beta)$ belongs to the AB-family and
then there is an exact sequence 

\centerline{$0\rightarrow \widetilde\Omega^0_0\rightarrow \mathcal{M}(\beta)
\rightarrow \C\rightarrow 0$, or 
$0\rightarrow \C\rightarrow \mathcal{M}(\beta)
\rightarrow \widetilde\Omega^0_0\rightarrow 0$.}

\noindent In the first case, $V(\beta)$ has a codimension one ideal
isomorphic to $\widetilde{\mathcal{P}}_{0,0}^{loc}$. In the second case, 
the trivial submodule of $\mathcal{M}(\beta)$ is the center
$\mathfrak{Z}$ of the local Lie algebra $V(\beta)$,
and $V(\beta)/\mathfrak{Z}$ is isomorphic to
$\widetilde{\mathcal{P}}_{0,0}^{loc}$. In both cases, $\widetilde{\mathcal{P}}_{0,0}^{loc}$
is a $W$-section of $V(\beta)$. 

Similarly when $\mu$ is in the right complementary family,
$\widetilde{\mathcal{P}}_{-2,0}^{loc}$ is a $W$-section of
 $V(\beta)$.

Thus the lemma is proved in all cases.
\end{proof}

\begin{lemma}\label{lemma_70} 
Let $\beta\in\Lambda/\Z\alpha$.
There exists a unique scalar $\delta(\beta)$ such that
\begin{enumerate}
\item[(i)] $\delta(\beta)\in \deg \mathcal{M}(\beta)$, and
\item[(ii)] $-\delta(\beta)-2\in \deg \mathcal{M}(-\beta)$.
\end{enumerate}
\end{lemma}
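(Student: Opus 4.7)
The plan is to combine Lemma 69, which supplies a candidate $(\delta,s)$, with the degenerate-degree dichotomy in the Kaplansky--Santharoubane classification to nail down $\delta(\beta)$. Recall that on $\mathcal{S}(W)$ the degree is multivalued only on the ``exceptional'' modules $\Omega^0_0$, $\Omega^1_0$ and the indecomposable members of the $AB$-family, where it equals $\{0,1\}$; everywhere else it is a singleton $\{\delta\}$.

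For uniqueness I would argue directly. Suppose $\delta_1,\delta_2$ both satisfy (i) and (ii). If $\deg\mathcal{M}(\beta)$ is a singleton, condition (i) already forces $\delta_1=\delta_2$. The only remaining case is $\deg\mathcal{M}(\beta)=\{0,1\}$, which puts $\delta_i\in\{0,1\}$ and $-\delta_i-2\in\{-2,-3\}$. Since $\deg\mathcal{M}(-\beta)$ is either a singleton or the set $\{0,1\}$ (and $\{-2,-3\}\cap\{0,1\}=\emptyset$), it cannot contain two distinct shifted values $-\delta_1-2$, $-\delta_2-2$, so again $\delta_1=\delta_2$.

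For existence I would invoke Lemma 69 to fix $(\delta,s)$ such that $V(\beta)$ admits $\widetilde{\mathcal{P}}^{loc}_{\delta,s}$ as a $W$-section, and set $\delta(\beta):=\delta$. Since the $\pm1$-components of $\widetilde{\mathcal{P}}^{loc}_{\delta,s}$ are $\tilde{\Omega}^{\delta}_{s}$ and $\tilde{\Omega}^{-\delta-2}_{-s}$, I then check the list of cases in the proof of Lemma 69: when $\mu$ is a nondegenerate Poisson bracket $\beta^{\delta}_{s}$ with $(\delta,s)\notin\{(0,0),(-2,0),(1,0),(-3,0)\}$, $V(\beta)$ is literally $\widetilde{\mathcal{P}}^{loc}_{\delta,s}$ so $\mathcal{M}(\pm\beta)\simeq\Omega^{\delta}_{s}$ and $\Omega^{-\delta-2}_{-s}$, and (i)--(ii) are immediate from the definition of $\deg$; the four ``boundary'' Poisson-bracket cases $(1,0),(-3,0)$ and the left/right complementary cases all yield $\mathcal{M}(\beta)$ in the exceptional family on one side and in $\Omega^{-2}_{0}$ or $\Omega^{-3}_{0}$ on the other, so either $\delta(\beta)\in\{0,1\}$ matches $\deg\mathcal{M}(\beta)=\{0,1\}$ and the opposing value $-\delta-2\in\{-2,-3\}$ is a singleton, or vice versa.

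The main obstacle will be the bookkeeping of the ``extension'' discrepancies in the degenerate cases: when Lemma 69 only provides $\widetilde{\mathcal{P}}^{loc}_{\delta,s}$ as a proper $W$-subquotient rather than an isomorphism (e.g.\ $\mathcal{M}(\beta)=\Omega^1_0$ is a codimension-one extension of the section component $\tilde{\Omega}^1_0=d\Omega^0_0$, or $\mathcal{M}(\beta)$ is an $AB$-family extension of $\tilde{\Omega}^0_0=\overline{A}$), one must check that $\mathcal{M}(\beta)$ is still one of the degree-$\{0,1\}$ modules $\Omega^0_0$, $\Omega^1_0$ or in the $AB$-family, so that $\delta(\beta)\in\deg\mathcal{M}(\beta)$ remains valid. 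This is exactly the point of the convention assigning degree $\{0,1\}$ uniformly to all these modules, and the check reduces in each case to reading off $\mathcal{M}(\pm\beta)$ from the explicit form of $\mu$ given in Lemma 54.
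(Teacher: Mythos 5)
Your proof is correct and follows essentially the same route as the paper: existence comes from Lemma \ref{lemma_69}, and uniqueness from the observation that when $\deg\mathcal{M}(\beta)=\{0,1\}$ the shifted value $-\delta-2\in\{-2,-3\}$ forces $\deg\mathcal{M}(-\beta)$ to be single-valued. The only difference is cosmetic: the paper shortcuts your case-by-case re-examination of the proof of Lemma \ref{lemma_69} by noting directly that $\widetilde\Omega^{\delta}_u$ and $\widetilde\Omega^{-\delta-2}_{-u}$ are subquotients of $\mathcal{M}(\beta)$ and $\mathcal{M}(-\beta)$, which already places $\delta$ and $-\delta-2$ in the respective (multivalued) degrees.
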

\begin{proof}
Let $\beta\in\Lambda/\Z\alpha$.
Note that for $\beta=0$, then $\delta(0)=-1$ is 
the unique degree of $\mathcal{M}(0)\simeq W$ and it satisfies (ii).
From now on, it can be  assumed that $\beta\neq 0$.

First prove the existence of $\delta(\beta)$.

By Lemma \ref{lemma_69}, 
there exists $(\delta,u)\in Par$ such that
the local Lie algebra $V(\beta)$ admits 
$\widetilde{\mathcal{P}}_{\delta,u}^{loc}$ as $W$-section.

Hence the $W$-module $\widetilde\Omega^{\delta}_u$ is a subquotient of
$\mathcal{M}(\beta)$ and the $W$-module $\widetilde\Omega^{-\delta-2}_{-u}$ is a
subquotient of
$\mathcal{M}(-\beta)$. Thus
$\delta\in\deg \mathcal{M}(\beta)$ and
$-\delta-2\in\deg \mathcal{M}(-\beta)$.
The scalar $\delta(\beta)=\delta$ satisfies (i) and (ii), and
the existence is proved.

Next prove the unicity. If $\deg \mathcal{M}(\beta)$ is single
valued, then $\delta(\beta)$ is uniquely determined. Assume
otherwise. Then $\deg \mathcal{M}(\beta)=\{0,1\}$.
It follows from the
previous point that $-2$ belongs to
$\deg \mathcal{M}(\beta)+\deg \mathcal{M}(-\beta)$. Therefore 
$-2$ or $-3$ is a degree of $\mathcal{M}(-\beta)$. So
$\deg \mathcal{M}(-\beta)$ is single valued, and $-2-\delta(\beta)$ is
uniquely determined. In both case, $\delta(\beta)$ is uniquely determined.
\end{proof}

It follows from the previous lemma that there is a well determined
function $\delta:\Lambda/\Z\alpha\rightarrow \C,
\beta\mapsto\delta(\beta)$ with the property that
$\delta(\beta)\in \deg \mathcal{M}(\beta)$ and
$\delta(\beta)+\delta(-\beta)=-2$. This function will be called the \textit{degree function}.

\begin{lemma}\label{lemma_71}  
Let $\beta\in \Lambda/\Z\alpha$. Then we have:

\centerline{ $\delta(n\beta)=n(\delta(\beta)+1)-1$, $\forall n\in\Z$}
\end{lemma}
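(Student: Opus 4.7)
The strategy is to interpret the $\Z$-graded Lie subalgebra $\mathcal{N} = \bigoplus_{n \in \Z} \mathcal{M}(n\beta)$ of $\mathcal{L}$ (weakly graded over $\mathcal{L}(\alpha) \simeq W$) as an instance of the model algebra $\widetilde{\mathcal{P}}_{\delta(\beta), s}$ of Section~\ref{sect_12}, and then to read the desired formula off the $n$-th graded piece $\widetilde{\mathcal{P}}^n_{\delta(\beta), s} = \widetilde{\Omega}^{n(\delta(\beta)+1)-1}_{ns}$. Observe first that the formula is compatible with the defining symmetry $\delta(n\beta) + \delta(-n\beta) = -2$ established in Lemma~\ref{lemma_70}, and that it holds trivially for $n = 0$ (where $\mathcal{M}(0) = \mathcal{L}(\alpha) \simeq \Omega^{-1}_0$, giving $\delta(0) = -1$) and for $n = \pm 1$ (where it reduces to that symmetry). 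Hence it suffices to treat $n \geq 2$.

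First I would invoke Lemma~\ref{lemma_69} at $\beta$, producing $s \in \C/\Z$ such that $V(\beta)$ admits $\widetilde{\mathcal{P}}^{loc}_{\delta(\beta), s}$ as a $W$-section. Since $V(\beta)$ is the local part of $\mathcal{N}$, Lemma~\ref{lemma_64} then exhibits $\mathcal{L}^{min}(\widetilde{\mathcal{P}}^{loc}_{\delta(\beta), s})$ as a $W$-section of $\mathcal{N}$. In the generic case $\delta(\beta) \notin \{0, -2\}$, Lemma~\ref{lemma_68}(i) identifies this minimal algebra with $\widetilde{\mathcal{P}}_{\delta(\beta), s}$ itself, so for every $n$ the $W$-module $\widetilde{\Omega}^{n(\delta(\beta)+1)-1}_{ns}$ arises as a $W$-subquotient of $\mathcal{M}(n\beta) \in \mathcal{S}(W)$.

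From this subquotient information the formula is extracted as follows. Since modules in $\mathcal{S}(W)$ have length at most two by Kaplansky-Santharoubane (Section~\ref{sect_13.1}), and since $\widetilde{\Omega}^\gamma_t$ with $t \neq 0$ or $\gamma \notin \{0, 1\}$ is itself the unique irreducible in $\mathcal{S}(W)$ with that degree, the subquotient relation forces $\mathcal{M}(n\beta) \simeq \Omega^{n(\delta(\beta)+1)-1}_{ns}$ outside a small set of exponents, yielding $\delta(n\beta) = n(\delta(\beta)+1)-1$ directly. In the remaining situations where the exponent lands in $\{0, 1\}$ with $ns = 0$, the module $\mathcal{M}(n\beta)$ lies in the $AB$-family and $\deg \mathcal{M}(n\beta) = \{0, 1\}$; the correct representative is then singled out by applying the same argument to $-n\beta$ and invoking $\delta(n\beta) + \delta(-n\beta) = -2$.

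The hard part will be the exceptional cases $\delta(\beta) \in \{0, -2\}$, where Lemma~\ref{lemma_68}(i) is unavailable and $\widetilde{\mathcal{P}}_{\delta(\beta), s}$ ceases to be minimal, so additional graded ideals inside $\mathcal{L}^{min}(\widetilde{\mathcal{P}}^{loc}_{\delta(\beta), s})$ must be accounted for. I expect to handle these by restricting to the positively graded subalgebra $\bigoplus_{n \geq -1} \mathcal{M}(n\beta)$ and invoking the minimality of $\widetilde{\mathcal{P}}^+(u)$ from Lemma~\ref{lemma_68}(ii) after a suitable shift of $\delta$, or alternatively by a direct induction on $n$ using the classification of degree-$1$ $W$-equivariant bilinear maps from Lemma~\ref{lemma_51} applied to the bracket $\mathcal{M}(\beta) \times \mathcal{M}((n-1)\beta) \to \mathcal{M}(n\beta)$.
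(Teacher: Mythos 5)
Your proposal is correct and follows essentially the same route as the paper: Lemma \ref{lemma_69} and Lemma \ref{lemma_64} place $\widetilde{\mathcal{P}}_{\delta(\beta),s}$ (minimal by Lemma \ref{lemma_68}(i) when $\delta(\beta)\notin\{0,-2\}$) as a $W$-section of $\bigoplus_n \mathcal{M}(n\beta)$, the subquotient data pins down a degree $d(n)=n(\delta(\beta)+1)-1$ of $\mathcal{M}(n\beta)$, and the uniqueness in Lemma \ref{lemma_70} via $d(n)+d(-n)=-2$ resolves the $\{0,1\}$ ambiguity. The paper handles the exceptional cases $\delta(\beta)\in\{0,-2\}$ exactly as in the first of your two suggested alternatives, using the minimality of $\widetilde{\mathcal{P}}^+(u)$ from Lemma \ref{lemma_68}(ii) applied to $\beta$ or to $-\beta$.
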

\begin{proof} 
Note that $\delta(0)=-1$, so we may assume that $\beta \neq 0$. 
Set $\mathcal{M}= \oplus_{n\in\Z}\,\mathcal{M}_n$, where  
$\mathcal{M}_n=\mathcal{M}(n\beta)$ for all integer $n$. Then
$\mathcal{M} \subset \mathcal{L}$ is a weakly $\Z$-graded Lie algebra.

First assume that $\delta(\beta)\neq 0$ or $-2$.
Set $d(n)=n(\delta(\beta)+1)-1$.
By Lemma \ref{lemma_69}, the local Lie algebra 
$V(\beta)=\mathcal{M}(-\beta)\oplus\mathcal{L}(\alpha)\oplus
\mathcal{M}(\beta)$ admits 
$\widetilde{\mathcal{P}}_{\delta(\beta),u}^{loc}$ as a $W$-section,
for some $u\in\C/\Z$.
By Lemma \ref{lemma_68},  $\widetilde{\mathcal{P}}_{\delta(\beta),u}$ is minimal.
Thus by Lemma \ref{lemma_64}, $\widetilde{\mathcal{P}}_{\delta(\beta),u}$ is a 
$W$-section of $\mathcal{M}$. Hence  the $W$-module
$\widetilde\Omega^{d(n)}_{nu}$ is a subquotient
of $\mathcal{M}(n\beta)$.
So we have $d(\pm n)\in\deg \mathcal{M}(\pm n\beta)$.

Moreover, it is obvious that 
$d(n)+d(-n)=-2$. Therefore it follows from Lemma \ref{lemma_70} that  
$d(n)=\delta(n\beta)$, and the lemma is proved in this case.

Next assume that $\delta(\beta)=-2$.
By Lemma \ref{lemma_69}, the local Lie algebra 
$V(\beta)$ admits 
$\widetilde{\mathcal{P}}_{-2,u}^{loc}$ as a $W$-section,
for some $u\in\C/\Z$.
However $\widetilde{\mathcal{P}}_{-2,u}^{loc}$ is the local part
of weakly $\Z$-graded Lie algebra
$\widetilde{\mathcal{P}}^+(u)$, which is
minimal by Lemma \ref{lemma_68}. Hence $\widetilde{\mathcal{P}}^+(u)$ is a
$W$-section of $\mathcal{M}$. Hence the $W$-module
$\widetilde\Omega^{-n-1}_{nu}$ is a subquotient
of $\mathcal{M}(n\beta)$ for all $n\geq -1$. It follows
that $\deg \mathcal{M}(n\beta)$ is single valued for
all $n\geq 0$, and that  $\deg \mathcal{M}(n\beta)=-n-1$.
Thus we get
$\delta(n\beta)=-n-1$ for all $n\geq 0$
Since $\delta(n\beta)+\delta(-n\beta)=-2$, it follows that
$\delta(n\beta)=-n-1$ for all $n\in\Z$, and the
lemma is proved in this case.

The last case $\delta(\beta)=0$  is identical to
the previous one, because $\delta(-\beta)=-2$.
\end{proof}
\section{The degree function $\delta$ is affine}\label{sect_17}

As before, fix once for all, a primitive element $\alpha\in\Lambda$ such
that  $l(\alpha)\neq 0$. It follows from previous considerations that
the Lie subalgebra
$\mathcal{L}(\alpha)=\oplus_{n\in\Z}\,\mathcal{L}_{n\alpha}$
is isomorphic to $W$. For simplicity, any $\Z\alpha$-coset
${\beta}\in\Lambda/\Z\alpha$ will be called a \textit{coset}.
For a coset ${\beta}$, set
$\mathcal{M}({\beta})=
\oplus_{\gamma\in{\beta}}\,\mathcal{L}_{\gamma}$. We have

$$[\mathcal{M}(\beta_1),\mathcal{M}(\beta_2)]
\subset \mathcal{M}(\beta_1+\beta_2)$$

\noindent for any cosets $\beta_1,\,\beta_2$.

\begin{lemma}\label{lemma_72} Let $\beta,\,\gamma$
be cosets. Assume that
$\delta(\beta)\neq 0$. Then we have

\centerline{$[\mathcal{M}({\beta}), \mathcal{M}({\gamma})]\neq 0$.} 
\end{lemma}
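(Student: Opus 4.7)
I would argue by contradiction: suppose $[\mathcal{M}(\beta),\mathcal{M}(\gamma)]=0$ while $\delta(\beta)\neq 0$, and deduce that $\mathcal{M}(\gamma)$ becomes a proper non-zero graded $\mathcal{L}_H$-submodule of $\mathcal{L}_{\bar\gamma}=\bigoplus_{k\in\Z}\mathcal{M}(\gamma+k\beta)$, where $H=\Z\alpha+\Z\beta_0$ for a lift $\beta_0\in\Lambda$ of $\beta$; the contradiction will then be extracted from the minimality results of Lemmas \ref{lemma_68}--\ref{lemma_71}.

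The first step collects two non-vanishing ingredients. By the Main Lemma (Lemma \ref{lemma_62}), $[\mathcal{M}(\gamma),\mathcal{M}(-\gamma)]$ is a non-zero $W$-submodule of the simple $W$-module $W=\mathcal{L}(\alpha)$, hence equals $W$. Next, the hypothesis $\delta(\beta)\neq 0$, combined with the constraint $\delta(\beta)+\delta(-\beta)=-2$ from Lemma \ref{lemma_70} and the Kaplansky--Santharoubane classification (see Section \ref{sect_13.1}), forces the $W$-action on $\mathcal{M}(\beta)$ to satisfy $[W,\mathcal{M}(\beta)]=\mathcal{M}(\beta)$: if $\mathcal{M}(\beta)$ admitted a trivial $W$-submodule then one would land in the AB-family and the degree pair $(\delta(\beta),\delta(-\beta))$ would force $\delta(\beta)=0$, contrary to assumption.

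In the second step I propagate the centralizing condition by iterated Jacobi computations. A first Jacobi yields
\[ \mathcal{M}(\beta)=[W,\mathcal{M}(\beta)]=\bigl[\,[\mathcal{M}(\gamma),\mathcal{M}(-\gamma)],\mathcal{M}(\beta)\bigr]=\bigl[\mathcal{M}(\gamma),[\mathcal{M}(-\gamma),\mathcal{M}(\beta)]\bigr], \]
so in particular $[\mathcal{M}(-\gamma),\mathcal{M}(\beta)]\neq 0$. Next, the direct Jacobi identity $[[b_1,b_2],m]=[b_1,[b_2,m]]-[b_2,[b_1,m]]=0$ for $b_1,b_2\in\mathcal{M}(\beta)$ and $m\in\mathcal{M}(\gamma)$ gives $[[\mathcal{M}(\beta),\mathcal{M}(\beta)],\mathcal{M}(\gamma)]=0$; since the antisymmetric $W$-equivariant bracket on a tensor density of non-zero degree is non-vanishing (cf.\ Section \ref{sect_12.5}), one has $[\mathcal{M}(\beta),\mathcal{M}(\beta)]=\mathcal{M}(2\beta)$, whence $[\mathcal{M}(2\beta),\mathcal{M}(\gamma)]=0$. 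The same argument applied inductively yields $[\mathcal{M}(n\beta),\mathcal{M}(\gamma)]=0$ for every non-zero integer $n$.

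Therefore $\mathcal{M}(\gamma)$ is preserved by the degree-zero component $\mathcal{L}(\alpha)$ of $\mathcal{L}_H$ and annihilated by every other homogeneous component, so it is a non-zero graded $\mathcal{L}_H$-submodule of $\mathcal{L}_{\bar\gamma}$, proper because $\mathcal{M}(\gamma+\beta)\neq 0$ lies outside it. The contradiction comes from an irreducibility statement for this module: by Lemma \ref{lemma_69} (and the extension analogous to Lemma \ref{lemma_71}), $\mathcal{L}_H$ admits $\widetilde{\mathcal{P}}_{\delta(\beta),u}$ as a $W$-section (or $\widetilde{\mathcal{P}}^+(u)$ when $\delta(\beta)=-2$), which is minimal; the parallel minimality analysis applied to the module $\mathcal{L}_{\bar\gamma}$ rules out ``frozen'' graded submodules concentrated in a single $\Z$-graded piece of $\mathcal{L}_{\bar\gamma}$. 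The principal obstacle will be this final minimality step, carried out for the module rather than for the algebra itself, and the treatment of the degenerate case $\delta(\beta)=-2$ where $\widetilde{\mathcal{P}}_{\delta(\beta),u}$ must be replaced by $\widetilde{\mathcal{P}}^+(u)$.
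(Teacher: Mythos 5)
There is a genuine gap, and it sits exactly where you flag ``the principal obstacle''. Your plan is to show, under the contradiction hypothesis $[\mathcal{M}(\beta),\mathcal{M}(\gamma)]=0$, that $\mathcal{M}(\gamma)$ is a proper graded submodule of $\bigoplus_k\mathcal{M}(\gamma+k\beta)$ over $\bigoplus_n\mathcal{M}(n\beta)$. Two things break. First, your Jacobi induction only propagates the vanishing to \emph{positive} multiples of $\beta$: from $[\mathcal{M}(\beta),\mathcal{M}(\gamma)]=0$ you can hope to deduce $[\mathcal{M}(n\beta),\mathcal{M}(\gamma)]=0$ for $n\geq 1$ (and even that needs the surjectivity $[\mathcal{M}(\beta),\mathcal{M}(n\beta)]=\mathcal{M}((n+1)\beta)$, which is stronger than what the section/minimality analysis gives --- the paper's Step 1 only yields containment of $[L_0,\mathcal{M}(2\beta)]$, and for reducible $\mathcal{M}(2\beta)$ the image can miss a line). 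Nothing forces $[\mathcal{M}(-\beta),\mathcal{M}(\gamma)]=0$, so the submodule claim is unsupported. Second, even granting it, there is no irreducibility statement for the $\bigoplus_n\mathcal{M}(n\beta)$-module $\bigoplus_k\mathcal{M}(\gamma+k\beta)$ anywhere in reach: Lemma \ref{lemma_68} is a minimality statement about \emph{ideals} of the Lie algebras $\widetilde{\mathcal{P}}_{\delta,u}$, not about submodules of an arbitrary coset module, so the contradiction you want to extract is a missing idea, not a routine verification. (Separately, the assertion $[W,\mathcal{M}(\beta)]=\mathcal{M}(\beta)$ is not justified at this stage: irreducibility of $\mathcal{M}(\beta)$ is Lemma \ref{lemma_75}, which comes \emph{after} and depends on this lemma, and for a module of the $A$-family the vector $u_0$ does not lie in $[W,A_{a,b}]$; your claim that a trivial submodule forces $\delta(\beta)=0$ also needs care since $AB$-family modules have degree $\{0,1\}$.)

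The paper's route avoids all of this by changing the direction of the Witt algebra. Step 1 uses the $W$-section $\widetilde{\mathcal{P}}_{\delta(\beta),u}$ (or $\widetilde{\mathcal{P}}^+(u)$ when $\delta(\beta)=-2$) only to show $[\mathcal{M}(\beta),\mathcal{M}(\beta)]\supset[L_0,\mathcal{M}(2\beta)]$, which is where $\delta(\beta)\neq 0$ enters. Then, picking $\beta_0\in\beta$ and $\gamma_0\in\gamma$ with $l(\beta_0)\neq 0$ and $l(\gamma_0)\neq 0$, the Main Lemma \ref{lemma_62} and Lemma \ref{lemma_46} give $\mathcal{L}(\beta_0)\simeq W$, so that $\bigoplus_n\mathcal{L}_{\gamma_0+n\beta_0}$ lies in $\mathcal{S}(W)$ for \emph{this} copy of $W$; Kaplansky--Santharoubane then forces $[L_{\beta_0},L_{\gamma_0}]\neq 0$ or $[L_{2\beta_0},L_{\gamma_0}]\neq 0$, and since $L_{\beta_0}\in\mathcal{M}(\beta)$ and $L_{2\beta_0}\in[\mathcal{M}(\beta),\mathcal{M}(\beta)]$, either alternative yields $[\mathcal{M}(\beta),\mathcal{M}(\gamma)]\neq 0$ directly, with no contradiction argument and no control over negative multiples needed. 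I would rework your proof around this switch of Witt algebras.
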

\begin{proof}
Assume that $\delta(\beta)\neq 0$. 
Set $\mathcal{M}= \oplus_{n\in\Z}\,\mathcal{M}_n$, where  
$\mathcal{M}_n=\mathcal{M}(n\beta)$ for all integer $n$. Then
$\mathcal{M}$ is a weakly $\Z$-graded Lie algebra.

\textit{Step 1:}  We claim that:

\centerline{$[\mathcal{M}(\beta),\mathcal{M}(\beta)]\supset
[L_0,\mathcal{M}(2\beta)]$.}

By Lemma \ref{lemma_69}, the local Lie algebra 
$V(\beta):=\mathcal{M}(-\beta)\oplus\mathcal{L}(\alpha)\oplus
\mathcal{M}(\beta)$ admits 
$\widetilde{\mathcal{P}}_{\delta(\beta),u}^{loc}$ as a $W$-section,
for some $u\in\C/\Z$.

If $\delta(\beta)\neq -2$, the 
weakly $\Z$-graded Lie algebra $\widetilde{\mathcal{P}}_{\delta(\beta),u}$ is
minimal. Thus by Lemma \ref{lemma_64}, $\widetilde{\mathcal{P}}_{\delta(\beta),u}$ is a 
$W$-section of $\mathcal{M}$. Similarly 
the  weakly $\Z$-graded Lie algebra $\widetilde{\mathcal{P}}^+(u)$
is the minimal Lie algebra associated to the local
Lie algebra $\widetilde{\mathcal{P}}_{-2,u}^{loc}$. Thus
$\widetilde{\mathcal{P}}^+(u)$ is a $W$-section of $\mathcal{M}$ if
$\delta(\beta)= -2$

In both case, we have 
$[\widetilde{\Omega}^{\delta(\beta)}_u, \widetilde{\Omega}^{\delta(\beta)}_u]
=\widetilde{\Omega}^{2\delta(\beta)+1}_{2u}$. Since 
$\widetilde{\Omega}^{2\delta(\beta)+1}_{2u}$ is a $W$-subquotient of
$\mathcal{M}_2$, the $\C$-graded vector spaces
$[L_0,\widetilde{\Omega}^{2\delta(\beta)+1}_{2u}]$ and 
$[L_0,\mathcal{M}(2\beta)]$ are isomorphic, the claim follows.

\textit{Step 2:} Let $M\in \mathcal{S}(W)$ and let
$m\in M$ be a non-zero vector such that
$L_0.m=xm$ for some $x\neq 0$. Then we have

\centerline{ $L_1.m\neq 0$ or $L_2.m\neq 0$.}

This claim follows easily from Kaplansky-Santharoubane Theorem.

\textit{Step 3:} Fix $\beta_0\in\beta$ and $\gamma_0\in\gamma$
with $l(\beta_0)\neq 0$ and $l(\gamma_0)\neq 0$. We claim that

\centerline{$[L_{\beta_0},L_{\gamma_0}]\neq 0$ or
$[L_{2\beta_0},L_{\gamma_0}]\neq 0$.}

By Lemma \ref{lemma_62}, $\beta_0$ lies in $\Sigma$, and by Lemma \ref{lemma_46} the Lie subalgebra 
$\mathcal{L}(\beta_0)=\oplus_{n\in\Z}\,\mathcal{L}_{n\beta_0}$
is isomorphic to $W$. Thus the
$\mathcal{L}(\beta_0)$-module 

\centerline{$\mathcal{M}(\beta_0,\gamma_0) :=
\oplus_{n\in\Z}\,\mathcal{L}_{\gamma_0+n\beta_0}$,}

\noindent belongs to $\mathcal{S}(W)$. Since $l(\gamma_0)\neq 0$, it follows
from Step 2 that 
$[L_{\beta_0},L_{\gamma_0}]\neq 0$ or
$[L_{2\beta_0},L_{\gamma_0}]\neq 0$.

\textit{Final step:} Let $\beta_0$ and $\gamma_0$ as before. 
By definition,  $L_{\beta_0}$ belongs to $\mathcal{M}(\beta)$. By Step 1, 
$[\mathcal{M}(\beta),\mathcal{M}(\beta)]$ contains
$L_{2\beta_0}$. Thus by the previous claim,
we have 
$[\mathcal{M}(\beta),L_{\gamma_0}]\neq 0$
or $[[\mathcal{M}(\beta),\mathcal{M}(\beta)],L_{\gamma_0}]\neq 0$.
In both cases, we have
$[\mathcal{M}(\beta),L_{\gamma_0}]\neq 0$ and therefore
$[\mathcal{M}(\beta),\mathcal{M}(\gamma)]\neq 0$. 
\end{proof}

\begin{lemma}\label{lemma_73} Let $\beta,\,\gamma$ be cosets. We have:

\centerline {$\delta(\beta+\gamma)=\delta(\beta)+\delta(\gamma)+1$.}
\end{lemma}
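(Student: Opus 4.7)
The plan is to scale up the cosets and exploit the bounded range of degrees for $W$-equivariant bilinear maps between generalized tensor density modules. For each positive integer $n$, the Lie bracket restricts to a $W$-equivariant bilinear map
\[
\mu_n : \mathcal{M}(n\beta) \times \mathcal{M}(n\gamma) \to \mathcal{M}(n(\beta+\gamma)).
\]
Setting $K := \delta(\beta+\gamma) - \delta(\beta) - \delta(\gamma)$, the goal is to prove $K = 1$.

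First I would dispose of the trivial cases. If $\beta = 0$, then $\mathcal{M}(0) = \mathcal{L}(\alpha) \simeq W$ has $\delta(0) = -1$, and both sides of the identity reduce to $\delta(\gamma)$; the case $\gamma = 0$ is symmetric. If $\beta + \gamma = 0$, then by Lemma \ref{lemma_70} we have $\delta(\beta) + \delta(-\beta) = -2$, so $\delta(\beta) + \delta(\gamma) + 1 = -1 = \delta(0)$, as required. Assume henceforth that $\beta, \gamma, \beta+\gamma$ are all nonzero.

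Next, a direct computation using Lemma \ref{lemma_71} to expand each of the three terms yields
\[
k_n := \delta(n(\beta+\gamma)) - \delta(n\beta) - \delta(n\gamma) = n(K-1) + 1.
\]
If $K \neq 1$, then $|k_n - 1| = n \, |K-1|$ tends to infinity with $n$, so for all sufficiently large $n$ the number $k_n$ is either non-real or is a real number outside $[-2, 3]$; in particular $k_n \notin \{-2, -1, 0, 1, 2, 3\}$. Viewing $\mathcal{M}(n\beta), \mathcal{M}(n\gamma), \mathcal{M}(n(\beta+\gamma))$ as objects of $\mathcal{S}^*(W)$ with respective degrees $\delta(n\beta), \delta(n\gamma), \delta(n(\beta+\gamma))$, Lemma \ref{lemma_50} then forces $\mu_n = 0$ for such $n$.

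On the other hand, Lemma \ref{lemma_72} gives $\mu_n \neq 0$ whenever $\delta(n\beta) \neq 0$, and the equation $\delta(n\beta) = n(\delta(\beta)+1) - 1 = 0$ admits at most one positive integer solution (and none when $\delta(\beta) = -1$). Consequently arbitrarily large values of $n$ with $\mu_n \neq 0$ do exist, contradicting the conclusion of the previous paragraph. This forces $K = 1$, as desired. The entire argument hinges on the scaling identity $k_n = n(K-1) + 1$; once this is in hand, there is no substantive obstacle — the conclusion follows by combining Lemmas \ref{lemma_50}, \ref{lemma_71}, and \ref{lemma_72}, and the only mild subtlety is verifying that the non-vanishing hypothesis of Lemma \ref{lemma_72} persists for arbitrarily large $n$, which is immediate from the explicit scalar formula for $\delta(n\beta)$.
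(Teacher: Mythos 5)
Your proof is correct, and it takes a genuinely different route through the same three ingredients. The paper works at a single well-chosen scale $k$ (picked via Lemma \ref{lemma_71} so that none of $\delta(\pm k\beta)$, $\delta(k\gamma)$, $\delta(k(\beta+\gamma))$ is $0$ or $1$) and plays two maps against each other: $\mu^+:\mathcal{M}(k\beta)\times\mathcal{M}(k\gamma)\rightarrow\mathcal{M}(k(\beta+\gamma))$ and $\mu^-:\mathcal{M}(-k\beta)\times\mathcal{M}(k(\beta+\gamma))\rightarrow\mathcal{M}(k\gamma)$. Since $\deg\mu^++\deg\mu^-=-\delta(k\beta)-\delta(-k\beta)=2$ by Lemma \ref{lemma_70}, and the detailed case list in Lemma \ref{lemma_50} excludes degrees $-2$, $-1$ and $2$ when no module has degree $0$ or $1$, both degrees must be $1$, which gives the identity at scale $k$ and hence, by Lemma \ref{lemma_71}, at scale $1$. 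You instead use the single family $\mu_n$ together with the linear growth $k_n=n(K-1)+1$ (which you compute correctly) and only the coarse bound $\deg\mu\in[-2,3]$ from Lemma \ref{lemma_50}; the contradiction with Lemma \ref{lemma_72} for large $n$ then forces $K=1$. Your version needs less of Lemma \ref{lemma_50} — just the interval bound, not the classification of triples — at the harmless cost of invoking Lemma \ref{lemma_72} for infinitely many $n$ rather than once, and your observation that $\delta(n\beta)=0$ has at most one positive integer solution correctly secures that non-vanishing hypothesis. The only point worth making explicit is that $\delta(n\beta)$, $\delta(n\gamma)$, $\delta(n(\beta+\gamma))$ are legitimate degree choices in $\mathcal{S}^\ast(W)$, which is exactly what Lemma \ref{lemma_70} guarantees; with that said, the argument is complete.
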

\begin{proof}
\textit{Step 1:}
Let $\eta$ be any coset. By Lemma \ref{lemma_71}, we have

\centerline{$\delta(n\eta)=n(\delta(\eta)+1)-1$,}

\noindent  so we have

\centerline{$\delta(n\eta)\equiv -1$ for all $n$ or
$\vert \delta(n\eta)\vert\rightarrow\infty$ when $n\rightarrow\infty$.}

\noindent So there exists $k\neq0$ such that the set
$\{\delta(k\beta),\,\delta(-k\beta),
\,\delta(k\gamma),\,\delta(k(\beta+\gamma))\}$
contains neither $0$ nor $1$.

\textit{Step 2:} Let $k$ as before and let

$\mu^+: \mathcal{M}(k\beta)\times \mathcal{M}(k\gamma)
\rightarrow \mathcal{M}(k(\beta+\gamma))$, and

$\mu^-: \mathcal{M}(-k\beta)\times \mathcal{M}(k(\beta+\gamma))
\rightarrow \mathcal{M}(k\gamma)$ 

\noindent be the  $W$-equivariant bilinear maps
induced by the Lie bracket. By Lemma \ref{lemma_72},
$\mu^\pm$ are non-zero. We have
\begin{align*}
\deg\mu^+ +\deg\mu^-
=
&[\delta(k(\beta+\gamma))-\delta(k\beta)-\delta(k\gamma)]
+[\delta(k\gamma)-\delta(-k\beta)-\delta(k(\beta+\gamma))] \\
=
&-\delta(k\beta)-\delta(-k\beta).
\end{align*}
So it follows from  Lemma \ref{lemma_70} that

\centerline{ $\deg\mu^+ +\deg\mu^-=2$.}

Observe that, in the list of Lemma \ref{lemma_50},
the bilinear maps of degree
$-2$, $-1$ or $2$ involves at least one module of degree
$0$ or $1$. Thus it follows from the choice of
$k$ that the degrees of
$\mu^\pm$ are $0, 1$ or $3$. 
So the only solution of the previous equation is:

\centerline{
$\deg\mu^+ =\deg\mu^-=1$,}

\noindent and
therefore $\delta(k(\beta+\gamma))=\delta(k\beta)+\delta(k\gamma)+1$.
It follows from Lemma \ref{lemma_71} that:

$k(\delta(\beta+\gamma)+1)-1= 
[k(\delta(\beta)+1)-1]+[k(\delta(\gamma)+1)-1]+1$,

\noindent from which the relation
$\delta(\beta+\gamma)=\delta(\beta)+\delta(\gamma)+1$
follows. 
\end{proof}

\begin{lemma}\label{lemma_74} 
Let $M$, $N$ in $\mathcal{S}(W)$.
\begin{enumerate}
\item[(i)] If there is a non-zero $W$-morphism $\mu:M\rightarrow N$,
then $\deg M=\deg N$.
\item[(ii)] If there is a non-zero $W$-invariant bilinear map 
$\nu: M\times
N\rightarrow \C$, then $\deg M=1-\deg N$.
\end{enumerate}
\end{lemma}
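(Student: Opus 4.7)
\emph{Plan:} The plan is to reduce part (ii) to part (i) via graded duality, and to prove part (i) by a direct case analysis based on the Kaplansky--Santharoubane classification of $\mathcal{S}(W)$ recalled in Section \ref{sect_13.1}.

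For part (ii), note first that $L_0$-invariance of $\nu$ forces $(x+y)\nu(m,n)=0$ for $m\in M_x$, $n\in N_y$, so $\nu$ is homogeneous of degree zero in the graded sense and induces a non-zero $W$-equivariant map $\hat\nu:M\rightarrow N'$ into the graded dual $N'=\bigoplus_x(N_{-x})^{*}$. A direct computation on the basis $(u_x^\delta)$ of $\Omega_s^\delta$ shows $(\Omega_s^\delta)'\simeq \Omega_{-s}^{1-\delta}$, and through Kaplansky--Santharoubane this extends to all of $\mathcal{S}(W)$ and yields $N'\in\mathcal{S}(W)$ with $\deg N' = 1-\deg N$ (the reducible situation is automatic, since $1-\{0,1\}=\{0,1\}$). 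Applying part (i) to $\hat\nu$ then gives $\deg M = 1-\deg N$.

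For part (i), let $\mu:M\rightarrow N$ be a non-zero $W$-morphism and split into cases according to whether $M$, $N$ are irreducible. If both are irreducible, Schur's lemma makes $\mu$ an isomorphism, so $\deg M = \deg N$. If both are reducible, Kaplansky--Santharoubane places them in the $AB$-family, whence $\deg M = \deg N = \{0,1\}$ by definition. The two mixed cases can only occur vacuously: the sole proper composition factors of an $AB$-family module are $\C$ (with support $\{0\}$) and $\overline{A}$ (with support $\Z\setminus\{0\}$), and neither belongs to $\mathcal{S}(W)$ because every module in $\mathcal{S}(W)$ has support equal to an entire coset in $\C/\Z$. Consequently, if $M$ is irreducible and $N$ is reducible, then $\mu$ is injective and its image is a submodule of $N$ isomorphic to $M\in \mathcal{S}(W)$; this forces the image to be all of $N$, whence $M\simeq N$, contradicting the reducibility of $N$. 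The symmetric argument (surjectivity of $\mu$ and analysis of $\ker\mu$) disposes of the case where $M$ is reducible and $N$ is irreducible.

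The only technical input is Kaplansky--Santharoubane; the main minor obstacle is handling the multivalued convention for $\deg$ consistently, in particular verifying that $1-\{0,1\}=\{0,1\}$ so that the reducible-reducible situation is symmetric under graded duality. No further Lie-theoretic machinery is required.
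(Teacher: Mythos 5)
Your proof is correct and follows essentially the same route as the paper: the paper also splits (i) into the cases ``$\mu$ is an isomorphism'' versus ``both $M$ and $N$ are reducible (hence in the $AB$-family with degree $\{0,1\}$)'', and derives (ii) by passing to the graded dual via $\nu^\ast:M\rightarrow N'$ with $\deg N'=1-\deg N$. Your only addition is to spell out why the mixed irreducible/reducible cases cannot occur (the composition factors $\C$ and $\overline{A}$ of $AB$-family modules are not in $\mathcal{S}(W)$), a detail the paper leaves implicit.
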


\begin{proof}
If $\mu$ is an isomorphism,
the assertion is clear. Otherwise both $M$ and $N$ are reducible,
and  $\deg M=\deg N=\{0,1\}$. Thus Assertion (i) is proved. The bilinear
map $\nu$ gives rise to a non-zero morphism $\nu^\ast:M\rightarrow N'$,
thus we get $\deg M=\deg N'=1-\deg N$. 
\end{proof}

Recall that $\mathcal{L}(\alpha)$ is identified with $W$.

\begin{lemma}\label{lemma_75} For any coset $\beta$, the $W$-module 
$\mathcal{M}(\beta)$ is irreducible.
\end{lemma}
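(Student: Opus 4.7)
I argue by contradiction. Suppose $\mathcal{M}(\beta)$ is reducible for some coset $\beta$. Since $\mathcal{M}(0)=\mathcal{L}(\alpha)\simeq W$ is simple, necessarily $\beta\neq 0$. By the Kaplansky--Santharoubane Theorem (Section \ref{sect_13.1}), a reducible module in $\mathcal{S}(W)$ lies in the $AB$-family, so $\deg\mathcal{M}(\beta)=\{0,1\}$; Lemma \ref{lemma_70} then forces $\delta(\beta)\in\{0,1\}$ and $\delta(-\beta)\in\{-2,-3\}$, so in particular $\mathcal{M}(-\beta)$ is irreducible with single-valued degree. A reducible $AB$-family module singles out a distinguished nonzero vector $L_{\gamma_0}\in\mathcal{M}(\beta)$ of $L_0$-eigenvalue zero (where $\gamma_0\in\beta$ is the unique element with $l(\gamma_0)=0$); this vector either spans a trivial $W$-submodule ($B$-family case, so $[\mathcal{L}(\alpha),L_{\gamma_0}]=0$), or maps nontrivially to the trivial quotient via a $W$-equivariant surjection $\phi:\mathcal{M}(\beta)\to\C$ ($A$-family case).

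The heart of the argument is the $B$-family case. Since $L_{\gamma_0}$ is $W$-invariant, $\mathrm{ad}(L_{\gamma_0})$ is a $W$-module endomorphism of $\mathcal{L}$ of $W$-degree $0$ shifting the coset grading by $\beta$, and for each coset $\gamma$ it restricts to a $W$-equivariant map $\phi_\gamma:\mathcal{M}(\gamma)\to\mathcal{M}(\gamma+\beta)$. By Lemma \ref{lemma_74}(i), a nonzero such $\phi_\gamma$ requires $\deg\mathcal{M}(\gamma)\cap\deg\mathcal{M}(\gamma+\beta)\neq\emptyset$, and by Lemma \ref{lemma_73} the two degrees differ by $\delta(\beta)+1\in\{1,2\}$. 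Since each individual degree-set is either a single value or the set $\{0,1\}$ (diameter at most $1$), the shift $+2$ leaves no overlap; hence when $\delta(\beta)=1$, $\mathrm{ad}(L_{\gamma_0})\equiv 0$, making $L_{\gamma_0}$ central and contradicting the simplicity of $\mathcal{L}$. When $\delta(\beta)=0$, the same argument still yields $[L_{\gamma_0},\mathcal{M}(-\beta)]=0$ (since $\delta(-\beta)=-2$ avoids the $\{0,1\}$ overlap range), placing $L_{\gamma_0}$ in the left kernel of $\mu:\mathcal{M}(\beta)\times\mathcal{M}(-\beta)\to W$. Combining this kernel property with the explicit classification of non-degenerate $\mu$ in Lemma \ref{lemma_54} (specifically, the Poisson-realization formulas for the complementary-family maps, where a left-kernel element forces incompatibilities with the Lie bracket of $\mathcal{L}$) gives the required contradiction.

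The $A$-family case is handled by a parallel argument applied to the surjection $\phi$: one identifies the $\mu$ allowed by Lemma \ref{lemma_69} (now either $\mu\propto\beta^1_0$, $\mu\propto\beta^{-3}_0$, or the $A$-family branch of the complementary family), computes in the corresponding Poisson realization, and shows that the codimension-one $W$-submodule $\ker\phi\subset\mathcal{M}(\beta)$ generates a graded ideal $\mathcal{I}\subset\mathcal{L}$ that, by the degree-matching dictated by Lemma \ref{lemma_74} applied to the various maps $[\mathcal{L},\ker\phi]\to\mathcal{L}$, necessarily fails to contain $L_{\gamma_0}$. Then $\mathcal{I}$ is a nontrivial proper graded ideal, contradicting simplicity.

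The main obstacle I expect is the $\delta(\beta)=0$ sub-case (both $A$- and $B$-family), where degree matching alone is not decisive. Here the argument must invoke the explicit Poisson formulas from Lemma \ref{lemma_54} and track concrete brackets, much as in the proof of Lemma \ref{lemma_66}, where an iterated bracket computation was needed to exclude the commutative-product maps $\pi_s^\delta$. In the present setting, the analogous task is to exclude each of $\beta^1_0$, $\beta^{-3}_0$, and the complementary-family maps from appearing as $\mu$; this seems to require a short explicit verification on top of the degree-theoretic reasoning outlined above, combining the left-kernel information from $L_{\gamma_0}$ with antisymmetry and Jacobi to force a collision with $\dim\mathcal{L}_\lambda=1$.
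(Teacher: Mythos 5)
Your opening moves match the paper's: argue by contradiction, use Kaplansky--Santharoubane to place a reducible $\mathcal{M}(\beta)$ in the $AB$-family, split into the case of a trivial submodule (your $B$-family case, the paper's Step 2) and a codimension-one submodule (your $A$-family case, the paper's Step 3), and use Lemmas \ref{lemma_73} and \ref{lemma_74} to constrain where $\ad(L_{\gamma_0})$ (resp.\ the action on the invariant covector) can be non-zero. The sub-case $\delta(\beta)=1$, where the degree shift is $2$ and $L_{\gamma_0}$ becomes central, is correct. But the argument does not close in the remaining cases. In the $\delta(\beta)=0$ sub-case, the fact that $L_{\gamma_0}$ lies in the left kernel of $\mu:\mathcal{M}(\beta)\times\mathcal{M}(-\beta)\to W$ is \emph{not} a contradiction: for a map $\mu^l_M$ of the left complementary family with $M$ in the $B$-family, the left kernel is exactly $\Ker j$, i.e.\ the trivial submodule of $M$ --- so the situation you reach is precisely consistent with Lemma \ref{lemma_54}, and the ``short explicit verification'' you defer is the whole difficulty. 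Likewise, in the $A$-family case the assertion that the ideal generated by $\Ker\phi$ avoids $L_{\gamma_0}$ is unjustified: iterated brackets $[\mathcal{M}(\gamma),[\mathcal{M}(-\gamma),\Ker\phi]]$ land back in $\mathcal{M}(\beta)$ and nothing you have said prevents them from leaving $\Ker\phi$.

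The missing idea is global, not local. From the degree-matching you have already established, $\Supp\,[\mathcal{L},L_{\gamma_0}]$ is contained in the union of the cosets $\gamma$ with $\delta(\gamma)=1$, i.e.\ in a single level set of $\delta$. By Lemma \ref{lemma_73} the function $\delta$ is affine, and since reducibility forces $\delta(\beta)\in\{0,1\}\neq -1=\delta(0)$, it is non-constant, hence takes infinitely many values; therefore no finite set $F$ satisfies $\Lambda=F+\delta^{-1}(1)$. This contradicts Lemma \ref{lemma_43}, which guarantees $\Lambda=F+\Supp\,[\mathcal{L},L_{\gamma_0}]$ for some finite $F$. The codimension-one case is handled dually: the invariant covector $L^\ast_\mu$ has $\Supp\,\mathcal{L}.L^\ast_\mu$ confined to a level set of $\delta$, contradicting Lemma \ref{lemma_63}. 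This is how the paper finishes, and it avoids entirely the case-by-case Poisson computations you propose; you should replace your final steps with this covering argument.
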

\begin{proof}
\textit{Step 1:} Let $x\in\C$,  $x\neq -1$, and
set $\Lambda_x=\delta^{-1}(x)$. We claim that there are no
finite sets $F\subset \Lambda$ such that:
 
\centerline{$\Lambda=F+\Lambda_x$.}

\noindent Indeed, it can be assumed that $\Lambda_x\neq\emptyset$.
By the previous lemma, $\delta$ is affine.
Since $\delta(0)=-1$ and $\delta^{-1}(x)\neq\emptyset$, the
function $\delta$ takes infinitely many values. However $\delta$ takes
only finitely many values on $F+\Lambda_x$ for any finite set $F$.
So the claim is proved.

\textit{Step 2:}  First prove that $\mathcal{M}(\beta)$ does
not contain a trivial $W$-submodule. Assume otherwise. There exists
$\mu\in \beta$ such that 
$L_\mu$ is a  non-zero $W$-invariant vector.

For any coset $\gamma$ such that
$[L_\mu, \mathcal{M}(\gamma)]\neq 0$, the operator
$\ad L_\mu$ provides a non-zero $W$-morphism
from $\mathcal{M}(\gamma)\rightarrow \mathcal{M}(\beta+\gamma)$. So
the previous lemma implies

\centerline{ $\deg \mathcal{M}(\gamma)=\deg \mathcal{M}(\beta+\gamma)$.}

By Lemma \ref{lemma_73}, we have
$\delta(\beta+\gamma)=\delta(\beta)+\delta(\gamma)+1$. Since $\mathcal{M}(\beta)$ is reducible, we get $\delta(\beta)=0$ or $1$. So we get

\centerline{$\delta(\beta+\gamma)=\delta(\gamma)+1$, or
$\delta(\beta+\gamma)=\delta(\gamma)+2$.}

The unique solution of these equations is

\centerline{$\delta(\beta)=0$, $\delta(\gamma)=0$, and
$\delta(\beta+\gamma)=1$.}

It follows that $\Omega(\mu)\subset \Lambda_1$, where
$\Omega(\mu)=\Supp [\mathcal{L}, L_\mu]$. By the first step, there are  no
finite set $F$ such that  
$\Lambda=F+\Omega(\mu)$, which contradicts Lemma \ref{lemma_43}.

\textit{Step 3:}  Now prove that $\mathcal{M}(\beta)$ does
not contain a codimension one $W$-submodule. Assume otherwise. Let
$H\subset \mathcal{M}(\beta)$ be a codimension one $W$ subspace
and let $\mu$ be the unique element of
$\beta\setminus\Supp H$. It follows that 
$L^\ast_\mu$ is a $W$-invariant vector of the graded dual $\mathcal{L}'$ of
$\mathcal{L}$.

For any coset $\gamma$  such that
$[\mathcal{M}(\gamma),\mathcal{M}(\beta-\gamma)]\not\subset H$, the Lie bracket
provides a non-zero bilinear map:
$\mathcal{M}(\gamma)\times\mathcal{M}(\beta-\gamma)\rightarrow
\mathcal{M}(\beta)/H\simeq \C$. The previous lemma implies

\centerline{$\deg \mathcal{M}(\gamma)=1-\deg \mathcal{M}(\beta-\gamma)$.}

\noindent By Lemma \ref{lemma_73}, we have
 $\delta(\beta)=\delta(\gamma)+\delta(\beta-\gamma)+1$.
Since $\mathcal{M}(\beta)$ is reducible, we get $\delta(\beta)=0$ or $1$.
So we get

\centerline{$\delta(\gamma)+\delta(\beta-\gamma)=0$, or
$\delta(\gamma)+\delta(\beta-\gamma)=-1$.}

The unique solution of these equations is

\centerline{$\delta(\beta)=1$, $\delta(\gamma)=0$, and
$\delta(\beta-\gamma)=0$.}

It follows that $\mathcal{M}(\gamma).L^\ast_\mu\neq 0$ only if
$\delta(\gamma)=0$. It follows that
$\Omega^\ast(\mu)\subset \Lambda_0-\mu$, where
 $\Omega^\ast(\mu)=\Supp \mathcal{L}. L^\ast_\mu$. 
By the first step, there are  no
finite set $F$ such that  
$\Lambda=F+\Omega^\ast(\mu)$, which contradicts Lemma \ref{lemma_63}.

\textit{Step 4:} It follows that the $W$-module $\mathcal{M}(\beta)$ 
contains neither a trivial submodule nor a codimension one submodule.
By Kaplansky-Santha\-rou\-ba\-ne Theorem, the
$W$-module  $\mathcal{M}(\beta)$  is irreducible. 
\end{proof}

\section{The quasi-two-cocycle $c$}\label{sect_18}

In this section, a certain  $W$-equivariant  map
$\phi:\mathcal{L}\rightarrow \mathcal{P}$ is defined. Indeed, $\phi$ is not a Lie 
algebra morphism, but we have

\centerline{$\phi([L_\lambda,L_\mu])=c(\lambda,\mu) 
\{\phi(L_\lambda),\phi(L_\mu)\}$}

\noindent  for some $c(\lambda,\mu)\in \C^\ast$. 
The main result of the section, namely  
Lemma \ref{lemma_79}, shows that $c$ satisfies a two-cocycle identity.
However its validity domain is only a (big) subset of
$\Lambda^3$. Since $c$ is not an ordinary two-cocycle, it is informally called a
``quasi-two-cocycle".

Let $\mathcal{P}$ be the Poisson algebra of symbols of twisted
pseudo-differential operators. Recall that

$$\mathcal{P}=\oplus_{(\delta,u)\in{Par}}\,\,\Omega_u^\delta$$

\noindent Set $A=\C[z,z^{-1}]$,
$A_u=z^u A$ for $u\in \C/\Z$. As before, it will be 
convenient to represent elements in $\Omega_u^\delta$ as 
$f\partial^{-\delta}$, where $f\in A_{u-\delta}$.
Relative to the Poisson bracket, we have
$\{\Omega_u^\delta,\Omega_v^\eta\}\subset \Omega_{u+v}^{\delta+\eta+1}$.

Let $(\delta,u)$, $(\eta,v)$ and
$(\tau,w)\in Par$.

\begin{lemma}\label{lemma_76} Let $a, b, c$ be three scalars.
Assume that 

\centerline{$a\{X,\{Y,Z\}\}+b \{Z,\{X,Y\}\}+c \{Y,\{Z,X\}\}=0$}

\noindent for any $(X,Y,Z)\in
\Omega_u^{-\delta}\times\Omega_v^{-\eta}\times\Omega_w^{-\tau}$

If none of the four couples
$(\eta,\tau)$, $(\delta,\eta+\tau-1)$, $(\eta,\delta)$,
$(\tau,\delta+\eta-1)$ is $(0,0)$, then we have

\centerline{$a=b$.}
\end{lemma}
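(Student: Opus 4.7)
The plan is first to reduce the hypothesis using the Jacobi identity, then expand everything in the basis $(E_\lambda)$ of $\mathcal{P}$ and extract the coefficients of a polynomial identity in three scalar variables. The Poisson bracket on $\mathcal{P}$ satisfies
\[
\{X,\{Y,Z\}\}+\{Z,\{X,Y\}\}+\{Y,\{Z,X\}\}=0,
\]
so subtracting $c$ times this identity from the hypothesis and setting $a'=a-c$, $b'=b-c$ reduces the statement to showing that $a'=b'$, whenever $a'\{X,\{Y,Z\}\}+b'\{Z,\{X,Y\}\}=0$ identically.

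An element of $\Omega_u^{-\delta}$ of weight $x_1$ is a multiple of $E_{\lambda_1}$ with $\lambda_1+\rho=(x_1+\delta,\delta)$; similarly $\lambda_2+\rho=(x_2+\eta,\eta)$ and $\lambda_3+\rho=(x_3+\tau,\tau)$, with $x_1,x_2,x_3$ varying independently over the cosets $u,v,w$ of $\Z$ in $\C$. Using $\{E_\lambda,E_\mu\}=\langle\lambda+\rho\mid\mu+\rho\rangle\,E_{\lambda+\mu}$ with the standard symplectic form, a short computation yields
\[
\{X,\{Y,Z\}\}=(\tau x_2-\eta x_3)\bigl[(\eta+\tau-1)x_1-\delta(x_2+x_3)\bigr]E_{\lambda_1+\lambda_2+\lambda_3},
\]
\[
\{Z,\{X,Y\}\}=(\eta x_1-\delta x_2)\bigl[(\delta+\eta-1)x_3-\tau(x_1+x_2)\bigr]E_{\lambda_1+\lambda_2+\lambda_3}.
\]
Since each coset $u+\Z$, $v+\Z$, $w+\Z$ is Zariski-dense in $\C$, the assumed relation becomes a polynomial identity in $x_1,x_2,x_3$. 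Reading off the coefficients of the monomials $x_1^2$, $x_2^2$, $x_3^2$ and $x_1x_3$ yields
\[
b'\eta\tau=0,\qquad (b'-a')\delta\tau=0,\qquad a'\delta\eta=0,\qquad \eta\bigl[b'(\delta+\eta-1)-a'(\eta+\tau-1)\bigr]=0. \quad(\ast)
\]

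The conclusion $a'=b'$ then follows by a short case analysis driven by the four exclusion hypotheses. If $\delta\tau\neq 0$, the second equation of $(\ast)$ immediately gives $a'=b'$. If $\delta=0$ and $\tau\neq 0$, then $(\eta,\delta)\neq(0,0)$ forces $\eta\neq 0$, so the first equation yields $b'=0$; next $(\delta,\eta+\tau-1)\neq(0,0)$ forces $\eta+\tau-1\neq 0$, and the fourth equation gives $a'=0$. The symmetric case $\tau=0$, $\delta\neq 0$ uses $(\eta,\tau)\neq(0,0)$ and $(\tau,\delta+\eta-1)\neq(0,0)$ to extract $a'=b'=0$ from the third and fourth equations. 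The main obstacle is the degenerate case $\delta=\tau=0$, in which all three quadratic equations collapse to $0=0$; here the four hypotheses together force $\eta\neq 0$ and $\eta\neq 1$, and the fourth equation reduces to $\eta(\eta-1)(b'-a')=0$, still giving $a'=b'$. Thus the four exclusion conditions are exactly those needed to cover every case.
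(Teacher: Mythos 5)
Your proof is correct and is essentially the paper's argument: both expand the triple Poisson brackets, read off the coefficients of the quadratic monomials (your four relations are exactly the paper's coefficients $C$, $B$, $A$, $E$ after your substitution $a'=a-c$, $b'=b-c$), and run the identical four-way case analysis on the vanishing of $\delta$ and $\tau$ using the same exclusion hypotheses. The only differences are presentational — you pre-reduce with the Jacobi identity to eliminate $c$ and compute in the weight basis $E_\lambda$ (with the Zariski-density remark for cosets of $\Z$) instead of with symbols $f\partial^{\delta}$ — which streamlines but does not change the argument.
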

\begin{proof} 
Assume that none of the four couples is $(0,0)$.
Define three maps 
$\mu_i:\Omega_u^{-\delta}\times\Omega_v^{-\eta}\times\Omega_w^{-\tau}
\rightarrow\Omega^{-\delta-\eta-\tau+2}_{u+v+w},$ 
$(X,Y,Z)\mapsto\mu_i(X,Y,Z)$ as follows:

$\mu_1(X,Y,Z)=\{X,\{Y,Z\}\}$

$\mu_2(X,Y,Z)=\{Z,\{X,Y\}\}$

$\mu_3(X,Y,Z)=\{Y,\{Z,X\}\}$

\noindent Set $X=f\partial^\delta$, 
$Y=g\partial^\eta$ and $Z=h\partial^\tau$, where
$f\in A_{u+\delta}$, $g\in A_{v+\eta}$ and $h\in A_{w+\tau}$. We have:
\begin{align*}
&\mu_1(X,Y,Z) \\
=
&\{f\partial^\delta,
\{ g\partial^\eta,h\partial^\tau\}\} \\
=
&\{f\partial^\delta,(\eta gh'-\tau
g'h)\partial^{\eta+\tau-1}\} \\
=
&[\delta\eta fgh''-\delta\tau fg''h +(\delta\eta-\delta\tau) fg'h' 
-(\eta+\tau-1)\eta f'gh' \\
&+(\eta+\tau-1)\tau
f'g'h]\partial^{\delta+\eta+\tau-2}.
\end{align*}
So $[a\mu_1+b\mu_2+c\mu_3](X,Y,Z)$ can be expressed as:

$(A fgh'' +B fg''h +C f''gh +D fg'h' +E f'gh' +E
f'g'h)\partial^{\delta+\eta+\tau-2}$, 

\noindent where the six coefficients are given by:

$A=(a-c)\delta\eta$

$B=(b-a)\tau\delta$

$C=(c-b) \eta\tau$

$D=[a(\eta-\tau)-b(\delta+\eta-1)+c(\tau+\delta-1)]\delta$

$E=[-a(\tau+\eta-1)+b(\delta+\eta-1)+c(\tau-\delta)]\eta$

$F=[a(\tau+\eta-1)+b(\tau-\eta)-c(\delta+\tau-1)]\tau$.

The equation  $a\mu_1+b\mu_2+c\mu_3=0$ implies that the six coefficients
$A, B,\dots ,F$ are all zero.  

If $\delta\tau\neq 0$, the equality  $a=b$ follows from $B=0$.
Assume otherwise. Since $\delta$ or $\tau$ is zero, we have
$\eta\neq 0$.

If $\tau\neq 0$ but $\delta=0$, it follows from $C=0$ that
$b=c$. Therefore the identity $E=0$ implies that
$(b-a)(\tau+\eta-1)=0$. Since $\delta=0$, we get
$\tau+\eta-1\neq 0$ and thus $a=b$.

The case $\tau=0$ but $\delta\neq 0$ is strictly similar.

In the case $\tau=\delta=0$, from $E=0$ we get
$(b-a)(\eta-1)=0$. Since $(\delta,\eta+\tau-1)=(0,\eta-1)$,
it follows that $\eta-1\neq 0$ and therefore $a=b$.
\end{proof}

Let $\delta\in\C$, let $u,v,w\in\C/\Z$ and
let $\theta: \Omega_u^{0}\times\Omega_v^{0}\rightarrow\Omega^1_{u+v}$
be any $W$-equivariant bilinear map. 

\begin{lemma}\label{lemma_77}Let $a,b,c$ be three scalars, with $a\neq 0$. Assume that

\centerline{$a\{X,\theta(Y,Z)\}+b\{Z,\{X,Y\}\}+c\{Y,\{Z,X\}\}=0$}

\noindent for any $(X,Y,Z)\in
\Omega_u^{-\delta}\times\Omega_v^{0}\times\Omega_w^{0}$.
Then we have $\theta=0$.
\end{lemma}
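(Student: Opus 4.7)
The plan is to apply Lemma \ref{lemma_51} to cut $\theta$ down to a two-parameter family, and then read off the vanishing of both parameters as coefficients of elementary monomial shapes inside the identity. Since $\theta$ falls into the case $(\delta,\delta')=(0,0)$ of Lemma \ref{lemma_51}, there exist scalars $\alpha,\beta\in\C$ with $\theta(Y,Z)=\alpha Y\,\d Z+\beta Z\,\d Y$ for all $Y,Z$, so the problem reduces to showing $\alpha=\beta=0$.

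Next I would write everything in pseudo-differential symbol form: $X=f\partial^\delta$ with $f\in z^{u+\delta}\C[z,z^{-1}]$, $Y=g\in z^v\C[z,z^{-1}]$, $Z=h\in z^w\C[z,z^{-1}]$, so that $\theta(Y,Z)=\phi\,\partial^{-1}$ with $\phi:=\alpha gh'+\beta g'h$. Using the Leibniz-type bracket rule $\{p\partial^i,q\partial^j\}=(ipq'-jp'q)\partial^{i+j-1}$ (the only Poisson bracket of symbols compatible with $\{L_n,L_m\}=(m-n)L_{n+m}$), three short computations give $\{X,Y\}=\delta fg'\partial^{\delta-1}$, $\{Z,X\}=-\delta fh'\partial^{\delta-1}$, and then
\begin{align*}
\{Z,\{X,Y\}\}&=-\delta(\delta-1)fg'h'\partial^{\delta-2},\\
\{Y,\{Z,X\}\}&=\phantom{-}\delta(\delta-1)fg'h'\partial^{\delta-2},\\
\{X,\theta(Y,Z)\}&=\bigl(\delta f\phi'+f'\phi\bigr)\partial^{\delta-2}.
\end{align*}

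Substituting into the hypothesis and expanding $\phi'=(\alpha+\beta)g'h'+\alpha gh''+\beta g''h$ rewrites the identity as
\[
\bigl[a\delta\alpha\,fgh''+a\delta\beta\,fg''h+K\,fg'h'+a\alpha\,f'gh'+a\beta\,f'g'h\bigr]\partial^{\delta-2}=0,
\]
with $K=a\delta(\alpha+\beta)+(c-b)\delta(\delta-1)$. Specializing $f,g,h$ to monomials $z^s,z^t,z^r$ with $s\in u+\delta+\Z$, $t\in v+\Z$, $r\in w+\Z$ turns the vanishing coefficient into a polynomial in $(s,t,r)$ that must vanish on an infinite product of cosets of $\Z$, hence identically. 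The coefficients of the mixed monomials $sr$ and $st$ in this polynomial are exactly $a\alpha$ and $a\beta$, so the assumption $a\neq 0$ forces $\alpha=\beta=0$ and therefore $\theta=0$. No substantial obstacle is anticipated: the system is over-determined (five independent monomial shapes), and two of them alone isolate $\alpha$ and $\beta$; the only care required is in tracking the Poisson-bracket signs.
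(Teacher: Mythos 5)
Your proposal is correct and follows essentially the same route as the paper: reduce $\theta$ via Lemma \ref{lemma_51} to the two-parameter form $\alpha\, g h'\partial^{-1}+\beta\, g'h\partial^{-1}$, expand the three iterated brackets in symbol form, and read off $a\alpha=a\beta=0$ from the coefficients of $f'gh'$ and $f'g'h$. Your monomial specialization $(f,g,h)=(z^s,z^t,z^r)$ and the Zariski-density remark make explicit the linear-independence step that the paper leaves implicit, but the computation and conclusion are identical.
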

\begin{proof}
Define three maps 
$\mu_i:\Omega_u^{-\delta}\times\Omega_v^{0}\times\Omega_w^{0}
\rightarrow\Omega^{-\delta+2}_{u+v+w}, 
(X,Y,Z)\mapsto\mu_i(X,Y,Z)$ as follows:

$\mu_1(X,Y,Z)=\{X,\theta(Y,Z)\}$

$\mu_2(X,Y,Z)=\{Z,\{X,Y\}\}$

$\mu_3(X,Y,Z)=\{Y,\{Z,X\}\}$

As before, identify $\Omega^0_s$ with $A_s$.
By Lemma \ref{lemma_51}, there are two constants $A, B$
such that $\theta(g,h)=A h\d g+ B g\d h$. In term of symbols, we get
$\theta(g,h)=[A g'h+ B gh']\partial^{-1}$.

Set $X=f\partial^\delta$, 
$Y=g$ and $Z=h$, where
$f\in A_{u+\delta}$, $g\in A_v$ and $h\in A_w$. We have:
\begin{align*}
\mu_1(X,Y,Z)=
&\{f\partial^\delta,[A g'h+ B gh']\partial^{-1}\} \\
=
&[A\delta fg''h + B\delta fgh''+ \delta(A+B) fg'h' 
+Af'g'h +B f'gh']\partial^{\delta-2}. \\
\mu_2(X,Y,Z)=
&\{h,\{f\partial^\delta,g\}\} \\
=
&-\delta(\delta-1) fh'g'\partial^{\delta-2} \\
\mu_3(X,Y,Z)=
&\{g,\{h, f\partial^\delta\}\} \\
=
&\delta(\delta-1) fh'g'\partial^{\delta-2}.
\end{align*}
The coefficient of the monomial $f'g'h$ and the monomial $f'gh'$
in the expression $(a\mu_1+b\mu_2+c\mu_3)(X,Y,Z)$ are respectively $A$ and
$B$. Thus $A=B=0$ and so $\theta$ vanishes.
\end{proof}

Let $\mathcal{L}$ be a non-integrable  Lie algebra in the class $\mathcal{G}$.
As before, fix once for all a primitive element $\alpha\in\Lambda$ such
that  $l(\alpha)\neq 0$. One can normalize $L_0$ in a way that
$l(\alpha)=1$.

By Lemmas \ref{lemma_46} and \ref{lemma_62}, the Lie subalgebra
$\mathcal{L}(\alpha)=\oplus_{n\in\Z}\,\mathcal{L}_{n\alpha}$
is isomorphic to $W$. 
For any $\Z\alpha$-coset $\beta$, set
$\mathcal{M}(\beta)= \oplus_{\gamma\in\beta}\,\mathcal{L}_{\gamma}$. 
For such $\beta$, denote by $s(\beta)$ the 
spectrum of $L_0$ on $\mathcal{M}(\beta)$. It is clear that
$s(\beta)$ is exactly a $\Z$-coset, therefore
$s(\beta)$ is a well defined element of $\C/\Z$.

By Lemma \ref{lemma_75}, the $W$-module $\mathcal{M}(\beta)$ is
irreducible, and by Kaplansky-Santharoubane Theorem, there is
an isomorphism of
$W$-modules 

\centerline{$\phi_\beta:\mathcal{M}(\beta)\rightarrow 
\Omega^{\delta(\beta)}_{s(\beta)}$.}

\noindent  Fix once for all
the isomorphism $\phi_\beta$ for all $\beta\in \Lambda/\Z\alpha$.
Let $\phi:\mathcal{L}\rightarrow \mathcal{P}$ be the map whose
restriction to each $\mathcal{M}(\beta)$ is
$\phi_\beta$. Thus $\phi$ is a morphism of $W$-modules. As it will be seen
later on, some modification of $\phi$ will be a morphism of Lie algebras.

Let $\beta$, $\gamma$ be two $\Z\alpha$-cosets. 
It is clear that 
$[\mathcal{M}(\beta),\mathcal{M}(\gamma)]\subset \mathcal{M}(\beta+\gamma)$,
so the Lie bracket provides a morphism of $W$-modules

\centerline{$[~,~]:\mathcal{M}(\beta)\times \mathcal{M}(\gamma)
\rightarrow \mathcal{M}(\beta+\gamma)$. }

 \noindent By Lemma \ref{lemma_73}, we have
$\delta(\beta+\gamma)=\delta(\beta)+\delta(\gamma)+1$.
Therefore the bracket of symbols provides another morphism of
$W$-modules:

\centerline{$\{~,~\}: \Omega^{\delta(\beta)}_{s(\beta)}\times
\Omega^{\delta(\gamma)}_{s(\gamma)} 
\rightarrow\Omega^{\delta(\beta+\gamma)}_{s(\beta+\gamma)}$.}

\noindent Thus  we get a diagram
of $W$-modules :
\[ \UseTips
\newdir{ >}{!/-5pt/\dir{>}}
\xymatrix @=1pc @*[r]
{
    \mathcal{M}(\beta)\times \mathcal{M}(\gamma) \ar[rrr]^{\phantom{abcdefg}[~,~]} 
    \ar[dd]^{\phi_\beta\times \phi_\gamma}
    &&& \mathcal{M}(\beta+\gamma) \ar[dd]^{\phi_{\beta+\gamma}} \\
    &&& \\
    \Omega_{s(\beta)}^{\delta(\beta)} \times \Omega_{s(\gamma)}^{\delta(\gamma)} \ar[rrr]^{\phantom{abcdef}\{~,~\}} &&& 
    \Omega_{s(\beta+\gamma)}^{\delta(\beta+\gamma)} \\
} \]
\noindent This diagram is almost commutative:

\begin{lemma}\label{lemma_78} There exists
 a function $c:\Lambda/\Z\alpha\times 
\Lambda/\Z\alpha\rightarrow \C^\ast$
such that 

\centerline{$\phi([X,Y])=
c(\beta,\gamma)\{\phi(X),\phi(Y)\}$}

\noindent for any $\beta,\,\gamma\in \Lambda/\Z\alpha$ and any
$X,\,Y\in \mathcal{M}(\beta)\times \mathcal{M}(\gamma)$.
Moreover 

\centerline{
$c(\beta,\gamma)=c(\gamma,\beta)$}

\noindent if $(\delta(\beta),\delta(\gamma))\neq (0,0)$.
\end{lemma}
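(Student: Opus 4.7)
The plan is to transport the Lie bracket through $\phi$ and invoke the classification of degree-$1$ $W$-equivariant bilinear maps given by Lemma~\ref{lemma_51}. For each pair of cosets $\beta,\gamma$, the Lie bracket restricts to a $W$-equivariant bilinear map $\mathcal{M}(\beta)\times\mathcal{M}(\gamma)\to\mathcal{M}(\beta+\gamma)$, which via $\phi_\beta,\phi_\gamma,\phi_{\beta+\gamma}$ becomes a $W$-equivariant bilinear map
\[
\mu_{\beta,\gamma}:\Omega^{\delta(\beta)}_{s(\beta)}\times\Omega^{\delta(\gamma)}_{s(\gamma)}\longrightarrow\Omega^{\delta(\beta+\gamma)}_{s(\beta+\gamma)}.
\]
By Lemma~\ref{lemma_73}, $\mu_{\beta,\gamma}$ has degree exactly $1$, placing us in the setting of Lemma~\ref{lemma_51}.

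In the generic case $(\delta(\beta),\delta(\gamma))\neq(0,0)$, Lemma~\ref{lemma_51}(i) gives a one-dimensional space of such maps, spanned by the (non-zero) Poisson bracket $B^{\delta(\beta),\delta(\gamma)}_{s(\beta),s(\gamma)}$. I would define $c(\beta,\gamma)$ to be the unique scalar with $\mu_{\beta,\gamma}=c(\beta,\gamma)\,B^{\delta(\beta),\delta(\gamma)}_{s(\beta),s(\gamma)}$. Lemma~\ref{lemma_72} (applied to whichever of $\beta,\gamma$ has non-zero $\delta$, using antisymmetry of the bracket when needed) gives $[\mathcal{M}(\beta),\mathcal{M}(\gamma)]\neq 0$, so $c(\beta,\gamma)\neq 0$. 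The symmetry $c(\beta,\gamma)=c(\gamma,\beta)$ is then automatic from $[X,Y]=-[Y,X]$ and $\{x,y\}=-\{y,x\}$.

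The real work is in the exceptional case $\delta(\beta)=\delta(\gamma)=0$: here $B^{0,0}_{s(\beta),s(\gamma)}$ vanishes identically, and Lemma~\ref{lemma_51}(ii) only fixes $\mu_{\beta,\gamma}$ up to a scalar multiple of the \emph{non-Poisson} antisymmetric map $(f,g)\mapsto f\d g-g\d f$ (the coefficients $a,b$ being forced to satisfy $b=-a$ by antisymmetry). To kill this spurious summand I would combine the Jacobi identity with Lemma~\ref{lemma_77}. Set $\eta=-(\beta+\gamma)$; by Lemma~\ref{lemma_71}, $\delta(\eta)=-3$. For arbitrary $X\in\mathcal{M}(\beta),Y\in\mathcal{M}(\gamma),Z\in\mathcal{M}(\eta)$, the Jacobi identity in $\mathcal{L}$ pushes through $\phi$ to an identity in $\mathcal{P}$: the five pairs of cosets appearing in the inner and outer brackets, namely $(\eta,\beta),(\gamma,\eta),(\gamma,-\gamma),(\beta,-\beta),(\eta,\beta+\gamma)$, all have at least one non-zero $\delta$, so the generic-case formula applies and the translated identity reads
\[
c(\eta,\beta+\gamma)\{z,\mu_{\beta,\gamma}(x,y)\}+c(\gamma,-\gamma)c(\eta,\beta)\{y,\{z,x\}\}+c(\beta,-\beta)c(\gamma,\eta)\{x,\{y,z\}\}=0
\]
for all $(z,x,y)\in\Omega^{-3}_{s(\eta)}\times\Omega^0_{s(\beta)}\times\Omega^0_{s(\gamma)}$. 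This is precisely the hypothesis of Lemma~\ref{lemma_77}, with $\theta=\mu_{\beta,\gamma}$ and leading coefficient $a=c(\eta,\beta+\gamma)$, which is non-zero because $\delta(\eta)\neq 0$. Lemma~\ref{lemma_77} then forces $\mu_{\beta,\gamma}=0$, i.e.\ $[\mathcal{M}(\beta),\mathcal{M}(\gamma)]=0$; defining $c(\beta,\gamma)=1$ makes the required identity hold trivially, both sides being zero.

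The main obstacle is thus the exceptional case: the degeneracy of the Poisson bracket on $\Omega^0\times\Omega^0$ leaves a stray antisymmetric $W$-equivariant map that Lemma~\ref{lemma_51} cannot eliminate by itself. Lemma~\ref{lemma_77} is the technical device tailored to remove it, converting a single instance of Jacobi (with a degree-$(-3)$ test coset $\eta$) into the vanishing of this unwanted map.
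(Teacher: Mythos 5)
Your proof is correct and takes essentially the same route as the paper: Lemma \ref{lemma_51} together with Lemma \ref{lemma_72} handles the generic case $(\delta(\beta),\delta(\gamma))\neq(0,0)$, and in the exceptional case a single Jacobi identity fed into Lemma \ref{lemma_77} kills the stray equivariant map, the only difference being that you test against the explicit coset $\eta=-(\beta+\gamma)$ with $\delta(\eta)=-3$ while the paper uses an arbitrary $\xi$ with $\delta(\xi)\notin\{0,-1\}$ (whose existence it deduces from affineness of $\delta$). Your parenthetical that antisymmetry forces $b=-a$ in Lemma \ref{lemma_51}(ii) is only accurate when $\beta=\gamma$, but nothing in your argument depends on it, since Lemma \ref{lemma_77} annihilates $\theta$ of either form.
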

\begin{proof}
Let $\beta,\gamma\in \Lambda/\Z\alpha$.

First consider the case $(\delta(\beta),\delta(\gamma))\neq (0,0)$.
By Lemma \ref{lemma_51}, any $W$-equivariant bilinear map
$\Omega^{\delta(\beta)}_{s(\beta)}\times
\Omega^{\delta(\gamma)}_{s(\gamma)}\rightarrow
\Omega^{\delta(\beta+\gamma)}_{s(\beta+\gamma)}$ is proportional to the
Poisson bracket of symbols. Thus there is a constant
$c(\beta,\gamma)\in \C$ such that
$\phi([X,Y])=
c(\beta,\gamma)\{\phi(X),\phi(Y)\}$
for any 
$X,\,Y\in \mathcal{M}(\beta)\times \mathcal{M}(\gamma)$.
Moreover, by Lemma \ref{lemma_72}, we have
$[\mathcal{M}(\beta),\mathcal{M}(\gamma)]\neq 0$. Therefore 
$c(\beta,\gamma)\neq 0$. Since the Poisson bracket and the Lie bracket
are skew symmetric, we also have $c(\beta,\gamma)=c(\gamma,\beta)$.

Next consider the case $(\delta(\beta),\delta(\gamma))=
(0,0)$. We claim that 

\centerline{\it $[\mathcal{M}(\beta),\mathcal{M}(\gamma)]=0$.}

 In order to prove the claim, we may assume that
$\delta$ takes the value $0$. Define
the $W$-equivariant bilinear map
$\theta: \Omega^{\delta(\beta)}_{s(\beta)}\times
\Omega^{\delta(\gamma)}_{s(\gamma)}
\rightarrow\Omega^{\delta(\beta+\gamma)}_{s(\beta+\gamma)}$
by the requirement:
$\phi_{\beta+\gamma}([X,Y])=
\theta(\phi_\beta(X),\phi_\gamma(Y))$.

Since $\delta(0)=-1$, $\delta$ takes the value $0$ and
$\delta$ is affine, there is some $\xi\in \Lambda/\Z\alpha$ such that
$\delta(\xi)\not\in\{0,-1\}$.
Let $X,\,Y,\,Z\in \mathcal{M}(\beta)\times \mathcal{M}(\gamma)\times
\mathcal{M}(\xi)$. Since $\delta(\xi)+1\neq 0$, we get

$\phi([X,[Y,Z]])=c(\beta,\gamma+\xi)\{\phi(X),\phi([Y,Z])\}$.

\hskip2.09cm$=c(\beta,\gamma+\xi)c(\gamma,\xi)
\{\phi(X),\{\phi(Y),\phi(Z)\}\}$

Similarly, we have

$\phi([Y,[Z,X]])=c(\gamma,\beta+\xi)c(\beta,\xi)
\{\phi(Y),\{\phi(Z),\phi(X)\}\}$, and

$\phi([Z,[X,Y]])=c(\xi,\alpha+\beta)\{\phi(Z),\theta(X,Y)\}$.

\noindent Thus the Jacobi identity  in $\mathcal{L}$: 
$[X,[Y,Z]]+[Y,[Z,X]]+[Z,[X,Y]]=0$ implies that

$c(\beta,\gamma+\xi)c(\gamma,\xi)\{X,\{Y,Z\}\}
+c(\gamma,\beta+\xi)c(\beta,\xi)\{Y,\{Z,X\}\}$

\hskip6cm$+c(\xi,\alpha+\beta)\{Z,\theta(X,Y)\}=0$,

\noindent for any $(X, Y,Z)\in 
\Omega^{0}_{s(\beta)}\times
\Omega^{0}_{s(\gamma)}\times\Omega^{\delta(\xi)}_{s(\xi)}$. Since
$c(\xi,\alpha+\beta)$ is not zero, it follows from Lemma \ref{lemma_77}
that $\theta$ vanishes. Thus the claim is proved.

Since $[\mathcal{M}(\beta),\mathcal{M}(\gamma)]=0$ and
$\{\Omega^{0}_{s(\beta)}, \Omega^{0}_{s(\gamma)}\}=0$,
any value $c(\beta,\gamma)\in\C^\ast$ is suitable.
\end{proof}

\begin{lemma}\label{lemma_79} Let $\beta$, $\gamma$ and $\eta$ be three 
$\Z\alpha$-cosets. If none of the four couples
$(\delta(\beta),\delta(\gamma))$, 
$(\delta(\beta+\gamma),\delta(\eta))$, 
$(\delta(\gamma),\delta(\eta))$ and
$(\delta(\beta),\delta(\gamma+\eta))$ is $(0,0)$, then we have:

\centerline{$c(\beta,\gamma)c(\beta+\gamma,\eta)=
c(\beta,\gamma+\eta)c(\gamma,\eta)$.}
\end{lemma}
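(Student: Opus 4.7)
My plan is to transport the Jacobi identity of $\mathcal{L}$ through $\phi$ into an identity on iterated Poisson brackets in $\mathcal{P}$, and then invoke Lemma \ref{lemma_76} to read off the sought equality of the two coefficients.

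Concretely, fix arbitrary homogeneous elements $X\in\mathcal{M}(\beta)$, $Y\in\mathcal{M}(\gamma)$, $Z\in\mathcal{M}(\eta)$ and set $x=\phi(X)$, $y=\phi(Y)$, $z=\phi(Z)$. Applying Lemma \ref{lemma_78} twice to each term of the Jacobi identity $[X,[Y,Z]]+[Y,[Z,X]]+[Z,[X,Y]]=0$ (first to the inner bracket, landing in $\mathcal{M}(\gamma+\eta)$, $\mathcal{M}(\eta+\beta)$ and $\mathcal{M}(\beta+\gamma)$ respectively, then to the outer one) yields
\begin{align*}
0 ={}& c(\beta,\gamma+\eta)\,c(\gamma,\eta)\,\{x,\{y,z\}\} \\
&{}+ c(\eta,\beta+\gamma)\,c(\beta,\gamma)\,\{z,\{x,y\}\} \\
&{}+ c(\gamma,\eta+\beta)\,c(\eta,\beta)\,\{y,\{z,x\}\}.
\end{align*}
Since $\phi_\beta,\phi_\gamma,\phi_\eta$ are isomorphisms of $W$-modules onto $\Omega^{\delta(\beta)}_{s(\beta)}$, $\Omega^{\delta(\gamma)}_{s(\gamma)}$, $\Omega^{\delta(\eta)}_{s(\eta)}$, this identity holds for \emph{all} triples $(x,y,z)$ in these three modules.

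Now apply Lemma \ref{lemma_76} with $(\delta,\eta,\tau)=(-\delta(\beta),-\delta(\gamma),-\delta(\eta))$. Using Lemma \ref{lemma_73} to rewrite $\eta+\tau-1=-\delta(\gamma+\eta)$ and $\delta+\eta-1=-\delta(\beta+\gamma)$, the four couples $(\eta,\tau)$, $(\delta,\eta+\tau-1)$, $(\eta,\delta)$, $(\tau,\delta+\eta-1)$ of that lemma translate respectively into $(-\delta(\gamma),-\delta(\eta))$, $(-\delta(\beta),-\delta(\gamma+\eta))$, $(-\delta(\gamma),-\delta(\beta))$, $(-\delta(\eta),-\delta(\beta+\gamma))$. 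These are the four couples excluded by hypothesis, so Lemma \ref{lemma_76} forces
\[ c(\beta,\gamma+\eta)\,c(\gamma,\eta) \;=\; c(\eta,\beta+\gamma)\,c(\beta,\gamma). \]
Finally, the assumption $(\delta(\beta+\gamma),\delta(\eta))\neq(0,0)$ allows us to invoke the symmetry part of Lemma \ref{lemma_78} to replace $c(\eta,\beta+\gamma)$ by $c(\beta+\gamma,\eta)$, yielding exactly the quasi-two-cocycle identity stated.

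The only subtlety is the index bookkeeping between the couples of Lemma \ref{lemma_76} and those appearing in the hypothesis of the present lemma; once the affine identity $\delta(\mu+\nu)=\delta(\mu)+\delta(\nu)+1$ from Lemma \ref{lemma_73} is used, the matching is perfect, and Lemma \ref{lemma_76} delivers precisely the two-coefficient comparison that the statement calls for. I do not expect any genuine difficulty beyond this translation.
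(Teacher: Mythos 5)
Your proposal is correct and follows exactly the paper's argument: push the Jacobi identity through $\phi$ using Lemma \ref{lemma_78} twice per term, observe that the resulting three-term identity holds on all of $\Omega^{\delta(\beta)}_{s(\beta)}\times\Omega^{\delta(\gamma)}_{s(\gamma)}\times\Omega^{\delta(\eta)}_{s(\eta)}$, and apply Lemma \ref{lemma_76} (your translation of its four excluded couples via Lemma \ref{lemma_73} is accurate). Your explicit final appeal to the symmetry $c(\eta,\beta+\gamma)=c(\beta+\gamma,\eta)$ is a detail the paper leaves implicit, but it is exactly what is needed.
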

\begin{proof}
As in the previous proof, we have:

\centerline{$\phi([X,[Y,Z]])=c(\beta,\gamma+\eta)c(\gamma,\eta)
\{\phi(X),\{\phi(Y),\phi(Z)\}\}$.} 

\noindent for any $(X,Y,Z)\in
\mathcal{M}(\beta)\times \mathcal{M}(\gamma)\times \mathcal{M}(\eta)$. It follows
from the Jacobi identity in $\mathcal{L}$ that

$c(\beta,\gamma+\eta)c(\gamma,\eta)\{X,\{Y,Z\}\}
+ c(\eta,\beta+\gamma)c(\beta,\gamma)\{Z,\{X,Y\}\}$

\hskip5.5cm$+c(\gamma,\eta+\beta)c(\gamma,\eta)\{Y,\{Z,X\}\}=0$,

\noindent for any 
$(X,Y,Z)\in
\Omega^{\delta(\beta)}_{s(\beta)}\times
\Omega^{\delta(\gamma)}_{s(\gamma)}\times
\Omega^{\delta(\eta)}_{s(\eta)}$. Thus the lemma follows
from Lemma \ref{lemma_76}. 
\end{proof}

\noindent{\textbf{Remark.}}~
\textit{The equation satisfied by $c$ is exactly 
the equation of a two-cocycle of $\Lambda/\Z\alpha$ with values in 
$\C^\ast$, except that its validity domain is a subset of 
$(\Lambda/\Z\alpha)^3$. So
$c$  is a ``quasi-two-cocycle".}

\section{Proof of Theorem \ref{theorem_3}}\label{sect_19}

As before, let $\mathcal{L}\in\mathcal{G}$ be a 
non-integrable Lie algebra.
Fix once for all a primitive element 
$\alpha\in\Lambda$ such that  $l(\alpha)\neq 0$ and recall
that $\mathcal{L}(\alpha)\simeq W$. As before, we normalize $L_0$ in such a way that $l(\alpha)=1$.
In the previous section, a $W$-equivariant map
$\phi:\mathcal{L}\rightarrow \mathcal{P}$ has been defined. The map
$\phi$ is not a Lie algebra morphism, but the defect is
accounted by a map $c:\Lambda/\Z\alpha\times
\Lambda/\Z\alpha\rightarrow \C^\ast$.
 
In this section, it is
proved that  $c$ is indeed a ``quasi-boundary". 
This allows us to modify $\phi$ to get 
an algebra morphism  $\psi:\mathcal{L}\rightarrow \mathcal{P}$, from which 
Theorem \ref{theorem_3} is deduced.

\begin{lemma}\label{lemma_80} Let $M$ be a lattice and
let $R=\oplus_{m\in M}\,R_m$ be a commutative  associative
$M$-graded algebra satisfying the following conditions:
\begin{enumerate}
\item[(i)] $\dim R_m\leq 1$ for all $m\in M$,
\item[(ii)] $X.Y\neq 0$ for any two non-zero homogeneous elements $X$, $Y$ of
$R$.
\end{enumerate}
Then there exists an algebra morphism $\chi:R\rightarrow \C$ with 
$\chi(X)\neq 0$ for any non-zero homogeneous element $X\in R$.
\end{lemma}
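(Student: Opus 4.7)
The plan is to pass to the localization of $R$ at its non-zero homogeneous elements and then extract a character from a maximal ideal, in the spirit of the proof of Lemma~\ref{lemma_6}(iv). One may assume $R\neq 0$, since otherwise the conclusion is vacuous.

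Let $T$ denote the set of non-zero homogeneous elements of $R$. Hypothesis~(ii) shows $T$ is multiplicatively closed, and a direct argument shows no element of $T$ is a zero divisor in $R$: if $X\in R_m\cap T$ and $XY=0$, decomposing $Y=\sum_n Y_n$ into homogeneous pieces, each $XY_n\in R_{m+n}$ must vanish, so hypothesis~(ii) forces $Y_n=0$ for all $n$. Consequently the localization $\hat R:=T^{-1}R$ is a well-defined unital commutative associative $\C$-algebra into which $R$ injects. Setting $S=\Supp R$ and $G=S-S\subset M$, I claim $\hat R$ is $G$-graded with $\dim \hat R_g=1$ for every $g\in G$: two elements $X_1/Y_1$ and $X_2/Y_2$ of $\hat R_g$ (with $X_i\in R_{m_i}$, $Y_i\in R_{n_i}$ non-zero and $m_i-n_i=g$) satisfy $X_1Y_2,\,X_2Y_1\in R_{m_1+n_2}=R_{m_2+n_1}$, a one-dimensional space, so the two representatives are proportional. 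In particular every non-zero homogeneous element of $\hat R$ is invertible, and $\hat R$ has countable $\C$-dimension since $G$ is a subgroup of the lattice $M$ and hence itself a countable lattice.

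To finish, pick by Zorn's lemma a maximal ideal $\mathfrak{m}$ of $\hat R$. The residue field $K=\hat R/\mathfrak{m}$ is a $\C$-algebra of countable dimension. Since $\C$ is algebraically closed and uncountable, the standard argument forces $K=\C$: any $x\in K\setminus \C$ would be transcendental over $\C$, and then the family $\{(x-a)^{-1}\}_{a\in\C}$ would be an uncountable $\C$-linearly independent subset of $K$, contradicting countable dimension. The resulting character $\chi:\hat R\to\C$ restricts, via the embedding $R\hookrightarrow\hat R$, to the required morphism: any non-zero homogeneous $X\in R$ is invertible in $\hat R$, so $\chi(X)\in\C^\ast$.

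The main obstacle is the passage from a maximal ideal to a character with values in $\C$; this is precisely where the uncountability of $\C$ enters, in agreement with the remark in the introduction that only this point of the proof needs modification to cover a countable ground field of characteristic zero.
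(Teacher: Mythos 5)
Your proposal is correct and follows essentially the same route as the paper: localize $R$ at the multiplicative set of non-zero homogeneous elements, observe that the localization has countable dimension over $\C$ because its support is a countable group with one-dimensional homogeneous components, and extract a character from a maximal ideal using that a field extension of $\C$ of countable dimension must equal $\C$. The paper compresses these steps into three sentences (citing the same mechanism already used in Lemma~\ref{lemma_6}(iv)); your version merely makes explicit the verifications it leaves implicit, such as the non-zero-divisor property and the gradation of the localization.
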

\begin{proof}
Let $S$ be the set of non-zero homogeneous elements of $R$.
By hypothesis, $S$ is a multiplicative subset. 
Since $S/\C^\ast\simeq \Supp R$ is countable, the algebra $R_S$ has
countable dimension, therefore any maximal ideal of $R_S$ provides
an algebra morphism $\chi: R_S\rightarrow \C$. So its restriction to
$R$ is an algebra morphism $\chi:R\rightarrow \C$ with 
$\chi(X)\neq 0$ for any $X\in S$. 
\end{proof}

Let $M$ be a lattice, and let $d:M\rightarrow \C$ be an
additive map. Let
$c:M\times M\rightarrow\C^\ast,\,(l,m)\mapsto c(l,m)$ be a function.

\begin{lemma}\label{lemma_81} 
Assume the following hypotheses for all $l,m,n\in M$:
\begin{enumerate}
\item[(i)]  $c(l,m)=c(m,l)$ whenever $(d(l),d(m))\neq (-1,-1)$.
\item[(ii)] $c(l,m)c(l+m,n)=c(l,m+n)c(m,n)$ whenever none of the four couples
$(d(l),d(m))$, $(d(l+m),d(n))$, $(d(l),d(m+n))$ and 
$(d(m),d(n))$ is  $(-1,-1)$.
\end{enumerate}
Then there exists a function $b:M\rightarrow\C^\ast$ such that

\centerline{$c(l,m)=b(l)b(m)/b(l+m)$}

\noindent  for any couple $(l,m)$ with 
$(d(l),d(m))\neq (-1,-1)$.
\end{lemma}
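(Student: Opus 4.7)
The plan is to extend $c$ to a symmetric $\C^\ast$-valued full $2$-cocycle $\tilde c$ on $M\times M$, build from $\tilde c$ a commutative associative $M$-graded algebra to which Lemma \ref{lemma_80} applies, and read off $b$ from the resulting character. Let $K=\Ker d$ and $N=d^{-1}(-1)$. If $N=\emptyset$ then $c$ is already a symmetric full cocycle, and the algebra $R=\bigoplus_{m\in M}\C e_m$ with $e_l\cdot e_m=c(l,m)\,e_{l+m}$ satisfies the hypotheses of Lemma \ref{lemma_80}, so $b(m):=\chi(e_m)$ does the job. Henceforth I assume $N\neq\emptyset$; since $d(M)$ is a nontrivial subgroup of $\C$, the set $M\setminus(K\cup N)$ is infinite.

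For $l,m\in N$ and any $n\in M\setminus(K\cup N)$, I would set
\[
\tilde c(l,m)\;:=\;\frac{c(l,\,m+n)\,c(m,n)}{c(l+m,\,n)},
\]
and $\tilde c=c$ on all remaining pairs. Three properties then need verification. For \emph{well-definedness} (independence of $n$): given two choices $n,n'\in M\setminus(K\cup N)$, the triples $(l,m+n,n'-n)$, $(m,n,n'-n)$ and $(l+m,n,n'-n)$ are all good (every one of their four pairs avoids $N\times N$), so the hypothesis cocycle applies, and combining the three identities yields equality of the two candidate values of $\tilde c(l,m)$. For \emph{symmetry} on $N\times N$: the triple $(l,n,m)$ is good, and the associated hypothesis cocycle together with the symmetry of $c$ on good pairs gives $\tilde c(l,m)=\tilde c(m,l)$.

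Next I would verify the \emph{full cocycle identity for $\tilde c$} by case analysis on which of the four pairs $(l,m),(l+m,n),(l,m+n),(m,n)$ lies in $N\times N$. Additivity of $d$ forces at most two of them to lie there simultaneously, with consistent configurations: none; exactly one; or exactly two, of the forms $\{l,m\in N,\,n\in K\}$, $\{l\in K,\,m,n\in N\}$, $\{l,n\in N,\,m\in K\}$, or $\{l,m,n\in N\}$. The ``none'' case is the hypothesis cocycle directly; the ``exactly one'' case reduces to the definition of $\tilde c$ via a single hypothesis cocycle; and each ``two $N$-pairs'' case is handled by chaining a few hypothesis cocycles on good triples involving an auxiliary element $n_0\in M\setminus(K\cup N)$. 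The hard part is the subcase $l,m,n\in N$, where the hypothesis cocycle on $(l,m,n)$ and all its obvious permutations involve a $(-1,-1)$ pair; one expands both $\tilde c(l,m)$ and $\tilde c(m,n)$ via $n_0$ and chains good hypothesis cocycles on $(l,m+n_0,n)$, $(m,n_0,n)$, $(l+m,n_0,n)$, $(l,m+n,n_0)$, $(l+m,n,n_0)$ to force the identity.

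Once $\tilde c$ is known to be a symmetric full cocycle, the algebra $R=\bigoplus_{m\in M}\C e_m$ with $e_l\cdot e_m=\tilde c(l,m)\,e_{l+m}$ is commutative, associative, has one-dimensional homogeneous components, and has no zero divisors among nonzero homogeneous elements (since $\tilde c$ takes values in $\C^\ast$). Lemma \ref{lemma_80} then furnishes a morphism $\chi:R\to\C$ with $\chi(e_m)\neq 0$ for all $m$; setting $b(m)=\chi(e_m)$ and applying $\chi$ to the product gives $b(l)b(m)=\tilde c(l,m)\,b(l+m)$, which restricted to good pairs $(d(l),d(m))\neq(-1,-1)$ is precisely the desired identity $c(l,m)=b(l)b(m)/b(l+m)$.
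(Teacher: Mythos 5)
Your proof is correct, but it takes a genuinely different route from the paper's. You extend $c$ itself to a symmetric two-cocycle $\tilde c$ on all of $M\times M$ (defining it on the bad pairs $d^{-1}(-1)\times d^{-1}(-1)$ by forcing the cocycle identity against an auxiliary element $n_0$ with $d(n_0)\notin\{0,-1\}$, then checking well-definedness, symmetry, and the full cocycle identity by cases), and only then apply Lemma \ref{lemma_80} once to the whole twisted group algebra $\C[M]$. The paper instead never defines anything on the bad pairs: it restricts the twisted product to the sub-semigroup $\{m\in M \mid \Re d(m)\geq 0\}$, where all pairs are automatically good, applies Lemma \ref{lemma_80} there to get a character $\chi$, extends $\chi$ to $\C[M]$ merely as a morphism of modules over that subalgebra (via localization), and then verifies directly that the extended $\chi$ is multiplicative on good pairs by inserting one auxiliary $n$ with $\Re d(m+n)\geq 0$. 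Your approach is conceptually cleaner --- it reduces the statement to the standard fact (noted in the paper's remark after the lemma) that symmetric two-cocycles on a free abelian group are coboundaries --- but it pays for this with the case analysis in the extension step; the paper's half-space trick trades that case analysis for a short module-theoretic extension argument. I checked your key computations: the well-definedness and symmetry arguments are sound, and the hardest case $l,m,n\in d^{-1}(-1)$ does close up using exactly the five good triples you list. The remaining two-bad-pair configurations are only sketched, but they follow the same mechanism and do go through, so I regard this as routine verification omitted rather than a gap.
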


\begin{proof}
Define an algebra structure on $\C[M]$ by the formula:

\centerline{$e^l\ast e^m=c(l,m)e^{l+m}$.}

Set $N=\{m\in M\vert\,\Re d(m)\geq 0\}$, where the notation $\Re z$ means
the real part of the complex number $z$ and
let $\C[N]$ be the subalgebra of $\C[M]$ with basis 
$(e^n)_{n\in N}$. Since $d(n)\neq -1$ for any
$n\in N$, it follows
from Identities (i) and (ii) that  $\C[N]$ is a commutative and
associative algebra. By the previous lemma, there is an algebra
morphism $\chi:\C[N]\rightarrow \C$ with 
$\chi(X)\neq 0$ for any  element $X\in S$, where 
$S$ denotes the set of non-zero homogeneous elements in $\C[N]$.

By restriction, the product $\ast$ defines a bilinear map
$\beta:\C[N]\times \C[M]\rightarrow \C[M]$. 
There are no couples $(n,m)\in N\times M$ with
$(d(n),d(m))=(-1,-1)$. Therefore
$\beta$ is a structure of $\C[N]$-module on $\C[M]$.
Indeed for $n_1,n_2\in N$ and $m\in M$, we have
$e^{n_1}\ast(e^{n_2}\ast e^m)=(e^{n_1}\ast e^{n_2})\ast e^m$.

As each
$X\in S$ acts bijectively on $\C[M]$, 
$\C[M]$ contains $\C[N]_S$. Since $M=N-N$, $\C[M]$ is isomorphic to $\C[N]_S$ as a $\C[N]$-module. Therefore
 $\chi$ extends to a morphism of $\C[N]$-modules
$\chi:\C[M]\rightarrow\C$. So it satisfies

\centerline{$\chi(e^n\ast e^m)=\chi(e^n) \chi(e^m)$}

\noindent for any couple 
$(n,m)\in N\times M$.

Next, prove the stronger assertion that

\centerline{$\chi(e^m\ast e^l)=\chi(e^m) \chi(e^l)$}

\noindent
for any couple $(m,l)$  with 
$(d(m),d(l))\neq (-1,-1)$. Choose  any $n\in N$ with
$\R d(m+n)\geq 0$. 
Since $n\in N$, it follows from the previous
identity that: 

\centerline{$\chi(e^n\ast (e^m\ast e^l))=\chi(e^n)\chi(e^m\ast e^l)$.}

\noindent Similarly, it follows from the fact that
$n\in N$ and $n+m\in N$ that

\centerline{$\chi((e^n\ast e^m)\ast e^l)=\chi(e^n\ast e^m)\chi(e^l)$
$=\chi(e^n)\chi(e^m)\chi(e^l)$}

\noindent By hypothesis
$(d(m),d(l))\neq(-1,-1)$, and each couple $(d(n),d(m+l))$, $(d(n),d(m))$
and $(d(n+m),d(l))$ contains a scalar with non-negative real part.
So none of these couples is $(-1,-1)$, and therefore we have

\centerline{$e^n\ast (e^m\ast e^l)=(e^n\ast e^m)\ast e^l$}

\noindent It follows that
$\chi(e^m\ast e^l)=\chi(e^m)\chi(e^l)$ and therefore we have

\centerline{$c(m,l)\chi(e^{m+l})=\chi(e^m)\chi(e^l)$,}

\noindent for any  $(m,l)$ with  $(d(m),d(l))\neq (-1,-1)$. Thus
the function $b(m)=\chi(e^m)$ satisfies the required identity. 
\end{proof}

\noindent{\textbf{Remark.}}~
\textit{
It follows that a symmetric two-cocycle
of $M$ with value in $\C^\ast$ is a boundary. This corollary is
obvious. Indeed a symmetric two-cocycle gives rise to
a central extension}

\centerline
{$1\rightarrow \C^\ast\rightarrow \widehat{M}\rightarrow
M\rightarrow 0$}

\noindent \textit{where $\widehat{M}$ is an abelian group. It  is split because $M$ is
free in the category of abelian groups. }

Let  $b:\Lambda/\Z\alpha\rightarrow \C^\ast$ be a function. 
Recall that 
$\mathcal{L}=\oplus_{\beta\in\Lambda/\Z\alpha}\,
\mathcal{M}(\beta)$. Define a new morphism of $W$-modules 
$\psi:\mathcal{L} \rightarrow \mathcal{P}$ by the formula:

 \centerline{$\psi(X)=b(\beta)\phi(X)$}

\noindent for any $X\in \mathcal{M}(\beta)$ and any 
$\beta\in \Lambda/\Z\alpha$.

\begin{lemma}\label{lemma_82} There exists a function
$b:\Lambda/\Z\alpha\rightarrow \C^\ast$ such that 
$\psi:\mathcal{L}\rightarrow \mathcal{P}$ is a morphism of Lie algebras.
\end{lemma}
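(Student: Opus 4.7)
The plan is to reduce the claim to a direct application of Lemma \ref{lemma_81} to the quasi-two-cocycle $c$ constructed in Lemma \ref{lemma_78}. By Lemma \ref{lemma_73}, $\delta$ is affine with $\delta(0)=-1$, so the function $d:\Lambda/\Z\alpha\rightarrow\C$ defined by $d(\beta):=-\delta(\beta)-1$ is genuinely additive. The purpose of this particular normalisation is precisely that the equality $(d(\beta),d(\gamma))=(-1,-1)$ is equivalent to $(\delta(\beta),\delta(\gamma))=(0,0)$; that is, the exceptional locus appearing in Lemma \ref{lemma_81} matches exactly the exceptional locus occurring in Lemmas \ref{lemma_78} and \ref{lemma_79}. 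With this choice of $d$, hypothesis (i) of Lemma \ref{lemma_81} is supplied by the symmetry assertion of Lemma \ref{lemma_78}, while hypothesis (ii) is supplied by the quasi-cocycle identity of Lemma \ref{lemma_79}.

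Applying Lemma \ref{lemma_81} then produces $b:\Lambda/\Z\alpha\rightarrow\C^\ast$ with
\[
c(\beta,\gamma)=\frac{b(\beta)b(\gamma)}{b(\beta+\gamma)}
\]
whenever $(\delta(\beta),\delta(\gamma))\neq(0,0)$. With this $b$ used in the definition of $\psi$, verifying that $\psi$ is a Lie algebra morphism reduces to a component-wise check: for homogeneous $X\in\mathcal{M}(\beta)$ and $Y\in\mathcal{M}(\gamma)$, Lemma \ref{lemma_78} gives
\[
\psi([X,Y])=b(\beta+\gamma)c(\beta,\gamma)\{\phi(X),\phi(Y)\},
\]
whereas $\{\psi(X),\psi(Y)\}=b(\beta)b(\gamma)\{\phi(X),\phi(Y)\}$. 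Outside the locus $(\delta(\beta),\delta(\gamma))=(0,0)$ the two expressions coincide by the defining property of $b$. On that locus, the proof of Lemma \ref{lemma_78} already established $[\mathcal{M}(\beta),\mathcal{M}(\gamma)]=0$, and one also has $\{\Omega^0_{s(\beta)},\Omega^0_{s(\gamma)}\}=0$ in $\mathcal{P}$; both sides vanish identically, so the identity holds trivially.

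The proof is essentially bookkeeping; the only substantive observation is the identification of the correct additive shift $d=-\delta-1$, dictated by the affine constant $-1$ from Lemma \ref{lemma_73} together with the need to align the two a priori distinct exceptional loci. Since all hypotheses of Lemma \ref{lemma_81} are then immediately met, I do not anticipate any serious obstacle in this step.
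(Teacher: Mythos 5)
Your proof is correct and follows essentially the same route as the paper: the same additive shift $d=-\delta-1$, the same application of Lemma \ref{lemma_81} via Lemmas \ref{lemma_78} and \ref{lemma_79}, and the same two-case verification (with the exceptional locus $(\delta(\beta),\delta(\gamma))=(0,0)$ handled by the vanishing of both sides). Your explicit remark that the left-hand side vanishes there because $[\mathcal{M}(\beta),\mathcal{M}(\gamma)]=0$ was established in the proof of Lemma \ref{lemma_78} is a slightly more careful bookkeeping of a point the paper leaves implicit.
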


\begin{proof}
Set $M=\Lambda/\Z\alpha$ and for
 $\beta\in M$, set
$d(\beta)=-1-\delta(\beta)$. It follows from Lemma \ref{lemma_73} that the map
$d:M\rightarrow \C$ is additive. By Lemmas \ref{lemma_78} and \ref{lemma_79}, 
the quasi-two-cocycle $c$ and the additive map $d$ satisfies the
hypothesis of the previous lemma. Therefore there exists a function 
$b:\Lambda/\Z\alpha\rightarrow \C^\ast$ such that

\centerline{$c(\beta,\gamma)=b(\beta)b(\gamma)/b(\beta+\gamma)$}

\noindent for any couple $(\beta,\gamma)$ with
$(\delta(\beta),\delta(\gamma))\neq (0,0)$.

Choose such a function $b$. We claim that
$\psi([X,Y])=\{\psi(X),\psi(Y)\}$ for any $X\in \mathcal{M}(\beta)$,
$Y\in\mathcal{M}(\gamma)$ and any $\beta$, $\gamma$ be in 
$\Lambda/\Z\alpha$.

First assume that $(\delta(\beta),\delta(\gamma))\neq (0,0)$.
We have
\begin{align*}
\psi([X,Y])=
&b(\beta+\gamma)\,\phi([X,Y]) \\
=
&b(\beta+\gamma)c(\beta,\gamma)\,\{\phi(X),\phi(Y)\} \\
=
&b(\beta) b(\gamma)\,\{\phi(X),\phi(Y)\}.
\end{align*}
Thus $\psi([X,Y])=\{\psi(X),\psi(Y)\}$.

Consider now the case $(\delta(\beta),\delta(\gamma))= (0,0)$.
Since $\{\Omega^0_u,\Omega^0_s\}=0$, $\forall u,\, s\in \C/\Z$
it follows that $\psi([X,Y])=\{\psi(X),\psi(Y)\}$
because both sides of the identity are zero.

Therefore $\psi$ is an algebra morphism. 
\end{proof}

Define the map $\pi:\Lambda\rightarrow \C^2$ by the
formula:

\centerline
{$\pi(\lambda)=(l(\lambda)-\delta(\lambda)-1,-1-\delta(\lambda))$.}

\noindent it follows from Lemma \ref{lemma_73} that $\pi$ is additive.
Since $\mathcal{P}$ is $\C^2$-graded, one can define
the $\Lambda$-graded Lie algebra $\pi^\ast\mathcal{P}$.
When $\pi$ is one-to-one, $\pi^\ast\mathcal{P}$ is the Lie algebra
$W_\pi$ defined in the introduction. In general, the notation
$\pi^\ast$ has been defined in Section \ref{sect_1.6} 
and $W_\pi$ in  Section \ref{sect_12.7}. 

\begin{lemma}\label{lemma_83} We have $\mathcal{L}\simeq\pi^\ast\mathcal{P}$.
\end{lemma}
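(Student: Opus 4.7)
The plan is to apply Lemma \ref{lemma_1} to the Lie algebra morphism $\psi:\mathcal{L}\rightarrow \mathcal{P}$ constructed in Lemma \ref{lemma_82}. To do so, I only need to verify two points: that $\psi$ is homogeneous of degree zero with respect to the grading $\pi$, i.e.\ $\psi(\mathcal{L}_\lambda)\subset \mathcal{P}_{\pi(\lambda)}$ for every $\lambda\in\Lambda$; and that the induced linear map $\mathcal{L}_\lambda\rightarrow\mathcal{P}_{\pi(\lambda)}$ is an isomorphism.

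For the first point, fix $\lambda\in\Lambda$ and let $\beta$ be its coset in $\Lambda/\Z\alpha$. Recall that $\phi_\beta:\mathcal{M}(\beta)\rightarrow\Omega^{\delta(\beta)}_{s(\beta)}$ is an isomorphism of $W$-modules, that $\psi\vert_{\mathcal{M}(\beta)}=b(\beta)\,\phi_\beta$, and that $L_\lambda$ is an eigenvector of $\ad L_0$ of eigenvalue $l(\lambda)$. Thus $\phi_\beta(L_\lambda)$ is a non-zero eigenvector of $L_0$ on $\Omega^{\delta(\beta)}_{s(\beta)}$ of eigenvalue $l(\lambda)$, and by the basis description of Section \ref{sect_12} it is a non-zero multiple of
\[
u^{\delta(\beta)}_{l(\lambda)}=E_{l(\lambda)-\delta(\beta)-1,\,-\delta(\beta)-1}.
\]
Since $\delta(\beta)=\delta(\lambda)$ by definition, the exponent of $E$ is precisely $\pi(\lambda)$, so $\psi(L_\lambda)\in \C E_{\pi(\lambda)}=\mathcal{P}_{\pi(\lambda)}$. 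Because $b(\beta)\neq 0$ and $\phi_\beta(L_\lambda)\neq 0$, in fact $\psi(L_\lambda)\neq 0$, so $\psi$ induces the required isomorphism $\mathcal{L}_\lambda\simeq\mathcal{P}_{\pi(\lambda)}$ on each homogeneous component.

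With $\psi$ a Lie algebra morphism (Lemma \ref{lemma_82}) satisfying both hypotheses of Lemma \ref{lemma_1}, we conclude $\mathcal{L}\simeq \pi^\ast\mathcal{P}$ as $\Lambda$-graded Lie algebras. There is essentially no obstacle: the hard work has already been carried out in Lemmas \ref{lemma_70}--\ref{lemma_82}, which provided the map $\psi$, the degree function $\delta$, and the rescaling $b$; the content of Lemma \ref{lemma_83} is simply to unwind the identifications so that the grading $\pi$ on $\mathcal{P}$ matches the intrinsic $\Lambda$-grading of $\mathcal{L}$. The one point to verify carefully is that the exponent formula $u^\delta_x=E_{x-\delta-1,-\delta-1}$ combined with the normalization $l(\alpha)=1$ gives exactly $\pi(\lambda)=(l(\lambda)-\delta(\lambda)-1,-\delta(\lambda)-1)$; this is a direct translation of the definition of the symbols $E_\mu$ in Section \ref{sect_12}.
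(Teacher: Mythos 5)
Your proposal is correct and follows essentially the same route as the paper: both verify the hypotheses of Lemma \ref{lemma_1} for the morphism $\psi$ by computing that $\psi(L_\lambda)$, being an $L_0$-eigenvector of eigenvalue $l(\lambda)$ in $\Omega^{\delta(\lambda)}_{s(\lambda)}$, must be a non-zero multiple of $E_{\pi(\lambda)}$ (the paper phrases this as solving $z\frac{d}{dz}f=(l(\lambda)-\delta(\lambda))f$, you phrase it via the basis $u^\delta_x=E_{x-\delta-1,-\delta-1}$, which is the same computation). No gaps.
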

\begin{proof}
Let $\lambda\in\Lambda$. By defintion,
$\psi(L_\lambda)=f\partial^{-\delta(\lambda)}$, for some twisted function
$f$. Moreover $[L_0,L_\lambda]=l(\lambda)L_\lambda$,
therefore we have 
$z\frac{d}{dz} f=[l(\lambda)-\delta(\lambda)]f$, therefore $f$ is
proportional to $z^{l(\lambda)-\delta(\lambda)}$. It follows
that $\psi$ maps isomorphically
$\mathcal{L}_\lambda$ to $\mathcal{P}_{\pi(\lambda)}$. By Lemma \ref{lemma_1}, 
$\mathcal{L}$ is precisely $\pi^\ast\mathcal{P}$. 
\end{proof}

Recall that the condition $\mathcal{C}$ is:

\centerline{$\Im \pi\not\subset \C\rho$
and $2\rho\notin\Im \pi$.}

\begin{thm}\label{theorem_3} Let $\Lambda$ be a lattice.
\begin{enumerate}
\item[(i)] If  $\mathcal{L}\in \mathcal{G}$ is a primitive
non-integrable  Lie algebra, then there
is an injective additive map
$\pi:\Lambda\rightarrow \C^2$
satisfying condition $\mathcal{C}$ such that
$\mathcal{L}\simeq W_\pi$.
\item[(ii)] Conversely, if $\pi:\Lambda\rightarrow \C^2$
is injective and satisfies condition $\mathcal{C}$,
then the Lie algebra $W_\pi$ is simple
(and, in particular, it is primitive).
\end{enumerate}
\end{thm}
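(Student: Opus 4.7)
The plan is to combine Lemma \ref{lemma_83} with Lemma \ref{lemma_49}, using primitivity only to upgrade the additive map $\pi$ of Lemma \ref{lemma_83} to an injection; condition $\mathcal{C}$ then falls out of the simplicity of $\mathcal{L}$.

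For part (i), Lemma \ref{lemma_83} already provides an additive map
\[
\pi:\Lambda\rightarrow \C^2,\qquad \pi(\lambda)=\bigl(l(\lambda)-\delta(\lambda)-1,\,-1-\delta(\lambda)\bigr),
\]
together with an isomorphism $\mathcal{L}\simeq\pi^\ast\mathcal{P}$ of $\Lambda$-graded Lie algebras, so what remains is to verify that $\pi$ is injective and satisfies $\mathcal{C}$. Set $K=\ker\pi$. Since $\Lambda/K$ embeds into $\C^2$ it is torsion-free, hence $K$ is a direct summand; write $\Lambda=K\oplus\Lambda'$ and let $\bar\pi=\pi|_{\Lambda'}$. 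Unwinding the definition of $\pi^\ast$ from Section \ref{sect_1.6}, one obtains a canonical isomorphism of $\Lambda$-graded Lie algebras
\[
\pi^\ast\mathcal{P}\simeq(\bar\pi^\ast\mathcal{P})\otimes \C[K],
\]
where $\C[K]$ is given the trivial Lie bracket. Any graded ideal of $\bar\pi^\ast\mathcal{P}$ would tensor up to a graded ideal of $\mathcal{L}$, so simplicity of $\mathcal{L}$ forces $\bar\pi^\ast\mathcal{P}$ to be simple graded on $\Lambda'$, and the equality $\dim(\bar\pi^\ast\mathcal{P})_{\lambda'}=\dim\mathcal{P}_{\bar\pi(\lambda')}=1$ places it in $\mathcal{G}(\Lambda')$. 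If $K\neq 0$, the above isomorphism exhibits $\mathcal{L}$ as an imprimitive form $(\bar\pi^\ast\mathcal{P})(m)$ with $m=\mathrm{rank}\,K\geq 1$, contradicting primitivity. Hence $K=0$, so $\pi$ is injective and $\pi^\ast\mathcal{P}=W_\pi$ in the sense of Section \ref{sect_12.7}. Since $W_\pi\simeq\mathcal{L}$ is simple graded, Lemma \ref{lemma_49}(i) then forces $\pi$ to satisfy condition $\mathcal{C}$.

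For part (ii), Lemma \ref{lemma_49}(ii) gives simplicity of $W_\pi$ as an ungraded algebra directly from $\mathcal{C}$ and injectivity of $\pi$. To derive primitivity, suppose that $W_\pi\simeq\mathcal{L}'(m)=\mathcal{L}'\otimes\C[z_1^{\pm 1},\dots,z_m^{\pm 1}]$ for some $m\geq 1$; then any proper non-zero ideal $\mathfrak{a}\subset\C[z_1^{\pm 1},\dots,z_m^{\pm 1}]$ (e.g.\ $\mathfrak{a}=(z_1-1)$) yields a proper non-zero two-sided ideal $\mathcal{L}'\otimes\mathfrak{a}$ of $W_\pi$, contradicting simplicity. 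Therefore $m=0$ and $W_\pi$ is primitive.

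The only non-mechanical point in this plan is identifying $\pi^\ast\mathcal{P}$ with $(\bar\pi^\ast\mathcal{P})\otimes\C[K]$ when $\pi$ fails to be injective; everything else assembles results already in hand (Lemmas \ref{lemma_83}, \ref{lemma_49}, and the explicit construction of Section \ref{sect_1.6}).
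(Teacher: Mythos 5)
Your proposal is correct and follows essentially the same route as the paper: invoke Lemma \ref{lemma_83}, split $\Lambda=\Ker\pi\oplus\Lambda_1$ to exhibit $\pi^\ast\mathcal{P}$ as $\C[\Ker\pi]\otimes W_{\pi_1}$ and use primitivity to force $\Ker\pi=0$, then obtain condition $\mathcal{C}$ from simple gradedness and deduce the converse from Lemma \ref{lemma_49}. The only differences are cosmetic: you cite Lemma \ref{lemma_49}(i) where the paper re-derives the two failure modes of $\mathcal{C}$ directly, and you spell out the (correct) argument that simplicity rules out imprimitive forms, which the paper leaves implicit.
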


\begin{proof}
By the previous lemma, 
$\mathcal{L}\simeq\pi^\ast\mathcal{P}=W_\pi$. Set $M=\Ker \pi$. There is a
sublattice
$\Lambda_1$ such that
$\Lambda=M\oplus\Lambda_1$. It follows that
$W_\pi\simeq \C[M]\otimes W_{\pi_1}$, where
$\pi_1:\Lambda_1\rightarrow \C$ is the restriction 
of $\pi$ to $\Lambda_1$. Since $\mathcal{L}$ is primitive, it follows that 
$M=0$, hence $\pi$ is injective.

If  $\pi(\Lambda)\subset \C\rho$, then
$W_\pi$ is abelian. If $2\rho\in \pi(\Lambda)$, then
$E_{-2\rho}$ belongs to $W_\pi$ and therefore
$W_\pi\neq [W_\pi,W_\pi]$. Since $\mathcal{L}$ is simple graded,
$\pi$ satisfies the condition $\mathcal{C}$.

The converse follows from Lemma \ref{lemma_49}. 
\end{proof}

\noindent{\textbf{Remark}.}
\textit{
Set $\omega=(1,0)$, so that $E_\omega$ is the symbol of
$z^2\partial$. It follows from the proof that, for any 
primitive vector $\alpha\in\Lambda$ with $l(\alpha)\neq 0$, there
exists a unique $\pi_{\alpha}:\Lambda\rightarrow \C^2$ such that
$\pi_{\alpha}(\alpha)=\omega$ and $\mathcal{L}\simeq W_{\pi_\alpha}$.
Therefore, there are many injective additive maps $\pi$ such that
$\mathcal{L}\simeq W_\pi$ and some of them do not contain
$\omega$  in their image.}

\bigskip
{\it Authors addresses:}

Universit\'e Claude Bernard Lyon 1,

Institut Camille Jordan, UMR 5028 du CNRS

43, bd du 11 novembre 1918

69622 Villeurbanne Cedex

FRANCE

{\it Email addresses:}

iohara@math.univ-lyon1.fr

mathieu@math.univ-lyon1.fr


\begin{thebibliography}{[Che]}

\bibitem[BZ]{BZ} Georgia Benkart and Efim  Zelmanov:   Lie algebras
graded by root systems, in {\it  Non-associative algebras and its applications}, 
Kluwer Acad. Publ., Math. Appl., 303  (1994) 31-38.

\bibitem[C]{C} Vyjayanthi Chari: Integrable
representations of affine Lie-algebras.  
Invent. Math.  85  (1986) 317-335.

\bibitem[CP]{CP} Vyjayanthi Chari and  Andrew Pressley: New unitary
representations of loop groups.  Math. Ann.  275  (1986) 87-104.

\bibitem[D]{D} Jacques Dixmier: {\it Alg\`ebres enveloppantes},
Cahiers scientifiques XXXVII, Gauthier-Villars, Paris (1974).

\bibitem[E]{E} Alberto Elduque: Fine gradings on simple classical Lie
algebras.  J. Algebra  324  (2010) 3532-3571.


\bibitem[Gr]{Gr} Ya Grozman: Classification of Bilinear Invariant Operators on
Tensor Fields. Funct. anal. Appl. 14 (1980) 58-59.

\bibitem[I]{I} Kenji  Iohara and Yoshiyuki Koga:
{\it Representation theory of the Virasoro algebra,}
Springer Monographs in Mathematics, Springer-Verlag (2011).

\bibitem[IM]{IM} Kenji Iohara and Olivier Mathieu:
A Global Version of Grozman's Theorem,
in preparation.

\bibitem[J]{J} N. Jacobson: {\it Structure and Representations of Jordan
Algebras}, Am. Math. Soc., Colloquium Publications 39 (1968).


\bibitem[Ka1]{Ka1} V.G. Kac:  Simple irreducible graded Lie algebras
of finite growth.  Izv. Akad. Nauk SSSR Ser. Mat.  32  (1968)
1323-1367. 

\bibitem[Ka2]{Ka2} V.G. Kac: Some problems on infinite
dimensional Lie algebras, in  {\it Lie algebras and
related topics}, Springer-Verlag, Lect. Notes in Math. 933
(1982) 117-126.

\bibitem[Kan]{Kan} I.M. Kantor: Models of the exceptional Lie algebras.
  Dokl. Akad. Nauk SSSR  208  (1973), 1276-1279.  

\bibitem[Kap]{Kap} Irving Kaplansky,  The Virasoro algebra.  Comm. Math.
Phys.  86  (1982),  49-54.

\bibitem[KS]{KS} Irving Kaplansky and L. J.  Santharoubane:  Harish-Chandra
modules over the Virasoro algebra. in {\it  Infinite-dimensional groups
with applications},  Math. Sci. Res. Inst. Publ. 4 (1985)  217-231.

\bibitem[Ki]{Ki} A.A. Kirillov: {\it The Orbit Method.}
Notes from the European scholl of Group Theory, Luminy (1991)

\bibitem[Ko]{Ko} Max Koecher: Imbedding of Jordan algebras into Lie
algebras. American Journal of Mathematics 89 (1967) 787-816

\bibitem[MP]{MP} Christiane Martin and Alain  Piard: Classification of
the indecomposable bounded admissible modules over the Virasoro Lie algebra
with weightspaces of dimension not exceeding two.  Comm. Math. Phys.  150 
(1992) 465-493.

\bibitem[M1]{M1} O. Mathieu: Sur un probl\`eme de V.G. Kac:
la classification de certaines alg\`ebres de Lie gradu\'ees simples.
J. of Algebra  102 (505-536) 1986.

\bibitem[M2]{M2} O. Mathieu: Classification des alg\`ebres de Lie de croissance $\leq 1$.
Inv. Math 108 (1992) 455-519.

\bibitem[MS]{MS} A. Meurman and  L.J. Santharoubane: Cohomology and
Harish-Chandra modules over the Virasoro algebra.  Comm. Algebra  16 
(1988) 27-35.

\bibitem[Mi]{Mi} W. Miller: {\it On Lie algebras and some special functions of mathematical physics}.
Mem. Amer. Math. Soc. 50 (1964)

\bibitem[Mo]{Mo} Robert Moody: A new class of Lie algebras.  J. Algebra  10  (1968) 211-230.

\bibitem[T]{T} Jacques Tits: Une classe d'alg\`ebres de Lie en relation avec
les alg\`ebres de Jordan.  Indag. Math.  24  (1962) 530-535. 

\bibitem[Y]{Y} R. Yu : Alg\`ebre de Lie de type Witt.
Comm. in Alg. 25 (1997) 1471-1484.

\end{thebibliography}
\end{document}